\tikzstyle{every picture}=[> = to]
\tikzset{cdlabel/.style={execute at begin node=$\scriptstyle,execute at end node=$}}
\tikzset{implication/.style={double equal sign distance, -implies}}
\tikzset{biimplication/.style={double equal sign distance, implies-implies}}
\newcommand\mi@kern[1]{%
  \settowidth\@tempdima{$\mi@obj^{#1}$}
  \kern-\@tempdima
  #1
  \settowidth\@tempdima{$\mi@obj$}
  \kern\@tempdima
}
\newtoks\mi@toksp
\newtoks\mi@toksb
\DeclareRobustCommand{\manyindices}[5]{
  \def\mi@obj{#5}
  \mi@toksp\expandafter{\mi@kern{#2}}
  \mi@toksb\expandafter{\mi@kern{#1}}
  \@mathmeasure4\textstyle{#5_{#1}^{#2}}
  \@mathmeasure6\textstyle{#5_{#3}^{#4}}
  \dimen0-\wd6 \advance\dimen0\wd4
  \@mathmeasure8\textstyle{\hphantom{{}_{#1}^{#2}}#5^{\the\mi@toksp#4}_{\the\mi@toksb#3}}
  \hbox to \dimen0{}{\kern-\dimen0\box8}
}
\newread\testin
\def\mathcenter#1{\vcenter{\hbox{$#1$}}}
\def\grapha#1{\includegraphics{#1}}
\def\graphb#1{\includegraphics[trim=-1 -1 -1 -1]{#1}}
\def\mfig#1{\mathcenter{\grapha{#1}}}
\def\mfigb#1{\mathcenter{\graphb{#1}}}
\renewcommand{\colon}{\nobreak\mskip2mu\mathpunct{}\nonscript
  \mkern-\thinmuskip{:}\allowbreak\mskip6muplus1mu\relax}
\newcommand{\RR}{\mathbb R}
\newcommand{\CC}{\mathbb C}
\newcommand{\PP}{\mathbb P}
\newcommand{\cP}{\mathcal{P}}
\newcommand{\cS}{\mathcal{S}}
\newcommand{\cT}{\mathcal{T}}
\newcommand{\co}{\colon}
\renewcommand{\epsilon}{\varepsilon}
\newcommand{\abs}[1]{\lvert #1 \rvert}
\newcommand{\norm}[1]{\lVert #1 \rVert}
\newcommand{\bdy}{\partial}
\DeclareMathOperator*{\esssup}{ess\,sup}
\DeclareMathOperator{\Iter}{Iter}
\theoremstyle{plain}
\numberwithin{equation}{section}
\newtheorem{proposition}[equation]{Proposition}
\newtheorem{lemma}[equation]{Lemma}
\newtheorem{corollary}[equation]{Corollary}
\newtheorem{theorem}{Theorem}
\newtheorem{citethm}[equation]{Theorem}
\theoremstyle{definition}
\newtheorem{definition}[equation]{Definition}
\newtheorem{question}[equation]{Question}
\newtheorem{claim}[equation]{Claim}
\newtheorem{algorithm}[equation]{Algorithm}
\theoremstyle{remark}
\newtheorem{example}[equation]{Example}
\newtheorem{remark}[equation]{Remark}
\newtheorem{warning}[equation]{Warning}
\theoremstyle{plain}
\newenvironment{taggedthm}[1]
 {\taggedthmx}
 {\endtaggedthmx}
\DeclareMathOperator{\EL}{EL} 
\DeclareMathOperator{\SF}{SF} 
\DeclareMathOperator{\Fill}{Fill} 
\DeclareMathOperator{\Verts}{Vert}
\DeclareMathOperator{\Edge}{Edge}
\newcommand{\Edges}{\Edge}
\DeclareMathOperator{\Lip}{Lip} 
\DeclareMathOperator{\Dir}{Dir} 
\DeclareMathOperator{\Emb}{Emb} 
\DeclareMathOperator{\WR}{WR} 
\DeclareMathOperator{\Area}{Area}
\DeclareMathOperator{\Star}{Star} 
\DeclareMathOperator{\mincut}{mincut} 
\newcommand{\Wgt}{\mathcal{W}}
\newcommand{\Len}{\mathcal{L}}
\newcommand{\id}{\mathrm{id}}
\DeclareMathOperator{\Rlx}{Rlx} 
\newcommand{\DR}{\mathrm{DR}} 
\newcommand{\ER}{\mathrm{ER}} 
\newcommand{\NR}{\mathrm{NR}} 
\newcommand{\shortseq}[5]{#1 \overset{#2}{\longrightarrow} #3 \overset{#4}{\longrightarrow} #5}
\newcommand{\longseq}[7]{#1 \overset{#2}{\longrightarrow} #3 \overset{#4}{\longrightarrow} #5 \overset{#6}{\longrightarrow} #7}
\newcommand{\wt}[1]{\widetilde{#1}}
\newcommand{\pdual}{p^\vee}
\newcommand{\SFfrom}{\overrightarrow{\SF}}
\newcommand{\SFto}{\overleftarrow{\SF}}
\definecolor{dark-green}{rgb}{0,0.6,0}
\definecolor{dark-red}{rgb}{0.7,0,0}
\definecolor{dark-blue}{rgb}{0,0,0.8}
\begin{document}
\title{Elastic Graphs}

\author[Thurston]{Dylan~P.~Thurston}
\address{Department of Mathematics\\
         Indiana University,
         Bloomington, Indiana 47405\\
         USA}
\email{dpthurst@indiana.edu}
\date{February 7, 2019}

\begin{abstract}
  An \emph{elastic graph} is a graph with an elasticity associated to
  each edge. It may be viewed as a network made out of ideal rubber
  bands. If the
  rubber bands are
  stretched on a target space there is an \emph{elastic energy}.  We
  characterize when a
  homotopy class of maps
  from one elastic graph to another is \emph{loosening}, i.e.,
  decreases this elastic energy for all possible targets. This fits
  into a more general framework of energies for maps between graphs.
\end{abstract}

\subjclass[2010]{Primary 37E25; Secondary 58E20, 05C21}

\maketitle

\setcounter{tocdepth}{1}
\tableofcontents

\section{Introduction}
\label{sec:intro}

\subsection{Broader context}
\label{sec:broader-context}

We start with a general question.

\begin{question}\label{quest:looser}
  When is one network of elastic bands looser than another?
\end{question}

This question could have been asked long ago, as soon as Hooke's Law
was discovered. Informally, one network is looser than another if,
however they are stretched, the first network will always have lower
potential energy and force on the external vertices than the second.
(We will make this question precise shortly, but note for now that our
elastic bands have a resting length of~$0$, unlike real elastics or
springs.)

There is a closely related question for resistors:
\begin{question}\label{quest:looser-elec}
  When does one network of resistors dissipate more energy than another?
\end{question}
Elastic bands and resistors both have quadratic
responses to their inputs: the potential energy stored in an elastic
band is a quadratic function of how far it is stretched, while the
energy dissipated in a resistor is a quadratic function of the voltage
difference between the ends. (See Appendix~\ref{sec:electrical} for
more on this relationship.)

The electrical version of the question is much easier. Consider a
network of resistors with $k$ external \emph{nodes} where the voltage
can be controlled. Such a network has a symmetric $k \times k$
\emph{response matrix}, giving the vector of currents flowing out of
each node as a linear function of the vector of voltages. This same
matrix also gives a positive semi-definite quadratic form giving the
total energy dissipated, so Question~\ref{quest:looser-elec} amounts
to asking whether one quadratic form dominates another.

This answer does not translate to Question~\ref{quest:looser}. The key
difference between resistors and elastics is that electricity has a
direction of flow through a resistor, while elasticity just has an
unsigned tension. Thus in a resistor network the balancing condition
at a vertex (Kirchhoff's first law) is a linear equation, while in an
elastic network, stretched on a target space that is also a graph,
there are some combinatorics that play a role as well.
The difference between Questions~\ref{quest:looser-elec}
and~\ref{quest:looser} can be compared to the difference between
homology and homotopy, or between holomorphic differentials and
quadratic differentials.

In this paper we answer one interpretation of
Question~\ref{quest:looser}. The answers are different than for
Question~\ref{quest:looser-elec}; for instance,
the classical $Y$--$\Delta$ transform \cite{Kennelly99:TriangleStar},
giving an equivalence between different resistor networks, becomes an
inequality; cf.~\eqref{eq:Y-Delta-examp}. The answer is in terms of
minimizing a new ``embedding energy'' for maps between
graphs; cf.\ Equation~\eqref{eq:emb}.

There has been a great deal of work related to the electrical networks
version of the problem (or, equivalently, harmonic maps from a graph
to~$\RR$). Most prominently, since a harmonic $\RR$-valued function on
a simply-connected planar domain is the real part of an analytic
$\CC$-valued function, it can be used to get discrete analogues of
analytic functions. There is a 70-year history of work on the discrete
harmonic or analytic functions
\cite[\textit{inter alia}]{BSST40:DissectionRects, Isaacs41:Difference,
  Ferrand44:Preharmonic, Duffin68:Rhombic,PP93DiscreteMinimal,
  Mercat01:DiscreteRiemannIsing, Smirnov01:PercPlane,
  Lovasz04:DiscreteAnalytic, CS11:DiscreteIsoradial}. See Smirnov's
ICM lecture
\cite{Smirnov10:DiscreteComplexAnalysis} for some of the
history. These works rely crucially on the target being a vector
space. It is, for instance, not clear how to apply these
discretizations to compute the minimal dilatation map between Riemann
surfaces.

There is another line of work towards discrete approximations to
analytic functions, based on circle packings on the Riemann sphere, as
proposed by W.~Thurston \cite{Thurston86:Zippers} and developed by
others \cite{RS87:ConvergenceCirclePack, HS96:ConvergenceCirclePack,
  Stephenson05:IntroCirclePack}. Here the target of the map is the
sphere, not a
vector space, but it does use a complex projective
structure on the target to define the meaning of a ``circle''
\cite{KMT03:CircleProj}. The method also involves transcendental
equations.

In this paper, we are interested in more general targets. Concretely,
we work with maps to graphs rather than to $\RR$, $\CC$, or
$\CC\PP^1$. The equations that result have combinatorial data and,
once the combinatorics are fixed, are algebraic. For instance, for
harmonic maps from one graph to another,
the equations are linear once the combinatorial data is
fixed. Although we work
mainly with targets that are graphs, in fact
the main results apply to energies of harmonic maps with \emph{any}
possible target.

See Section~\ref{sec:prior-work} for more on
prior work.

Although in principle the results are quite general, the main
applications to date are in the setting of surfaces. We briefly
summarize the connection to surfaces here, although in the bulk of the
paper we will not refer to it.
A \emph{ribbon
  graph} is a graph together with a cyclic
ordering of the edges incident to each vertex; this is enough to give
a topological thickening of the graph to a surface. In the setting of
surfaces, it is traditional to look not at harmonic energy but at a
dual notion, \emph{extremal length}. This is based on homotopy classes
of loops mapped into the surface, rather than on maps from the surface
to some other space. (Definition~\ref{def:el-curve} has the
corresponding notion for graphs.) In the graph setting, the control we
get over when one network is looser than another also gives bounds on
the maximal ratio of extremal lengths, maximized over all homotopy
classes of loops. This turns out
to be quite useful, because
for closed surfaces, the
maximum ratio of extremal lengths over all homotopy classes gives the
minimal
dilation of a quasi-conformal map
between the surfaces \cite[Theorem~4]{Kerckhoff80:AsympTeich}, while
for surfaces with boundary, this ratio controls whether one surface
conformally embeds in another \cite{KPT15:EmbeddingEL}.

In turn, this control over when one surface conformally embeds in
another (plus a relation between extremal length on a graph and
extremal length on its thickening) lets us prove a new
characterization of when a topological branched self-cover of the
sphere is equivalent to a rational map
\cite{Thurston16:Characterize}. This gives a converse to an earlier
characterization by W. Thurston \cite{DH93:ThurstonChar}.

You can also use these techniques and the relation between graphs and
surfaces to approximate the
Teichmüller distance between two Riemann surfaces
\cite{Palmer15:harmonic-thesis}. In principle, that work does not
use the results of this paper, since we need a more flexible notion of
``looser'' than the one considered here. But the motivation and
techniques are similar.

\subsection{Definitions and results}
\label{sec:main-results}
We now turn to making Question~\ref{quest:looser} precise.

\begin{definition}
  In this paper, a \emph{graph} is a topologist's graph, a finite
  1-dimensional CW complex, i.e., with multiple edges and self-loops
  allowed. \emph{Maps} between graphs are continuous maps between the
  underlying topological spaces. In particular, they need not send
  vertices to vertices. For convenience, we will work with
  maps that are piecewise-linear (PL) with respect to a fixed linear
  structure on each
  edge.
\end{definition}

\begin{definition}\label{def:marked-graph}
  A \emph{marked graph} is a pair $(\Gamma, M)$,
  where $\Gamma$ is a graph and $M \subset \Verts(\Gamma)$ is a finite subset of marked
  points (possibly empty). A map $f \co (\Gamma_1, M_1) \to (\Gamma_2, M_2)$
  between marked
  graphs is required to send marked points to marked points (i.e.,
  $f(M_1) \subset M_2$). Homotopy is considered within the space of
  such maps. In particular, within a homotopy class the restriction of
  $f$ to a map from $M_1$
  to $M_2$ is fixed.
\end{definition}

\begin{definition}\label{def:length-graph}
  A \emph{length graph} $K = (\Gamma, \ell)$ is a graph in which each edge~$e$
  has a positive
  length $\ell(e)$. This gives a metric on~$\Gamma$ respecting the linear
  structure on the edges. If $f \co K_1 \to K_2$ is a PL map between
  length graphs, then $\abs{f'} \co K_1 \to \RR_{\ge 0}$ is the
  absolute value of the
  derivative with respect to the metrics. It is locally constant with
  jump discontinuities.
\end{definition}

\begin{definition}\label{def:elastic-graph}
  An \emph{elastic graph}~$G=(\Gamma,\alpha)$ is a graph~$\Gamma$ in which
  each edge~$e$ has
  a positive elastic constant $\alpha(e)$. We can take derivatives by
  interpreting $\alpha(e)$ as the length of~$e$ as above.
  For $G$ an elastic graph, $K$ a
  length graph, and $f \co G \to K$ a PL map, the
  \emph{Dirichlet energy} of~$f$ is
  \begin{align}
    \Dir(f) &\coloneqq \int_{x \in \Gamma} \abs{f'(x)}^2\,dx.\label{eq:dir}\\
  \intertext{The Dirichlet energy of a (marked) homotopy
    class~$[f]$ is defined to be}
    \Dir[f] &\coloneqq \inf_{g \in [f]} \Dir(g).\label{eq:dir-homotopy}
  \end{align}
\end{definition}

It is easy to see that a minimizer~$g$ for $\Dir[f]$ must be
\emph{constant-derivative}: $\abs{g'}$ is constant on each edge of~$g$
(Definition~\ref{def:const-deriv}). If $g$ is constant-derivative, we
then have
\begin{equation}\label{eq:dir-1}
\Dir(g) = \sum_{e \in \Edge(\Gamma)} \frac{\ell(g(e))^2}{\alpha(e)},
\end{equation}
where $\ell(g(e))$ is the length of the image of~$e$ in a
natural sense (Definition~\ref{def:length}). This is
the Hooke's law energy of~$g$, where each edge is an ideal spring
with resting length~$0$ and spring constant given by~$\alpha$.
Physically, you could think about stretching a rubber band graph
shaped like~$G$ over
a system of pipes shaped like~$K$.

\emph{Harmonic} maps are maps that minimize the energy
\eqref{eq:dir} or~\eqref{eq:dir-1} within their homotopy class.
\begin{figure}
  \[
  \begin{tikzpicture}
    \node (A0) at (0,0) {$\mfigb{vertex-tt-1}$};
    \node (A1) at (4,0) {$\mfigb{vertex-tt-0}$};
    \draw[->] (A0) to (A1);

    \node (B0) at (0,-2.5) {$\mfig{vertex-tt-11}$};
    \node (B1) at (4,-2.5) {$\mfig{vertex-tt-10}$};
    \draw[->] (B0) to (B1);
  \end{tikzpicture}
  \]
  \caption{Local schematic pictures of
    the equilibrium condition for harmonic
    maps. Top: A vertex maps
    to an edge. Bottom: A vertex maps to a vertex.}
  \label{fig:equil-examp}
\end{figure}
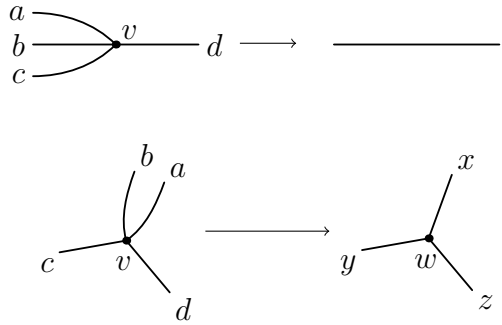
Intuitively, a map is harmonic if it is constant-derivative and each
vertex is at equilibrium,
in the sense that the force pulling in one direction is less
than or equal to the total force pulling it in other directions.
Physically, the force pulling a vertex in the direction of
an incident edge~$e$ is given by the derivative of
Equation~\eqref{eq:dir-1} with respect to $\ell(g(e))$, i.e.,
$2\ell(g(e))/\alpha(e) = 2\abs{f'(e)}$. We will drop the irrelevant
factor of~$2$ and refer to $\abs{f'(e)}$ as the \emph{tension} in the
edge (Definition~\ref{def:harmonic}).

For instance, if a vertex~$v$ of~$G$
maps to the middle of an edge of~$K$ with three edges on the left and
one edge on the right as
on top of Figure~\ref{fig:equil-examp}, the net
force to the left must equal the net force to the right:
\[
\abs{f'(a)} + \abs{f'(b)} + \abs{f'(c)} = \abs{f'(d)}.
\]
On the other hand, if a vertex~$v$ of~$G$ maps to a vertex~$w$
of~$K$ as on the bottom of Figure~\ref{fig:equil-examp}, we no longer
have an equality. Instead, the net force
pulling~$v$ into any one of the three edges incident to~$w$ can't be
greater than the total force pulling it into the other two
edges. This gives three triangle inequalities:
\begin{equation}\label{eq:triangle-examp}
\begin{aligned}
  \abs{f'(a)} + \abs{f'(b)} &\le \abs{f'(c)} + \abs{f'(d)}\\
  \abs{f'(c)} &\le \abs{f'(a)} + \abs{f'(b)} + \abs{f'(d)}\\
  \abs{f'(d)} &\le \abs{f'(a)} + \abs{f'(b)} + \abs{f'(c)}.
\end{aligned}
\end{equation}
(See Corollary~\ref{cor:harmonic-triangle} for the general statement.)
In any homotopy class of maps to a
target length graph there is a harmonic map
(Theorem~\ref{thm:harmonic-min}).

\begin{example}\label{examp:harmonic}
  Consider an elastic graph~$G$ and length graph~$K$, both tripods
  with their ends marked, with
  elastic constants and lengths given by
  \begin{align*}
    G &= \mfigb{energies-107} & K &= \mfigb{energies-1},
  \end{align*}
  where $x$ varies. If $x=3$, the map $f$ minimizing
  Dirichlet energy takes
  the vertex of~$G$ to a point $1/5$ of the way along an edge:
  \[
  \mfigb{energies-103} \overset{f}{\longrightarrow} \mfigb{energies-10}.
  \]
  The net force pulling the vertex of~$G$ upwards is
  $4/5$, the same as the
  net force pulling it downwards.
  On the other hand, if $0 < x \le 2$, then the harmonic representative
  has the central vertex of~$G$ mapping to the central vertex of~$K$.
\end{example}

We can think of Dirichlet energy as a function of the target lengths (or more
precisely the target lengths and the homotopy class). We next compare
these functions.

\begin{definition}\label{def:loosening}
  Given a homotopy class $[\phi] \co G_1 \to G_2$ of maps
  between marked elastic graphs, we say that $[\phi]$ is
  \emph{loosening} if, for all marked length graphs~$K$ and marked maps
  $f \co G_2 \to K$,
  \[
  \Dir[f \circ \phi] \le \Dir[f].
  \]
  If the inequality is always strict, we say that $[\phi]$ is
  \emph{strictly loosening}.
  For a finer invariant, define the \emph{Dirichlet stretch factor} to be
  \begin{equation}\label{eq:sf-dir}
  \SF_{\Dir}[\phi] \coloneqq \sup_{[f] \co G_2 \to K} \frac{\Dir[f \circ \phi]} {\Dir[f]},
  \end{equation}
  where the supremum runs over all marked length graphs~$K$ and all
  marked homotopy classes~$[f]$, so that $[\phi]$ is loosening
  iff $\SF_{\Dir}[\phi] \le 1$. It is less obvious but also true that $[\phi]$ is
  strictly loosening iff $\SF_{\Dir}[\phi] < 1$.
\end{definition}

\begin{definition}\label{def:embedding}
  For $\phi \co G_1 \to G_2$ a PL map between marked
  elastic graphs,
  the \emph{embedding energy} of~$\phi$ is
  \begin{equation}\label{eq:emb}
    \Emb(\phi) \coloneqq \esssup_{y \in G_2} \sum_{x \in \phi^{-1}(y)} \abs{\phi'(x)}.
  \end{equation}
  We are taking the essential supremum over points $y \in G_2$,
  ignoring in particular
  vertices of~$G_2$, images of vertices
  of~$G_1$, and points where
  $\phi^{-1}(y)$ is infinite, all of
  which have measure zero. For a homotopy class~$[\phi]$, define
  \begin{equation}\label{eq:emb-homotopy}
  \Emb[\phi] \coloneqq \inf_{\psi \in [\phi]} \Emb(\psi).
  \end{equation}
\end{definition}

\begin{example}\label{examp:emb}
  If $G_1$ and $G_2$ are both tripods with marked ends, with elastic
  constants $(1,3,3)$ and $(1,1,1)$, then the minimizer for
  $\Emb(\phi)$ is not the map from Example~\ref{examp:harmonic}, but
  instead sends the vertex of~$G_1$ to a point $2/5$ of the way along
  an edge of~$G_2$, with $\Emb(\phi) = 3/5$:
  \[
  \mfigb{energies-104} \overset{\phi}{\longrightarrow} \mfigb{energies-11}.
  \]
\end{example}

Our answer to Question~\ref{quest:looser} says that
$\Emb[\phi] = \SF_{\Dir}[\phi]$, and in particular the homotopy class $[\phi]$ is
loosening exactly when $\Emb[\phi] \le 1$. Before stating the theorem,
we way to measure the tightness of an elastic network~$G$, using
maps to~$G$
rather than maps from~$G$.

\begin{definition}\label{def:curve}
  A \emph{marked one-manifold} is a marked graph in which every marked
  point is a 1-valent vertex and every other vertex is 2-valent;
  equivalently, it is a (not necessarily connected)
  one\hyp manifold~$C$ with boundary, where the set of marked points
  is equal to $\bdy C$.
  A \emph{marked multi-curve} on a marked graph~$\Gamma$ is a marked
  one\hyp manifold $C$ and a marked map
  $c \co C \to \Gamma$.  Thus, it is a union of loops on~$\Gamma$ and arcs
  between marked points of~$\Gamma$. It is a \emph{marked curve} if
  $C$ is connected.
  A marked
  multi-curve is \emph{reduced} if it is PL and has no
  backtracking: on each component of~$C$, $c$ is either
  constant or has a perturbation that is locally injective. In each
  homotopy class of multi-curves
  there is an essentially unique reduced representative. If $(C,c)$
  is a marked multi-curve on~$\Gamma$, then
  for $y \in \Gamma$, 
  define $n_c(y)$ to be the size of
  $c^{-1}(y)$. If $c$ is reduced and $e$ is an edge of~$\Gamma$, then
  $n_c(y)$ is constant almost everywhere on~$e$, so we may
  write $n_c(e)$.

  A \emph{marked weighted multi-curve} is a marked multi-curve
  in which each component $C_i$ has a positive weight~$w_i$. For
  weighted multi-curves,
  $n_c(y)$ is the weighted
  count of $c^{-1}(y)$.
\end{definition}

\begin{definition}\label{def:el-curve}
  The \emph{extremal length} of a reduced multi-curve~$c$ on an
  elastic graph~$G = (\Gamma, \alpha)$ is
  \begin{equation}\label{eq:el-curve}
    \EL(c) \coloneqq \sum_{e \in \Edge(G)} \alpha(e) n_c(e)^2.
  \end{equation}
  This is also the extremal length of the homotopy class~$[c]$; see
  Equation~\eqref{eq:el-weight-def} for extremal length of non-reduced
  multi-curves.
\end{definition}

See \cite[Section~5.2]{Thurston16:RubberBands} for motivation on why
this is called extremal length.

\begin{definition}
  For $[\phi] \co G_1 \to G_2$ a homotopy class of maps
  between elastic marked graphs, the \emph{extremal length
    stretch factor} is
  \begin{equation}\label{eq:sf-el}
    \SF_{\EL}[\phi] \coloneqq \sup_{[c]\colon C \to G_1} \frac{\EL[\phi \circ c]}{\EL[c]}
  \end{equation}
  where the supremum runs over all homotopy classes of marked
  multi-curves $(C,c)$ on~$G_1$.
\end{definition}

We are now ready to state the main result of this paper.
\begin{theorem}\label{thm:emb-sf}
  For $[\phi] \co G_1 \to G_2$ a homotopy class of maps
  between marked elastic graphs,
  \begin{equation}\label{eq:emb-sf}
  \Emb[\phi] = \SF_{\Dir}[\phi] = \SF_{\EL}[\phi].
  \end{equation}
  Furthermore, there is
  \begin{itemize}
  \item a PL map $\psi\in[\phi]$ realizing $\Emb[\phi]$;
  \item a marked length graph~$K$ and a PL map $f \co G_2 \to K$ realizing
    $\SF_{\Dir}$, in the sense that $f$ and $f \circ \phi$ are both
    harmonic and
    \[
    \SF_{\Dir}[\phi]
      = \frac{\Dir(f \circ \psi)}{\Dir(f)};
    \]
    and
  \item a marked weighted multi-curve~$(C,c)$ on $G_1$
    realizing $\SF_{\EL}$, in the sense that $c$ and $\psi \circ c$
    are both reduced and
    \[
    \SF_{\EL}[\phi] =
    \frac{\EL(\psi \circ c)}{\EL(c)}.
    \]
  \end{itemize}
\end{theorem}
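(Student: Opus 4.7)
The plan is to establish the cyclic chain of inequalities
\[
\SF_{\EL}[\phi]\;\le\;\SF_{\Dir}[\phi]\;\le\;\Emb[\phi]\;\le\;\SF_{\EL}[\phi],
\]
and to produce the three realizers as a byproduct of the last, hard inequality.

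For $\SF_{\Dir}[\phi]\le\Emb[\phi]$ I would fix a PL representative $\psi\in[\phi]$ and a marked PL map $f\co G_2\to K$ and invoke the co-area formula for PL maps,
$\int_{G_1} h(x)\abs{\psi'(x)}\,dx=\int_{G_2}\sum_{x\in\psi^{-1}(y)}h(x)\,dy$,
applied to $h(x)=\abs{f'(\psi(x))}^2\abs{\psi'(x)}$. Combined with the pointwise bound $\sum_{x\in\psi^{-1}(y)}\abs{\psi'(x)}\le\Emb(\psi)$, this gives $\Dir(f\circ\psi)\le\Emb(\psi)\cdot\Dir(f)$, and passing to infima on both sides yields the homotopy-class version.

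For $\SF_{\EL}[\phi]\le\SF_{\Dir}[\phi]$ I would first establish (or take from the earlier development) the Beurling-style duality
\[
\EL[c]=\sup_{f\co G\to K}\frac{\ell[f\circ c]^2}{\Dir[f]}
\]
for any reduced marked multi-curve $c$ on any elastic graph $G$: the $\le$ direction follows from writing $\ell(f\circ c)=\sum_e n_c(e)\ell(f(e))$ and applying Cauchy-Schwarz edge-wise, while the $\ge$ direction is witnessed by the length graph with $\ell(e)=\alpha(e) n_c(e)$ and $f$ the natural edge-by-edge identification. Applied twice, once to $c$ on $G_1$ and once to $\phi\circ c$ on $G_2$, this duality gives, for each curve $c$ and target $(K,f)$,
\[
\frac{\ell[f\circ\phi\circ c]^2}{\Dir[f]}
=\frac{\ell[(f\circ\phi)\circ c]^2}{\Dir[f\circ\phi]}\cdot\frac{\Dir[f\circ\phi]}{\Dir[f]}\le\EL[c]\cdot\SF_{\Dir}[\phi],
\]
and taking $\sup$ successively over $(K,f)$ and over $c$ yields $\SF_{\EL}[\phi]\le\SF_{\Dir}[\phi]$.

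The remaining inequality $\Emb[\phi]\le\SF_{\EL}[\phi]$ is the main content. My plan is a three-step max-flow/min-cut style argument. First, I would show that the infimum $\Emb[\phi]$ is attained by some PL representative $\psi$, reducing to a compactness argument on a finite-dimensional polytope of PL maps of bounded combinatorial type in $[\phi]$ and using lower semi-continuity of $\Emb$ on each stratum. Second, I would analyze the \emph{tight locus}
\[
T=\bigl\{y\in G_2\;\bigm|\;\textstyle\sum_{x\in\psi^{-1}(y)}\abs{\psi'(x)}=\Emb(\psi)\bigr\}
\]
of such a minimizer: a local perturbation argument, showing that any admissible way of ``cutting'' $T$ on a positive-measure set would strictly decrease $\Emb$, forces $T$ to carry a reduced weighted multi-curve $c$ on $G_1$ with $\psi\circ c$ reduced and contained in $T$, and with $\abs{\psi'(x)}$ proportional to $n_c(x)/n_{\psi\circ c}(\psi(x))$ along its image. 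Third, for such a saturating $c$, the equality case of the Cauchy-Schwarz step used in the first paragraph gives $\EL(\psi\circ c)/\EL(c)=\Emb(\psi)=\Emb[\phi]$; this $c$ is the extremal multi-curve realizer, and the length graph $K$ with $\ell(e)\propto\alpha(e)n_{\psi\circ c}(e)$ together with the natural $f$ then realizes $\SF_{\Dir}$ by the duality recalled above.

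The main obstacle will be the second step, producing the saturating multi-curve from a minimizer. This is the graph-theoretic analogue of Beurling's criterion for an extremal conformal metric; I expect it to require either a Ford-Fulkerson-style flow-augmentation procedure---iteratively routing curves through $T$ and rerouting $\psi$ whenever a cycle obstruction appears---or a direct linear-programming duality computation on the finite-dimensional polytope of PL maps of fixed combinatorial type, with $\Emb$ as a piecewise-linear objective whose dual variables can be read as edge-weights $n_c(e)$ of the sought curve.
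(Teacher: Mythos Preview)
Your easy inequalities are fine, though organized differently from the paper: the paper proves $\SF_{\Dir}[\phi]\le\Emb[\phi]$ and $\SF_{\EL}[\phi]\le\Emb[\phi]$ both directly from submultiplicativity (its Corollary~\ref{cor:sf-bound}), rather than chaining through a separate inequality $\SF_{\EL}\le\SF_{\Dir}$. Your Beurling duality for $\EL$ is exactly the content of the paper's Theorem~\ref{thm:harmonic-min}, condition~(\ref{item:harmonic-curve}), so that step is sound.

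For the hard direction, the paper takes a genuinely different route from your proposed tight-locus analysis. Rather than first proving a minimizer $\psi$ exists by compactness and then extracting a saturating curve from its tight locus, the paper sets up an explicit iteration $\Iter_\phi$ on $\Len(\Gamma_2)$: given lengths $\ell$, take the harmonic representative of $[\phi]$ into $(\Gamma_2,\ell)$, read off its tension weights on~$\Gamma_1$, push those forward to~$\Gamma_2$ via a taut representative, and dualize back to lengths. A projective fixed point of this iteration in the interior of $\Len(\Gamma_2)$ yields a \emph{$\lambda$-filling} map (lengths preserved, weights scaled uniformly by~$\lambda$), which is automatically an $\Emb$-minimizer with the tight sequence built in. Boundary fixed points require an inductive analysis on subgraphs, leading to a \emph{partially} $\lambda$-filling map. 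The existence of the minimizer, the optimal curve, and the optimal length target all fall out simultaneously from the fixed point; no separate compactness argument for $\Emb$-minimizers is needed.

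The paper in fact explicitly acknowledges your approach as a viable alternative (opening of Section~\ref{sec:iteration}: ``One approach\ldots would be to study those maps that locally minimize the embedding energy\ldots From a local minimum, you can extract lengths and weights to form the desired strip structure''), but chooses the iteration because it also yields an algorithm. What your approach would buy is a more self-contained argument that avoids the machinery of weak graphs and the boundary fixed-point analysis; what the paper's approach buys is the algorithmic content and a cleaner handle on the structure of minimizers (the layered filling decomposition). Your identification of the second step as the crux is accurate, and your two suggested mechanisms (flow augmentation or LP duality on the polytope of fixed combinatorial type) are reasonable, but be aware that the compactness step is not free: you would need something like the paper's Proposition~\ref{prop:strong-red-finite} to bound the combinatorial types that can appear.
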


Note that $\Emb[\phi]$ is defined as an infimum (over the
homotopy class), while $\SF_{\Dir}[\phi]$ and $\SF_{\EL}[\phi]$ are
defined as suprema (over all possible targets or
multi-curves). As such,
Equation~\eqref{eq:emb-sf} helps us compute these quantities, by
sandwiching the target value.

We also give an algorithm that produces,
simultaneously, the map $\psi \in [\phi]$ minimizing $\Emb$, the
pair $(K,f)$ maximizing the ratio of Dirichlet energies, and the
reduced multi-curve
$(C,c)$ maximizing the ratio of extremal lengths.

\begin{example}
  Example~\ref{examp:emb} fits into the following
  sequence of maps realizing $\SF_{\EL}$ and $\SF_{\Dir}$:
  \begin{gather*}
  C \overset{c}{\longrightarrow} G_1
    \overset{\phi}{\longrightarrow} G_2
    \overset{g}{\longrightarrow} K\\
  \mfigb{energies-4}
    \overset{c}{\longrightarrow} \mfigb{energies-103}
    \overset{\phi}{\longrightarrow} \mfigb{energies-101}
    \overset{g}{\longrightarrow} \mfigb{energies-5}
  \end{gather*}
  Here $c$ is the evident map, $\phi$ is the map from
  Example~\ref{examp:emb}, and $g$ is the map that takes the vertex
  of~$G_2$ to the vertex of~$K$. Specifically, we have
  \[
  \frac{\EL[\phi \circ c]}{\EL[c]} = \frac{4+1+1}{4+3+3}
    = \Emb(\phi) = 3/5
    = \frac{\Dir[g \circ \phi]}{\Dir[g]} = \frac{36/25 + 27/25+27/25}{4+1+1}.
  \]
\end{example}

\medskip

We can generalize the target spaces in Theorem~\ref{thm:emb-sf}
considerably.

\begin{definition}\label{def:dir-general}
  Let $G$ be a marked elastic graph and let $X$ be a marked length
  space.%
  \footnote{A \emph{length space} is a metric space where the
    distance is given by the infimum of lengths of paths.
    \emph{Marked} means that there is a distinguished finite set of points.}
  Let $f \co G \to X$ be a Lipschitz map taking marked points to
  marked points. Define the
  Dirichlet energy of~$f$ by
  \begin{align*}
    \Dir(f) &\coloneqq \int_{x \in G} \abs{f'(x)}^2\,dx\\
  \intertext{where $\abs{f'(x)}$ is the best local Lipschitz constant of~$f$ in a
  neighborhood of~$x$.
  For a homotopy class of marked maps, define}
    \Dir[f] &\coloneqq \inf_{g \in [f]} \Dir(g).
  \end{align*}
\end{definition}
In this generality, minimizers for
$\Dir[\phi]$ need not exist. (See
\cite{KS93:SobolevHarmonic,EF01:HarmonicPolyhedra} for some cases
where minimizers do exist.)

\begin{theorem}\label{thm:emb-sf-gen}
  For $[\phi] \co G_1 \to G_2$ a homotopy class of maps
  between marked elastic graphs,
  \[
  \Emb[\phi] = \sup_{[f]\co G_2 \to X} \frac{\Dir[f \circ \phi]}{\Dir[f]},
  \]
  where the supremum runs over all marked length spaces~$X$ and all homotopy
  classes of marked maps $f \co G_2 \to X$ with $\Dir[f] > 0$.
\end{theorem}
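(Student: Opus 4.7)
The plan is to prove two inequalities. The inequality $\Emb[\phi] \le \sup_{[f]\co G_2 \to X} \Dir[f\circ\phi]/\Dir[f]$ (with sup over all length spaces) is immediate: any marked length graph $K$ is in particular a marked length space with its intrinsic path metric, and on length graphs the two definitions of $\Dir$ agree, so this supremum dominates $\SF_{\Dir}[\phi]$, which equals $\Emb[\phi]$ by Theorem~\ref{thm:emb-sf}. The substantive direction is the reverse: for every marked length space $X$ and every marked Lipschitz $f \co G_2 \to X$ with $\Dir[f] > 0$, I want to prove
\[
\Dir[f \circ \phi] \le \Emb[\phi] \cdot \Dir[f].
\]

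My strategy is to fix the PL representative $\psi \in [\phi]$ realizing $\Emb[\phi]$ guaranteed by Theorem~\ref{thm:emb-sf}, and to show that for every Lipschitz $g \in [f]$,
\[
\Dir(g \circ \psi) \le \Emb(\psi)\, \Dir(g).
\]
Taking the infimum over $g \in [f]$ then yields the desired bound. The argument is a chain rule together with a fiberwise change-of-variables. Since $\psi$ is PL, at almost every $x \in G_1$ the derivative $\abs{\psi'(x)}$ is defined and $\psi(x)$ lies in the interior of an edge of $G_2$ where the best local Lipschitz constant $\abs{g'(\psi(x))}$ is defined. Comparing local Lipschitz constants on small neighborhoods gives the pointwise bound $\abs{(g\circ\psi)'(x)} \le \abs{g'(\psi(x))}\cdot\abs{\psi'(x)}$ a.e. Squaring, integrating over $G_1$, and applying the coarea identity for the PL map $\psi$ (i.e.\ ordinary substitution on each piece where $\psi$ has constant derivative) gives
\begin{align*}
  \Dir(g \circ \psi)
  &\le \int_{x \in G_1} \abs{g'(\psi(x))}^2 \abs{\psi'(x)}^2\,dx \\
  &= \int_{y \in G_2} \abs{g'(y)}^2 \sum_{x\in\psi^{-1}(y)} \abs{\psi'(x)}\,dy \\
  &\le \Emb(\psi) \int_{y \in G_2} \abs{g'(y)}^2 \,dy = \Emb(\psi)\cdot \Dir(g),
\end{align*}
where the last inequality uses the essential-supremum definition of $\Emb(\psi)$ applied inside the integral.

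The only real obstacle is justifying the pointwise chain-rule inequality when the target is a general length space rather than a length graph, but this is essentially definitional: the best local Lipschitz constant is sub\hyp multiplicative under composition, and both factors are well-defined almost everywhere because $\psi$ is PL and $g$ is Lipschitz on each edge of $G_2$. The coarea step is likewise elementary on a graph, reducing to one-variable substitution on each edge of $G_2$, with the preimage sum picking up the multiple sheets of $\psi$. Thus the general length-space case reduces cleanly to the structural input from Theorem~\ref{thm:emb-sf}.
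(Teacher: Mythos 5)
Your proof is correct and matches the paper's argument: the substantive direction is exactly the paper's lemma (the chain-rule pointwise bound $\abs{(g\circ\psi)'} \le \abs{g'(\psi(\cdot))}\,\abs{\psi'}$ followed by change of variables over the fibers of $\psi$ and an essential-supremum bound), applied to the PL minimizer $\psi$ from Theorem~\ref{thm:emb-sf}, while the easy direction likewise follows by specializing to length-graph targets. The only cosmetic difference is that you take the infimum over $g\in[f]$ directly whereas the paper works with an $\epsilon$-approximate minimizer and lets $\epsilon\to 0$; these are equivalent.
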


We next put Theorem~\ref{thm:emb-sf} in the broader context of
a family of energies
of maps between weighted graphs, elastic graphs, and length graphs:
\begin{equation}
\mathcenter{\begin{tikzpicture}[node distance=3.5cm,
  every text node part/.style={align=center},align=center]
  \node (weight) {Weighted\\graph $W$};
  \node (elastic) [right of=weight] {Elastic\\graph $G$};
  \node (length) [right of=elastic] {Length\\graph $K$};
  \draw[->] (weight) to node[above,cdlabel]{$\sqrt{\EL}$} (elastic);
  \draw[->] (elastic) to node[above,cdlabel]{$\sqrt{\Dir}$} (length);
  \draw[->, bend left=28] (weight) to node[above,cdlabel]{$\ell$} (length);
  \draw[->, loop below, min distance=0.6cm, out=-105, in=-75]
    (weight) to node[below,cdlabel]{$\WR$} (weight);
  \draw[->, loop below, min distance=0.6cm, out=-105, in=-75]
    (elastic) to node[below,cdlabel]{$\sqrt{\Emb}$} (elastic);
  \draw[->, loop below, min distance=0.6cm, out=-105, in=-75]
    (length) to node[below,cdlabel]{$\Lip$} (length);
\end{tikzpicture}}
\label{eq:energies-graph}
\end{equation}
(Weighted graphs, Definition~\ref{def:weighted-graph}, are a mild
generalization of weighted multi-curves.)
The label on an arrow gives the appropriate energy of a map between
the given types of graphs.
$\EL(f)$, $\Emb(f)$ and $\Dir(f)$ are as defined above, except that we
take the square root for reasons to be explained shortly, and $\EL$
has been extended from multi-curves to maps from general weighted graphs in a
natural way (Equation~\eqref{eq:el-weight-def}).
We make some additional definitions.
\begin{itemize}
\item For $f \co W \to K$ a PL map from a weighted graph to a length graph,
  $\ell(f)$ is the weighted length of the image of~$f$:
  \begin{equation}\label{eq:ell-def}
    \ell(f) \coloneqq \int_{x \in W} w(x) \abs{f'(x)}\,dx.
  \end{equation}
\item For $f \co K_1 \to K_2$ a PL map between length graphs,
  $\Lip(f)$ is the best global Lipschitz constant for~$f$:
  \begin{equation}\label{eq:lip-def}
  \Lip(f) \coloneqq \esssup_{x \in K_1} \abs{f'(x)}.
  \end{equation}
\item For $f \co W_1 \to W_2$ a PL map between weighted
  graphs, the \emph{weight ratio} is the maximum ratio of weights:
  \begin{equation}\label{eq:WR-def}
    \WR(c) \coloneqq \esssup_{y \in W_2} \frac{\sum_{x \in f^{-1}(y)}
    w(x)}{w(y)}.
  \end{equation}
\end{itemize}
A key fact is that these energies are \emph{submultiplicative}, in the
sense that the energy of a composition of two maps is less than or
equal to the product of the energies of the two pieces. To state this
uniformly, we make some further definitions.

\begin{definition}\label{def:12inf-graphs}
  For $p \in \{1,2,\infty\}$, a \emph{$p$-conformal graph} $G^p$ is
  \begin{itemize}
  \item for $p=1$, a weighted graph;
  \item for $p=2$, an elastic graph; and
  \item for $p=\infty$, a length graph.
  \end{itemize}
  Let $p,q \in \{1,2,\infty\}$ with $p \le q$. If we have a
  $p$-conformal graph~$G_1$, a $q$-conformal graph~$G_2$, and a PL map
  $f \co G_1^p \to G_2^q$ between them, define
  $E^p_q(f)$ by
  \begin{align*}
    E^1_1(f) &\coloneqq \WR(f)  & E^1_2(f) &\coloneqq \sqrt{\EL(f)} &  E^1_\infty(f) &\coloneqq \ell(f)\\
            &         & E^2_2(f) &\coloneqq \sqrt{\Emb(f)} & E^2_\infty(f) &\coloneqq \sqrt{\Dir(f)}\\
            &         &          &                 & E^\infty_\infty(f) &\coloneqq\Lip(f).\vphantom{\sqrt{f}}
  \end{align*}
  In each case, for a homotopy class $[f]$ of maps, define 
  \[
  E^p_q[f] \coloneqq \inf_{g \in [f]} E^p_q(g).
  \]
\end{definition}
Let $p,q,r \in \{1,2,\infty\}$ with
$p \le q \le r$. Suppose we have a sequence of maps
\[\shortseq{G_1^p}{f}{G_2^q}{g}{G_3^r}\]
between marked graphs of the respective types.
Then the energies are submultiplicative:
\begin{align}
  E^p_r(g \circ f) &\le E^p_q(f) E^q_r(g)\label{eq:submul}\\
  E^p_r[g \circ f] &\le E^p_q[f] E^q_r[g].\label{eq:submul-homotop}
\end{align}
See Proposition~\ref{prop:sub-mult} for a more detailed statement,
spelling out the cases.

More generally, $p$-conformal graphs and energies $E^p_q$ can be defined
uniformly for real numbers $p, q$ with $1 \le p \le q \le \infty$. See
Appendix~\ref{sec:energies}.

Theorem~\ref{thm:emb-sf} can be interpreted as saying that some cases of
Equation~\eqref{eq:submul-homotop} are tight, as follows.

\begin{definition}\label{def:tight}
  Let $p,q \in \{1,2,\infty\}$ with $p \le q$. If
  $f \co G_1^p \to G_2^q$ is a PL map between marked
  graphs of the respective conformal type and
  \[
  E^p_q(f) = E^p_q[f],
  \]
  we say that $f$ is an \emph{energy minimizer} (in its homotopy class).

  Let $p,q,r \in \{1,2,\infty\}$ with $p \le q \le r$. If we have a
  sequence of maps
  \[
  \shortseq{G_1^p}{f}{G_2^q}{g}{G_3^r}
  \]
  of PL maps between graphs of the respective conformal type, we say
  that this sequence is \emph{tight} if
  $g \circ f$ is an energy minimizer and Equation~\eqref{eq:submul} is sharp:
  \[
    E^p_r[g \circ f] = E^p_r(g \circ f) = E^p_q(f) E^q_r(g).
  \]
  We may similarly say that a longer sequence of maps is tight.
\end{definition}

\begin{lemma}\label{lem:tight}
  Let $\shortseq{G_1^p}{f}{G_2^q}{g}{G_3^r}$ be a sequence of maps. If
  the sequence is tight, then $f$ and $g$ are also energy
  minimizers. Conversely, if $f$ and $g$ are energy minimizers and
  $E^p_r[g \circ f] = E^p_q[f]E^q_r[g]$, then the sequence is tight.
\end{lemma}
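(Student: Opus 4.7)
The plan is a short algebraic manipulation using only two ingredients already established: submultiplicativity of the energies $E^p_q$ in both the pointwise form~\eqref{eq:submul} and the homotopy-invariant form~\eqref{eq:submul-homotop}, together with the tautological bound $E^p_q[h]\le E^p_q(h)$ coming from the definition of $E^p_q[h]$ as an infimum over the homotopy class of $h$.

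For the forward direction, suppose the sequence is tight, so $E^p_r[g\circ f]=E^p_r(g\circ f)=E^p_q(f)\,E^q_r(g)$. I would chain this equation with the homotopy form of submultiplicativity to obtain the four-term string
\[
E^p_q(f)\,E^q_r(g)\;=\;E^p_r[g\circ f]\;\le\;E^p_q[f]\,E^q_r[g]\;\le\;E^p_q(f)\,E^q_r(g),
\]
which forces $E^p_q[f]\,E^q_r[g]=E^p_q(f)\,E^q_r(g)$. Combined with the individual inequalities $E^p_q[f]\le E^p_q(f)$ and $E^q_r[g]\le E^q_r(g)$, equality of the two products forces equality factor by factor, provided both factors on the right are positive. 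Any edge case where one of the energies vanishes is automatic, since $0\le E^p_q[h]\le E^p_q(h)=0$ collapses to equality and makes $h$ a minimizer trivially.

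For the converse, assume $f$ and $g$ are energy minimizers and $E^p_r[g\circ f]=E^p_q[f]\,E^q_r[g]$. Substituting $E^p_q[f]=E^p_q(f)$ and $E^q_r[g]=E^q_r(g)$ yields $E^p_r[g\circ f]=E^p_q(f)\,E^q_r(g)$. The pointwise submultiplicativity~\eqref{eq:submul} gives $E^p_r(g\circ f)\le E^p_q(f)\,E^q_r(g)$, while the infimum definition gives $E^p_r(g\circ f)\ge E^p_r[g\circ f]$; these two squeeze to $E^p_r(g\circ f)=E^p_r[g\circ f]=E^p_q(f)\,E^q_r(g)$, which is the defining condition of tightness.

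There is no real obstacle here; the lemma is pure bookkeeping about how submultiplicativity interacts with passage to the infimum over homotopy classes. The only minor subtlety is the factor-by-factor cancellation in the forward direction, which is handled by the observation that a zero-energy map is automatically an energy minimizer.
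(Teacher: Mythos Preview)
Your argument is correct and is essentially the paper's own proof spelled out: the paper displays the two chains $E^p_r[g\circ f]\le E^p_r(g\circ f)\le E^p_q(f)E^q_r(g)$ and $E^p_r[g\circ f]\le E^p_q[f]E^q_r[g]\le E^p_q(f)E^q_r(g)$, notes they share the same outer terms, and observes that equality throughout one line forces equality throughout the other. Your treatment of the zero-energy edge case is slightly incomplete (it shows the vanishing factor is a minimizer but says nothing about the other factor), but the paper's proof does not address that degenerate case either.
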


\begin{proof}
  In any sequence of maps $\shortseq{G_1^p}{f}{G_2^q}{g}{G_3^r}$
  (tight or not), we have the following inequalities:
  \begin{gather*}
      E^p_r[g \circ f] \le E^p_r(g \circ f) \le E^p_q(f)E^q_r(g)\\
      E^p_r[g \circ f] \le E^p_q[f]E^q_r[g] \le E^p_q(f)E^q_r(g).
    \end{gather*}
    (In both lines, we use Proposition~\ref{prop:sub-mult} and the
    definition of $E^p_q[f]$ as an infimum.)
    The hypothesis of the first (resp.\ second) claim in the statement
    is that the inequalities in
    the top (resp.\ bottom) line are equalities. In either case, the
    inequalities in the other line must be equalities as well.
\end{proof}

In this language, Theorem~\ref{thm:emb-sf} says that, for every
homotopy class of maps $[\phi] \co G_1 \to G_2$ between marked elastic
graphs, there is a corresponding tight sequence
\[
\longseq{C}{c}{G_1}{\psi}{G_2}{f}{K}
\]
with $\psi \in [\phi]$, $(C,c)$ a weighted multi-curve on~$G_1$, and $K$ a
length graph.

\subsection{Structure of the paper}
\label{sec:structure}

The paper is organized by the type of energy under consideration.
In the course of the paper, we also prove several other tightness
results.
\begin{itemize}
\item In Section~\ref{sec:taut}, we prove
  Theorem~\ref{thm:maxflow-mincut}, which gives
  a version of the max-flow/min-cut theorem in the context of
  maps between graphs. This gives a characterization of which maps
  minimize $E^1_p$ for any~$p$. This is surprisingly
  involved, and feels like it should be standard, but we have been
  unable to find a reference.
\item In Section~\ref{sec:lipschitz}, we prove
  Theorem~\ref{thm:lipschitz-stretch}, which shows that for any
  homotopy class $[\phi] \co K_1 \to K_2$ of maps between marked length
  graphs, there is a tight sequence
  \[
  \shortseq{C}{c}{K_1}{\psi}{K_2}
  \]
  where $\psi \in [\phi]$. This is a theorem of
  White~\cite{Bestvina11:Bers}: the minimal Lipschitz
  stretch factor between metric graphs equals the maximal ratio by
  which lengths of multi-curves are changed.
  We also extend the graphs we are looking at to allow \emph{weak
    graphs}, where lengths are allowed to be zero, and prove the
  corresponding tightness result
  (Theorem~\ref{thm:weak-stretch}).
\item In Section~\ref{sec:harmonic}, we prove
  Theorem~\ref{thm:harmonic-min}, which shows that for any homotopy
  class $[f] \co G \to K$ of maps from a marked elastic graph to a
  marked length graph, there is a tight sequence
  \[
  \shortseq{C}{c}{G}{g}{K}
  \]
  where $g \in [f]$ is harmonic. Again, we
  extend the results to the setting of weak graphs
  (Theorem~\ref{thm:harmonic-min-weak}).
\item Section~\ref{sec:filling} has the proof of the main results,
  Theorems~\ref{thm:emb-sf} and~\ref{thm:emb-sf-gen}, characterizing minima
  $\Emb(\phi)$ in a homotopy class.
\item Appendix~\ref{sec:energies} defines a family of energies
  $E^p_q(f)$ for any $1 \le p \le q \le \infty$ and any PL map $f$
  between metric graphs.
  Theorem~\ref{thm:energy-sf} gives the most general
  statement: for any $1 \le p \le q \le \infty$ and homotopy class of
  maps $[\phi] \co G \to H$ between marked graphs with the appropriate
  extra structure, we can find $\psi \in [\phi]$ and a tight sequence
  \[
  \longseq{C}{c}{G}{\psi}{H}{f}{K}.
  \]
  (The proof of Theorem~\ref{thm:energy-sf} is only sketched.)
\item Finally, Appendix~\ref{sec:electrical}
relates the elastic networks of this paper and the more
well-studied subject of resistor networks and electrical equivalence.
\end{itemize}

\begin{table}
  \caption{Structures on graphs and maps in this paper. See also
    Definition~\ref{def:12inf-graphs}.}
  \centering
  \begin{tabular}{@{}llcl@{}}
    \toprule
    Concept & Summary & Letters & Reference \\ \midrule
    \multicolumn{2}{l}{Discrete structures on graphs} & $\Gamma$ \\
    \quad Marked graph & With special marked points && Def.\ \ref{def:marked-graph}\\
    \quad Multi-curve & $1$-manifold & $C, c$ & Def.\ \ref{def:curve} \\
    \quad Weak graph & Null edges of $0$ length allowed && Def.\ \ref{def:weak-graph}\\
    \quad Train track & Edges partitioned into gates & $T$ & Def.\ \ref{def:tt}\\[4pt]
    \multicolumn{2}{l}{Metric or numeric structures on graphs} \\
    \quad Weighted & With multiplicity,  $1$-conformal & $W, w, \Wgt$ & Def.\ \ref{def:weighted-graph} \\
    \qquad Balanced & \multicolumn{2}{l}{Weights satisfying $\Delta$ inequalities} & Def.\ \ref{def:tt} \\
    \quad Length & Metric graph, $\infty$-conformal & $K, \ell, \Len$ & Def.\ \ref{def:length-graph}\\
    \quad Elastic & Stretchable edges, $2$-conformal & $G, \alpha$ & Def.\ \ref{def:elastic-graph} \\
    \quad Strip & Weighted \& length structure & $S,(w,\ell)$ & Def.\ \ref{def:strip-graph}\\
    \quad $p$-conformal & Generalization of above & $G,\alpha$ & Def.\ \ref{def:p-graph}\\[4pt]
    \multicolumn{2}{l}{Types of maps between graphs}\\
    \quad Edge-reduced & \multicolumn{2}{l}{No backtracking on individual edges} & Def.\ \ref{def:edge-reduced}\\
    \quad Reduced & No backtracking && Def.\ \ref{def:reduced}\\
    \quad Taut & Minimizing $E^1_p$ for all $p$ & & Def.\ \ref{def:taut}\\
    \quad Strongly reduced & Taut for some choice of weights & & Def.\ \ref{def:strongly-reduced} \\
    \quad Constant-derivative & $\abs{f'}$ constant on edges & & Def.\ \ref{def:const-deriv} \\
    \quad Harmonic & Minimizing $E^2_\infty$ & $f$ & Def.\ \ref{def:harmonic}\\
    \quad (Partially) $\lambda$-filling & Minimizing $E^2_2$ & $\phi$ & Defs.\ \ref{def:lambda-filling}, \ref{def:partial-filling}\\
    \quad Relaxed & Only metric information & $r$ & Def.\ \ref{def:relaxed}\\
    \bottomrule
  \end{tabular}
  \label{tab:structures-graphs}
\end{table}
See Table~\ref{tab:structures-graphs} for the various structures we
use on graphs and on maps between them.

Throughout the paper, we develop the theory behind these concepts,
more than is necessary for the proof of the main
Theorem~\ref{thm:emb-sf}. For instance, the theory of taut maps in
Section~\ref{sec:taut} is more general than needed for the maps that
actually arise in Sections~\ref{sec:harmonic} and~\ref{sec:filling}.

Many of the definitions and proofs are made more lengthy by the need
to deal with edges of weight or length~$0$. (These arise when dealing
with PL maps that are constant on some edges.) To get an overall
view, these extra complications are unnecessary. Thus we recommend
skipping the following sections on first reading:
\begin{itemize}
\item Assume that all reduced maps are locally injective in
  Definition~\ref{def:reduced}. This makes much of
  Section~\ref{sec:taut} trivial.
\item The notion of \emph{weak graphs} and \emph{weak maps} in
  Section~\ref{sec:lipschitz}. With that simplification, the older
  Theorem~\ref{thm:lipschitz-stretch} suffices (not the more
  complicated Theorem~\ref{thm:weak-stretch}), and almost the entire
  section can be skipped.
\item Section~\ref{sec:harmonic-weak}.
\item Section~\ref{sec:partial-lambda-filling}.
\end{itemize}

In addition, for readers familiar with Riemann surfaces and especially
flip-translation surfaces, the discussion about rectangle tilings at the
end of Appendix~\ref{sec:electrical} may be helpful.

\subsection{Prior and related work}
\label{sec:prior-work}

In addition to the work on discrete harmonic functions surveyed at the
beginning of the introduction,
harmonic maps between manifolds and singular spaces have been studied
for a long time, and there is a great deal of work on various cases
\cite[\textit{inter alia}]{GS92:Harmonic,KS93:SobolevHarmonic,Wolf95:JSHarmonic,Picard05:MartingalesTrees}.
In
particular, Eells and Fuglede proved that in every homotopy class of
maps between suitable (marked) Riemannian polyhedra there is a harmonic
representative \cite[Theorem 11.1]{EF01:HarmonicPolyhedra}, which is a
large part of our Theorem~\ref{thm:harmonic-min}. Their theory is
much more general. We reprove these results in our
more elementary setting, while also providing further connections to,
e.g., the theory of extremal length.

There have also been various notion of extremal length on graphs.
Duffin~\cite{Duffin62:ELNetwork} was one of the first, defining
extremal length between two nodes in a planar graph, using definitions
very similar to ours. Since they only considered two nodes, this is
related to maps to $\RR$ and the case of electrical networks, as
explained in their paper. A somewhat different notion of extremal
length was considered by Cannon, Floyd, and Perry
\cite{CFP94:SquaringRectangles}, where the contributions to the
``length'' part of ``extremal length'' come from passing through
vertices rather than over edges. Thus, even though they only consider
rectangles and cylinders (both related to maps to $\RR$), their
theories are not quite equivalent to electrical networks, although
they also get square tilings. Their
vertex-based extremal length is coarsely related to the edge-based
extremal length in the current paper, with constants depending on the
maximum valence.

Question~\ref{quest:looser} appears to be new.

The definition of embedding energy in Definition~\ref{def:embedding}
has not appeared in this form, although it is dual
to Jeremy
Kahn's notion of domination of weighted arc diagrams
\cite{Kahn06:BoundsI}. See
\cite[Prop.~1.17]{Thurston16:Characterize} for the
precise statement.

Theorem~\ref{thm:emb-sf} should be thought of as analogous to
Teichmüller's theorem, that in any homotopy class of homeomorphism
$[f] \co S_1 \to S_2$ between closed Riemann surfaces there are
``best'' metrics on~$S_1$ and~$S_2$ that reflect the minimal
quasi-conformal stretching. (The length graph~$K$ in the
statement of Theorem~\ref{thm:emb-sf} turns out to be $G_2$ with a
different metric.)

Most of the general graph energies in Section~\ref{sec:energies}
appear to be new, but the energy $E^1_p[c]$ is a power of
the $p$-modulus of the homotopy
class, which exists in much greater
generality~\cite{Fuglede57:ELFunctComplete}.

Many of the results of this paper were announced as part of an earlier
research report \cite{Thurston16:RubberBands}, which also contains
many related open problems.

\subsection{Acknowledgements}

This project grew out of joint work with Kevin Pilgrim, and owes a great
deal to conversations with him and with Jeremy Kahn.
There were many additional helpful conversations with
Maxime Fortier Bourque and
Steven Gortler.
The referees provided detailed and helpful feedback.
This work was partially supported by NSF grants DMS-1358638 and DMS-1507244.


\section{Basic notions and sub-multiplicativity}
\label{sec:basic}

As mentioned already, all graphs in this paper have a linear
structure on the edges,
metrics are assumed to be linear with respect to this structure,
weights are assumed to be piecewise-constant, and maps between graphs
are assumed to be PL. All the energies extend naturally to a wider
class of maps between graphs; however, the exact wider class of maps
depends on the energy, and it is more convenient to stick to PL
maps. The downside of working with PL maps is that the existence of
minimizers of the energies
is not obvious, since the space of PL maps is not locally compact in
any reasonable sense. We will prove existence of energy minimizers in
each case of interest.

We start by revisiting our definitions of graphs and energies.

\begin{definition}\label{def:regular}
If $f \co \Gamma_1 \to \Gamma_2$ is a PL map, the \emph{regular
  points} of~$f$ are the points in~$\Gamma_1$ that are in the interior
of a segment on which $f$ is linear with non-zero derivative, and the
\emph{singular points} are all
other points, namely vertices of~$\Gamma_1$, preimages of vertices
of~$\Gamma_2$, points where the derivative changes, and segments on
which~$f$ is constant. Similarly, the
\emph{singular values} in~$\Gamma_2$ are the images of singular
points, and the \emph{regular values} are the rest
of~$\Gamma_2$. There are only finitely many singular values. If
$y \in \Gamma_2$ is any point in the domain (regular or singular), a
\emph{regular neighborhood} of~$y$ is any neighborhood that has no
singular values, except possibly at~$y$.
\end{definition}

\begin{definition}\label{def:length}
  As in the introduction, a length graph~$K$ is a pair
  $(\Gamma, \ell)$ of a graph and an assignment of a length
  $\ell(e) > 0$ to edge~$e$ of~$\Gamma$; we call $\ell$ a
  \emph{metric} on~$\Gamma$. In a \emph{weak length graph}, we weaken
  the conditions to $\ell(e) \ge 0$. The space of all weak
  metrics on~$\Gamma$ is denoted $\Len(\Gamma)$, and the subspace of
  metrics is denoted $\Len^+(\Gamma)$. See
  Definition~\ref{def:weak-graph} for more on weak metrics.
  Given a PL map
  $f \co \Gamma_1 \to (\Gamma_2, \ell)$ to a length graph, we can
  pull-back the metric on~$\Gamma_2$ to a weak metric $f^*\ell$
  on~$\Gamma_1$, assigning to each edge $e \in \Edges(\Gamma_1)$ the
  total length traced out by $f(e)$. We also write $\ell(f(e))$ for
  $f^*\ell(e)$.
\end{definition}

\begin{definition}\label{def:weighted-graph}
  A \emph{weighted graph} is a graph~$W=(\Gamma,w)$ with a piecewise-constant
  weight function $w \co \Gamma \to \RR_{\ge 0}$. If $w$
  is constant on an edge~$e$, we write $w(e)$ for the weight
  on~$e$. The space of all weights on~$\Gamma$ that are constant on
  each edge is
  denoted $\Wgt(\Gamma)$, and $\Wgt^+(\Gamma)$ is the subspace where the
  weights are positive.
\end{definition}

\begin{definition}\label{def:multiplicity}
  For $W$ a weighted graph, $\Gamma$ a graph, and $c \co W \to
  \Gamma$ a PL map, the \emph{multiplicity function}
  $n_c \co \Gamma \to \RR$ is defined at regular values of~$c$ by
  \[
  n_c(y) \coloneqq \sum_{x \in c^{-1}(y)} w(x).
  \]
  This may also be written $n_c^w$ or $c_* w$ if we need to make the
  dependence on~$w$ explicit.
  We will never care about the value of $n_c$ at non-regular values.

  If in addition $\Gamma$ has more structure we have an energy.
  \begin{itemize}
  \item If $\Gamma$ is also a weighted graph with its own weights~$w$, the
    \emph{weight ratio} was defined in Equation~\eqref{eq:WR-def}; it
    is the $\ell^\infty$ norm of~$n_c$.
  \item If $\Gamma$ is an elastic graph with elastic
    constants~$\alpha$, the \emph{extremal length}
    of~$c$ is
    \begin{equation}\label{eq:el-weight-def}
    \EL(c) \coloneqq \int_{y \in \Gamma} n_c(y)^2\,dy,
    \end{equation}
    where we view each edge~$e$ of~$G$ as having a measure
    of total mass $\alpha(e)$. This generalizes
    Definition~\ref{def:el-curve}; $\EL(c)$ is the $\ell^2$ norm
    of~$n_c$, squared.
  \item If $\Gamma$ is a length graph with lengths~$\ell$, the
    \emph{weighted length}
    of~$c$ was defined in Equation~\eqref{eq:ell-def}; by
    Lemma~\ref{lem:length-defs} below, it is the $\ell^1$ norm of~$n_c$.
  \end{itemize}
\end{definition}

In Section~\ref{sec:taut} we will show that each homotopy class has a
\emph{taut map} that simultaneously minimizes $n_c(y)$
for all regular values~$y\in\Gamma$. Taut maps are automatically
minimizers of $\WR$, $\EL$, and~$\ell$, independent of the structure
on~$\Gamma$. If $c$ is taut, $n_c$ is constant on each edge
of~$\Gamma$ and
Equation~\eqref{eq:el-weight-def} reduces to
\begin{align*}
  \EL(c) &= \sum_{e \in \Edges(G)} n_c(e)^2\cdot \alpha(e)
\end{align*}

In addition to these energies, the energies $\Emb$, $\Dir$, and $\Lip$
were defined in Section~\ref{sec:intro}.
For all of these energies, if we wish to make explicit the dependence
of the energy on the geometric structure, we
use superscripts for the structure on the domain and subscripts for
the structure on the range. Thus we write $\WR^{w_1}_{w_2}$,
$\EL^w_\alpha$, $\ell^w_\ell$, $\Emb^{\alpha_1}_{\alpha_2}$,
$\Dir^\alpha_\ell$, or $\Lip^{\ell_1}_{\ell_2}$.

To work with these energies, we give some elementary lemmas, starting
with another formula for Dirichlet energy.

\begin{definition}
  For $G$ an elastic graph, $\Gamma$ either a
  length graph or an elastic graph, and $f \co G \to \Gamma$ a
  PL map, the \emph{filling function} $\Fill_f
  \co \Gamma \to \RR$ is defined at regular values of~$f$ by
  \begin{equation}
    \Fill_f(y) \coloneqq \sum_{x \in f^{-1}(y)} \abs{f'(x)}.
  \end{equation}
  In particular, Equation~\eqref{eq:emb} says that $\Emb(f) = \esssup_y
  \Fill_f(y)$.
\end{definition}

\begin{lemma}\label{lem:dirichlet-defs}
  For $f \co G
  \to K$ a PL map from an elastic graph to a length graph,
  \begin{equation}\label{eq:dirichlet-defs}
  \Dir(f) = \int_{x \in \Gamma} \abs{f'}^2\,dx = \int_{y \in K} \Fill_f(y)\,dy.
  \end{equation}
\end{lemma}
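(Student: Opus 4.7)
The first equality is simply the definition of $\Dir(f)$ from Equation~\eqref{eq:dir}, so only the second equality requires work. This is essentially a one-dimensional coarea formula for PL maps, and I would prove it by a direct edge-by-edge reduction to the PL structure, being careful to use $\alpha$ as the measure on the domain $G$.

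First, I would subdivide the edges of $G$ if necessary so that $f$ is linear (with constant derivative) on each edge $e$ of the subdivision. Write $s_e \coloneqq \abs{f'|_e}$ for this constant slope. Since the measure $dx$ on $G$ is induced by the elastic constants $\alpha$, so that edge $e$ has total $dx$-mass $\alpha(e)$, the left-hand side becomes
\[
\int_G \abs{f'}^2\,dx = \sum_{e \in \Edge(G)} s_e^2\, \alpha(e).
\]

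For the right-hand side, I would compute the contribution of a single edge $e$ to $\int_K \Fill_f(y)\,dy$. If $s_e = 0$ then $f|_e$ is constant and contributes nothing to $\Fill_f$ at any regular value. If $s_e > 0$ then $f|_e$ is an affine map of slope $s_e$ onto an arc in $K$ of $\ell$-length $s_e\alpha(e)$, and each regular value $y$ in the interior of this arc receives exactly one preimage from $e$, contributing $s_e$ to $\Fill_f(y)$. Interchanging sum and integral,
\[
\int_K \Fill_f(y)\,dy = \sum_{e \in \Edge(G)} s_e \cdot \ell(f(e)) = \sum_{e \in \Edge(G)} s_e^2\, \alpha(e),
\]
matching the left-hand side.

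The only technicality is that this computation takes place at regular values of $f$: the excluded set of singular values (images of vertices of $G$, vertices of $K$, and endpoints of the finitely many edge-image arcs) is finite and hence has measure zero in $K$, so dropping it does not affect the integral. There is no serious obstacle; the statement is really just the PL coarea formula, but stated carefully so that the $\alpha$-measure on $G$ matches up with the ordinary length measure on $K$ under $f$.
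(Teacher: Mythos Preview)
Your argument is correct and is exactly what the paper does: the paper's own proof reads ``Change of variables'' (jointly for this lemma and Lemma~\ref{lem:length-defs}), and your edge-by-edge computation is just that change of variables spelled out explicitly.
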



There is a corresponding formula for~$\ell$.
\begin{lemma}\label{lem:length-defs}
 For $c \co W \to K$ a PL map from a weighted graph to a
 length graph,
  \begin{equation}\label{eq:length-defs}
    \ell(c) = \int_{x \in W} w(x)\abs{c'(x)}\,dx = \int_{y \in K} n_c(y)\,dy.
  \end{equation}
\end{lemma}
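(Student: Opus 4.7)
The first equality is just the defining formula \eqref{eq:ell-def}, so the content of the lemma is the second equality,
\[
\int_{x \in W} w(x)\abs{c'(x)}\,dx = \int_{y \in K} n_c(y)\,dy.
\]
The plan is to prove this by the one-dimensional area formula, applied piece by piece to a PL decomposition of~$W$.

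First I would choose a finite partition of~$W$ into closed subintervals $I_1,\dots,I_N$ such that on each $I_j$ the weight $w$ is a constant $w_j$, the map $c$ is linear with constant derivative $\abs{c'}_j \in \RR_{\ge 0}$, and the image $c(I_j)$ is contained in a single edge of~$K$. Such a partition exists because $c$ is PL and $w$ is piecewise-constant, so there are only finitely many points (vertices of~$W$, preimages of vertices of~$K$, breakpoints of~$c$, and discontinuities of~$w$) to cut at. The left-hand side then decomposes as
\[
\int_{W} w\,\abs{c'}\,dx \;=\; \sum_{j=1}^{N} w_j \abs{c'}_j \,\abs{I_j},
\]
where $\abs{I_j}$ denotes the length of~$I_j$ in the linear structure on~$W$.

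Next I would analyze each piece. On intervals~$I_j$ where $\abs{c'}_j=0$, both sides contribute nothing: the summand above is zero, and $c(I_j)$ is a single point, which has measure zero in~$K$ and so contributes nothing to the right-hand side. On each interval $I_j$ with $\abs{c'}_j>0$, the restriction $c|_{I_j}$ is an affine bijection from $I_j$ onto its image, and the usual one-variable change of variables gives
\[
w_j \abs{c'}_j\,\abs{I_j} \;=\; w_j\,\abs{c(I_j)} \;=\; \int_{c(I_j)} w_j\,dy \;=\; \int_{K} w_j\,\mathbf{1}_{c(I_j)}(y)\,dy.
\]
Summing over~$j$ and interchanging sum and integral yields
\[
\int_{W} w\,\abs{c'}\,dx \;=\; \int_{K}\Bigl(\sum_{j\,:\,\abs{c'}_j>0} w_j\,\mathbf{1}_{c(I_j)}(y)\Bigr)\,dy.
\]

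Finally I would identify the integrand on the right with $n_c(y)$ at regular values. If $y\in K$ is a regular value of~$c$, then $y$ avoids all singular values and lies in no set $c(I_j)$ with $\abs{c'}_j=0$; for each $x \in c^{-1}(y)$ there is a unique interval~$I_{j(x)}$ containing~$x$ with $\abs{c'}_{j(x)}>0$ and $w(x)=w_{j(x)}$, so the bracketed sum equals exactly $\sum_{x\in c^{-1}(y)} w(x) = n_c(y)$. Since the complement of the regular values is finite and hence of measure zero in~$K$, the equality of integrals follows. The only real obstacle is the bookkeeping of the PL partition, which is routine; everything else is a one-variable change of variables.
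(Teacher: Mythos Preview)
Your proof is correct and is exactly the change-of-variables argument the paper has in mind; the paper's own proof is simply the phrase ``Change of variables.'' You have written out in detail what that phrase encodes, with the appropriate PL partition and the identification of the integrand with $n_c$ at regular values.
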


\begin{proof}[Proof of Lemmas~\ref{lem:dirichlet-defs} and~\ref{lem:length-defs}]
  Change of variables.
\end{proof}

\begin{lemma}\label{lem:fill-ineq}
  Let $\shortseq{G_1}{\phi}{G_2}{\psi}{\Gamma}$ be a sequence of
  PL maps between graphs, where $G_1$
  and~$G_2$ are elastic graphs and $\Gamma$ is a length graph
  or an elastic graph. Then for almost every $z \in \Gamma$,
    \begin{equation*}
    \Fill_{\psi\circ\phi}(z) = \sum_{y\in\psi^{-1}(z)} \abs{\psi'(y)} \Fill_\phi(y)
      \le \Emb(\phi)\,\Fill_\psi(z).
  \end{equation*}
\end{lemma}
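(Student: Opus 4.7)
The plan is to prove both the equality and the inequality by working at a generic point $z \in \Gamma$ chosen to avoid several measure-zero bad sets, then use the chain rule and the essential-supremum definition of $\Emb$.

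First I would identify the set of \emph{good} values $z \in \Gamma$: those $z$ that (i) are regular values of $\psi$, (ii) are regular values of $\psi \circ \phi$, and (iii) have the property that every $y \in \psi^{-1}(z)$ is a regular value of $\phi$. The singular values of $\psi$ and of $\psi \circ \phi$ form finite sets, hence have measure zero. For (iii), let $S \subset G_2$ be the (finite) set of singular values of $\phi$; then we require $z \notin \psi(S)$, which again excludes only finitely many points. So the good values have full measure in $\Gamma$.

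At a good $z$, the preimage $(\psi \circ \phi)^{-1}(z)$ decomposes as the disjoint union of $\phi^{-1}(y)$ over $y \in \psi^{-1}(z)$, and every $x$ in this preimage is a regular point of $\phi$ with $\phi(x) = y$ a regular point of $\psi$. The chain rule gives $|(\psi \circ \phi)'(x)| = |\psi'(y)|\cdot|\phi'(x)|$, and so
\[
\Fill_{\psi \circ \phi}(z) = \sum_{x \in (\psi \circ \phi)^{-1}(z)} |(\psi \circ \phi)'(x)| = \sum_{y \in \psi^{-1}(z)} |\psi'(y)| \sum_{x \in \phi^{-1}(y)} |\phi'(x)| = \sum_{y \in \psi^{-1}(z)} |\psi'(y)|\,\Fill_\phi(y),
\]
which is the asserted equality.

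For the inequality, let $B = \{y \in G_2 \co \Fill_\phi(y) > \Emb(\phi)\}$; by definition of the essential supremum, $B$ has measure zero in $G_2$. The main subtlety (and the one step deserving genuine care) is that we need $\Fill_\phi(y) \le \Emb(\phi)$ to hold for every $y$ in the finite set $\psi^{-1}(z)$, not just for a.e.\ $y$. To achieve this I would further restrict to $z \notin \psi(B)$. Since $\psi$ is PL, on the complement of its (finite) singular set $\psi$ is piecewise linear with nonzero derivative, so it sends measure-zero sets to measure-zero sets; hence $\psi(B)$ has measure zero in $\Gamma$. For $z$ in the remaining set of full measure, every $y \in \psi^{-1}(z)$ satisfies $\Fill_\phi(y) \le \Emb(\phi)$, and therefore
\[
\sum_{y \in \psi^{-1}(z)} |\psi'(y)|\,\Fill_\phi(y) \le \Emb(\phi) \sum_{y \in \psi^{-1}(z)} |\psi'(y)| = \Emb(\phi)\,\Fill_\psi(z),
\]
completing the proof. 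The only nontrivial step is the push-forward measure-zero argument for $\psi(B)$; everything else is the chain rule and bookkeeping of PL singular sets.
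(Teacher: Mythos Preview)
Your proof is correct and is exactly the expansion of what the paper means by ``immediate from the definitions'' (the paper gives no further argument). One small simplification: since $\Fill_\phi$ is piecewise constant on the regular values of~$\phi$, the set $B$ is in fact empty there, so the push-forward measure-zero step, while valid, is not actually needed.
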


\begin{lemma}\label{lem:nf-W}
  Let $\shortseq{W_1}{\phi}{W_2}{c}{\Gamma}$ be a sequence of PL
  maps, with $W_1$ and $W_2$ weighted graphs. Then for almost every $z \in \Gamma$,
  \[
  n_{c \circ \phi}(z) \le \WR(\phi) n_c(z).
  \]
\end{lemma}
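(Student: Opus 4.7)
The plan is to unpack the definitions and apply the bound from $\WR(\phi)$ fiberwise over $c^{-1}(z)$. The crucial identity is simply $(c\circ\phi)^{-1}(z) = \phi^{-1}(c^{-1}(z))$, which lets the sum defining $n_{c\circ\phi}(z)$ be written as an iterated sum. This is directly parallel to Lemma~\ref{lem:fill-ineq}, with $w_1$ playing the role of $\abs{\phi'}$ and $\WR$ playing the role of $\Emb$.

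First I would identify the finite exceptional set of $z$ we must discard. Let $T \subset W_2$ be the (finite) set of singular points of $\phi$, and let $S \subset \Gamma$ be the union of $c(T)$, the singular values of $c$, and the singular values of $c\circ\phi$. Each of these is finite, so $S$ is a finite, measure-zero subset of $\Gamma$. For $z \in \Gamma \setminus S$, the preimage $c^{-1}(z) \subset W_2$ is finite, and each $y \in c^{-1}(z)$ is a regular value of $\phi$, so in particular $\phi^{-1}(y)$ is finite.

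For such a $z$, I would then compute
\[
n_{c\circ\phi}(z) \;=\; \sum_{x \in (c\circ\phi)^{-1}(z)} w_1(x) \;=\; \sum_{y \in c^{-1}(z)} \;\sum_{x \in \phi^{-1}(y)} w_1(x).
\]
Since the weight functions are piecewise-constant and $\phi$ is PL, the ratio $\bigl(\sum_{x\in\phi^{-1}(y)} w_1(x)\bigr)/w_2(y)$ is piecewise-constant on the regular set, so its essential supremum $\WR(\phi)$ is achieved as a genuine supremum over regular values. Hence for each $y \in c^{-1}(z)$ we have $\sum_{x \in \phi^{-1}(y)} w_1(x) \le \WR(\phi)\cdot w_2(y)$; in the degenerate case $\WR(\phi) = \infty$ the lemma is vacuous, and if $\WR(\phi) < \infty$ and $w_2(y) = 0$ then the inner sum is forced to vanish. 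Summing over $y \in c^{-1}(z)$ gives $n_{c\circ\phi}(z) \le \WR(\phi)\cdot n_c(z)$, as desired.

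I do not expect any real obstacle here: the statement is essentially bookkeeping. The only subtle point is confirming that the ``almost every'' in the definition of $\WR(\phi)$ is compatible with the pointwise use we need at the finitely many points of $c^{-1}(z)$, which is handled by the piecewise-constant observation above and by discarding the finite set $S$.
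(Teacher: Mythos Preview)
Your proof is correct and is exactly what the paper means by ``immediate from the definitions'' (which is the entirety of the paper's proof of this lemma, grouped together with Lemmas~\ref{lem:fill-ineq} and~\ref{lem:deriv-Lip}). You have simply written out the bookkeeping the paper omits.
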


\begin{lemma}\label{lem:deriv-Lip}
  Let $\shortseq{\Gamma}{f}{K_1}{\phi}{K_2}$ be a sequence of PL
  maps, with $K_1$ and $K_2$ length graphs. Then for almost every
  $x \in \Gamma$,
  \[
  \bigl\lvert(\phi \circ f)'(x)\bigr\rvert \le \abs{f'(x)} \Lip(\phi).
  \]
\end{lemma}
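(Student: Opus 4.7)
The plan is to prove Lemma~\ref{lem:deriv-Lip} by the chain rule for PL maps, with care taken about the measure-zero sets of singular points on both factors.

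First I would identify the bad set in~$\Gamma$ where we do not attempt to verify the inequality pointwise. This consists of (i)~the (finite) set of singular points of~$f$, (ii)~the preimage $f^{-1}(S_\phi)$ of the (finite) set $S_\phi$ of singular values of~$\phi$, and (iii)~any null set where the pointwise bound $\abs{\phi'(y)} \le \Lip(\phi)$ fails (this has measure zero by the definition of $\Lip(\phi)$ as an essential supremum in Equation~\eqref{eq:lip-def}). The set (i) is finite. For (ii), decompose $\Gamma$ into the finitely many maximal segments on which $f$ is linear; on each such segment $f$ either is constant (so $f^{-1}$ of any point either is empty or is the entire segment), or is linear with nonzero derivative (in which case $f^{-1}(S_\phi)$ meets the segment in a finite set). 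Thus (ii) is itself measure zero on any segment where $f$ is non-constant; on a segment where $f$ is constant, the conclusion is trivial since both $\abs{f'(x)}$ and $\abs{(\phi\circ f)'(x)}$ vanish.

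Next, for any $x$ outside the bad set, $f$ is linear with nonzero derivative on a neighborhood $U$ of~$x$, and $f(x)$ is a regular point of~$\phi$, so $\phi$ is linear on a neighborhood $V$ of~$f(x)$. Shrinking $U$ so that $f(U)\subset V$, the composition $\phi\circ f$ is linear on~$U$, and the ordinary chain rule gives
\[
(\phi\circ f)'(x) \;=\; \phi'(f(x))\cdot f'(x).
\]
Taking absolute values and using $\abs{\phi'(f(x))}\le \Lip(\phi)$ (which holds at $f(x)$ since $x$ is outside the bad set) yields
\[
\bigl\lvert (\phi\circ f)'(x)\bigr\rvert \;=\; \abs{\phi'(f(x))}\,\abs{f'(x)} \;\le\; \Lip(\phi)\,\abs{f'(x)},
\]
which is the desired inequality.

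The only real subtlety — and this is the step I would write most carefully — is verifying that the bad set has measure zero in~$\Gamma$. Once that is in hand, the inequality is immediate from the PL chain rule together with the defining inequality for the essential supremum. No compactness or homotopy-invariance arguments are required, since the statement is pointwise (almost everywhere) for the specific representatives $f$ and $\phi$.
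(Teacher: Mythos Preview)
Your proof is correct and is essentially the paper's approach---the paper dismisses this lemma (together with Lemmas~\ref{lem:fill-ineq} and~\ref{lem:nf-W}) as ``immediate from the definitions,'' and you have simply spelled out the chain rule and the measure-zero bookkeeping that the paper leaves implicit. One small slip: in~(ii) you want the singular \emph{points} of~$\phi$ (a finite subset of~$K_1$), not the singular \emph{values} (which would lie in~$K_2$), though your subsequent use of ``$f(x)$ is a regular point of~$\phi$'' makes clear what you mean.
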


\begin{proof}[Proof of Lemmas~\ref{lem:fill-ineq}, \ref{lem:nf-W}, and \ref{lem:deriv-Lip}]
  Immediate from the definitions.
\end{proof}

We now turn to sub-multiplicativity, Equations~\eqref{eq:submul}
and~\eqref{eq:submul-homotop}. For concreteness, we list the 10
cases individually.

\begin{proposition}\label{prop:sub-mult}
  The energies from~\eqref{eq:energies-graph} are sub-multiplicative, in the following
  sense. For $i \in \{1,2,3\}$, let $W_i$ be marked weighted graphs, $G_i$
  be marked elastic graphs, and $K_i$ be marked length graphs. Then,
  if we are given marked PL
  maps between these spaces as specified on each line, we have the
  given inequality.
  \begin{align}
    \label{eq:sm-CCC}
    \shortseq{W_1}{\phi}{\,&W_2}{\psi}{W_3}\co&
      \WR(\psi \circ \phi) &\le \WR(\phi) \WR(\psi)\\
    \label{eq:sm-CCG}
    \shortseq{W_1}{\phi}{\,&W_2}{c}{G}\co&
      \EL(c \circ \phi) &\le \WR(\phi)^2 \EL(c)\\
    \label{eq:sm-CCK}
    \shortseq{W_1}{\phi}{\,&W_2}{f}{K}\co&
      \ell(f \circ \phi) &\le \WR(\phi) \ell(f) \displaybreak[0]\\
    \label{eq:sm-CGG}
    \shortseq{W}{c}{\,&G_1}{\phi}{G_2}\co&
      \EL(\phi \circ c) &\le \EL(c) \Emb(\phi)\\
    \label{eq:sm-CGK}
    \shortseq{W}{c}{\,&G}{f}{K}\co&
      \ell(f \circ c)^2 &\le \EL(c) \Dir(f)\\
    \label{eq:sm-CKK}
    \shortseq{W}{c}{\,&K_1}{\phi}{K_2}\co&
      \ell(\phi \circ c) &\le \ell(c)\Lip(\phi) \displaybreak[0]\\
    \label{eq:sm-GGG}
    \shortseq{G_1}{\phi}{\,&G_2}{\psi}{G_3}\co&
      \Emb(\psi \circ \phi) &\le \Emb(\phi) \Emb(\psi)\\
    \label{eq:sm-GGK}
    \shortseq{G_1}{\phi}{\,&G_2}{f}{K}\co&
      \Dir(f \circ \phi) &\le \Emb(\phi) \Dir(f)\\
    \label{eq:sm-GKK}
    \shortseq{G}{f}{\,&K_1}{\phi}{K_2}\co&
      \Dir(\phi \circ f) &\le \Dir(f)\Lip(\phi)^2 \\
    \label{eq:sm-KKK}
    \shortseq{K_1}{\phi}{\,&K_2}{\psi}{K_3}\co&
      \Lip(\psi \circ \phi) &\le \Lip(\phi) \Lip(\psi).
  \end{align}
  (When there is only one graph of a particular type, we omit the subscript.)
  Each inequality still holds if we take homotopy classes
  on both sides.
\end{proposition}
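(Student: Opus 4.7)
The plan is to split the proof into two parts: (i) prove each of the ten pointwise inequalities for individual PL maps, and (ii) deduce the corresponding homotopy-class statements from (i).

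For part (ii), the deduction is uniform across all ten cases. Given representatives $f' \in [f]$ and $g' \in [g]$ of the two homotopy classes, the composition $g' \circ f'$ lies in $[g\circ f]$, so $E^p_r[g\circ f] \le E^p_r(g'\circ f') \le E^p_q(f')\,E^q_r(g')$ by the pointwise statement. Since $f'$ and $g'$ vary independently over their respective homotopy classes and the right-hand side is nonnegative, taking infima yields $E^p_r[g\circ f] \le E^p_q[f]\,E^q_r[g]$. So only part (i) requires real work.

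For part (i), most cases reduce directly to Lemmas~\ref{lem:fill-ineq}, \ref{lem:nf-W}, and~\ref{lem:deriv-Lip}: \eqref{eq:sm-CCC}, \eqref{eq:sm-CCK}, and \eqref{eq:sm-KKK} follow by taking the sup or integrating those pointwise bounds; \eqref{eq:sm-CCG} follows by squaring $n_{c\circ\phi}\le \WR(\phi)\,n_c$ and integrating; \eqref{eq:sm-GGG} and \eqref{eq:sm-GGK} follow from $\Fill_{\psi\circ\phi}(z)\le \Emb(\phi)\Fill_\psi(z)$ combined with the filling-function formula for $\Dir$ in Lemma~\ref{lem:dirichlet-defs}; \eqref{eq:sm-CKK} and \eqref{eq:sm-GKK} follow by taking pointwise bounds on the derivative $(\phi\circ f)'$ and integrating. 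These are all routine unpacking of definitions plus a change-of-variables.

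The two interesting cases are \eqref{eq:sm-CGG} and \eqref{eq:sm-CGK}, and both hinge on a single Cauchy--Schwarz step. For $c\co W\to G_1$ and a PL map $\phi\co G_1\to G_2$, at almost every $z\in G_2$,
\[
n_{\phi\circ c}(z) = \sum_{y\in\phi^{-1}(z)} n_c(y)
 = \sum_{y\in\phi^{-1}(z)} \frac{n_c(y)}{\sqrt{\abs{\phi'(y)}}}\cdot\sqrt{\abs{\phi'(y)}},
\]
so Cauchy--Schwarz gives $n_{\phi\circ c}(z)^2 \le \Fill_\phi(z)\sum_{y\in\phi^{-1}(z)} n_c(y)^2/\abs{\phi'(y)}$. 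Integrating over $z\in G_2$, bounding $\Fill_\phi$ by $\Emb(\phi)$, and applying the change-of-variables formula $\int_{G_2}\sum_{y\in\phi^{-1}(z)} h(y)/\abs{\phi'(y)}\,dz = \int_{G_1,\,\text{reg}} h(y)\,dy \le \int_{G_1} h(y)\,dy$ with $h=n_c^2$ yields \eqref{eq:sm-CGG}. For \eqref{eq:sm-CGK}, the same pointwise Cauchy--Schwarz on $n_{f\circ c}(z)$ gives $n_{f\circ c}(z)\le \bigl(\sum_y n_c(y)^2/\abs{f'(y)}\bigr)^{1/2}\Fill_f(z)^{1/2}$, and a second Cauchy--Schwarz on the integral $\ell(f\circ c)=\int n_{f\circ c}(z)\,dz$ (using Lemmas~\ref{lem:length-defs} and~\ref{lem:dirichlet-defs}) produces $\ell(f\circ c)^2 \le \EL(c)\,\Dir(f)$. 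The main obstacle is bookkeeping: making sure the change of variables is applied only on the regular set and that contributions from constant segments and singular values (all measure zero in the range) do not break the inequality; this is handled by integrating the pointwise bound only at regular values of $\phi$ (respectively $f$), where the formula $\int h\,dy = \int_z \sum_y h(y)/\abs{\phi'(y)}\,dz$ is valid and the omitted set contributes nothing to $\int_{G_2}$.
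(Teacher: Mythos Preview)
Your proposal is correct and follows essentially the same approach as the paper: the routine cases are dispatched via Lemmas~\ref{lem:nf-W}, \ref{lem:deriv-Lip}, \ref{lem:fill-ineq}, and~\ref{lem:dirichlet-defs}, and the two substantive cases \eqref{eq:sm-CGG} and \eqref{eq:sm-CGK} are handled by Cauchy--Schwarz. Two small differences worth noting: for \eqref{eq:sm-CGK} the paper changes variables from $K$ to $G$ first and then applies a single Cauchy--Schwarz to $\int_G n_c(y)\abs{f'(y)}\,dy$, which is slightly more direct than your two-step pointwise-then-integral Cauchy--Schwarz; and for the homotopy-class statement the paper uses an $\epsilon$-approximation argument, whereas your direct infimum argument is equivalent and arguably cleaner.
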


\begin{proof}
  Equations~\eqref{eq:sm-CCC}, \eqref{eq:sm-CCG}, and
  \eqref{eq:sm-CCK} follow from Lemma~\ref{lem:nf-W} and
  Equations \eqref{eq:WR-def}, \eqref{eq:el-weight-def},
  and~\eqref{eq:length-defs}, respectively.
  Equations~\eqref{eq:sm-CKK}, \eqref{eq:sm-GKK},
  and~\eqref{eq:sm-KKK} follow from Lemma~\ref{lem:deriv-Lip} and
  Equations~\eqref{eq:length-defs}, \eqref{eq:dir},
  and~\eqref{eq:lip-def}, respectively.

  For Equation~\eqref{eq:sm-CGG}, we have
  \begin{align*}
    \EL(\phi \circ c)
      &= \int_{z \in G_2} n_{\phi \circ c}(y)^2\,dz\\
      &= \int_{z\in G_2} \biggl(\sum_{\phi(y)=z} n_c(y)\biggr)^2\,dz\\
      &\le \int_{z\in G_2} \biggl(\sum_{\phi(y)=z} n_c(y)^2/\abs{\phi'(y)}\biggr)
          \biggl(\sum_{\phi(y)=z} \abs{\phi'(y)}\biggr)\,dz\\
      &\le \Emb(\phi) \int_{y\in G_1} n_c(y)^2\,dy\\
      &= \Emb(\phi) \EL(c),
  \end{align*}
  using
  the definition of $\EL$;
  the equality $n_{\phi\circ c}(y) = \sum_{\phi(x)=y} n_c(x)$ at regular values;
  the Cauchy-Schwarz inequality;
  the definition of $\Emb(\phi)$ and a change of
    variables between $G_2$ and~$G_1$; and
  the definition of $\EL$ again.

  For Equation~\eqref{eq:sm-CGK}, we have
  \begin{align*}
    \ell(f \circ c)^2 &= \left(\int_{z \in K} n_{f \circ c}(z)\,dz\right)^2\\
      &= \left(\int_{y\in G} n_c(y)\abs{f'(y)}\,dy\right)^2\\
      &\le \left(\int_{y \in G} n_c(y)^2\,dy\right)
          \left(\int_{y\in G} \abs{f'(y)}^2\,dy\right)\\
      &= \EL(c)\Dir(f),
  \end{align*}
  using Lemma~\ref{lem:length-defs},
  a change of variables from~$K$ to~$G$, the Cauchy-Schwarz
  inequality, and the definitions of $\EL$ and $\Dir$.

  Equation~\eqref{eq:sm-GGG} follows from
  Lemma~\ref{lem:fill-ineq}.

  For Equation~\eqref{eq:sm-GGK}, we have
  \begin{align*}
    \Dir(f\circ\phi)
      &= \int_{z \in K} \Fill_{f\circ\phi}(z)\,dz\\
      &\le \int_{z \in K} \Fill_{f}(z)\Bigl(\max_{f(y)=z}
          \Fill_\phi(y)\Big)\,dz\\
       &\le \int_{z\in K} \Fill_f(z)\Emb(\phi)\,dz\\
       &= \Emb(\phi) \Dir(f),
  \end{align*}
  using Lemma~\ref{lem:dirichlet-defs},
  Lemma~\ref{lem:fill-ineq}, the definition of $\Emb(\phi)$, and
  Lemma~\ref{lem:dirichlet-defs} again.

  For each of these equations, to replace concrete functions by homotopy classes,
  take representatives $f,g$ of the two homotopy classes whose energy
  is within a factor of $\epsilon$ of the optimal value, in the sense
  that $E^p_q[f] \le
  E^p_q(f) \le E^p_q[f](1 + \epsilon)$, and similarly for~$g$. Then
  \[
  E^p_r[g \circ f] \le E^p_r(g \circ f) \le E^p_q(f)E^q_r(g) \le
    E^p_q[f]E^q_r[g](1+\epsilon)^2.
  \]
  Since $\epsilon$ can be chosen as small as desired, we are done.
\end{proof}

We now have one direction of Theorem~\ref{thm:emb-sf}.
\begin{corollary}\label{cor:sf-bound}
  For any homotopy class $[\phi] \co G_1 \to G_2$ of maps
  between marked elastic graphs, $\SF_{\Dir}[\phi] \le \Emb[\phi]$ and
  $\SF_{\EL}[\phi] \le \Emb[\phi]$.
\end{corollary}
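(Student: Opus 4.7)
The plan is that both inequalities follow almost immediately from the submultiplicativity established in Proposition~\ref{prop:sub-mult}, once it has been stated in the homotopy-class form. There is essentially no additional work required; the corollary is a direct repackaging of two of the ten cases of Proposition~\ref{prop:sub-mult}.

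For the Dirichlet inequality, I would take any marked length graph $K$ and any marked homotopy class $[f] \co G_2 \to K$ with $\Dir[f] > 0$, and apply Equation~\eqref{eq:sm-GGK} in its homotopy form: $\Dir[f \circ \phi] \le \Emb[\phi]\,\Dir[f]$. Dividing by $\Dir[f]$ gives $\Dir[f \circ \phi]/\Dir[f] \le \Emb[\phi]$, and taking the supremum over all such $[f]$ in the definition \eqref{eq:sf-dir} of $\SF_{\Dir}[\phi]$ yields $\SF_{\Dir}[\phi] \le \Emb[\phi]$. (The case $\Dir[f]=0$ forces $f$ to be null-homotopic after collapsing, so $\Dir[f\circ\phi]=0$ as well and contributes nothing to the supremum.)

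For the extremal length inequality, I would do the parallel argument using Equation~\eqref{eq:sm-CGG} in its homotopy form: for any marked multi-curve $(C,c)$ on~$G_1$, $\EL[\phi \circ c] \le \EL[c]\,\Emb[\phi]$. Dividing by $\EL[c]$ and taking the supremum over all multi-curves as in \eqref{eq:sf-el} gives $\SF_{\EL}[\phi] \le \Emb[\phi]$.

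There is really no obstacle here; the only mildly delicate point is the passage from the concrete inequalities \eqref{eq:sm-GGK} and~\eqref{eq:sm-CGG} to their homotopy-class versions, but this is precisely the last paragraph of the proof of Proposition~\ref{prop:sub-mult} (choosing representatives whose energies are within a factor $1+\epsilon$ of the infima and letting $\epsilon\to 0$). The harder direction of Theorem~\ref{thm:emb-sf}, that $\Emb[\phi]$ is actually achieved as a supremum on both sides, is deferred to Section~\ref{sec:filling}.
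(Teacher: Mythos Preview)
Your proposal is correct and follows essentially the same approach as the paper: apply the homotopy-class versions of Equations~\eqref{eq:sm-GGK} and~\eqref{eq:sm-CGG} from Proposition~\ref{prop:sub-mult}, divide, and take the supremum. The paper's own proof is slightly terser (it does not explicitly address the degenerate $\Dir[f]=0$ case and simply says the $\EL$ argument is ``exactly parallel''), but the substance is identical.
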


\begin{proof}
  For any marked length graph~$K$ and
  homotopy class $[f] \co G_2 \to K$, by the homotopy version of
  Equation~\eqref{eq:sm-GGK} we have
  \[
  \frac{\Dir[f \circ \phi]}{\Dir[f]} \le
  \frac{\Dir[f]\Emb[\phi]}{\Dir[f]}  = \Emb[\phi].
  \]
  Since $K$ and~$[f]$ were arbitrary, it follows that
  $\SF_{\Dir}[\phi] \le \Emb[\phi]$. The argument that
  $\SF_{\EL}[\phi] \le \Emb[\phi]$ is exactly parallel.
\end{proof}


\section{Reduced and taut maps}
\label{sec:taut}

We next turn to notions of efficiency of maps between graphs. There is
a weak notion depending on no extra structure
(``reduced''), and then a more powerful notion that depends on a weight
structure (``taut'').

\subsection{Reduced maps}
\label{sec:reduced}

We work by analogy with the standard notion of a reduced (cyclic) word in a
group.

\begin{definition}\label{def:edge-reduced}
  A map $f \co \Gamma_1 \to \Gamma_2$ between marked graphs is
  \emph{edge-reduced} if, on each edge~$e$ of~$\Gamma_1$, $f$ is
  either constant or has a perturbation that is locally injective.
\end{definition}

\begin{definition}\label{def:direction}
  For a graph~$\Gamma$ and point $x\in \Gamma$, a \emph{direction}~$d$
  at~$x$ is a germ of a PL map from~$\RR_{\ge 0}$ to~$\Gamma$ starting
  at~$x$, considered up to PL reparametrization. Explicitly, there is
  a \emph{zero}
  direction, the germ of a constant map. Points on edges of~$\Gamma$
  have two non-zero directions, and vertices have as many non-zero
  directions as their valence. If $f \co \Gamma_1 \to \Gamma_2$ is a
  PL map and $d$ is a direction at $x \in \Gamma_1$, then $f(d)$ is a
  direction at~$f(x)$.
\end{definition}

\begin{definition}\label{def:reduced}
  A locally injective PL map $f \co \Gamma_1 \to \Gamma_2$ between marked graphs is
  \emph{reduced} if, at each unmarked vertex~$v$
  of~$\Gamma_1$, there are directions $d_1$ and~$d_2$ at~$v$ so that
  $f(d_1)$ and $f(d_2)$ are distinct and non-zero.

  More generally, pick $y \in \Gamma_2$ and let
  $Z \subset \Gamma_1$ be a connected component of $f^{-1}(y)$. A
  \emph{direction} from~$Z$ is a point $x \in Z$
  and a direction~$d$ from $x$ that points out of~$Z$ (so that $f(d)$
  is non-zero).
  Then $Z$ is a \emph{dead end} for~$f$ if
  $Z$ has no marked point and there is exactly one
  direction in
  \[
  D(Z) \coloneqq \{\,f(d)\mid \text{$d$ a direction from $Z$}\,\}.
  \]
  (If $D(Z) = \emptyset$, then $Z$ is an
  entire connected component of~$\Gamma_1$. We do not count this as a
  dead end.) We say that $f$ is
  \emph{reduced} if it has no dead ends. If $f$ is not reduced, there
  is a natural \emph{reduction} operation at a dead end~$Z$, where we
  modify~$f$ by pulling the image of~$Z$ in the direction $D(Z)$ until
  it hits a vertex of~$\Gamma_2$ or the
  boundary of a domain of linearity.
\end{definition}

\begin{proposition}\label{prop:reduced}
  If $f \co \Gamma_1 \to \Gamma_2$ is any map between marked graphs,
  then repeated reduction makes $f$ into a reduced map. In
  particular, 
  there is a reduced map in every homotopy class.
\end{proposition}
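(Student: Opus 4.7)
The plan has two parts: showing that (i) reduction preserves the marked homotopy class, and (ii) the reduction process terminates in finitely many steps.

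For (i), a reduction at a dead end $Z$ slides the image of $Z$ continuously along the unique direction $D(Z) \subset \Gamma_2$, adjusting $f$ continuously on a neighborhood of $Z$ in $\Gamma_1$. Since $Z$ contains no marked points by the definition of a dead end, this motion extends to a marked homotopy; the reduced map lies in the same marked homotopy class as $f$.

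For (ii), I plan to exhibit a non-negative integer complexity that strictly decreases under each reduction. Subdivide $\Gamma_1$ at all breakpoints of $f$ and all preimages of vertices of $\Gamma_2$; then on each edge of the subdivided $\Gamma_1$, $f$ is either constant (``collapsed'') or injectively linear into a single edge of $\Gamma_2$. Define
\[
C(f) \coloneqq \#\{\text{non-collapsed edges of the subdivided } \Gamma_1\}.
\]
The key claim is that each reduction strictly decreases $C(f)$. At a dead end $Z$, every non-collapsed edge $e_i$ of $\Gamma_1$ incident to $Z$ maps onto a segment of the unique edge $e^* \subset \Gamma_2$ containing $D(Z)$, starting at $y = f(Z)$ and going in direction $D(Z)$; let $L_i$ be this image-length. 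Because the subdivision includes preimages of vertices of $\Gamma_2$, we have $L_i \le \lvert vy \rvert$, where $v$ is the first vertex of $\Gamma_2$ past $y$ along $D(Z)$. The pull distance $\delta = \min\bigl(\lvert vy\rvert,\,\min_i L_i\bigr)$ is strictly positive. If $\delta = \min_i L_i < \lvert vy\rvert$, the edges achieving the minimum collapse. If $\delta = \lvert vy\rvert$, then the upper bound $L_i \le \lvert vy\rvert$ forces $L_i = \delta$ for every $i$, so all incident edges collapse simultaneously. In either case at least one non-collapsed edge becomes collapsed, so $C(f)$ strictly decreases.

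Since $C(f)$ is a non-negative integer, the reduction process terminates after finitely many steps, and the terminal map has no dead ends and is therefore reduced. Applying this procedure to any representative of a marked homotopy class yields a reduced representative in the same class, proving the ``in particular'' statement. The main obstacle I foresee is verifying that the subdivision used to define $C(f)$ remains compatible after each reduction, so that $C$ does not need to be ``reset'' in a way that could hide non-termination. This should follow from the observation that reduction is locally an affine reparametrization of the incident edges of $\Gamma_1$, introducing no new breakpoints on the non-collapsed edges and leaving images still contained in $e^*$.
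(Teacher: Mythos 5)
Your proof is correct and takes essentially the same approach as the paper: the paper's one-line argument is precisely induction on the number of linear segments with non-zero derivative (your $C(f)$), with the assertion that this count decreases under reduction. You supply the case analysis (pull to an image breakpoint versus pull to a vertex of $\Gamma_2$) that the paper leaves implicit.
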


\begin{proof}
  Proceed by induction on the number of linear segments of~$f$ with
  non-zero derivative. This is reduced by each reduction operation.
\end{proof}

\begin{proposition}\label{prop:min-reduced}
  For $p,q \in \{1,2,\infty\}$ with $1 \le p \le q \le \infty$,
  reduction does not increase $E^p_q$, and strictly decreases $E^p_q$
  if $p < q$.
\end{proposition}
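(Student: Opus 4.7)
The plan is to analyze a single reduction step at a dead end $Z \subset f^{-1}(y)$ with $D(Z) = \{d^*\}$, derive pointwise inequalities relating the reduced map $\tilde f$ to $f$, and then read off the behavior of each $E^p_q$; iteration then handles the general case.

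First I would unpack the local geometry. By definition the reduction moves the image of $Z$ some maximal distance $\delta > 0$ in direction $d^*$, so $\tilde f$ agrees with $f$ outside a neighborhood of $Z$. On each linear segment of $\Gamma_1$ emanating from $\partial Z$ in a domain direction $d$ with $f(d) = d^*$, $\tilde f$ remains linear but the image segment has been shortened by $\delta$; writing $L_e$ for the original image length and $|e|$ for the domain length of such a segment, $|\tilde f'|_e = (L_e - \delta)/|e| < |f'|_e$, and $|\tilde f'| = |f'|$ on every other segment. Consequently, at almost every $z \in \Gamma_2$ we have $n_{\tilde f}(z) \le n_f(z)$ and $\Fill_{\tilde f}(z) \le \Fill_f(z)$, with both inequalities strict on the open interval $I = \{y + \epsilon d^* : 0 < \epsilon < \delta\}$ of positive length $\delta$, where the branches that used to exit $Z$ are no longer in the preimage.

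Second, I would split the six energies of Definition~\ref{def:12inf-graphs} into two groups. The essential-supremum energies $\Lip = \esssup |f'|$, $\Emb = \esssup \Fill_f$, and $\WR = \esssup n_f / w$ (with the target weight $w$ unchanged) are controlled directly by the pointwise weak inequalities, giving $E^p_q(\tilde f) \le E^p_q(f)$ in the three diagonal cases $p = q$. The integral energies $\Dir = \int |f'|^2$, $\EL = \int n_f^2$, and $\ell = \int n_f$ are controlled by strict pointwise decrease on a set of positive measure --- the exiting edge segments in $\Gamma_1$ for $\Dir$, and the interval $I \subset \Gamma_2$ for $\EL$ and $\ell$ --- yielding strict decrease of $E^p_q$ (and hence of its square root, where relevant) in the three off-diagonal cases $p < q$.

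The main obstacle is bookkeeping when $Z$ is large, for instance when $Z$ contains entire edges on which $f$ is constant, or when $y$ is a vertex of $\Gamma_2$ with many incident branches. But since the reduction only alters $f$ in a neighborhood of $Z$, and every direction out of $\partial Z$ maps to the same $d^*$, each exiting segment is analyzed by the same linear shortening computation, and the strict-versus-weak dichotomy of the conclusion is preserved.
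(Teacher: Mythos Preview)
Your argument is correct and is precisely the content of the paper's proof, which reads in its entirety ``Clear from the definitions.'' You have unpacked that one line: a reduction step only changes $f$ on the first linear pieces of the segments exiting $Z$, where $|\tilde f'|$ weakly drops pointwise, and correspondingly $n_{\tilde f}$ and $\Fill_{\tilde f}$ drop on the target, strictly on the length-$\delta$ interval $I$. The three esssup energies ($p=q$) then weakly decrease and the three integral energies ($p<q$) strictly decrease, exactly as you say.

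One small remark on your implementation of the reduction: the paper's phrasing ``pulling the image of~$Z$ in the direction $D(Z)$'' is more naturally read as collapsing the $\delta$-preimage on each exiting segment to the point $y+\delta d^*$ (so $|\tilde f'|=0$ there and $|\tilde f'|=|f'|$ beyond), rather than reparametrizing the whole segment linearly. Either realization is a PL map homotopic to $f$ satisfying the same pointwise inequalities, so your conclusions go through unchanged; but if you want to match the paper's convention you might phrase it the first way.
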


\begin{proof}
  Clear from the definitions.
\end{proof}

As a result of Propositions~\ref{prop:reduced}
and~\ref{prop:min-reduced}, when looking for energy minimizers we can
restrict our attention to reduced maps.

\subsection{Taut maps and flows}
\label{sec:tautness-definitions}

We now add some more structure, and correspondingly get
more restrictive conditions on energy\hyp minimizing maps. We will
consider maps
from weighted graphs, and in particular minimize $E^1_q$ for
any~$q$.

\begin{definition}\label{def:taut}
  Let $c \co W \to \Gamma$ be a PL map from a marked weighted
  graph~$W$ to a marked graph~$\Gamma$. We defined the multiplicity
  $n_c \co \Gamma \to \RR_{\ge 0}$ in
  Definition~\ref{def:multiplicity}.
  For a homotopy class~$[c]$ and $y$ in the interior of an edge of~$\Gamma$
  set
  \[
  n_{[c]}(y) \coloneqq \inf_{d \in [c]} n_d(y).
  \]
  The infimum is taken over maps~$d$ for which $n_d(y)$ is
  defined. Then it is easy to see that $n_{[c]}(y)$ depends only on the edge
  containing~$y$. We say that $c$ is \emph{taut} if
  $n_c = n_{[c]}$ almost everywhere. We say that $c$ is \emph{locally taut}
  if for every regular value $y \in \Gamma$ and regular
  neighborhood~$N$ of~$y$, the quantity~$n_c$
  cannot be reduced by homotopy of~$c$ on $c^{-1}(N)$.
\end{definition}

It will require work to show that taut maps exist, but if
they do exist they have good properties.

\begin{lemma}\label{lem:taut-reduced}
  A taut map from a positive weighted graph is reduced.
\end{lemma}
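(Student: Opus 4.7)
The plan is to argue by contradiction. Suppose $c \co W \to \Gamma$ is taut but not reduced, so by definition there is a dead end $Z \subset c^{-1}(y)$ for some $y \in \Gamma$, with $Z$ containing no marked points and $D(Z)$ consisting of a single non-zero direction $d$ at $y$. I will produce a homotopy to a map $c'\in [c]$ satisfying $n_{c'} < n_c$ on a positive-measure set, contradicting the characterization of tautness.

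To produce $c'$, perform the reduction operation at $Z$: choose a small $\epsilon>0$ and continuously deform $c$ by sliding the image of (a small neighborhood of) $Z$ from $y$ to a nearby point $y'$ lying distance $\epsilon$ from $y$ in the direction $d$. Because every outgoing direction at a point of $Z$ maps either to the zero direction or to $d$, this deformation extends to a genuine PL homotopy supported in a small tubular neighborhood of $Z$; the absence of marked points in $Z$ ensures the homotopy is admissible in the marked category. Call the resulting map $c'$.

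Now compare $n_c$ and $n_{c'}$ on the open segment $I$ from $y$ to $y'$. Let $\Sigma$ denote the total weight of the outgoing prongs of $Z$, i.e.\ the sum of $w(x)$ (with multiplicity in $d$) over points $x \in Z$ admitting a direction mapping to $d$. Because $D(Z)\neq\emptyset$ there is at least one such prong, and because $W$ is a \emph{positive} weighted graph, $\Sigma>0$. On $I$, the prongs of $Z$ contribute $\Sigma$ to $n_c$, while in $c'$ they have been pulled back and no longer cover $I$; all contributions from parts of $W$ not adjacent to $Z$ are identical for $c$ and $c'$. Hence $n_{c'}(z) = n_c(z) - \Sigma$ for $z \in I$, so $n_{c'} < n_c$ on the positive-measure segment $I$. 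Since $c' \in [c]$, this gives $n_{[c]}(z) \le n_{c'}(z) < n_c(z)$ on $I$, contradicting $n_c = n_{[c]}$ almost everywhere.

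The only genuine subtleties are the verifications that $\Sigma > 0$ (which is exactly where positivity of the weights is used, and is why the lemma fails for general weighted graphs allowing zero weights) and that the local reduction is admissible as a marked homotopy (which is guaranteed by the ``no marked point'' clause in the definition of a dead end). Everything else is immediate from the definitions.
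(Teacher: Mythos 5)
Your proposal is correct and is exactly the argument the paper intends: the paper's entire proof is the one sentence ``Reduction reduces $n_c$,'' and you have spelled out why — identifying a dead end $Z$, performing the reduction operation, and observing that on the segment swept out, the multiplicity drops by the total weight $\Sigma$ of the prongs of $Z$ pointing in the direction $D(Z)$, which is strictly positive precisely because the weights are positive and $D(Z)\neq\emptyset$. This contradicts $n_c = n_{[c]}$ a.e. You also correctly flag the two places where hypotheses are used (positivity for $\Sigma>0$, absence of marked points in $Z$ for admissibility of the homotopy), matching the paper's framing.
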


\begin{proof}
  Reduction strictly reduces $n_c$.
\end{proof}

\begin{proposition}\label{prop:taut-minimal}
  Let $c \co W \to \Gamma$ be a taut map from a marked weighted
  graph~$W$ to a
  marked graph~$\Gamma$. If $\Gamma$ is weighted, then $\WR(c)$ is
  minimal in $[c]$. If $\Gamma$ has an elastic structure, then
  $\EL(c)$ is minimal in $[c]$. If $\Gamma$ has a length
  structure, then $\ell(c)$ is minimal in $[c]$.
\end{proposition}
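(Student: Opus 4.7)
The plan is to exploit the defining property of tautness, namely that $n_c(y) = n_{[c]}(y)$ almost everywhere, which expresses a strong pointwise minimality of the multiplicity function within the homotopy class $[c]$. Each of the three target energies $\WR(c)$, $\EL(c)$, and $\ell(c)$ is a monotone functional of $n_c$: an essential supremum of $n_c/w$, or an $L^2$ or $L^1$ norm against the appropriate measure on $\Gamma$. So pointwise a.e.\ minimality of $n_c$ should translate immediately into minimality of each of these three energies.

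More concretely, first I would fix an arbitrary competitor $d \in [c]$ and unpack Definition~\ref{def:taut}. For every point $y$ in the interior of an edge of $\Gamma$ that is simultaneously a regular value of $c$ and of $d$, the definition of $n_{[c]}$ as an infimum gives $n_d(y) \ge n_{[c]}(y)$, and tautness gives $n_c(y) = n_{[c]}(y)$. Since the singular values of $c$ and of $d$ together form a measure-zero set, we conclude
\[
n_c(y) \le n_d(y) \qquad \text{for a.e.\ } y \in \Gamma.
\]

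The three cases then follow by applying to this pointwise inequality, respectively, the essential supremum against the weight $w(y)$ on~$\Gamma$ (giving $\WR(c) \le \WR(d)$), the integral against the elastic measure with $n_c^2$ and $n_d^2$ (giving $\EL(c) \le \EL(d)$), and the integral against the length measure (giving $\ell(c) \le \ell(d)$), using the formulas of Definition~\ref{def:multiplicity} and Lemma~\ref{lem:length-defs}. The only thing to watch is lining up the measure-zero singular sets of $c$ and of the competitor $d$, and the observation that tautness is stronger than pointwise minimality: the single map $c$ simultaneously realizes the infimum $n_{[c]}(y)$ for almost every~$y$. I do not expect any genuine obstacle here; the content of the proposition is essentially packaged into the definition of taut, and the substantive work of the section lies in the subsequent existence theorem for taut maps.
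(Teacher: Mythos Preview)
Your proposal is correct and matches the paper's own proof, which simply says the result is clear from the definitions since each energy is monotonic in $n_c$, invoking Lemma~\ref{lem:length-defs} for the length case. You have spelled out exactly this monotonicity argument in detail.
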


\begin{proof}
  Clear from the definitions, since the energies are monotonic
  in~$n_c$. When $\Gamma$ is a length graph, we use
  Lemma~\ref{lem:length-defs}.
\end{proof}

See also Proposition~\ref{prop:taut-balanced} below.

\begin{example}\label{examp:mincut}
  Let $W$ be a marked weighted graph with two marked vertices $s$
  and~$t$. Let $I$ be the interval $[-1,1]$ with the endpoints
  marked, and let $f \co W \to I$ be a map with $f(s) = -1$ and
  $f(t)=1$. Then $f$ is taut iff each edge is mapped monotonically
  and,
  for each regular value $y \in I$, the set of edges containing
  $f^{-1}(y)$ is a \emph{minimal cut-set} for~$W$, a set of edges of~$W$
  that separates $s$ from~$t$ and has minimal weight among all such
  sets.
\end{example}

The max-flow/min-cut theorem says that in the
context of Example~\ref{examp:mincut} the total weight of a minimal
cut-set
for~$W$ is equal to the maximum flow from $s$ to~$t$ through the edges
of~$W$. We will show that taut maps exist, and that they satisfy a
generalization of the max-flow/min-cut theorem; see
Theorem~\ref{thm:maxflow-mincut} and
Corollary~\ref{cor:maxflow-mincut-std} below. To give the strongest
statement, we make some definitions and preliminary lemmas first.

\begin{lemma}\label{lem:taut-curve}
  A  marked weighted multi-curve $c \co C \to \Gamma$ on a marked
  graph~$\Gamma$, with weights that are positive and constant on the
  components of~$C$, is taut iff it is reduced.
\end{lemma}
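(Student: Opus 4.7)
The forward implication ($c$ taut $\Rightarrow$ $c$ reduced) is immediate from Lemma~\ref{lem:taut-reduced}, since the weights on $C$ are positive. The content is in the converse. The plan is to reduce to the case of a single connected component, and then use a universal-cover argument showing that a reduced loop or reduced arc in $\Gamma$ minimizes the multiplicity $n_c(y)$ at every regular value $y$ simultaneously.

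Concretely, I would first note that since the weight $w_i$ is constant on each component $C_i$, at every regular value $y$
\[
n_c(y) = \sum_i w_i\, n_{c|_{C_i}}(y),
\]
and homotopies of $c$ may be performed independently on the different components. Hence $n_{[c]}(y) = \sum_i w_i\, n_{[c|_{C_i}]}(y)$ and it suffices to prove each $c|_{C_i}$ is taut. This reduces the problem to showing: a reduced PL map $c_0 \co C_0 \to \Gamma$ from a single circle or a single marked arc satisfies $n_{c_0}(y) = n_{[c_0]}(y)$ at every regular value $y$.

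For this, I would pass to the universal cover $\pi \co \tilde\Gamma \to \Gamma$, which is a tree. If $C_0$ is an arc with marked endpoints, choose a lift of one endpoint; then $c_0$ and any homotopic representative lift to paths in $\tilde\Gamma$ with the same prescribed endpoints, and the reduced representative lifts to the unique geodesic between them. Any other such lift must traverse every edge on this geodesic at least once, so summing crossings over all preimages $\tilde y \in \pi^{-1}(y)$ gives $n_{c_0'}(y) \ge n_{c_0}(y)$. If $C_0$ is a circle and $c_0$ is essential, the associated deck transformation $\tau$ acts as a hyperbolic isometry on the tree with a unique invariant axis $A$; the lift of $c_0$ covers a fundamental segment of $A$ exactly once, and any homotopic lift from $\tilde v$ to $\tau(\tilde v)$ must also cover this segment at least once, giving the same pointwise inequality after projecting back to $\Gamma$. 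A contractible reduced loop is necessarily constant, making the statement trivial.

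The main technical point I expect to require care is the loop case: identifying the axis of the relevant deck transformation, projecting a homotopic lifted representative onto it, and tracking how multiplicities of edges in $\tilde\Gamma$ sum under $\pi$ to multiplicities on edges of $\Gamma$. The arc case is strictly easier because lifts of the endpoints are pinned down by the marked-point constraint.
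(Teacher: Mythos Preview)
Your argument is correct. The paper's proof is essentially a one-liner: it cites Lemma~\ref{lem:taut-reduced} for the forward direction, and for the converse invokes the standard fact that a (strictly) reduced representative of a free homotopy class $S^1 \to \Gamma$ or a relative homotopy class $I \to \Gamma$ is unique up to reparametrization, so any other representative can first be reduced (not increasing $n_c$) and then agrees with the given one.

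Your route is different but closely related: rather than appealing to uniqueness of reduced representatives, you prove the pointwise inequality $n_{c'}(y) \ge n_{c_0}(y)$ directly via the universal cover, using the geodesic-in-a-tree / translation-axis picture. This is in effect a proof of the ``standard fact'' the paper cites, so your version is more self-contained at the cost of being longer. The decomposition into components that you do first is implicit in the paper's proof (the uniqueness statement is for a single loop or arc). One small point worth making explicit in your loop case: for an arbitrary basepoint lift $q$, the geodesic $[q,\tau(q)]$ contains a full fundamental domain of the axis, so every axis lift of $y$ in that domain separates $q$ from $\tau(q)$ and must be crossed; you allude to this but it is the crux of the counting.
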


\begin{proof}
  One direction is Lemma~\ref{lem:taut-reduced}. For
  the other direction, if $c$ is reduced and non-constant, perturb it
  to a locally injective map
  and use the standard fact that, given
  a non-trivial free
  homotopy class of maps from $S^1$ to~$\Gamma$, a locally injective
  representative is unique up
  to reparametrization of the domain (which does not change~$n_c$).
  The same is true for homotopy
  classes of maps from $I$ to~$\Gamma$ relative to the endpoints.
\end{proof}

\begin{definition}\label{def:carries}
  If $c \co W_1 \to W_2$ is a PL map between weight graphs, we say that
  $W_2$ \emph{carries} $(W_1,c)$ if $\WR(c) \le 1$. If $W_2$ carries
  $(W_1,c)$, then we say that a point~$y$ of~$W_2$ is \emph{saturated}
  if $n_c(y) = w(y)$. Similarly, a subset of a weighted graph is saturated
  if almost every point in it is saturated.
\end{definition}

One notion of a ``flow'' on a weighted graph~$W$ is a
taut weighted multi-curve carried by~$W$. These multi-curves are a
little awkward to work with directly. As such, we will also define and
work with a more general type of flow from \emph{train tracks}.

\begin{definition}\label{def:triangle}
  A sequence of non-negative numbers $(x_i)_{i=1}^k$ is said to satisfy the
  \emph{triangle inequalities} if, for each $i$, $x_i$ is no larger than the
  sum of the remaining numbers:
  \begin{equation}\label{eq:triang-ineq}
  x_i \le \sum_{\substack{1 \le j \le k\\i \ne j}} x_j.
  \end{equation}
  This implies that $k \ne 1$, and if $k=2$ then $x_1 = x_2$.
  If $k \ge 3$ and one of these inequalities is an equality, then there
  is exactly one~$i$ so that $x_i$ is equal to the sum of
  the remainder.
  That $x_i$ is said to
  \emph{dominate} the rest.
\end{definition}

\begin{definition}\label{def:tt}
  A \emph{train-track structure}~$\tau$ on a graph~$\Gamma$ is, for
  each point~$x$ of~$\Gamma$, a partition
  of the non-zero directions from~$x$ into equivalence
  classes, called the \emph{gates} at~$x$, with at least two gates at
  each unmarked point. (This only gives additional structure at the
  vertices of~$\Gamma$.) A \emph{train track} is a graph with a
  train-track structure.

  If the graph~$\Gamma$ of a train track is weighted,
  the \emph{weight} of a gate is the sum of the weights of the edges
  corresponding to the directions
  that make it up. A set of weights on a train track is
  \emph{balanced} if, at each unmarked vertex, the weights of the gates
  satisfy the triangle inequalities.
  A \emph{balanced train track} is a train track together with
  balanced weights.
  If the weight of one gate~$g$ at a vertex of a balanced train track
  dominates the others,
  we can \emph{smooth} the vertex, changing the train-track structure
  so that there are only two gates, one with the directions from~$g$
  and one with all the
  other directions.

  A graph~$\Gamma$ with no unmarked univalent ends has a
  \emph{discrete} train-track structure $\delta_\Gamma$, in which two
  different directions are never equivalent. By default we use the
  discrete train-track structure on a graph. A (marked) weighted graph for
  which the discrete train-track structure satisfies the triangle
  inequalities (at unmarked vertices) is said to be \emph{balanced}.

  If $f \co (\Gamma_1,\tau_1) \to (\Gamma_2,\tau_2)$ is a map between
  train tracks that is locally injective on the edges
  of~$\Gamma_1$, we say that $f$ is a \emph{train-track map} if, for
  each vertex $v$ of~$\Gamma_1$ and directions $d_1$ and $d_2$ at~$v$,
  \begin{equation}\label{eq:tt-condition}
  \bigl(d_1 \sim_{\tau_1} d_2\bigr) \Longleftrightarrow
    \bigl(f(d_1) \sim_{\tau_2} f(d_2)\bigr).
  \end{equation}
  More generally,
  we say
  that $f$ is a train-track map if it has arbitrarily small
  perturbations $f_\epsilon$ so that $f_\epsilon$ is locally
  injective on the edges of~$\Gamma_1$ and satisfies
  Equation~\eqref{eq:tt-condition}. 
  In particular, a multi-curve $c \co C \to \Gamma$ is a train-track map iff
  at each point it passes
  through the incoming and outgoing directions are in different
  gates.

  If $f \co \Gamma_1 \to \Gamma_2$ is a locally injective, reduced
  map, the \emph{train track of~$f$} is
  the unique train-track structure~$\tau(f)$ on $\Gamma_1$ so that $f$
  is a train-track map with respect to the train track structures
  $\tau(f)$ on~$\Gamma_1$ and $\delta_{\Gamma_2}$
  on~$\Gamma_2$. Concretely, we have
  \[
  \bigl(d_1 \sim_{\tau(f)} d_2\bigr) \Longleftrightarrow
   \bigl(f(d_1) = f(d_2)\bigr).
  \]
\end{definition}

\begin{remark}
  Train tracks were first introduced in the context of surfaces and
  the theory of pseudo-Anosov
  diffeomorphisms~\cite{PH92:CombTrainTracks}, and a slight different
  notion of train track maps
  between graphs is used to study automorphisms of free
  groups~\cite{BH92:TrainTracks}.
\end{remark}

\begin{lemma}\label{lem:compose-tt}
  A composition of train-track maps is a train-track map.
\end{lemma}
\begin{proof}
  This is obvious if both maps are locally injective. In general,
  given train track maps $f$ and~$g$ with locally injective
  train-track perturbations $f_\epsilon$ and~$g_\epsilon$, the
  composition $g_\epsilon \circ f_\epsilon$ is a locally injective
  train-track perturbation of $g \circ f$.
\end{proof}
\begin{proposition}\label{prop:taut-balanced}
  Let $f \co W \to \Gamma$ be a taut map from a marked weighted graph.
  Then $n_f(\cdot)$ gives $\Gamma$ the structure of a balanced
  weighted graph.
\end{proposition}
\begin{proof}
  If $n_f$ (as a function on the edges of~$\Gamma$) is not balanced at
  an unmarked vertex~$v$, with one incident edge $e$ dominating the
  others, we may homotop $f$ to move the image of $f^{-1}(v)$
  along~$e$. This decreases $n_f$, contradicting the assumption that
  $f$ was taut.
\end{proof}

We can now state the main goal of this section.

\begin{theorem}\label{thm:maxflow-mincut}
  Let $f \co W \to \Gamma$ be a PL map from a marked weighted graph to
  a marked graph. Then there is a taut map in $[f]$. Furthermore, the
  following conditions are equivalent.
  \begin{enumerate}
  \item\label{item:mf-taut} The map~$f$ is taut.
  \item\label{item:mf-locally-taut} The map~$f$ is locally taut.
  \item\label{item:mf-tt} The graph~$W$ carries a marked balanced
    train track $t \co T \to W$, so that $f \circ t$ is
    a train-track map (with respect to the discrete train track
    structure on~$\Gamma$) and $n_{f \circ t} = n_t$. We may choose $(T,t)$
    so that $T$ is
    a subgraph of~$W$.
  \item\label{item:mf-curve} The graph~$W$ carries a marked weighted
    multi-curve $c \co C \to W$ so that $f \circ c$ is taut and
    $n_{f \circ c} = n_f$.
  \end{enumerate}
  Furthermore, if these conditions are satisfied, $t$
  in~(\ref{item:mf-tt}) and $c$ in~(\ref{item:mf-curve}) saturate
  every
  edge of~$W$ on which $f$ is not constant, and $c$ can be chosen to
  factor through~$t$: there is a multi-curve $c' \co C \to T$  so that $c = t \circ c'$.
\end{theorem}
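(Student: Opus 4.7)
The plan is to establish the cyclic implications (1) $\Rightarrow$ (2) $\Rightarrow$ (3) $\Rightarrow$ (4) $\Rightarrow$ (1), and then derive existence of a taut representative by first producing a locally taut one.

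The implication (1) $\Rightarrow$ (2) is immediate. For (4) $\Rightarrow$ (1), I would argue by contradiction: if some $f' \in [f]$ had $n_{f'}(z_0) < n_f(z_0)$ at a regular value~$z_0$, then compose with the carried multi-curve~$c$. Since $W$ carries $c$, Lemma~\ref{lem:nf-W} gives $n_{f' \circ c}(z_0) \le n_{f'}(z_0)$, while the saturation hypothesis $n_{f \circ c} = n_f$ then forces $n_{f' \circ c}(z_0) < n_{f \circ c}(z_0)$, contradicting the tautness of $f \circ c$.

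The heart of the proof is (2) $\Rightarrow$ (3). I would take $T$ to be the union of those edges of $W$ on which $f$ is not constant, equipped with the inherited weights, and $t \co T \hookrightarrow W$ the inclusion, so $\WR(t) = 1$ automatically. At each vertex~$v$ of~$T$ I install the pullback train-track structure, declaring two directions equivalent precisely when their $f$-images coincide. With this setup, $f \circ t$ is a train-track map with respect to $\delta_\Gamma$ by construction, and $n_{f \circ t}$ matches the $f$-pushforward of $n_t$ because no cancellations occur on a train-track map. The only nontrivial claim is the triangle inequality at each unmarked vertex of~$T$: if some gate~$g$ at~$v$ had weight exceeding the sum of the remaining gates, then perturbing~$f$ near~$v$ by sliding $f(v)$ infinitesimally in the direction $f(g)$ would strictly decrease $n_f$ on a short segment in that direction by the excess, violating local tautness; a parallel local argument covers the case where $f(v)$ lies at a vertex of~$\Gamma$.

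For (3) $\Rightarrow$ (4) I would inductively split the weighted train track~$T$ into a disjoint union of loops and marked arcs. At each vertex the triangle inequalities permit a matching of the weights across distinct gates, and performing such a split strictly decreases a complexity measure (for instance, the number of vertices of valence greater than two, or the total weight at branch vertices), terminating in a marked weighted multi-curve $c \co C \to W$. The train-track property of $f \circ t$ guarantees that $f \circ c$ has no backtracking, hence is taut by Lemma~\ref{lem:taut-curve}, and the weight-preserving nature of the splitting gives the matching $n_{f \circ c} = n_f$. For the existence of a taut map in $[f]$, I would first reduce $f$ using Proposition~\ref{prop:reduced}, then iteratively perform local reductions at points where $f$ fails to be locally taut; fixing any auxiliary length metric on~$\Gamma$, each such reduction strictly decreases $\ell(f)$, and a finiteness argument on combinatorial types of reduced maps with bounded length forces termination. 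The resulting locally taut representative is then globally taut by (2) $\Rightarrow$ (1), and the saturation assertion follows from the explicit construction, which includes every non-constant edge of~$W$ in $T$ with full weight. The main obstacle will be the splitting step (3) $\Rightarrow$ (4): one must verify that the inductive matching of gates can always be carried out while preserving both the triangle inequalities and the marked-endpoint structure, and that the procedure terminates at a genuine multi-curve rather than stalling on a non-trivial train track; a secondary subtlety is the perturbation argument in (2) $\Rightarrow$ (3) when $f(v)$ is itself a vertex of~$\Gamma$, where the directions available for the perturbation are constrained.
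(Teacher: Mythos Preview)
Your argument for (2) $\Rightarrow$ (3) has a genuine gap. Take $W$ to be a tripod with unmarked center~$u$ and marked leaves $v_1,v_2,v_3$, edge weights $w(e_1)=3$, $w(e_2)=w(e_3)=1$, and $\Gamma=\Star_3$ with $f(v_i)=s_i$. The taut map sends $u$ to~$s_1$ and is constant on~$e_1$ (since the minimal $v_1$-cut is $\{e_2,e_3\}$ of weight~$2$, not $\{e_1\}$ of weight~$3$). Your $T=\{e_2,e_3\}$ then has a single gate at the unmarked vertex~$u$, so it is not a train track. Your perturbation argument does not rescue this: sliding $f(u)$ off~$s_1$ toward the center makes the formerly constant edge~$e_1$ non-constant, and since $w(e_1)=3>2$, the multiplicity $n_f$ on the newly covered segment \emph{increases} from~$2$ to~$3$. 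Local tautness is not violated, yet your $T$ fails. The flaw is that you discarded constant edges from~$T$ but forgot they re-enter the picture under perturbation.

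The paper's route through (2) $\Rightarrow$ (3) is substantially different. Rather than reading off a train track directly from~$f$, it passes through a local min-cut analysis (Proposition~\ref{prop:maxflow-mincut-local}) at each singular value. The key step is Lemma~\ref{lem:minimal-weights}: one first \emph{reduces} the weights on slack edges (those not in any minimal vertex cut) so that the resulting weighted graph is minimal, and then builds a complete nested family of minimal cuts (Lemma~\ref{lem:complete-nested}) to define the train-track structure. In the example above, $e_1$ is slack and its weight is reduced to~$2$; the resulting $T$ contains all three edges with weights $2,1,1$ and two balanced gates at~$u$. The point is that $T$ need not have the inherited weights of~$W$ on every edge---only on edges where $f$ is non-constant (this is the saturation clause)---and constant edges may appear in~$T$ with strictly smaller weight to make the triangle inequalities close up. Your other implications and the existence sketch are broadly in line with the paper, though your termination argument via~$\ell(f)$ is less sharp than the paper's observation that $n_f$ takes values in a finite set of weight-combinations; and the step you flagged as the main obstacle, (3) $\Rightarrow$ (4), is in fact the routine one (Proposition~\ref{prop:tt-curve}).
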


As a preliminary step towards Theorem~\ref{thm:maxflow-mincut}, we
relate train tracks and multi-curves.

\begin{proposition}\label{prop:tt-curve}
  Any balanced train track $T = (\Gamma, w, \tau)$ carries a marked
  weighted multi-curve $(C,c)$ that saturates~$T$ and so that
  $c \co C \to T$ is a train-track map. We can choose
  $(C,c)$ so that each component of $C$ runs over each edge
  of~$\Gamma$ at most twice.
\end{proposition}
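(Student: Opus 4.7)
The plan is to construct $(C, c)$ in three stages: decompose the gate weights at each unmarked vertex into pairwise transitions, realize these transitions by tracing curves, then reduce by 1-manifold surgery to bound the per-edge traversals of each component.

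\emph{Stage 1 (local decomposition).} At each unmarked vertex $v$ with gate weights $x_1, \ldots, x_k$, I find non-negative $y^v_{ij} = y^v_{ji}$ (for $i \ne j$) satisfying $\sum_{j \ne i} y^v_{ij} = x_i$. For $k \le 3$ this is explicit (e.g.\ $y_{ij} = (x_i + x_j - x_\ell)/2$ for $k=3$, non-negative by the triangle inequalities). For general $k$, Farkas' lemma reduces feasibility to showing $z \cdot x \ge 0$ whenever $z_i + z_j \ge 0$ for all $i \ne j$; such a $z$ has at most one negative coordinate, say $z_1 < 0$ with $z_j \ge |z_1|$ for $j \ge 2$, so $z \cdot x \ge |z_1|\bigl(\sum_{j \ge 2} x_j - x_1\bigr) \ge 0$ by the triangle inequality for $x_1$. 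Marked vertices impose no constraint and act as sources/sinks where weight may terminate.

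\emph{Stage 2 (realization).} Maintain residual capacities $w_{\text{res}}(e)$ on each edge and $y^v_{ij, \text{res}}$ on each transition, initialized at full values. Repeatedly pick an edge with positive residual, start a curve at an interior point, and extend in both directions: at each unmarked vertex reached through gate $g_i$, continue via some gate $g_j \ne g_i$ with $y^v_{ij, \text{res}} > 0$ along any edge of $g_j$ with positive residual weight. The conservation invariant $\sum_{e \in g_i(v)} w_{\text{res}}(e) = \sum_{j \ne i} y^v_{ij, \text{res}}$, preserved through each update, ensures extension never stalls while residuals remain. Terminate extension when both ends close into a loop or reach marked vertices; assign the new component weight $\epsilon$ equal to the minimum residual encountered along its route and subtract $\epsilon$. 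Each iteration strictly reduces the finite count of positive residuals, so after finitely many iterations $(C, c)$ saturates $T$, and the gate-switching rule makes $c$ a train-track map.

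\emph{Stage 3 (surgery).} If some component $C_i$ traverses an edge $e$ at least three times, two of the traversals at points $q_1, q_2 \in C_i$ run in the same direction along $e$. The standard 1-manifold surgery at $\{q_1, q_2\}$ splits $C_i$ into two components with the crossings of $e$ partitioned between them; since the surgery reroutes locally and $c(q_1) = c(q_2)$, the train-track property and saturation persist. Using the elementary inequality $\max(0, a-2) + \max(0, b-2) \le \max(0, a+b-2)$, the integer quantity $\sum_{j, e'} \max(0, n_{c_j}(e') - 2)$ strictly decreases at each surgery, forcing termination with every component traversing each edge at most twice. The principal obstacle is Stage 1, where the triangle-inequality hypothesis on the train-track structure is used essentially; without it no local gate-pair decomposition exists and the construction fails at the outset, whereas Stages 2 and 3 then proceed by a conservation argument and a routine surgery argument, respectively.
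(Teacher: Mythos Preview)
Your proof is correct and reaches the same conclusion, but the route differs from the paper's in two places.

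The paper avoids your Stage~1 decomposition entirely. Instead it passes to the sub-train-track~$T''$ obtained by deleting zero-weight edges and then \emph{smoothing} every vertex at which one gate dominates. In~$T''$ every unmarked vertex has either exactly two gates (forcing the turn) or at least three gates with all triangle inequalities strict; this is what guarantees that subtracting a small multiple of the traced curve leaves a weighted train track. The induction is on $\lvert\Edges(T'')\rvert + \lvert\text{Yard}(T'')\rvert$, where $\text{Yard}$ counts unmarked vertices with at least three gates. Your Farkas/linear-programming decomposition into pairwise transitions $y^v_{ij}$ is a cleaner substitute for this smoothing trick: once the $y^v_{ij}$ are fixed, preservation of the residual structure is automatic accounting rather than an appeal to strictness of inequalities.

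The paper also has no Stage~3. It gets the ``each component crosses each edge at most twice'' bound directly, by closing off the traced path into a cycle the first time an oriented edge repeats. Your surgery argument is a perfectly valid alternative that decouples the edge-bound from the tracing procedure.

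One small imprecision in Stage~2: when you set $\epsilon$ to the ``minimum residual encountered along the route,'' you must divide each residual by the number of times the route uses it, since a single trace may pass through the same transition $y^v_{ij}$ (or the same edge) more than once. With that correction the chosen $\epsilon$ still zeroes out at least one residual, so termination goes through unchanged.
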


\begin{proof}
  Let $T' \subset T$ be the sub-train-track of edges of non-zero
  weight. Let $T''$ be $T'$ modified by
  smoothing all vertices in which one gate dominates the others. Let
  $\mathop{\mathrm{Yard}}(T'') \subset \Verts(T'')$ be the set of marked vertices of
  $T''$ with at least three gates. We will proceed by induction on
  $\abs{\Edges(T'')} + \abs{\mathop{\mathrm{Yard}}(T'')}$.

  In the base case, $T'$ and $T''$ are empty.

  Otherwise, choose any oriented edge
  $\vec e_0$ of $T''$, and find a path (forward and backwards)
  from~$\vec e_0$ within~$T''$, always making
  turns between different gates.
  Since there are at least two gates at each unmarked vertex, we can
  always
  find a successor edge for $\vec e_i$, unless we hit
  a marked vertex.
  Since there are only finitely many oriented edges in~$T$, we
  will eventually either find a path of edges between marked points or see a
  repeat and find a cyclic loop of edges.

  Consider the marked curve $(C_1,c_1)$ that runs
  over the cycle or path. For $\epsilon > 0$, let
  $w_\epsilon(e) \coloneqq w(e) - \epsilon n_{c_1}(e)$.
  Then for $\epsilon$ sufficiently small, $w_\epsilon$ gives a balanced
  train-track structure on~$T''$:
  \begin{itemize}
  \item On an edge~$e$, $w(e) > 0$ by construction of $T'$
    so $w_\epsilon(e) \ge 0$; and
  \item At a vertex~$v$, the construction of $T''$
    ensures that the triangle
    inequalities at~$v$ continue to hold in $w_{\epsilon}$.
  \end{itemize}
  Let $\epsilon_1$ be the maximum value of~$\epsilon$ so that
  $(\Gamma, w_{\epsilon_1},\tau)$  is a balanced train track. Let
  $w_1 = w_{\epsilon_1}$ and
  $T_1 = (\Gamma, w_1, \tau)$. As before, we construct derived train tracks
  $T_1'$ by deleting edges of weight~$0$ from~$T_1$ and $T_1''$ by smoothing
  dominating vertices in~$T_1'$. By the choice of $\epsilon_1$, there is
  either
  \begin{itemize}
  \item an edge~$e$ of $T''$ with $w_1(e) = 0$ but $w(e) \ne 0$, so
    $\abs{\Edge(T_1'')} < \abs{\Edge(T'')}$; or
  \item a vertex~$v$ of~$\Gamma$ with a gate~$g$ so that $w_1(g)$ dominates the
    other gates but $w(g)$ does not, so $\abs{\mathrm{Yard}(T_1'')} < \abs{\mathrm{Yard}(T'')}$.
  \end{itemize}
  In either case,
  by induction $T_1$ carries a marked weighted multi-curve $(C_2,c_2)$ that
  saturates~$T_1$. Then $(C_1 \sqcup C_2, c_1 \sqcup c_2, \epsilon_1
  \sqcup w_2)$ is
  the desired multi-curve from the statement.

  If, in the construction above, we make a cyclic loop as soon as we
  see a repeated oriented edge,
  the components of~$C$ run over each (unoriented) edge at
  most twice.
\end{proof}

\begin{lemma}\label{lem:carry-curve-taut}
  If $f \co W \to \Gamma$ is a map from a marked weighted graph to a
  marked graph and $(C,c)$ is a marked weighted multi-curve carried by $W$
  so that $f \circ c$ is taut and $n_{f \circ c} = n_f$, then $f$ is taut.
\end{lemma}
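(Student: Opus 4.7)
\medskip\noindent\textbf{Proof plan.} The plan is to deduce tautness of $f$ directly from the definition: at (almost every) regular value $y \in \Gamma$, we need $n_f(y) = n_{[f]}(y)$. Since $n_{[f]}(y) \le n_f(y)$ is immediate (take $f$ itself as the representative), it suffices to show $n_f(y) \le n_g(y)$ for an arbitrary $g \in [f]$, and then take the infimum over $g$.

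I would chain four inequalities on $n_f(y)$. First, the hypothesis $n_{f\circ c} = n_f$ gives $n_f(y) = n_{f\circ c}(y)$. Second, because $g$ is homotopic to $f$, the composite $g \circ c$ lies in $[f\circ c]$, so tautness of $f\circ c$ yields $n_{f\circ c}(y) \le n_{g\circ c}(y)$. Third, Lemma~\ref{lem:nf-W} applied to the sequence $\shortseq{C}{c}{W}{g}{\Gamma}$ gives $n_{g\circ c}(y) \le \WR(c)\, n_g(y)$. Finally, the carrying hypothesis $\WR(c) \le 1$ closes the chain: $\WR(c)\, n_g(y) \le n_g(y)$. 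Concatenating these four steps produces $n_f(y) \le n_g(y)$, as desired.

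There is no real obstacle here: the lemma is essentially the combinatorial repackaging of the carrying condition, Lemma~\ref{lem:nf-W}, and tautness of the composition $f \circ c$ into tautness of $f$ itself. The only mild bookkeeping is to choose $y$ to be a regular value for all four of $f$, $g$, $f\circ c$, and $g\circ c$ simultaneously; these exclude only measure-zero sets and so are harmless for the almost-everywhere conclusion.
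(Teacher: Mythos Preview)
Your proof is correct and follows essentially the same approach as the paper: both chain $n_f = n_{f\circ c} \le n_{g\circ c} \le n_g$ using tautness of $f\circ c$, the carrying hypothesis via Lemma~\ref{lem:nf-W}, and the fact that $g\circ c \in [f\circ c]$. The paper writes the chain in reverse and inserts $n_{[f\circ c]}$ explicitly as an intermediate, but the substance is identical.
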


\begin{proof}
  If $g \co W \to \Gamma$ is any other map in $[f]$, then
  \[
  n_g \ge n_{g \circ c} \ge n_{[f \circ c]} = n_{f \circ c} = n_f,
  \]
  using the fact that $W$ carries~$c$ and Lemma~\ref{lem:nf-W}; the
  definition of $n_{[f \circ c]}$ as an infimum over the homotopy
  class; and the hypotheses.
\end{proof}

\begin{corollary}\label{cor:tt-taut}
  A train-track map from a marked balanced train
  track is taut.
\end{corollary}

\begin{proof}
  Let $f \co T \to \Gamma$ be the train-track map. Let $c \co C \to T$
  be the multi-curve from Proposition~\ref{prop:tt-curve}. Then, by
  Lemma~\ref{lem:compose-tt}, $f \circ c$ is taut and, since $c$
  saturates~$T$, $n_{f \circ c} = n_f$. Thus by
  Lemma~\ref{lem:carry-curve-taut}, $f$ is taut.
\end{proof}

\subsection{Local models}
\label{sec:local-models}

To prove Theorem~\ref{thm:maxflow-mincut}, we first analyze the
situation locally in a
regular neighborhood of a singular value. This reduces to
studying maps from a graph with $k$
marked points to a $k$-leg star graph.

Let $\Star_k$ be the star graph with $k$ legs, with a central vertex
$s_*$, marked endpoints $s_1,\dots,s_k$, and edges $[s_i,s_*]$. A
\emph{$k$-marked} graph is a graph with $k$ marked vertices
$(v_i)_{i=1}^k$ (in order).
There is a canonical homotopy class of marked maps from a $k$-marked
graph to $\Star_k$,
taking $v_i$ to $s_i$. We prove an analogue of
Theorem~\ref{thm:maxflow-mincut} in this context.

\begin{proposition}\label{prop:maxflow-mincut-local}
  Let $W$ be a $k$-marked weighted graph. Then there
  is a taut map in the canonical homotopy class of maps to
  $\Star_k$. Furthermore, the
  following conditions are equivalent.
  \begin{enumerate}
  \item\label{item:local-taut} The map~$f$ is taut.
  \item\label{item:local-tt} The graph~$W$ carries a marked balanced
    train track $(T,t)$ so
    that $f \circ t$ is a train-track map and $n_{f \circ t} = n_f$.
  \item\label{item:local-curve} The graph~$W$ carries a marked
    weighted multi-curve $(C,c)$ so that
    $f \circ c$ is taut and $n_{f \circ c} = n_f$.
  \end{enumerate}
\end{proposition}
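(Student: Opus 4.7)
The plan is to establish the cycle of implications $(2) \Rightarrow (3) \Rightarrow (1) \Rightarrow \text{locally taut} \Rightarrow (2)$, together with a direct construction of a locally taut representative, which then yields the existence statement. Two implications are essentially formal. For $(3) \Rightarrow (1)$, apply Lemma~\ref{lem:carry-curve-taut} directly. For $(2) \Rightarrow (3)$, apply Proposition~\ref{prop:tt-curve} to~$T$ to obtain a saturating multi-curve $c_0 \co C \to T$, and take $c \coloneqq t \circ c_0 \co C \to W$; then $W$ carries~$c$, and $f \circ c$ is a composition of train-track maps from a curve, hence reduced and taut by Lemma~\ref{lem:taut-curve}, with $n_{f \circ c} = n_{f \circ t} = n_f$. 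That $(1)$ implies local tautness is immediate from the definitions.

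The main content is (locally taut) $\Rightarrow (2)$. Given a locally taut map $f \co W \to \Star_k$, let $T$ be the subgraph of~$W$ consisting of edges on which $f$ is non-constant, with weights inherited from~$W$, and equip $T$ with the train-track structure $\tau(f|_T)$ in which two directions at a vertex are equivalent precisely when they have the same image direction under~$f$. The inclusion $t \co T \hookrightarrow W$ satisfies $\WR(t) = 1$, the composition $f \circ t$ is a train-track map with respect to the discrete structure on~$\Star_k$ by the very definition of $\tau(f|_T)$, and $n_{f \circ t} = n_f$ because only edges on which $f$ is constant are discarded. The core step is verifying the triangle inequalities at each vertex~$v$ of~$T$. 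If $f(v)$ lies in the interior of a leg~$i$, then $\tau(f|_T)$ has two gates at~$v$, ``toward~$s_i$'' and ``toward~$s_*$'', with weights $w_+$ and $w_-$; sliding $v$ infinitesimally along leg~$i$ would change the cut weight on leg~$i$ by $\pm(w_+ - w_-)$, so local tautness forces $w_+ = w_-$. If $f(v) = s_*$, then $\tau(f|_T)$ has gates indexed by the legs into which~$v$ has non-constant edges, and pushing $v$ into any leg~$j$ would change the cut weight on leg~$j$ by $|E_j|_w - \sum_{j' \ne j} |E_{j'}|_w$; local tautness thus yields the triangle inequality. A parallel local-tautness argument shows each unmarked vertex of~$T$ has at least two distinct image directions, so there are always at least two gates.

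For existence, take any reduced representative $f_0$ (via Proposition~\ref{prop:reduced}) and iteratively improve it using the potential $\Phi(f) \coloneqq \int_{\Star_k} n_f(y)\,dy$ computed with any fixed positive lengths on the legs; $\Phi$ is bounded below by $\sum_i c_i \ell_i$ with $c_i \coloneqq \mincut(v_i, \{v_j\}_{j \ne i})$. Whenever $f$ fails local tautness at a vertex~$v$, the case analysis above exhibits a direction along which pushing~$v$ strictly decreases~$\Phi$; continue this push until either $f(v)$ collides with another singular value (decreasing a combinatorial complexity measure such as the number of vertices of~$W$ whose image is neither a marked point nor~$s_*$) or local tautness at~$v$ is restored. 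I expect the main obstacle to be the case analysis at vertices mapping to~$s_*$---where up to $k$ gates must be juggled to extract the triangle inequality from the inability to improve along any single leg---together with the formal termination argument, which requires showing that each elementary improvement either strictly decreases a combinatorial invariant or decreases~$\Phi$ by a uniformly positive amount, rather than producing an infinite sequence of ever-smaller moves.
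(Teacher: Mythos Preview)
Your cycle of implications is set up correctly, and the formal steps $(2)\Rightarrow(3)\Rightarrow(1)$ are fine. The existence argument via iterative improvement is plausible, though the paper instead constructs a taut map directly from minimal vertex cuts; that is a stylistic difference, not a gap.

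The genuine problem is in your step (locally taut) $\Rightarrow$ (2). Your train track $T$ is the subgraph of non-constant edges with the \emph{inherited} weights from~$W$ and the structure $\tau(f|_T)$, and you claim the triangle inequalities follow from local tautness. This fails when an unmarked vertex~$v$ of~$W$ has $f(v)=s_i$, a leaf of $\Star_k$. There is only one direction at~$s_i$, so every non-constant edge at~$v$ lies in a single gate, violating the requirement of at least two gates at unmarked vertices. Your claimed ``parallel local-tautness argument'' cannot rule this out: pushing $f(v)$ toward~$s_*$ may well \emph{increase} multiplicity (because previously constant edges become non-constant), so local tautness gives no obstruction. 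A concrete counterexample is the tripod with edge weights $(1,1,3)$ mapping to $\Star_3$: the unique taut map sends the central vertex to~$s_3$, the weight-$3$ edge becomes constant, and your $T$ has a single gate at the center.

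The paper's proof of $(1)\Rightarrow(2)$ is substantially different. It observes that preimages of regular values give a nested family of \emph{minimal} vertex cuts, then reduces the weights on slack edges (Lemma~\ref{lem:minimal-weights}) so that every remaining edge lies in some minimal cut, and completes the cut family to cover all edges (Lemma~\ref{lem:complete-nested}). The completed family defines a perturbed map~$g$ whose train-track structure $\tau(g)$, together with the \emph{reduced} weights~$w_0$, satisfies the triangle inequalities (these come from minimality of the cuts, not from local moves). In the tripod example, $e_3$ is slack and its weight is reduced from~$3$ to~$2$; the resulting train track includes $e_3$ with weight~$2$, giving two balanced gates at the center. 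The weight reduction and cut completion are the missing ideas in your approach.
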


\begin{definition}
  A \emph{cut}~$S$ of a graph~$\Gamma$ is a partition of the vertices of the graph
  into two disjoint subsets: $\Verts(\Gamma) = S \sqcup \overline{S}$. The
  corresponding \emph{cut-set} $c(S) = c(\overline{S})$ is the set of edges that
  connect $S$ to~$\overline{S}$. If $\Gamma$ has weights~$w$, the \emph{weight} of the
  cut is $w(S) \coloneqq \sum_{e \in c(S)} w(e)$.

  Two cuts $S_1$ and~$S_2$
  are \emph{nested} if they are disjoint or one is contained in the
  other : $S_1 \cap S_2 = \emptyset$, $S_1 \subset S_2$, or
  $S_2 \subset S_1$.  The definition is unchanged if we replace an
  $S_i$ with~$\overline{S_i}$. A set of cuts is nested if each pair is nested.

  Let $W$ be a $k$-marked weighted graph. A
  \emph{$v_i$-cut} is a subset $S_i \subset \Verts(G)$ so that
  $S_i\cap \{v_1,\dots,v_k\} = \{v_i\}$. A \emph{vertex cut} is
  a $v_i$-cut
  for some~$i$. A
  \emph{minimal} $v_i$-cut is one with minimal weight. Let
  $\mincut_i(w)$ be the weight of a
  minimal $v_i$-cut.
\end{definition}

\begin{lemma}\label{lem:submodular}
  If $S_1$ and~$S_2$ are
  two cuts on the same weighted graph, then
  $w(S_1) + w(S_2) \ge w(S_1 \cap S_2) + w(S_1 \cup S_2)$.
\end{lemma}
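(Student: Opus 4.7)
The plan is to reduce the inequality to a pointwise statement over the edges of $\Gamma$. Writing $w(S) = \sum_{e \in \Edge(\Gamma)} w(e)\,\chi_S(e)$, where $\chi_S(e) \in \{0,1\}$ is the indicator that $e \in c(S)$ (i.e.\ that $e$ has exactly one endpoint in $S$), and noting that $w(e) \ge 0$, it suffices to show that for every edge $e$,
\[
\chi_{S_1}(e) + \chi_{S_2}(e) \;\ge\; \chi_{S_1 \cap S_2}(e) + \chi_{S_1 \cup S_2}(e).
\]
Summing this per-edge inequality against the non-negative edge weights yields the lemma.

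To verify the per-edge inequality, I would partition $\Verts(\Gamma)$ into the four disjoint regions $A = S_1 \cap S_2$, $B = S_1 \setminus S_2$, $C = S_2 \setminus S_1$, and $D = \Verts(\Gamma) \setminus (S_1 \cup S_2)$. For an edge $e = uv$, each of the four indicators $\chi_{S_1}(e)$, $\chi_{S_2}(e)$, $\chi_{S_1 \cap S_2}(e)$, $\chi_{S_1 \cup S_2}(e)$ depends only on which of the regions contain $u$ and $v$. One then runs through the ten cases (up to swapping $u$ and $v$). In every case the left- and right-hand sides agree, except when $\{u,v\}$ splits between $B$ and $C$: there $\chi_{S_1}(e) = \chi_{S_2}(e) = 1$ while $\chi_{S_1 \cap S_2}(e) = \chi_{S_1 \cup S_2}(e) = 0$, giving a strict inequality of $2 \ge 0$.

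There is essentially no obstacle: this is the classical submodularity of the cut function on a graph, and the only content is the routine finite case check. The slightly interesting point worth flagging for later use in the proof of Theorem~\ref{thm:maxflow-mincut} is that equality in the lemma forces no edge to have one endpoint in $B$ and the other in $C$, i.e.\ the symmetric differences $S_1 \setminus S_2$ and $S_2 \setminus S_1$ must be unlinked by edges of positive weight.
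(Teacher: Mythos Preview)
Your proof is correct and is exactly the standard argument the paper has in mind; the paper itself only says ``This is the standard sub-modular property of cuts, and is easy to prove,'' so you have simply spelled out the routine edge-by-edge case check that underlies that remark.
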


\begin{proof}
  This is the standard sub-modular property of cuts, and is easy to prove.
\end{proof}

\begin{lemma}\label{lem:nest-one-end}
  If $W$ is a $k$-marked weighted graph and
  $S_i$ and $S_i'$ are two minimal $v_i$-cuts, then $S_i \cap S_i'$
  and $S_i \cup S_i'$ are also minimal $v_i$-cuts.
\end{lemma}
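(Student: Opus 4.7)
The plan is to combine Lemma~\ref{lem:submodular} with the fact that both $S_i \cap S_i'$ and $S_i \cup S_i'$ are $v_i$-cuts, and then use a double-inequality squeeze to conclude they have minimal weight.

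First I would verify that $S_i \cap S_i'$ and $S_i \cup S_i'$ are in fact $v_i$-cuts. Since $S_i$ meets $\{v_1,\dots,v_k\}$ in exactly $\{v_i\}$, and likewise for $S_i'$, the intersection contains $v_i$ and no other marked vertex, and the union contains $v_i$ and no other marked vertex either. Hence both qualify as $v_i$-cuts, and by definition each has weight at least $\mincut_i(w)$.

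Next I would apply submodularity. By Lemma~\ref{lem:submodular},
\[
w(S_i) + w(S_i') \ge w(S_i \cap S_i') + w(S_i \cup S_i').
\]
By assumption $w(S_i) = w(S_i') = \mincut_i(w)$, while the previous paragraph gives $w(S_i \cap S_i'), w(S_i \cup S_i') \ge \mincut_i(w)$. Therefore
\[
2\mincut_i(w) \ge w(S_i \cap S_i') + w(S_i \cup S_i') \ge 2\mincut_i(w),
\]
so both sides are equalities. Since each summand on the right is at least $\mincut_i(w)$ and they sum to $2\mincut_i(w)$, each must equal $\mincut_i(w)$, i.e., $S_i \cap S_i'$ and $S_i \cup S_i'$ are both minimal $v_i$-cuts.

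There is no real obstacle here; the only thing to be careful about is the bookkeeping that intersecting or unioning two $v_i$-cuts preserves the property of being a $v_i$-cut (so that the minimality lower bound $\mincut_i(w)$ applies on both sides of the submodularity inequality). Once that is checked, the squeeze argument closes the proof immediately.
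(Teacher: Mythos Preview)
Your proof is correct and follows essentially the same approach as the paper: verify that $S_i \cap S_i'$ and $S_i \cup S_i'$ are $v_i$-cuts (hence bounded below by $\mincut_i(w)$), then apply submodularity (Lemma~\ref{lem:submodular}) to squeeze both weights to equal $\mincut_i(w)$. The paper's version is simply more terse.
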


\begin{proof}
  The sets $S_i \cap S_i'$ and $S_i \cup S_i'$ are $v_i$-cuts, so by
  minimality they both have weight at least as large as
  $\mincut_i(w) = w(S_i) = w(S_i')$.  Lemma~\ref{lem:submodular} gives
  the other inequality.
\end{proof}

\begin{lemma}\label{lem:nest-two-ends}
  If $W$ is a $k$-marked weighted graph,
  $S_i$ is a minimal $v_i$-cut, and $S_j$ is a minimal $v_j$ cut for
  $j \ne i$, then $S_i \setminus S_j$ is also a minimal $v_i$-cut and $S_j
  \setminus S_i$ is also a minimal $v_j$-cut.
\end{lemma}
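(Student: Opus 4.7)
The plan is to mimic the proof of Lemma~\ref{lem:nest-one-end}, but applied to the pair $S_i$ and $\overline{S_j}$ rather than $S_i$ and $S_i'$. The trick is that while $S_i$ and $S_j$ cannot both be made smaller simultaneously by intersection (they contain different marked vertices), the complement $\overline{S_j}$ is itself a cut of the same weight as $S_j$, and it contains $v_i$ just like $S_i$ does.

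First I would check the membership conditions. Since $S_i \cap \{v_1,\dots,v_k\} = \{v_i\}$ and $S_j \cap \{v_1,\dots,v_k\} = \{v_j\}$, and $i \ne j$, we have $v_i \in S_i \setminus S_j$, $v_j \in S_j \setminus S_i$, and every other marked vertex lies outside both $S_i$ and $S_j$, hence outside both $S_i \setminus S_j$ and $S_j \setminus S_i$. So $S_i \setminus S_j$ is a $v_i$-cut and $S_j \setminus S_i$ is a $v_j$-cut, and by minimality
\[
w(S_i \setminus S_j) \ge \mincut_i(w) = w(S_i), \qquad w(S_j \setminus S_i) \ge \mincut_j(w) = w(S_j).
\]

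Next I would apply the submodular inequality (Lemma~\ref{lem:submodular}) to $S_i$ and $\overline{S_j}$, giving
\[
w(S_i) + w(\overline{S_j}) \ge w(S_i \cap \overline{S_j}) + w(S_i \cup \overline{S_j}).
\]
Since cutting by a set and by its complement produces the same cut-set, $w(\overline{S_j}) = w(S_j)$ and $w(S_i \cup \overline{S_j}) = w(\overline{S_i \cup \overline{S_j}}) = w(S_j \setminus S_i)$, while $S_i \cap \overline{S_j} = S_i \setminus S_j$. Hence
\[
w(S_i) + w(S_j) \ge w(S_i \setminus S_j) + w(S_j \setminus S_i).
\]

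Combining this with the two lower bounds above forces both inequalities to be equalities: $w(S_i \setminus S_j) = w(S_i) = \mincut_i(w)$ and $w(S_j \setminus S_i) = w(S_j) = \mincut_j(w)$. Thus $S_i \setminus S_j$ and $S_j \setminus S_i$ are minimal cuts of the required type. There is no real obstacle here once one thinks of replacing $S_j$ by its complement; the only thing to be careful about is the bookkeeping of which marked vertex lies in which piece, so that the resulting sets are genuinely $v_i$- and $v_j$-cuts.
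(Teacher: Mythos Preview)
Your proof is correct and follows exactly the same approach as the paper: verify that the differences are vertex cuts (hence bounded below by the mincuts), then apply submodularity to $S_i$ and $\overline{S_j}$ to get the reverse inequality. You have simply written out in more detail what the paper compresses into three sentences.
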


\begin{proof}
  $S_i \setminus S_j$ is a $v_i$-cut and $S_j \setminus S_i$ is a
  $v_j$-cut. By minimality, their weights are at least as large as
  $\mincut_i(w)$ and $\mincut_j(w)$, respectively. Applying
  Lemma~\ref{lem:submodular} to $S_i$ and $\overline{S_j}$
  gives the other inequalities.
\end{proof}

\begin{definition}
  If $W$ is a $k$-marked weighted graph,
  we say that an edge of~$W$ is \emph{slack} if it has non-zero
  weight and is not contained in any minimal vertex cut.
\end{definition}

See
Figure~\ref{fig:mincut} for the next two lemmas.

\begin{lemma}\label{lem:minimal-weights}
  If $W = (\Gamma, w)$ is a weighted graph with $k$ marked vertices,
  then there is a set of weights $w_0 \le w$ on~$\Gamma$ so that
  $W_0 = (\Gamma, w_0)$ has no slack edges and so that for all~$i$,
  $\mincut_i(w) = \mincut_i(w_0)$.
\end{lemma}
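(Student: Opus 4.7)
The plan is to reduce slack edges one at a time. Suppose $e$ is a slack edge of $W$: then $w(e) > 0$ and $e$ lies in no minimal vertex cut. Define
\[
\epsilon_e \coloneqq \sup\bigl\{\,\epsilon \in [0,w(e)] : \mincut_i(w - \epsilon \mathbf{1}_e) = \mincut_i(w) \text{ for all } i\,\bigr\},
\]
where $w - \epsilon\mathbf{1}_e$ agrees with $w$ off $e$ and assigns $w(e)-\epsilon$ to $e$. First I would show $\epsilon_e > 0$. For each $i$ and each minimal $v_i$-cut $S$, slackness of $e$ means $e \notin c(S)$, so $(w-\epsilon\mathbf{1}_e)(S) = w(S) = \mincut_i(w)$. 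Since there are only finitely many cuts, every non-minimal $v_i$-cut has weight at least $\mincut_i(w) + \delta$ for some $\delta > 0$; choosing $\epsilon < \min(\delta, w(e))$ keeps all cut weights $\ge \mincut_i(w)$, so $\mincut_i$ is preserved. Hence $\epsilon_e > 0$.

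Next I would verify that setting $w' \coloneqq w - \epsilon_e\mathbf{1}_e$ strictly reduces the number of slack edges. By construction $\mincut_i(w') = \mincut_i(w)$ for all $i$. Two key observations: (a) Any minimal $v_i$-cut $S$ under $w$ satisfies $e \notin c(S)$ (since $e$ was slack), so $w'(S) = w(S) = \mincut_i(w')$; thus $S$ remains a minimal $v_i$-cut under $w'$. Consequently, every edge that was non-slack under $w$ (having weight zero, or lying in some minimal vertex cut) is still non-slack under $w'$. (b) By the maximality in the definition of $\epsilon_e$, either $w'(e) = 0$ or there is some $i$ and some $v_i$-cut $S \ni e$ (i.e., $e \in c(S)$) that achieves weight $\mincut_i(w')$ under $w'$; in either case $e$ itself is no longer slack. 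Therefore the set of slack edges strictly shrinks.

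Since $\Gamma$ has finitely many edges, iterating this process terminates in finitely many steps with a weight function $w_0 \le w$ having no slack edges, i.e., with $W_0 = (\Gamma, w_0)$ minimal. The equality $\mincut_i(w_0) = \mincut_i(w)$ for all $i$ is preserved throughout, which is the second conclusion.

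The only potentially delicate point is (b): one must check that at $\epsilon = \epsilon_e$, maximality forces either $w'(e) = 0$ or a new minimal vertex cut containing $e$ to appear. This follows because, as $\epsilon$ increases past $\epsilon_e$, some $\mincut_i$ must drop strictly, and the only cuts whose weight decreases with $\epsilon$ are those whose cut-set contains $e$; so at $\epsilon = \epsilon_e$ some such cut must already be attaining the minimum.
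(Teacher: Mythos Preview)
Your proof is correct and follows essentially the same approach as the paper: reduce the weight on a slack edge as far as possible while preserving all $\mincut_i$, then induct on the number of slack edges. Your treatment is in fact slightly more careful than the paper's in explicitly verifying observation~(a), that previously non-slack edges remain non-slack after the reduction.
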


\begin{proof}
  We proceed by induction on the number of slack edges. If there is
  one, pick any slack edge~$e_0$ of~$W$. For $0 \le k \le w(e_0)$,
  define a modified set of weights by
  \[
  w\{e_0\mapsto k\}(e) \coloneqq
  \begin{cases}
    k & e = e_0 \\
    w(e) &\text{otherwise}.
  \end{cases}
  \]
  Let $k_0$ be minimal value so that, for all~$i$,
  $\mincut_i(w\{\hbox{$e_0\mapsto k_0$}\}) = \mincut_i(w)$. Then $k <
  w(e_0)$ and $e$ is not slack with respect to
  $w\{e_0 \mapsto k_0\}$. By induction we can find
  weights~$w_0 \le w\{e_0 \mapsto k_0\} \le w$ on~$\Gamma$ so that
  $(\Gamma, w_0)$ has no slack edges.
\end{proof}

The weights~$w_0$ in Lemma~\ref{lem:minimal-weights} is usually not unique.

\begin{lemma}\label{lem:complete-nested}
  Let $W = (\Gamma, w)$ be a $k$-marked weighted graph with no slack edges
  and let $\cS \subset \cP(\cP(\Verts(W)))$ be a nested set of minimal
  vertex cuts. Then there is a nested set of minimal vertex cuts $\cT
  \supset \cS$ so that every edge of~$W$ with non-zero weight is
  contained in $c(S)$ for some $S \in \cT$.
\end{lemma}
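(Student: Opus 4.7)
The plan is to induct on the number of non-zero-weight edges of~$W$ not covered by any cut-set $c(S)$ with $S \in \mathcal{S}$. The base case is trivial: take $\mathcal{T} = \mathcal{S}$. For the inductive step, fix any uncovered non-zero edge $e_0 = (a,b)$. It suffices to produce a single minimal vertex cut $S^*$ with $e_0 \in c(S^*)$ that is nested with every element of~$\mathcal{S}$, since adjoining $S^*$ to~$\mathcal{S}$ yields a strictly larger nested family satisfying the hypotheses, and the induction then supplies the required~$\mathcal{T}$.

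Constructing $S^*$ is the substantive work. Start with any minimal vertex cut~$S_0$ with $e_0 \in c(S_0)$, which exists by the hypothesis that $W$ is minimal; say $S_0$ is a minimal $v_i$-cut, oriented so that $a \in S_0$ and $b \in \overline{S_0}$. A key observation is that because $e_0 \notin c(T)$ for every $T \in \mathcal{S}$, the endpoints $a$ and~$b$ must lie on the same side of each such~$T$. The plan is to process the elements of~$\mathcal{S}$ in an order compatible with their laminar structure (outermost first) and iteratively modify~$S_0$ using Lemmas~\ref{lem:nest-one-end} and~\ref{lem:nest-two-ends}, maintaining the invariant that the current candidate~$S$ is a minimal vertex cut with $a \in S$ and $b \in \overline{S}$ that is nested with every previously processed element of~$\mathcal{S}$.

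Three modification types handle all cases. When processing $T$: if $T$ and~$S$ are cuts for the same marked vertex~$v_i$, replace $S$ by $S \cap T$ or $S \cup T$ (Lemma~\ref{lem:nest-one-end}), choosing whichever retains $e_0$ in its cut-set; the key observation forces which choice is correct ($S \cap T$ when $a, b \in T$; $S \cup T$ when $a, b \in \overline{T}$). If $T$ is a cut for a different marked vertex~$v_j$ and $a, b \in \overline{T}$, replace~$S$ by $S \setminus T$ (Lemma~\ref{lem:nest-two-ends}); this is a minimal $v_i$-cut disjoint from~$T$ and still contains~$e_0$. If $T$ is for a different marked vertex and $a, b \in T$, \emph{switch} the candidate to $T \setminus S$: by Lemma~\ref{lem:nest-two-ends} this is a minimal $v_j$-cut, it is contained in~$T$, and the direct check that $a \in T \cap S$ gives $a \notin T \setminus S$ while $b \in T \setminus S$ confirms $e_0 \in c(T \setminus S)$.

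The main obstacle is verifying that each modification preserves nestedness with all previously processed elements of~$\mathcal{S}$. Processing outermost first is the crucial device: most modifications produce a cut contained in the current $T$, which by outermost-first ordering is itself either contained in or disjoint from every previously processed cut, so nestedness follows automatically; the potentially problematic case of a union-type modification ($S \cup T$) requires an additional check that every outer cut~$T'$ already nested with~$S$ remains nested with $S \cup T$, which a short case analysis on the four nested relations between $T'$ and~$T$ (itself nested in~$\mathcal{S}$) establishes. After processing all of~$\mathcal{S}$, the final candidate is the required~$S^*$, completing the induction.
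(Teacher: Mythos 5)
Your proof is correct and takes essentially the same approach as the paper: induct on the number of uncovered nonzero-weight edges, invoke minimality to find a minimal vertex cut through an uncovered edge, and then repeatedly apply Lemmas~\ref{lem:nest-one-end} and~\ref{lem:nest-two-ends} to replace it with one nested against all of~$\cS$. The paper states this last step in one line; you supply the explicit processing order (outermost-first) and case analysis that makes ``repeatedly applying'' precise, which is exactly the intended argument (your stated invariant ``$a\in S$, $b\in\overline S$'' should really be ``$e_0\in c(S)$'' since the $T\setminus S$ step swaps roles, and the $S\setminus T$ case also needs a word beyond ``contained in~$T$,'' but both are cosmetic).
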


Here $\cP(X)$ is the power set of~$X$.

\begin{figure}
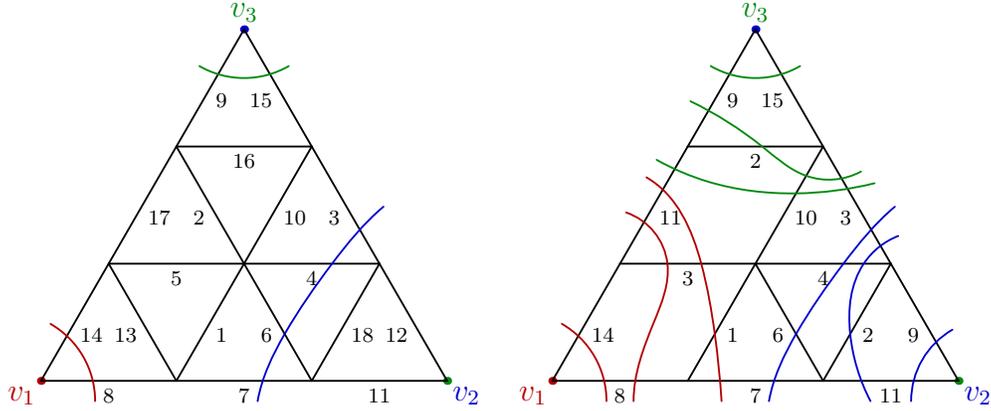

  \[
  \mfigb{cuts-1}
  \quad\mfigb{cuts-2}
  \]
  \caption{Finding minimum cuts and maximum flows near a vertex. Left:
    a weighted graph~$W$ with three marked vertices, with the weights
    indicated. Minimal cuts that separate the vertices are
    marked; these are used to construct a taut map from~$W$ to
    $\Star_3$. Right: a weighted graph~$W_0$ carried by~$W$, with no
    slack edges and the same $\mincut_i$ for each~$i$. The minimal
    cuts on~$W$
    have been extended to a complete nested family of minimal
    cuts. These are used to construct a train track carried by~$W$.
  }
  \label{fig:mincut}
\end{figure}

\begin{proof}
    We proceed by induction on the size of
    $\Edges(W) \setminus \bigcup_{S \in \cS} c(S)$. If there is at
    least one edge~$e_1$ in this set with non-zero weight, it is
    contained in some minimal vertex cut-set $c(S_1)$ since $W$ has no
    slack edges. Then, by repeatedly applying
    Lemmas~\ref{lem:nest-one-end} and~\ref{lem:nest-two-ends}, we can
    find another minimal vertex cut $S_1'$ so that $e \in c(S_1')$ and
    $S_1'$ is
    nested with respect to all $S \in \cS$. By
    induction, $\cS \cup \{S_1'\}$ can be completed to the desired
    set~$\cT$.
\end{proof}

\begin{proof}[Proof of Proposition~\ref{prop:maxflow-mincut-local}]
  We first prove the existence of a taut map.
  For each $i$, pick a minimal $v_i$-cut~$S_i$, and let $S_i' = S_i
  \setminus \bigcup_{j \ne i} S_j$. By
  Lemma~\ref{lem:nest-two-ends}, $\{S_i'\}$ is a nested collection
  of minimal $v_i$-cuts. Define $f \co W \to \Star_k$ by
  mapping all vertices in $S_i'$ to~$s_i$, all vertices not in any
  $S_i'$ to $s_*$, and all edges to
  reduced paths connecting their endpoints. (Thus $f$ is
  constant on any edge
  not in any $c(S_i')$.) Then
  $f$ is taut
  since the $S_i'$ are minimal.

  Proposition~\ref{prop:tt-curve} and Lemma~\ref{lem:taut-curve} tell
  us that (\ref{item:local-tt})
  implies~(\ref{item:local-curve}), and
  Lemma~\ref{lem:carry-curve-taut} tells us that
  (\ref{item:local-curve}) implies (\ref{item:local-taut}), so it
  remains to prove that~(\ref{item:local-taut})
  implies~(\ref{item:local-tt}).

  Suppose that $f$ is taut. For each regular value
  $y\in [s_*, s_i]\subset \Star_k$, consider the $v_i$-cut
  $S_y = \{v \in \Verts(\Gamma) \mid f(v) \in [s_i,y]\}$. Then $S_y$
  is a minimal vertex cut, since $f$ is taut.
  Furthermore, if $z$ is
  another regular value of~$f$ on any edge of $\Star_k$, then $S_y$
  and $S_z$ are nested. Let
  $\cS$ be the set of all $S_y$ for all regular~$y$, together with the
  singleton endpoints $\{v_i\}_{i=1}^k$. This is
  a nested set of minimal cuts on~$W$.

  Use Lemma~\ref{lem:minimal-weights} to
  find weights $w_0 \le w$ so that $(\Gamma, w_0)$ has no slack edges.
  Let $\Gamma_0 \subset \Gamma$ consist of the edges~$e$ with
  $w_0(e) \ne 0$, let $W_0 = (\Gamma_0,w_0)$, and let $t \co W_0
  \hookrightarrow W$ be the inclusion.
  By
  Lemma~\ref{lem:complete-nested}, $\cS$ can be completed to a nested
  system of cuts~$\cT$ on~$W_0$ so that every edge is in at least
  one cut-set. For one example of $W_0$ and~$\cT$, see the right of
  Figure~\ref{fig:mincut}.

  For each $i$, let the distinct
  $v_i$-cuts in $\cT$ be
  \[
  \{v_i\} = S_{i,0} \subsetneq S_{i,1} \subsetneq \dots \subsetneq S_{i,n_i}.
  \]
  For each $i$ and for $0 \le j \le n_i$, pick distinct points
  $x_{i,j} \in [s_i,s_*) \subset \Star_k$ with $x_{i,0} = s_i$ and with the $x_{i,j}$ in
  order. Then define a map $g \co W_0
  \to \Star_k$ on vertices by
  \[
  g(v) =
  \begin{cases}
    s_i & v = v_i\\
    x_{i,j} & v \in S_{i,j} \setminus S_{i,j-1}\\
    s_* & v \in \Verts(W_0) \setminus \bigcup_{S \in \cT} S.
  \end{cases}
  \]
  (These cases are exclusive since $\cT$ is nested.)
  On an edge~$e$ of~$W_0$, define $g(e)$ to be a reduced path
  connecting its
  endpoints. Since $\cT$ is a complete set of cuts, $g$ is not
  constant on any edge. Let $\tau_0 = \tau(g)$. Then $(W_0,\tau_0)$ is a
  balanced train track: the triangle
  inequalities follow since
  the $S_{i,j}$ are minimal cuts.

  By an appropriate
  choice of the $x_{i,j}$, the train-track map~$g$ can be made to be
  an arbitrarily small
  perturbation of~$f \circ t$, so $f \circ t$ is also a train-track
  map. If $e$ is an edge on which the original map~$f$ is not
  constant, then $e$ is not slack and $w(e) = w_0(e)$,
  so $n_{f \circ t} = n_f$ as desired.
\end{proof}

\subsection{General flows}
\label{sec:general-flows}

We now use these local models to prove Theorem~\ref{thm:maxflow-mincut}.

\begin{definition}\label{def:local-model}
  Let $f \co W \to \Gamma$ be a PL map from a marked weighted
  graph~$W$ to a marked graph~$\Gamma$. Pick $y \in \Gamma$ and let $N \subset \Gamma$
  be a closed regular neighborhood of~$y$. Then the
  \emph{local model for $f$ at~$y$} is the map
  \[
  f^{-1}(N)/\mathord{\sim} \longrightarrow N,
  \]
  where
  \begin{itemize}
  \item $N$ is considered as a marked graph (equivalent to $\Star_k$)
    with marked points at $\bdy N$;
  \item $\sim$ is the equivalence relation that identifies two points
    in $f^{-1}(\bdy N)$ if they map to the same point in $\bdy N$; and
  \item $f^{-1}(N)/\mathord{\sim}$ is considered as a $k$-marked weighted
      graph, with weights inherited from~$W$.
  \end{itemize}
\end{definition}

\begin{lemma}\label{lem:exist-local-taut}
  Let $[f] \co W \to \Gamma$ be a homotopy class of maps from a
  marked weighted graph~$W$ to a marked graph~$\Gamma$. Then there is
  a locally taut map in~$[f]$.
\end{lemma}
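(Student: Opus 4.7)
The plan is to iteratively apply the local result Proposition~\ref{prop:maxflow-mincut-local} at each singular value of~$f$, controlling the iteration by a global length. Begin by replacing $f$ with a strictly reduced representative using Proposition~\ref{prop:reduced}. A preliminary observation simplifies the task: at any regular value $y$ of~$f$, a regular neighborhood~$N$ has $f^{-1}(N)$ a disjoint union of arcs each mapping linearly onto~$N$, so $n_f$ is already minimal locally and local tautness is automatic. Hence local tautness needs only be established at the (finitely many) singular values of~$f$, which in particular include all vertices of~$\Gamma$.

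For such a singular value~$y$, choose a regular neighborhood $N_y$, disjoint from other chosen neighborhoods, form the local model $M_y \to N_y$ from Definition~\ref{def:local-model}, and apply Proposition~\ref{prop:maxflow-mincut-local} to obtain a taut representative of the canonical homotopy class from~$M_y$ to the star graph~$N_y$. Lifting this taut representative back, we modify~$f$ on $f^{-1}(N_y)$ while keeping it fixed on the complement; this produces a new map $f' \in [f]$ satisfying $n_{f'} \le n_f$ pointwise on~$\Gamma$, with strict inequality on a set of positive measure whenever $f$ was not already locally taut at~$y$.

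Fix an arbitrary length structure on~$\Gamma$ with every edge of length one. By Lemma~\ref{lem:length-defs}, the quantity $\ell(f_k) = \int_{\Gamma} n_{f_k}(y)\,dy$ is bounded below by zero and strictly decreases under each successful local replacement in an iteration $f = f_0, f_1, f_2, \dots$. The main obstacle is termination: the space of PL maps is not locally compact, so a priori the infimum might not be attained along the sequence and the combinatorial complexity could accumulate. I would address this by observing that each local replacement uses a prescribed taut form—built from a train-track-carried weighted multi-curve as in the proof of Proposition~\ref{prop:maxflow-mincut-local}—whose combinatorial complexity is controlled by that of~$f_0$ and the local $k$-valences in~$\Gamma$. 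Consequently all iterates $f_k$ remain strictly reduced and lie in a compact space of reduced PL maps of uniformly bounded combinatorial type, from which a subsequential limit $f^{\ast} \in [f]$ can be extracted. Any failure of local tautness of $f^{\ast}$ at some~$y$ would, by the same local replacement, produce a strictly $\ell$-smaller map in the same compact space, contradicting the minimality of $\ell(f^{\ast})$. Therefore $f^{\ast}$ is the desired locally taut representative.
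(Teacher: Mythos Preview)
Your overall strategy---iterate the local replacement from Proposition~\ref{prop:maxflow-mincut-local} at singular values and control the process by a monotone global quantity---matches the paper's. The difficulty, as you correctly identify, is termination, and here your argument has genuine gaps.

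First, the claim that all iterates lie in a ``compact space of reduced PL maps of uniformly bounded combinatorial type'' is asserted but not established. The local model at step~$k$ is built from $f_k$, not $f_0$, and after a local replacement the new map may have additional singular values. Without some normalization there is no evident uniform bound on the combinatorial complexity of the $f_k$.

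Second, even granting compactness, your limit argument does not close. A subsequential limit $f^\ast$ of the $f_k$ satisfies $\ell(f^\ast) = \inf_k \ell(f_k)$ (assuming continuity of~$\ell$), but a local replacement of $f^\ast$ produces some map~$g$ with $\ell(g) < \ell(f^\ast)$. Since $g$ is not one of the $f_k$, this does not contradict anything. You would need $f^\ast$ to minimize $\ell$ over a space that is \emph{closed under local replacement}, and you have not established this.

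The paper sidesteps both issues with a discreteness argument you are missing. After each local fix, it homotops the map so that vertices of~$W$ map to vertices of~$\Gamma$ and $n_f$ is constant on each edge of~$\Gamma$ (by ``spreading out'' the interval of minimal multiplicity on each edge). In this normalized form, each value $n_{f_k}(e)$ is a non-negative integer combination of the edge weights of~$W$, bounded above by $n_{f_0}(e)$. There are only finitely many such combinations, and each iteration strictly decreases $n_f$ on at least one edge, so the process terminates after finitely many steps---no compactness or limit is needed.
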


\begin{proof}
  Suppose~$f$ is any PL representative for its homotopy class. We can
  modify~$f$ to send vertices to vertices without increasing
  $n_f$, as follows. For each edge~$e$ of~$\Gamma$, look at the
  division of $e$ into intervals according to the value of~$n_f$. Pick
  one of these intervals on which $n_f$ is minimal (among the values
  of $n_f$ that appear), and spread out this interval by a homotopy
  until it covers~$e$. This gives us an initial map~$f_0$.

  If $f_0$ is not locally taut, then there is some vertex~$v$
  of~$\Gamma$ so that the local model of~$f_0$ at~$v$ is not
  taut. By Proposition~\ref{prop:maxflow-mincut-local}, there is a taut map in
  the homotopy class of the local model. Let $f_0'$ be $f_0$ with the
  map replaced by its taut model near~$v$. There is at
  least one point~$x$ near~$v$, regular for both maps, with
  $n_{f_0'}(x) < n_{f_0}(x)$. Homotop
  $f_0'$ as above, spreading out segments of minimal multiplicity,
  to construct a map $f_1$ that sends vertices to vertices with $n_{f_1} \le n_{f_0}$
  everywhere and $n_{f_1}(x) < n_{f_0}(x)$.

  If $f_1$ is not locally taut, repeat the process, with $f_1$ in
  place of $f_0$. Our
  initial representative $f_0$ gives an upper bound on $n_{f_i}(e)$ for
  all~$e$. There are only finitely many
  linear combinations of the non-zero weights of edges of~$W$ to get a
  value less
  than this bound. At each step $n_f$
  strictly decreases on at least one edge, so the process terminates
  in finite time.
\end{proof}

\begin{proof}[Proof of Theorem~\ref{thm:maxflow-mincut}]
  We first prove the equivalence of the four conditions on taut
  maps. (\ref{item:mf-taut}) implies~(\ref{item:mf-locally-taut}) is
  obvious.

  To see that (\ref{item:mf-locally-taut}) implies~(\ref{item:mf-tt}),
  suppose $f$ is locally taut.
  Then for each singular value $y \in \Gamma$, the local model for $f$
  at~$y$ carries a balanced train track compatible with~$f$ by
  Proposition~\ref{prop:maxflow-mincut-local}. Define a
  balanced train track~$T$ and map $t \co T \to W$ by
  assembling these local models following the pattern of~$W$,
  leaving $W$ unchanged outside of the local models. Then $(T,t)$ is
  the desired train track carried by~$W$, with $T$ a subgraph of~$W$.

  Proposition~\ref{prop:tt-curve} shows that (\ref{item:mf-tt})
  implies~(\ref{item:mf-curve}).

  Lemma~\ref{lem:carry-curve-taut} shows that (\ref{item:mf-curve})
  implies~(\ref{item:mf-taut}).

  Now if $[f]$ is any homotopy class, by
  Lemma~\ref{lem:exist-local-taut} there is a locally taut element
  of~$[f]$, which by the above equivalences is also globally taut.

  The statements about saturation follow immediately, and the
  multi-curve~$c$ factors through~$t$.
\end{proof}

\subsection{Connection to max-flow/min-cut}
\label{sec:conn-maxflow-mincut}
For some intuition,
we can connect Theorem~\ref{thm:maxflow-mincut} and
Proposition~\ref{prop:maxflow-mincut-local} to a statement that looks
more like the classical max-flow/min-cut problem
(Example~\ref{examp:mincut}). For simplicity, we
restrict to the local setting of
Proposition~\ref{prop:maxflow-mincut-local}. Given a graph~$\Gamma$ with
$k$ marked points $\{v_i\}_{i=1}^k$, define a \emph{flow} between
$v_i$ and~$v_j$ to be a marked weighted multi-curve $(C,c)$ on~$\Gamma$, with
each component a marked interval with one endpoint mapping to~$v_i$
and the other
mapping to~$v_j$. Such a flow has multiplicities $n_c$ on each edge as
usual, as well as a total weight $w(c)$, the sum of the weights of all
components of~$C$.

\begin{corollary}\label{cor:maxflow-mincut-std}
  Let $W$ be a weighted graph with $k$ marked points
  $\{v_i\}_{i=1}^k$. Then we can find
  \begin{itemize}
  \item for $1 \le i \le k$, a $v_i$-cut $S_i$, and
  \item for $1 \le i < j \le k$, a flow $c_{ij} = c_{ji}$ between $v_i$
    and~$v_j$
  \end{itemize}
  so that
  \begin{itemize}
  \item the collection of all flows $c_{ij}$ is carried by~$W$: for
    each edge~$e$ of~$\Gamma$,
    \begin{equation}\label{eq:flow-carry}
    \sum_{i < j} n_{c_{ij}}(e) \le w(e),
    \end{equation}
  \item the total flow into a vertex equals the weight of the cut: for
    each $i$,
    \begin{equation}\label{eq:flow-cut}
    \sum_{j \ne i} w(c_{ij}) = w(S_i).
    \end{equation}
  \end{itemize}
  In this situation, for each~$i$, $w(S_i)$ is minimal and
  $\sum_{j \ne i} w(c_{ij})$ is maximal.
\end{corollary}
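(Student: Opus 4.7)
The plan is to recognize the corollary as a reorganization of the data produced by Proposition~\ref{prop:maxflow-mincut-local} applied to the canonical homotopy class of marked maps $f \co W \to \Star_k$. The main combinatorial work has already been done there; the remaining obstacle, which I expect to be purely bookkeeping, is to partition the resulting multi-curve by which pair of leaves it connects and to read off compatible cuts from the level sets of~$f$.

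By the proposition, there is a taut $f$ in the canonical class, and the equivalence of~(\ref{item:local-taut}) and~(\ref{item:local-curve}) produces a marked weighted multi-curve $(C,c)$ carried by $W$ with $f \circ c$ taut and $n_{f \circ c} = n_f$. Since $\Star_k$ is a tree, tautness forces every component of~$C$ to be an interval whose endpoints map to two \emph{distinct} marked leaves $s_i, s_j$: a circle component, or an interval with both endpoints at the same $s_i$, would be null-homotopic in $\Star_k$ and, being reduced, constant after~$f$; such components contribute nothing to $n_{f\circ c}$ and can be discarded (at worst loosening the carrying condition). Group the surviving components by unordered pair of leaves and write $c = \bigsqcup_{i<j} c_{ij}$, so each $c_{ij}$ is a flow from $v_i$ to $v_j$. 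For the cuts, for each~$i$ choose a regular value $y_i$ in the interior of the edge $[s_i, s_*] \subset \Star_k$ that is close enough to $s_i$ that no other marked leaf lies in $[s_i, y_i]$, and set
\[
S_i \coloneqq \{\,v \in \Verts(W) \mid f(v) \in [s_i, y_i]\,\}.
\]
Then $S_i$ is a $v_i$-cut, and since $y_i$ is regular and a taut map is edge-reduced on positive-weight edges (Lemma~\ref{lem:taut-reduced} and Proposition~\ref{prop:min-reduced}), every positive-weight edge in $c(S_i)$ crosses $y_i$ exactly once, yielding $w(S_i) = n_f(y_i)$.

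The two identities now drop out. The carrying $\WR(c) \le 1$ reads $\sum_{i<j} n_{c_{ij}}(e) = n_c(e) \le w(e)$ on every edge, which is~\eqref{eq:flow-carry}. At the regular value~$y_i$, precisely the components of~$C$ with an endpoint at~$v_i$ cross $y_i$, each exactly once, so
\[
\sum_{j\ne i} w(c_{ij}) = n_{f\circ c}(y_i) = n_f(y_i) = w(S_i),
\]
which is~\eqref{eq:flow-cut}. Finally, minimality of $w(S_i)$ and maximality of $\sum_{j\ne i} w(c_{ij})$ come from elementary weak duality: for any $v_i$-cut $S_i'$, every interval component of~$C$ ending at~$v_i$ must cross $c(S_i')$, giving $\sum_{j\ne i} w(c_{ij}) \le w(S_i')$; conversely any flow carried by~$W$ from~$v_i$ to the other marked vertices has total weight at most $w(S_i)$, since each of its components must cross $c(S_i)$. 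The displayed equality forces both inequalities to be simultaneously tight.
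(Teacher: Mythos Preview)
Your proof is correct and follows essentially the same route as the paper: invoke Proposition~\ref{prop:maxflow-mincut-local} to get a taut $f$ and a carried multi-curve $(C,c)$ with $n_{f\circ c}=n_f$, sort the components by endpoint pair to obtain the $c_{ij}$, and take $S_i = S_{y_i}$ from a regular value on the $i$th leg. The paper is terser (it points back to the proof of the proposition for the cut $S_{y_i}$ and simply says minimality ``follows from tautness''), whereas you spell out why positive-weight edges cross $y_i$ exactly once and give an explicit weak-duality argument; the extra detail is harmless, though the citation of Proposition~\ref{prop:min-reduced} is stray and Lemma~\ref{lem:taut-subdomain} together with Lemma~\ref{lem:taut-reduced} already suffices.
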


\begin{proof}
  Let $f$ be a taut map in the canonical homotopy class of maps
  from~$W$ to $\Star_k$, as given by
  Proposition~\ref{prop:maxflow-mincut-local}. By that proposition,
  $W$ carries a marked weighted multi-curve $(C,c)$ so that $f \circ c$ is
  taut and $n_{f\circ c} = n_{f}$. The only non-trivial homotopy
  classes of marked multi-curves on $\Star_k$ are given by paths connecting
  $s_i$ to $s_j$ for some $i \ne j$. Let the flow $c_{ij}$ be the flow
  given by those components of $(C,c)$ that connect $v_i$
  to~$v_j$. Then Equation~\eqref{eq:flow-carry} says that $W$ carries
  $(C,c)$.

  On the other hand, for each~$i$, let $y_i$ be a regular value on
  $[s_*,s_i]$, and let $S_i$ be $S_{y_i}$ as defined in the proof of
  Proposition~\ref{prop:maxflow-mincut-local}. Then the equality
  $n_{f \circ c}(y) = n_f(y)$ is equivalent
  to Equation~\eqref{eq:flow-cut}. Minimality of $w(S_i)$ and
  maximality of $\sum_{j \ne i} w(c_{ij})$ both follow from tautness,
  and indeed are easy to deduce from Equation~\eqref{eq:flow-cut}.
\end{proof}

\subsection{Strongly reduced maps}
\label{sec:strong-red}

For later finiteness statements, we introduce a stronger notion of
``reduced'' for maps between
general marked graphs without a weight structure.

\begin{definition}\label{def:strongly-reduced}
  A map $f \co \Gamma_1 \to \Gamma_2$ between marked graphs is
  \emph{strongly reduced} if it is taut for some choice of positive
  weights on~$\Gamma_1$. See Figure~\ref{fig:reduced} for a non-example.
\end{definition}

\begin{figure}
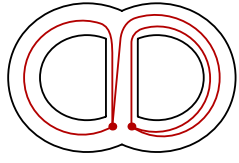

  \[
  \mfigb{reduced-3}
  \]
  \caption{A map from an eyeglass graph to a theta graph that is
    reduced but not strongly reduced.}
  \label{fig:reduced}
\end{figure}

\begin{lemma}\label{lem:strong-red-curve}
  Let $f \co \Gamma_1 \to \Gamma_2$ be an edge-reduced map. Then
  $f$ is strongly reduced iff for each
  edge~$e$ of~$\Gamma_1$, there is a
  reduced curve $(C_e,c_e)$ on~$\Gamma_1$ that
  runs over~$e$ and so that $f \circ c$ is also
  reduced. Furthermore, the curves can be chosen to run
  over each edge of~$\Gamma_1$ at most twice.
\end{lemma}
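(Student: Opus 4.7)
The plan is to derive both directions from Theorem~\ref{thm:maxflow-mincut} together with Proposition~\ref{prop:tt-curve}. The key translation is that ``strongly reduced'' is exactly condition~(\ref{item:mf-taut}) of that theorem for some positive choice of weights, while the existence of the curves in the statement comes out of condition~(\ref{item:mf-curve}).

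For the forward direction, starting from positive weights $w$ on $\Gamma_1$ for which $f$ is taut, I would invoke Theorem~\ref{thm:maxflow-mincut}(\ref{item:mf-tt}) to obtain a weighted train track $t\co T \hookrightarrow \Gamma_1$ realized as a subgraph, with $f \circ t$ a train-track map into $\Gamma_2$ (with the discrete structure) and with $t$ saturating every edge of $\Gamma_1$ on which $f$ is non-constant; in particular each such edge sits in $T$ with its full weight $w(e)>0$. Applying Proposition~\ref{prop:tt-curve} to $T$ then produces a weighted multi-curve $c\co C \to T$ that saturates $T$, is a train-track map, and whose components cover each edge of $T$ at most twice. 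Because $c$ and $f \circ t$ are train-track maps, so is their composition $f \circ t \circ c$, and hence it is locally injective on the edges of $C$; the same is true of $t \circ c$ since $t$ is injective. Consequently both $t \circ c$ and $f \circ t \circ c$ are reduced as multi-curves. By saturation, for each non-constant edge $e$ of $\Gamma_1$ at least one component $C_e \subset C$ runs over $e$, and the restriction $(t \circ c)|_{C_e}$ is the desired reduced curve.

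For the converse, given reduced curves $\{c_e\co C_e \to \Gamma_1\}_e$ indexed by the non-constant edges, with each $f \circ c_e$ reduced, I would assemble them into the weighted multi-curve $(C, c) := \bigsqcup_e (C_e, c_e)$ carrying unit weights, and then put weights on $\Gamma_1$ by $w(e') := \max(n_c(e'), \epsilon)$ for some fixed $\epsilon \in (0,1)$. These weights are strictly positive, the bound $n_c \le w$ ensures that $(\Gamma_1,w)$ carries $(C,c)$, and Lemma~\ref{lem:taut-curve} makes $f \circ c = \bigsqcup_e f \circ c_e$ taut. At any regular value $y \in \Gamma_2$ every preimage sits in a non-constant edge $e'$, and that edge is covered by $c_{e'}$ so that $n_c(e') \ge 1 > \epsilon$; hence $w(e') = n_c(e')$ on all such edges and so $n_{f \circ c}(y) = n_f(y)$. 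The equivalence of~(\ref{item:mf-taut}) and~(\ref{item:mf-curve}) in Theorem~\ref{thm:maxflow-mincut} then shows that $f$ is taut with respect to the positive weights~$w$, i.e., strongly reduced.

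The ``at most twice'' clause is inherited directly from Proposition~\ref{prop:tt-curve}. The main technical point I expect to verify is that each restriction $(t \circ c)|_{C_e}$ is genuinely a reduced curve on $\Gamma_1$, not merely edge-reduced; this reduces to the observation that on a connected one-manifold a PL map is reduced exactly when it is locally injective on each edge, which is precisely what the train-track condition on $c$ guarantees.
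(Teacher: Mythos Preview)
Your proof is correct and follows essentially the same route as the paper: the biconditional is unpacked from the equivalence of conditions~(\ref{item:mf-taut}) and~(\ref{item:mf-curve}) in Theorem~\ref{thm:maxflow-mincut}, and the ``at most twice'' clause is read off from Proposition~\ref{prop:tt-curve}. Your version is simply more explicit---in particular, the weight construction $w(e')=\max(n_c(e'),\epsilon)$ for the converse direction and the verification that $n_{f\circ c}=n_f$ are details the paper leaves to the reader.
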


\begin{proof}
  If $f$ is strongly reduced, then let $w \in \Wgt^+(\Gamma_1)$ be a set
  of weights for which $f$ is taut. Then
  the connected components of the multi-curve from
  condition~(\ref{item:mf-curve}) of Theorem~\ref{thm:maxflow-mincut}
  is the desired curve for each edge, which by
  Proposition~\ref{prop:tt-curve} may be chosen to run
  over each edge at most twice.

  Conversely, if there are curves $(C_e,c_e)$ as in the statement, let
  $W = \bigsqcup_e C_e$ be their union with the evident map
  $c \co W \to \Gamma_1$ and with constant weight~$\mathbf{1}$ on each
  component. Then $f$ is taut with respect to $n_c(\mathbf{1})$.
\end{proof}

\begin{definition}\label{def:comb-type}
  Given an edge-reduced PL map $f \co \Gamma_1 \to \Gamma_2$ between
  marked graphs, the \emph{combinatorial type} of~$f$ consists of
  the following discrete data.
  \begin{enumerate}
  \item For each vertex~$v$ of~$\Gamma_1$, record whether $f(v)$ is on a
    vertex or in the interior of an edge of~$\Gamma_2$, and which vertex
    or edge it lies on.
  \item For each oriented edge~$\vec e$ of~$\Gamma_1$, record the
    \emph{edge-path} of $f(\vec e)$, the reduced
    sequence of oriented edges of~$\Gamma_2$ that $f(\vec e)$ travels
    over. This edge-path may start and/or end with a partial edge,
    depending on whether the endpoints of $\vec e$ map to vertices or
    edges. There are degenerate cases when $\vec e$ maps to a single
    vertex or stays within the interior of a single edge.
  \end{enumerate}
\end{definition}
The combinatorial type of~$f$ determines the homotopy class
of~$[f]$.

\begin{proposition}\label{prop:strong-red-finite}
  Let $[f] \co \Gamma_1 \to \Gamma_2$ be a homotopy class of maps
  between marked graphs.  Then there are only finitely many
  combinatorial types of strongly reduced maps
  in~$[f]$.
\end{proposition}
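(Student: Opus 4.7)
The plan is to bound, in terms of the homotopy class $[f]$ alone, the combinatorial length of each edge-path $f(\vec e)$ for every strongly reduced representative $f \in [f]$. Once such a bound $M$ is in hand, there are only finitely many reduced sequences of oriented edges of $\Gamma_2$ of combinatorial length at most~$M$, and only finitely many choices of vertex or edge of $\Gamma_2$ for each vertex of $\Gamma_1$ to land on, so the combinatorial type must lie in a finite set.

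To produce the bound, I would let $\mathcal{C}$ denote the set of homotopy classes of reduced curves on $\Gamma_1$ (that is, marked maps from $S^1$ or from an interval with marked endpoints) whose reduced representative runs over each edge of $\Gamma_1$ at most twice. Finiteness of $\mathcal{C}$ is immediate, since such a representative has combinatorial length at most $2\abs{\Edge(\Gamma_1)}$ and there are only finitely many edge-paths of bounded length in $\Gamma_1$. For each $[c] \in \mathcal{C}$, the class $[f \circ c]$ on $\Gamma_2$ depends only on $[f]$ and~$[c]$, so the combinatorial length $L([c])$ of its reduced representative is a homotopy invariant depending only on $[f]$. Set $M \coloneqq \max_{[c] \in \mathcal{C}} L([c])$, a finite constant attached to $[f]$.

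The key step invokes Lemma~\ref{lem:strong-red-curve}. Given a strongly reduced $f \in [f]$ and an edge $e$ of $\Gamma_1$ on which $f$ is not constant, the lemma supplies a reduced curve $c_e$ on $\Gamma_1$ with $n_{c_e}(e) \ge 1$, covering each edge of $\Gamma_1$ at most twice (so $[c_e] \in \mathcal{C}$), and such that $f \circ c_e$ is itself reduced. Because $f \circ c_e$ is already reduced, its combinatorial length equals $L([c_e])$; on the other hand this length is
\[
\sum_{e'} n_{c_e}(e')\,\abs{f(e')} \;\ge\; n_{c_e}(e)\,\abs{f(e)} \;\ge\; \abs{f(e)},
\]
where $\abs{f(e')}$ denotes the combinatorial length of the edge-path of $f$ on~$e'$. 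Hence $\abs{f(e)} \le M$ for every non-constant edge; on the remaining edges the edge-path is trivial. Combining this uniform bound on edge-paths with the finitely many possible placements of each vertex, the list of combinatorial types in $[f]$ is finite.

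The main subtlety is the observation that strong reducedness allows each edge to be witnessed by a bounded-complexity curve $c_e$ whose image $f \circ c_e$ is already reduced — this is what converts the combinatorial length of $f(e)$ into the homotopy invariant $L([c_e])$. Without this, $\abs{f(e)}$ could a priori be arbitrarily large for representatives in a fixed homotopy class (consider an edge that wraps many times and then unwinds), and it is precisely the non-existence of such cancellation in a strongly reduced map that makes $[c_e]$ range over the finite set $\mathcal{C}$.
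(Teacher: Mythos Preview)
Your proof is correct and follows essentially the same approach as the paper's: both invoke Lemma~\ref{lem:strong-red-curve} to witness each non-constant edge by a reduced curve of bounded complexity whose image under~$f$ is already reduced, and then use that the reduced representative of~$[f\circ c]$ is determined by the homotopy class. The only cosmetic difference is that the paper phrases the conclusion as ``the edge-path of~$f(\vec e)$ is a sub-path of the edge-path of~$f\circ c$'', while you extract an explicit numerical bound~$M$ on~$\abs{f(e)}$; these are equivalent, modulo a bounded correction for partial edges at the endpoints of~$f(e')$, which does not affect finiteness.
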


\begin{proof}
  Let $g \in [f]$ be strongly reduced, and let~$\vec e$ be an oriented
  edge
  of~$\Gamma_1$. We must show that there is a finite set of possible
  edge-paths for $f(\vec e)$. For this, let $(C,c)$ be the curve
  running over~$e$ as in
  Lemma~\ref{lem:strong-red-curve}. Since
  $(C,c)$ runs over each edge of~$\Gamma_1$ at most twice, there are
  only finitely many possibilities for it. Furthermore, $[f \circ c]$
  is determined by $[f]$ and $[c]$, so there are only finitely many
  edge-paths in~$\Gamma_2$ arising from $[f \circ c]$. The edge-path
  of~$f(\vec e)$
  must be a sub-path of the edge-path of $f\circ c$, and again there
  are only finitely many possibilities.
\end{proof}

\begin{remark}
  Proposition~\ref{prop:strong-red-finite} is false if we look at
  reduced maps rather than strongly reduced maps. (The map $f$ can be
  ``spun'' around a taut cycle of~$\Gamma_1$, as in Figure~\ref{fig:reduced}.)
\end{remark}

\subsection{Restricting the domain and range}
\label{sec:taut-restricting}

\begin{lemma}\label{lem:taut-subdomain}
  Let $f \co W \to \Gamma$ be a map from a marked weighted graph to a
  marked graph. Let $W_0 \subset W$ be the closure of the subset of~$W$ where
  $w(x) \ne 0$. Then $f$ is taut iff the
  restriction $f|_{W_0} \co W_0 \to \Gamma$ is taut.
\end{lemma}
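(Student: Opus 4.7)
The plan is to reduce the question to the identity $n_f = n_{f|_{W_0}}$ together with an equality of the two homotopy-class infima $n_{[f]} = n_{[f|_{W_0}]}$. The first identity is immediate: since the weights are piecewise constant and $W_0$ is the closure of the set where $w \ne 0$, the weight function vanishes identically on each edge of $W$ not contained in $W_0$. Hence the sum $n_f(y) = \sum_{x \in f^{-1}(y)} w(x)$ only sees preimages lying in $W_0$, giving $n_f(y) = n_{f|_{W_0}}(y)$ at every regular value.

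The nontrivial step is the equality $n_{[f]} = n_{[f|_{W_0}]}$. One inequality is straightforward: any $g \in [f]$ restricts to an element $g|_{W_0} \in [f|_{W_0}]$ (the restriction of the marked homotopy to $W_0$ is again a marked homotopy, where we give $W_0$ the marked points it inherits from~$W$), and again $n_g = n_{g|_{W_0}}$, yielding $n_{[f|_{W_0}]} \le n_{[f]}$.

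For the reverse inequality I will extend maps from $W_0$ to $W$. Given $h \in [f|_{W_0}]$ with a marked homotopy $H_t \co W_0 \to \Gamma$ from $f|_{W_0}$ to~$h$, I extend $H_t$ to $\tilde H_t \co W \to \Gamma$, one unweighted edge $e \subset \overline{W \setminus W_0}$ at a time. The endpoints of such an $e$ either lie in $\bdry W_0$, where $H_t$ prescribes their image, or lie in the purely unweighted part, where I may simply keep them fixed at the value of~$f$. At $t=0$ I start with $f|_e$, and for each $t$ I connect the prescribed endpoint values by a PL path in~$\Gamma$, varying continuously in~$t$; this is possible because $\Gamma$ is a graph and maps of an interval into $\Gamma$ with prescribed endpoints always exist and can be made to depend continuously on parameters. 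The resulting $\tilde h \coloneqq \tilde H_1$ lies in $[f]$, and since all of its preimages lying outside $W_0$ have weight zero, $n_{\tilde h}(y) = n_h(y)$ at every regular value, proving $n_{[f]} \le n_{[f|_{W_0}]}$.

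Combining $n_f = n_{f|_{W_0}}$ with $n_{[f]} = n_{[f|_{W_0}]}$, the equality $n_f = n_{[f]}$ a.e.\ is equivalent to $n_{f|_{W_0}} = n_{[f|_{W_0}]}$ a.e., which is exactly the equivalence claimed. The main potential obstacle is the extension construction, since one must produce a bona fide homotopy of~$f$ on~$W$ rather than just redefining $f$ outside $W_0$; but because unweighted edges contribute nothing to~$n$ and the target is just a graph, there is enough flexibility to drag the extension continuously along with the endpoints.
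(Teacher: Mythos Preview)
Your proof is correct and is essentially the same argument the paper has in mind, just spelled out in much more detail. The paper's own proof is the single sentence ``Segments of weight~$0$ do not [a]ffect $n_f$,'' which packages both your observation $n_g = n_{g|_{W_0}}$ and the (routine) homotopy-extension step into one remark; your treatment makes explicit the restriction/extension of homotopy classes needed to pass from this pointwise identity to the equality $n_{[f]} = n_{[f|_{W_0}]}$.
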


\begin{proof}
  Segments of weight~$0$ do not effect $n_f$.
\end{proof}

\begin{definition}
  A homotopy class $[f] \co \Gamma_1 \to \Gamma_2$ of marked maps
  is \emph{essentially surjective} if every element of~$[f]$ is
  surjective.
\end{definition}

\begin{remark}
  If $\Gamma_1$ and $\Gamma_2$ are connected with no 1-valent vertices and
  $[\phi] \co \Gamma_1 \to \Gamma_2$ is $\pi_1$-surjective, then it is
  essentially surjective.
\end{remark}

If~$f$ is not (essentially) surjective, then tautness of~$f$ is
equivalent to tautness of the map with restricted range.

\begin{lemma}\label{lem:taut-subrange}
  Let $f \co W \to \Gamma$ be a map from a marked weighted graph to a
  marked graph. Suppose the image of~$f$ is contained in a subgraph
  $\Gamma_0 \subset \Gamma$. Let $\Gamma^*$ be
  $\Gamma$ with all edges not in~$\Gamma_0$ collapsed, with
  $\kappa\co \Gamma \to \Gamma^*$ the collapsing map. Then the
  following are equivalent:
  \begin{enumerate}
  \item $f$ is taut,\label{item:subrange-taut}
  \item the map with restricted range $f \co W \to \Gamma_0$ is
    taut,\label{item:subrange-restr} and
  \item the map $\kappa \circ f \co W \to \Gamma^*$ is taut.\label{item:subrange-collapse}
  \end{enumerate}
\end{lemma}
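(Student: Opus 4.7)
The plan is to reduce all three conditions to the same local statement. By Theorem~\ref{thm:maxflow-mincut}, a map is taut iff it is locally taut, so it suffices to show that $f\co W\to\Gamma$, $f\co W\to\Gamma_0$, and $\kappa\circ f\co W\to\Gamma^*$ are locally taut under exactly the same circumstances.

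First I would dispose of regular values $y$ of~$f$ lying in $\Gamma\setminus\Gamma_0$: since $\Image(f)\subset\Gamma_0$, we have $n_f(y)=0$ at such~$y$, which is automatically minimal and no homotopy can push it lower. Local tautness of $f\co W\to\Gamma$ therefore reduces to local tautness at every regular value in the interior of an edge of~$\Gamma_0$. For such a value~$y$, a regular neighborhood is a small open subinterval $N\subset e$ (where $e$ is the edge of~$\Gamma_0$ containing~$y$) that sits simultaneously in~$\Gamma_0$ and~$\Gamma$. The preimage $f^{-1}(N)\subset W$ and the space of admissible homotopies of $f|_{f^{-1}(N)}$ into~$N$ are intrinsic local data, independent of whether the target is read as~$\Gamma$ or as~$\Gamma_0$; in particular the homotopy class of $f|_{f^{-1}(N)}$ rel boundary is controlled by $\pi_1(\Gamma_0)\hookrightarrow\pi_1(\Gamma)$, which is injective for subgraphs, so no ``shortcut through $\Gamma\setminus\Gamma_0$'' can alter it. This gives the equivalence of~(\ref{item:subrange-taut}) and~(\ref{item:subrange-restr}).

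For~(\ref{item:subrange-restr})~$\iff$~(\ref{item:subrange-collapse}), the key observation is that $\kappa$ collapses only edges outside~$\Gamma_0$, so it restricts to a homeomorphism on the interior of every edge of~$\Gamma_0$. Consequently $\kappa(N)\subset\Gamma^*$ is a regular neighborhood of~$\kappa(y)$ with $(\kappa\circ f)^{-1}(\kappa(N))=f^{-1}(N)$, and homotopies of $\kappa\circ f$ on this preimage correspond bijectively, via~$\kappa$, to homotopies of~$f$ on $f^{-1}(N)$. Local tautness at~$y$ in~$\Gamma_0$ and at~$\kappa(y)$ in~$\Gamma^*$ therefore coincide. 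Applying Theorem~\ref{thm:maxflow-mincut} once more completes the third equivalence.

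There is no substantial obstacle in this argument. The only point needing brief verification is that the vertex identifications carried out by~$\kappa$ occur at vertices of~$\Gamma^*$ only, hence at positive distance from the interior regular values under analysis, so they play no role in the local models being compared; and that for a regular value $y$ in the interior of an edge, the combinatorial edge-path of $f|_{f^{-1}(N)}$ rel boundary is the same datum whether we work in~$\Gamma_0$, $\Gamma$, or~$\Gamma^*$, so no choice of target can afford an extra opportunity to reduce $n_f$ at~$y$.
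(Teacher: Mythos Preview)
Your proposal is correct and follows exactly the paper's approach: invoke Theorem~\ref{thm:maxflow-mincut} to replace ``taut'' by ``locally taut'' and then observe that the three local conditions coincide. The paper's own proof is a two-sentence version of what you wrote; your added detail (disposing of regular values outside~$\Gamma_0$, noting that $\kappa$ is a homeomorphism on edge interiors of~$\Gamma_0$) is a faithful unpacking of the phrase ``easily seen to be equivalent.'' The remark about $\pi_1(\Gamma_0)\hookrightarrow\pi_1(\Gamma)$ is not really needed---for a regular value~$y$ in an edge interior the regular neighborhood~$N$ is an interval, and the local model of Definition~\ref{def:local-model} depends only on~$N$ and $f^{-1}(N)$, which are literally identical across the three targets---but it does no harm.
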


\begin{proof}
  Recall from Theorem~\ref{thm:maxflow-mincut} that being taut is
  equivalent to being locally taut. If you replace ``taut'' with
  ``locally taut'' in conditions
  \eqref{item:subrange-taut}--\eqref{item:subrange-collapse} above,
  they are easily
  seen to be equivalent.
\end{proof}

\subsection{Continuity}
\label{sec:taut-continuity}

Let $[f] \co (\Gamma_1,w) \to \Gamma_2$ be a
homotopy class of maps between marked graphs. We are interested in how
$n^w_{[f]}$ varies as $w$ varies.
Define $N_{[f]} \co \Wgt(\Gamma_1) \to \Wgt(\Gamma_2)$ by
$N_{[f]}(w) \coloneqq n_{[f]}^w$. (There is usually no
single~$g \in [f]$ that is taut for all~$w$.)

\begin{proposition}\label{prop:nf-cont-pl}
  Let $[f] \co \Gamma_1 \to \Gamma_2$ be a homotopy class of maps
  between marked graphs. Then
  $N_{[f]} \co \Wgt(\Gamma_1) \to \Wgt(\Gamma_2)$ is continuous and piecewise-linear.
\end{proposition}

\begin{proof}
  Pick $w_0 \in \Wgt(\Gamma_1)$ and $e_0 \in
  \Edges(\Gamma_2)$, and let $K=n_{[f]}^{w_0}(e_0)$.
  Suppose that $w \in \Wgt(\Gamma_1)$ is some set of weights with
  $\abs{w(e) - w_0(e)} < \epsilon$ for all~$e$. To show continuity of
  $N_{[f]}$
  near~$w_0$, we will give upper and lower bounds on $n^w_{[f]}(e_0)$
  in terms of~$K$ and~$\epsilon$.

  To get an upper bound, let $f_0 \in [f]$
  be a taut map from $(\Gamma_1,w_0)$ to $\Gamma_2$. Pick a regular
  value $y \in e_0$ of~$f_0$, and let $k = \abs{f_0^{-1}(y)}$.
  Then since $f_0 \in [f]$,
  \[
  n_{[f]}^w(e_0) \le n_{f_0}^w(y) \le n_{f_0}^{w_0}(y) + k\epsilon
    = K + k\epsilon,
  \]
  as desired.

  To get a lower bound, let
  $E_1 = \{\,e\in\Edges(\Gamma_1) \mid w_0(e) \ne 0\,\}$ and $\nu
  = \min_{e \in E_1} w_0(e)$. Make sure
  $\epsilon < \nu$, and let
  $M = K/(\nu-\epsilon)$. Let $g \co
  (\Gamma_1,w) \to \Gamma_2$ be a taut map in $[f]$. Now
  $n_g^w(e_0)$ can be written as an integer linear combination of
  weights of edges:
  \[
  n_g^w(e_0) = \sum_{e \in \Edges(\Gamma_1)} a_e w(e).
  \]
  If $n_g^w(e_0) \ge K$, we are done. Otherwise, we must have
  $\sum_{e\in E_1} a_e \le M$, and so
  \begin{align*}
    n_g^w(e_0)
      &\ge \sum_{e \in E_1} a_e w(e) \\
      &\ge -M\epsilon + \sum_{e \in \Edges(\Gamma_1)} a_e w_0(e)\\
      &\ge -M\epsilon + K.
  \end{align*}
  The last inequality uses $n_g^{w_0}(e_0) \ge K$, which
  follows from the definition of~$K$.

  To show that $N_{[f]}$ is piecewise-linear, by continuity we may
  restrict attention to
  positive weights $w \in \Wgt^+(\Gamma_1)$. Then
  $n^w_{[f]}(e)$ can be computed by taking a taut
  representative $g \in [f]$ and finding $g^{-1}(x)$
  for $x$ a generic point on~$e$ close to one of the endpoints. 
  The combinatorial type
  of~$g$ determines $g^{-1}(x)$, and by
  Proposition~\ref{prop:strong-red-finite} there are only
  finitely many possibilities for $g^{-1}(x)$. For each possibility,
  $n_g(e)$ is an integer linear combination of weights on~$\Gamma_1$.
  By definition $n^w_{[f]}(e)$ is the minimum of
  these possibilities, so $N_{[f]}$ is a minimum of a finite set of
  linear functions.
\end{proof}


\section{Weak graphs and minimizing Lipschitz energy}
\label{sec:lipschitz}

We start by recalling White's result that the minimal Lipschitz
stretch factor of a map between length graphs is given by the maximal
ratio of lengths of curves.

\begin{theorem}[White]\label{thm:lipschitz-stretch}
  Let $[\phi] \co K_1 \to K_2$ be a homotopy class of maps between
  marked length graphs. Then there is a representative $\psi \in
  [\phi]$ and a marked curve $c \co C \to K_1$ so that the
  sequence
  \[
  \shortseq{C}{c}{K_1}{\psi}{K_2}
  \]
  is tight. In particular,
  \[
  \Lip[\phi] = \Lip(\psi) = \frac{\ell[\psi \circ c]}{\ell[c]}
= \sup_{c \co C \to K_1}\frac{\ell[\psi \circ c]}{\ell[c]},
  \]
  where the supremum runs over all non-trivial curves. We get the same
  quantity if we take the supremum over multi-curves or over curves
  that cross each edge
  of~$K_1$ at most twice.
\end{theorem}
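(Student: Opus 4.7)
The plan is to produce $\psi$ and $c$ in three steps: first establish existence of a PL representative $\psi \in [\phi]$ with $\Lip(\psi) = \Lip[\phi]$; second, identify its \emph{tension graph} $T \subseteq K_1$, namely the closure of the set where $\abs{\psi'} = L \coloneqq \Lip(\psi)$; and third, extract from $T$ a reduced curve $c$ such that $\psi \circ c$ is also reduced. Once these pieces are in place, tightness is automatic: since $\abs{\psi'} \equiv L$ on $T$, and $c$ is supported in $T$ with $\psi \circ c$ reduced, the computation $\ell(\psi \circ c) = L \cdot \ell(c)$ holds, giving equality in the sub-multiplicativity inequality~\eqref{eq:sm-CKK}. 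Combined with reducedness of $c$ and $\psi \circ c$, this makes the sequence tight via Lemma~\ref{lem:tight} and yields $\Lip[\phi] = \ell[\psi \circ c]/\ell[c]$, from which the supremum characterization follows by sub-multiplicativity applied to arbitrary curves.

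For existence of the Lipschitz minimizer, I combine Proposition~\ref{prop:min-reduced} (reduction does not increase $\Lip = E^\infty_\infty$) with Proposition~\ref{prop:strong-red-finite}: strongly reduced representatives of $[\phi]$ fall into finitely many combinatorial types, and within each type $\Lip(\psi)$ is a piecewise-linear function on the finite-dimensional parameter space of vertex positions, whose infimum over the (compact) allowed region is attained. Taking the minimum over the finitely many types yields a global PL minimizer. Among all such minimizers, I further select one for which the total $\ell$-length of the tension graph $T_\psi$ is as small as possible; this refinement is still feasible since only finitely many combinatorial types remain in play.

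Next I produce the curve. View $\psi|_{T_\psi}$ as a map from $T_\psi$ (equipped with its natural positive weights, e.g., unit weights per edge) to the marked graph $K_2$. Apply Theorem~\ref{thm:maxflow-mincut} in the homotopy class of this restricted map to obtain a taut representative $\psi_0$ together with a weighted multi-curve $(C,c)$ carried by $T_\psi$ such that $\psi_0 \circ c$ is taut and $n_{\psi_0 \circ c} = n_{\psi_0}$. The plan is to argue that $\psi|_{T_\psi}$ is already taut, so one may take $\psi_0 = \psi|_{T_\psi}$; then any component of $(C,c)$ furnishes a reduced curve $c \co C \to K_1$ whose composite $\psi \circ c$ is still reduced, and by Proposition~\ref{prop:tt-curve} the components can be chosen to cross each edge of $K_1$ at most twice, giving the final refinement of the statement.

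The main obstacle is precisely this tautness claim for $\psi|_{T_\psi}$, which I expect to be the heart of the argument. If it failed, the taut representative $\psi_0$ would differ from $\psi|_{T_\psi}$ by a homotopy that strictly reduces $n$ on some regular neighborhood $N \subset K_2$. I would use this homotopy to perturb $\psi$ itself on a neighborhood of $\psi^{-1}(N) \cap T_\psi$, strictly lowering $\abs{\psi'}$ below $L$ on a sub-segment of $T_\psi$, while taking the perturbation small enough that $\abs{\psi'}$ elsewhere remains $\le L$. The outcome would be a new representative $\psi' \in [\phi]$ with $\Lip(\psi') = L$ but $\ell(T_{\psi'}) < \ell(T_\psi)$, contradicting the choice of $\psi$. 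The delicate part is executing this local surgery compatibly at vertices of $K_1$ (where several gates of tension may meet) and at vertices of $K_2$ (where the local-model triangle inequalities of Proposition~\ref{prop:maxflow-mincut-local} govern how much tension can be redirected), and verifying that the perturbation lies in the original homotopy class.
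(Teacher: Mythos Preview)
Your outline has the right three-part skeleton (find a Lipschitz minimizer, look at its tension subgraph, extract a curve), but both the existence step and the curve-extraction step contain genuine gaps.

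\textbf{Existence.} You invoke Proposition~\ref{prop:strong-red-finite} to get finitely many combinatorial types, but that proposition applies only to \emph{strongly reduced} maps, and nothing you have said forces a Lipschitz minimizer to be strongly reduced. Proposition~\ref{prop:min-reduced} gives only ordinary reducedness, and the remark immediately after Proposition~\ref{prop:strong-red-finite} warns that reduced maps can have infinitely many combinatorial types. The paper avoids this entirely: it restricts to the constant-derivative complex $\PL^*[\phi]$ and uses the compactness of $\PL^*[\phi]_{\le D}$ (Lemma~\ref{lem:deriv-bound}) to obtain a minimizer directly.

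\textbf{Curve extraction.} Your key claim is that $\psi|_{T_\psi}$, with unit edge weights, is taut, so that Theorem~\ref{thm:maxflow-mincut} hands you a multi-curve. The justification you sketch does not work: you say that if tautness fails, a homotopy reducing the multiplicity $n$ on some neighborhood would let you ``strictly lower $\abs{\psi'}$ below $L$ on a sub-segment of $T_\psi$''. But reducing $n$ and reducing $\abs{\psi'}$ are different operations. A homotopy that lowers $n$ redirects portions of the map; it need not shorten the image of any edge. Indeed, maps that are reduced but not taut (Figure~\ref{fig:reduced}) are exactly the situation where no local ``removal of backtracking'' is available, yet $n$ is still not minimal. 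So your contradiction does not close.

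What the paper actually proves, and what suffices, is weaker than tautness: at each unmarked vertex of $T_\psi$, for every incoming tension direction there is an outgoing tension direction with a \emph{different} image direction in $K_2$ (Lemma~\ref{lem:tension-chain}). This follows directly from minimality of the \emph{set} of tension edges: if every other tension edge at $v$ mapped in the same direction as the reverse of $\vec e_0$, one could slide $\psi(v)$ in that direction, dropping $e_0$ from the tension set without creating new tension edges. With this lemma in hand, one simply walks forward and backward along tension edges until an oriented edge repeats or both ends reach marked points, yielding the curve~$c$ with $c$ and $\psi\circ c$ both reduced and crossing each edge at most twice. No appeal to Theorem~\ref{thm:maxflow-mincut} is needed; the argument is closer in spirit to the bare-hands construction in Proposition~\ref{prop:tt-curve}.
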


A version of Theorem~\ref{thm:lipschitz-stretch} appears in papers by Bestvina \cite[Proposition
  2.1]{Bestvina11:Bers} and Francaviglia-Martino
  \cite[Proposition 3.11]{FM11:MetricOutSpace}, and is attributed to
  Tad White (unpublished). Both
  papers work in the context of outer space and so assume that
  $\phi$ is a homotopy equivalence, although that assumption is never
  used in the proof. The extension to marked graphs is also
  immediate; we omit the proof.

We need a less trivial extension of
Theorem~\ref{thm:lipschitz-stretch}: we will need to understand
the behavior of energies when lengths degenerate to~$0$. This
leads us to \emph{weak graphs}, in which edges
may have length~$0$. The definition of maps
between weak graphs requires a little care, since we need to remember
homotopy information that may not be present at the level of the graph
obtained by collapsing all $0$-length edges.

\begin{definition}\label{def:weak-graph}
  A \emph{weak graph} is a graph~$\Gamma$ in which certain edges are
  designated as \emph{null edges}; the union of the null edges forms
  the \emph{null graph}. The \emph{collapsed
    graph}~$\Gamma^*$ of~$\Gamma$ is the graph obtained by identifying
  each null edge to a single point. There is a natural
  \emph{collapsing map} $\kappa \co \Gamma \to \Gamma^*$. If $\Gamma$
  is a marked graph, then $\Gamma^*$ is also marked,
  with marked points the image of the marked points of~$\Gamma$.
  Observe that
  $\kappa$ is a homotopy equivalence iff the null graph is a forest,
  with at most one marked point in each component of the null graph.

  If $(\Gamma, \ell)$ is a weak length graph, we consider
  $\Gamma$ to be a weak graph, where the null edges are the edges of
  length~$0$. The lengths $\ell(e)$ determine
  a pseudo-metric on~$\Gamma$ and a metric
  on~$\Gamma^*$.

  If $\Gamma_1$ and~$\Gamma_2$ are weak graphs, a PL \emph{weak map}
  from $\Gamma_1$ to~$\Gamma_2$ is a pair $(\wt f,f)$ of a PL map
  $f \co \Gamma_1^* \to \Gamma_2^*$ between the collapsed graphs,
  together with a map
  $\wt f \co \Gamma_1 \to \Gamma_2$ that is a local homotopy lift of~$f$,
  in the following sense. Pick disjoint regular neighborhoods $N_v^*$ of
  each vertex~$v$ of $\Gamma_2^*$.
  Let $N_2^* = \bigcup_v N_v^* \subset\Gamma_2^*$, $N_1^* = f^{-1}(N_2^*)$, $N_2 = \kappa_2^{-1}(N_2^*)$, and
  $N_1 = \kappa_1^{-1}(N_1^*)$. Then we require that $N_1 = \wt
  f^{-1}(N_2)$ and that the diagram
  \begin{equation}\label{eq:weak-map}
  \mathcenter{\begin{tikzpicture}[x=4em,y=40pt]
    \node(K1) at (0,0) {$\Gamma_1$};
    \node (K2) at (1,0) {$\Gamma_2$};
    \node (K1*) at (0,-1) {$\Gamma_1^*$};
    \node (K2*) at (1,-1) {$\Gamma_2^*$};
    \draw[->] (K1) to node[above,cdlabel] {\wt f} (K2);
    \draw[->] (K1*) to node[above,cdlabel] {f} (K2*);
    \draw[->] (K1) to node[right,cdlabel] {\kappa_1} (K1*);
    \draw[->] (K2) to node[right,cdlabel] {\kappa_2} (K2*);
  \end{tikzpicture}}
  \end{equation}
  commutes up to a homotopy that is the identity on
  $\Gamma_1 \setminus N_1$. We also call $f$ the \emph{shadow} of $\wt f$.

  If the $\Gamma_i$ are marked, we also require that $f$
  and~$\wt f$ be marked maps.
\end{definition}

We do not require
that $\wt f$ be an exact lift of~$f$, as those do not always exist; see
Figure~\ref{fig:weak-lift}.

\begin{figure}
  \[
  \begin{tikzpicture}
    \node (G1) at (0,-0.75) {\graphb{weak-0}};
      \node at (G1.180) [left=2pt] {$\Gamma_1$};
    \node (G2) at (3,0) {\graphb{weak-1}};
      \node at (G2.0) [right=2pt] {$\Gamma_2$};
    \node (G2s) at (3,-1.5) {\graphb{weak-2}};
      \node at (G2s.0) [right=2pt] {$\Gamma_2^*$};
    \draw[->] (G1) -- node[above,cdlabel]{\wt f} (G2.180);
    \draw[->] (G1) -- node[below,cdlabel]{f} (G2s.180);
    \draw[->] (G2) -- node[right,cdlabel]{\kappa} (G2s);
  \end{tikzpicture}
  \]
  \caption{A weak map where an exact lift is impossible. On the
    right is a weak graph~$\Gamma_2$ with a null edge $e_3$,
    and its collapsed graph $\Gamma_2^*$. The map $f \co \Gamma_1 \to
    \Gamma_2^*$ maps $e_1$ forward and backward
    $e_2^*$, so that the inverse image of the
    vertex~$v$ is a single point. The local lift $\wt f \co
    \Gamma_1 \to \Gamma_2$ maps $e_1$ along $e_2$, around $e_3$, and then
    back along $e_2$. There is no exact lift of~$f$.}
  \label{fig:weak-lift}
\end{figure}
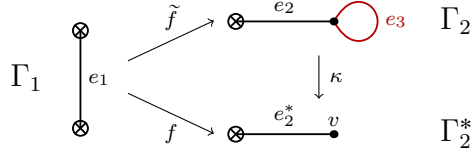

By the \emph{energy} of a weak map $(\wt f, f)$
(with additional weight, elastic, or length structure on the graphs, as
appropriate), we
mean the energy of the map~$f$ between collapsed graphs,
using the same analytic expression as for maps between ordinary
graphs. This applies to all of
the energies in Definition~\ref{def:12inf-graphs}. 

\begin{proposition}\label{prop:weak-homotopy}
  Let $\Gamma_1, \Gamma_2$ be weak graphs, and let
  $[g] \co \Gamma_1 \to \Gamma_2$ be a homotopy class of maps
  between the uncollapsed graphs. Then there is a weak map $(\wt f,f)$ between
  $\Gamma_1$ and $\Gamma_2$, with $\wt f \in [g]$, iff
  \begin{itemize}
  \item there is no null cycle in $\Gamma_1$ that maps to a non-null
    cycle in~$\Gamma_2$; and
  \item there is no null path in $\Gamma_1$ between marked points that
    maps to a non-null path in $\Gamma_2$.
  \end{itemize}
\end{proposition}
Here a \emph{null cycle} or \emph{null path} means one in which every
edge is null.
\begin{proof}
  The ``only if'' direction is immediate. The other direction is an
  application of Theorem~\ref{thm:lipschitz-stretch},
  applied to a metric on the $\Gamma_i$ in which all non-null edges
  have length~$1$ and all null edges have sufficiently small
  length.
\end{proof}

In light of Proposition~\ref{prop:weak-homotopy}, we make the
following definition.
\begin{definition}
  Let $\Gamma_1, \Gamma_2$ be marked weak graphs. A \emph{homotopy
    class} of maps between them is a homotopy class of maps between
  the uncollapsed graphs that contains at least one weak map
  $(f,\wt f)$, i.e., so
  that (up to homotopy) the image of every null cycle (resp.\ path) is a null
  cycle (resp.\ path). Note that this is the homotopy class of
  $\wt f$; the homotopy class of $f$ typically has less information.
\end{definition}

We next generalize the notions of reduced maps.
\begin{definition}\label{def:weak-reduced}
  Let $(\wt f, f) \co (\Gamma_1, \Gamma_1^*) \to (\Gamma_2,\Gamma_2^*)$ be
  a weak marked map. Suppose we have
  point $y \in \Gamma_2^*$, a regular neighborhood $N_y^*$ of~$y$, and
  a component~$Z$ of $\kappa_1^{-1}(f^{-1}(N_y^*))$ that is not an
  entire component of~$\Gamma_1$. Then we say that $Z$
  is a \emph{dead end} if $\wt f$ can be changed
  by a local homotopy so that $\wt f(Z)$ does not intersect
  $\kappa_2^{-1}(y)$. (Compare Definition~\ref{def:reduced}.)
  Concretely, we have the following, where
  $N_y = \kappa_2^{-1}(N_y^*)$.
  \begin{enumerate}
  \item If $Z$ contains a marked point, then $Z$ is not a dead end.
  \item If there are two points $x_1, x_2 \in \bdy Z$ so that
    $\wt f(x_1)$ and $\wt f(x_2)$ are distinct points of
    $\bdy N_y$, then $Z$ is not a dead end.
  \item If there are points $x_1, x_2 \in \bdy Z$ (possibly identical)
    and a path $\gamma$ in~$Z$ between them so that
    $\wt f(x_1) = \wt f(x_2)$ and $\wt f(\gamma)$
    is non-trivial in $\pi_1(N_y,\wt f(x_1))$, then $Z$ is not a dead
    end.
  \item Otherwise, $Z$ is a dead end.
  \end{enumerate}
  If $(\wt f, f)$ has no dead ends, we say that it is \emph{reduced}.
  As in Definition~\ref{def:reduced}, we can \emph{reduce} dead ends:
  if $Z$ is a
  dead end mapping
  to $y \in \Gamma_2^*$, modify $f$
  so that $f(\kappa_2(Z))$ misses~$y$ and continue to pull until you
  reach another singular value of~$f$.
\end{definition}

\begin{proposition}\label{prop:weak-reduced}
  Every weak map is homotopic to a reduced map. Reduction does not
  increase $E^p_q$ for any $p \le q$.
\end{proposition}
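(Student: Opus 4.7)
The plan is to mirror the arguments for Propositions~\ref{prop:reduced} and~\ref{prop:min-reduced}, exploiting the fact that both $E^p_q$ and the underlying modification are expressed through the collapsed map $f \co \Gamma_1^* \to \Gamma_2^*$, while the weak structure enters only when certifying that a component $Z$ is actually reducible.

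First I would check that reduction at a dead end $Z$ is a well-defined operation on weak maps. When conditions~(1)--(3) of Definition~\ref{def:weak-reduced} all fail to obstruct $Z$, the component has no marked point, all boundary points of $Z$ map under $\wt f$ to a single point $y' \in \bdy N_y$, and the restriction of $\wt f$ to $Z$ represents the trivial class in $\pi_1(N_y, \wt f(x_1))$ rel $\bdy Z$. Therefore I can first homotope $\wt f$ on $Z$ inside $N_y$ rel $\bdy Z$ to a map whose image avoids $\kappa_2^{-1}(y)$, and then push $f$ on $\kappa_1(Z)$ out of $N_y^*$ along the unique edge of $\Gamma_2^*$ determined by $\kappa_2(y')$ until it meets the next singular value of $f$, extending $\wt f$ by a consistent local lift (which exists because we are now moving along the interior of an edge).

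For existence of a reduced representative, I would induct on the number of maximal intervals on which $f$ is linear with nonzero derivative, exactly as in the proof of Proposition~\ref{prop:reduced}. Each reduction strictly decreases this count: the segments of $f$ in $\kappa_1(Z)$ that were folding back at $y$ are replaced by constant pieces or absorbed into adjacent monotone segments. Since the initial $f$ has only finitely many linear segments, the process terminates in finitely many steps.

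For the energy inequality, the reduction modifies $f$ only on $\kappa_1(Z)$ and only alters $n_f$, $\Fill_f$, and $\abs{f'}$ in the neighborhood of $y$ swept out by the pull. Over regular values $z$ close to $y$ and on the side being eliminated, the multiplicity $n_f(z)$ strictly drops because the folds there disappear; elsewhere $n_f$ and $\abs{f'}$ are unchanged. Since every $E^p_q$ in Definition~\ref{def:12inf-graphs} is a monotone functional of these three quantities, the inequality follows exactly as in Proposition~\ref{prop:min-reduced}. The main subtlety compared to the non-weak case is precisely condition~(3) of Definition~\ref{def:weak-reduced}: without it, pulling $f$ past $y$ would force a nontrivial loop in $\pi_1(N_y,\cdot)$ across the null subgraph of $\Gamma_2$, which a local homotopy cannot accomplish, and so the triviality hypothesis is exactly what is needed to ensure the lift $\wt f$ extends through each reduction step.
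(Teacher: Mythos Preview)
Your proposal is correct and follows essentially the same approach as the paper, which simply says ``As in Proposition~\ref{prop:reduced}, repeatedly reduce dead ends. As in Proposition~\ref{prop:min-reduced}, energies are clearly not increased.'' You have filled in considerably more detail than the paper provides, particularly the verification that the local lift~$\wt f$ can be coherently extended through each reduction step and the role played by condition~(3) of Definition~\ref{def:weak-reduced}, but the underlying argument is the same.
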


\begin{proof}
  As in Proposition~\ref{prop:min-reduced}, energies are not increased
  as you reduce dead ends. To find the reduced map, as in
  Proposition~\ref{prop:reduced}, repeatedly reduce dead ends.
\end{proof}

Furthermore, for a weak curves (a weak map from a marked 1-manifold to a
weak graph), reduced maps minimize all
energies $E^1_p$, as in Lemma~\ref{lem:taut-curve} and
Proposition~\ref{prop:taut-minimal}.

\begin{taggedthm}{4\/$'$}
  \label{thm:weak-stretch}
  Let $[\phi] \co K_1 \to K_2$ be a homotopy class of maps between
  weak marked length graphs, so that $K_1$ has at least one non-null
  edge. Then there is a
  weak map $\psi \in
  [\phi]$ and a marked curve $c \co C \to K_1$ with
  $\ell[c] > 0$ so that the
  sequence
  \[
  \shortseq{C}{c}{K_1}{\psi}{K_2}
  \]
  is tight. In particular,
  \[
  \Lip[\phi] = \Lip(\psi) = \frac{\ell[\psi \circ c]}{\ell[c]}
= \sup_{\substack{c'\co C \to K_1\\\ell[c] > 0}}\frac{\ell[\psi \circ c]}{\ell[c]},
  \]
  where the supremum runs over all curves~$c'$ of positive length in~$K_1$.
  We get the same
  quantity if we take the supremum over multi-curves or over
  curves that
  cross each edge
  of~$K_1$ at most twice.
\end{taggedthm}

To prove Theorem~\ref{thm:weak-stretch}, we restrict the maps
we consider, first for ordinary (non-weak)
maps.

\begin{definition}\label{def:const-deriv}
  A PL map $f \co K_1 \to K_2$ between marked length graphs
  is \emph{constant-derivative} if it is edge-reduced and $\abs{f'}$
  is constant on each edge
  of~$K_1$. A constant-derivative map is determined by its
  combinatorial type (Definition~\ref{def:comb-type}) plus a bounded
  amount of continuous data: for
  each vertex~$v$ of~$K_1$ that maps to the interior of an
  edge~$e$ of~$K_2$, record where $f(v)$ is along~$e$.
\end{definition}

\begin{lemma}\label{lem:deriv-bound}
  For any $D>0$, there are only finitely many
  combinatorial types of constant-derivative maps among maps with
  Lipschitz constant $\le D$.
\end{lemma}

\begin{proof}
  The bound on derivatives gives a bound
  on how many edges of~$K_2$ the image of an edge of~$K_1$ can
  cross.
\end{proof}

Since Lipschitz energy minimizers are constant-derivative, the above
lemma reduces us to finitely many cases to consider. We can give
parallel definitions in the case of weak maps.

\begin{definition}\label{def:weak-comb-type-const-deriv}
  A weak map $(\wt f, f) \co \Gamma_1 \to \Gamma_2$ is
  \emph{edge-reduced} if $\wt f$ is edge-reduced in the sense of
  Definition~\ref{def:edge-reduced}.
  The \emph{combinatorial type} of an
  edge-reduced weak map is the combinatorial type
  of~$f$.%
  \footnote{The map $f$ need not itself be edge-reduced; see
  Figure~\ref{fig:weak-lift} for an example. There is still a
  well-defined sequence of oriented edges that the image passes over,
  so the extension of Definition~\ref{def:comb-type} is clear.}
  If in
  addition $\Gamma_1$ and~$\Gamma_2$ have
  length structures, we say that $(\wt f, f)$ is
  \emph{constant-derivative} if $f$ is constant-derivative.
\end{definition}

\begin{remark}
  Definition~\ref{def:weak-comb-type-const-deriv} depends only on the shadow~$f$
  of the weak map. As a result, unlike for
  ordinary maps, the combinatorial type of a weak map $(\wt f, f)$ does not
  determine its homotopy class~$[\wt f]$. Furthermore, for a
  constant-derivative weak map $(\wt f, f)$, if we fix the
  combinatorial type of~$f$, the homotopy class~$[\wt f]$, and the
  continuous data of the position $f(v)$ for $v \in \Verts(\Gamma_1)$,
  we still may not determine $(\wt f, f)$ up to local
  homotopy. Nevertheless, since the energies we consider
  only depend on $f$, we are able to work with this definition.
\end{remark}

\begin{proof}[Proof of Theorem~\ref{thm:weak-stretch}]
  We follow the previous proofs
  \cite{Bestvina11:Bers,FM11:MetricOutSpace}, adapted to weak
  maps.

  First, we show there is a map that minimizes
  Lipschitz energy. The condition that
  no null cycle of~$K_1$ maps to a non-null cycle of~$K_2$ guarantees
  that there is some constant-derivative weak map
  $(\wt \psi_0, \psi_0)$ of
  finite Lipschitz energy. Then any optimizer $(\wt \psi, \psi)$ must
  have Lipschitz energy less that $\Lip(\psi_0)$.
  By Lemma~\ref{lem:deriv-bound} there are only finitely many
  combinatorial types of weak maps $(\wt \psi, \psi)$ to consider.
  For each constant-derivative maps of a fixed combinatorial type,
  $\Lip(\psi)$ is a continuous function
  of the position of the endpoints along the target edges, and so
  achieves its minimum among maps with that type.

  Let $(\wt \psi, \psi)$
  be a constant-derivative map realizing this minimum.
  We next prove the existence of a marked curve~$c$
  exhibiting the Lipschitz stretch of~$\psi$. For each non-null edge~$e$
  of~$K_1$, say that it is
  \begin{itemize}
  \item a \emph{tension edge} if $\abs{\psi'(e)}$ is the maximal
    value, $\Lip(\psi)$, and
  \item a \emph{slack edge} if $\abs{\psi'(e)} < \Lip(\psi)$.
  \end{itemize}
  Assume that the
  set of tension edges of~$\psi$ is minimal among maps with minimal $\Lip(\psi)$.

  To find the desired curve, we will find a suitable sequence of oriented
  edges $(\vec e_i)_{i=1}^N$ of~$K_1$ passing
  only over tension and null edges.

  \begin{lemma}\label{lem:tension-chain}
    Let $(\wt \psi, \psi)$ be a constant-derivative map realizing the minimal
    Lipschitz factor with minimal set of tension edges.
    Let $\vec e_0$ be an oriented tension edge
    of~$K_1$. Then we can find a reduced
    edge-path
    $\vec e_1,\dots,\vec e_k$ on~$K_1$ so that either
    \begin{itemize}
    \item the $\vec e_i$ for $i=1,\dots,k$ are null edges and
      $\vec e_k$ ends at a marked point, or
    \item the $\vec e_i$ for $i=1,\dots,k-1$ are null edges and $\vec
      e_k$ is a tension edge.
    \end{itemize}
    In the first case, we allow for $k=0$, if $\vec e_0$ already ends
    at a marked point.
    Furthermore, if $c_0 \co [0,1] \to K_1$ is the map passing over
    these edges in order, then $(\wt \psi \circ c_0, \psi \circ \kappa
    \circ c_0)$ is reduced as a weak map, fixing the endpoints.
  \end{lemma}

  \begin{proof}
    Let $v_1$ be the end of $\vec e_0$. If there is no null edge
    touching $v_1$, then either $v_1$ is marked or there is another
    tension edge meeting $v_1$ and mapping by $\psi$ to a different
    direction at $\psi(v_1)$; in either case we are done. Otherwise,
    let $C_1$ be the connected component of the null graph
    of~$K_1$ that contains the end of $\vec e_0$ and let
    $\{x\} = \kappa_1(C_1)$.
    Let $y = \psi(x_1)$ and $C_2$ be the relevant component of
    $\kappa_2^{-1}(y)$.
    $C_2$ may be a point on an edge of $K_2$, a vertex
    of~$K_2$, or a connected null subgraph of~$K_2$. We
    proceed by cases on $C_1$ and~$C_2$, parallel to the cases in
    Definition~\ref{def:weak-reduced}.
    \begin{enumerate}
    \item\label{item:tension-marked} If $C_1$ has a marked point~$x$,
      connect $v_1$ to~$x$ through null edges.
    \item\label{item:two-tension} If there is another tension edge $\vec e'$ of $K_1$
      incident to $C_1$ so that $\vec e'$ and the reverse of $\vec e_0$ map
      by $\psi \circ \kappa$ to distinct directions in $K_2^*$,
      connect $\vec v_1$ to the start of $\vec e'$ by a reduced edge-path in $K_1$.
    \item\label{item:tension-loop} If $\pi_1(C_1)$ maps non-trivially to $\pi_1(C_2)$, connect
      $v_1$ to itself by a reduced loop in $C_1$ that maps
      to a non-trivial loop in $C_2$.
    \end{enumerate}
    If none of the above cases apply, we get a contradiction, as
      follows. Partition the non-null edges
      incident to~$C_1$ according to which direction they map to
      from~$y$. (Put each edge with $\abs{\psi'} = 0$ into its own
      part.) Since we are not in cases~(\ref{item:tension-marked})
      or~(\ref{item:tension-loop}), we
      can displace $y \in K_2^*$ in any direction, reducing
      $\abs{\psi'}$ on one group of edges at the cost of
      increasing it on all other groups.
      Since we are not in
      case~(\ref{item:two-tension}), we can reduce
      $\abs{\psi'(\vec e_0)}$ so $e_0$ is no longer a tension edge
      without creating another tension edge, contradicting the
      assumption that the set of tension edges of $\psi$ was minimal.

    In any of cases
    (\ref{item:tension-marked})--(\ref{item:tension-loop}), by
    construction the sequence of edges in $K_1$ is reduced,
    and the path in $K_2$ falls into one of the cases of
    Definition~\ref{def:weak-reduced} and so is reduced as a weak map.
  \end{proof}

  To complete the proof of
  Theorem~\ref{thm:weak-stretch}, start with any tension
  edge~$\vec e_0$ of~$K_1$. (There is at least one since not all edges of $K_1$
  are null.) Use Lemma~\ref{lem:tension-chain} to construct a
  non-backtracking chain of tension and null edges forward and
  backward from $\vec e_0$. Either the chain ends in a marked point
  both forward and backward, or there is a repeated oriented edge. In
  either case we can extract a marked
  curve~$c$ (either an arc or a loop) so that both $c$ and
  $\psi \circ c$ are reduced (as weak maps). Thus
  $\Lip(\psi) = \ell(\psi \circ c)/\ell(c) = \ell[\psi \circ
  c]/\ell[c]$. If we make the null paths
  as efficient as possible and close off to make a cycle the
  first time that an oriented edge is repeated, then $c$
  will cross each (unoriented) edge of~$K_1$ at most twice.
\end{proof}

\begin{remark}
  The proof of Theorem~\ref{thm:weak-stretch} is similar to the
  proof of Proposition~\ref{prop:tt-curve}. The presence of
  null edges introduces extra complications in
  Theorem~\ref{thm:weak-stretch}.
\end{remark}


\section{Minimizing Dirichlet energy: Harmonic maps\label{sec:harmonic}}

\subsection{Definition and existence\label{sec:harmonic-def}}

We give a concrete description of harmonic maps and
prove their existence,
making the intuitive description of harmonic maps from the introduction
more precise.

\begin{definition}\label{def:harmonic}
  Let $G = (\Gamma, \alpha)$ be a marked elastic graph and let $K$ be a
  marked length graph.
  To any constant\hyp derivative map~$f$, we can associate the
  \emph{tension\hyp weighted graph} $W_f = (\Gamma, \abs{f'})$; that
  is, the weight of
  an edge~$e$ is $\abs{f'(e)}$.
  Then $f$~is \emph{harmonic} if it is constant-derivative
  and the associated map $f \co W_f \to K$ is taut.
\end{definition}

For general weighted graphs, it is not immediately obvious when a map
is taut. We can
simplify the condition in Definition~\ref{def:harmonic} to give
the triangle inequalities at vertices from the introduction.

\begin{proposition}\label{prop:harmonic-tt}
  Let $f \co G \to K$ be a constant-derivative map from a marked
  elastic graph to a marked length graph. Let $G_0 \subset G$ be the
  subgraph of~$G$ on which $\abs{f'} \ne 0$, and let $f_0$ be the
  restriction of~$f$ to~$G_0$.
  Then $f$ is harmonic iff
  $(W_{f_0},\tau(f_0))$ forms a marked balanced train track
  in the sense of Definition~\ref{def:tt}.
\end{proposition}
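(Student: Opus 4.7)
The plan is to convert harmonicity into tautness of $f_0 \co W_{f_0} \to K$ (one may drop the zero-weight edges from $W_f$ by Lemma~\ref{lem:taut-subdomain}), then use Theorem~\ref{thm:maxflow-mincut} to replace global tautness with local tautness, and finally match local tautness, point by point in $K$, with the triangle inequalities that define a weighted train track in Definition~\ref{def:tt}.

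At any regular value of $f_0$ lying in the interior of an edge of $K$, local tautness is automatic: the preimage of a small interval is a disjoint union of monotone arcs. The only nontrivial checks occur at singular values $y$, namely vertices of $K$ or images of vertices of $W_{f_0}$. At such a $y$ I would analyze the local model of Definition~\ref{def:local-model} vertex-by-vertex. For each vertex $v$ of $W_{f_0}$ with $f_0(v)=y$, the equivalence relation $\mathord\sim$ glues together the endpoints of edges leaving $v$ that lie in a common gate of $\tau(f_0)$, because two directions at $v$ are $\tau(f_0)$-equivalent iff they have the same image in $K$. Hence the component of the local model that contains $v$ is a $k$-marked star (where $k$ is the number of directions at $y$) centered at $v$, in which each gate $g$ at $v$ contributes a bundle of edges to the marked endpoint corresponding to $f_0(g)$ of total weight $w(g)$.

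The crucial final step is to apply Proposition~\ref{prop:maxflow-mincut-local} to this local star and match its tautness with the triangle inequalities at $v$. If the gate weights at an unmarked $v$ satisfy the triangle inequalities, $(W_{f_0},\tau(f_0))$ near $v$ is itself a marked weighted train track, and since $f_0$ is a train-track map with respect to $\tau(f_0)$ and $\delta_K$ by the very definition of $\tau(f_0)$, the inclusion of this train track into itself witnesses the carried train-track condition in Proposition~\ref{prop:maxflow-mincut-local}, giving local tautness. Conversely, if some gate $g$ at an unmarked $v$ violates $w(g)\le\sum_{g'\ne g}w(g')$, I would perturb $f$ by sliding $f(v)$ a short distance into the direction $f_0(g)$ and check directly that $n_{f_0}$ strictly decreases on the leg of $N$ corresponding to $g$, contradicting local tautness. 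Marked vertices of $W_{f_0}$ are handled automatically, since neither tautness nor the train-track definition imposes a constraint there.

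The main obstacle is this converse: translating a failed triangle inequality into a genuine reduction of $n_{f_0}$ under a small perturbation. Concretely, in the local star this amounts to identifying $\mincut_i$ with $\min(w(g_i),\sum_{j\ne i}w(g_j))$ and recognizing that a dominating gate corresponds to a slack edge in the sense of Section~\ref{sec:local-models}; this computation is the only nonroutine piece. Once it is settled, assembling the local verifications over all vertices of $W_{f_0}$ and invoking Theorem~\ref{thm:maxflow-mincut} yields the proposition.
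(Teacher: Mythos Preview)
Your approach is correct, but it takes a longer route than the paper's two-line argument. You go through the equivalence taut $\Leftrightarrow$ locally taut from Theorem~\ref{thm:maxflow-mincut} and then analyze each local model via Proposition~\ref{prop:maxflow-mincut-local}. The paper instead uses the equivalence (\ref{item:mf-taut}) $\Leftrightarrow$ (\ref{item:mf-tt}) of Theorem~\ref{thm:maxflow-mincut} directly: since every edge of $W_{f_0}$ has positive weight and $f_0$ is nowhere constant, the saturation clause forces the carried train track~$T$ to be all of $W_{f_0}$, and the only train-track structure making $f_0$ a train-track map to $(K,\delta_K)$ is $\tau(f_0)$. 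Thus tautness of $f_0$ is equivalent to $(W_{f_0},\tau(f_0))$ being a weighted train track, with no local analysis needed. Your hands-on argument has the pedagogical advantage of making the contrapositive direction (a failed triangle inequality lets you slide $f(v)$ and reduce $n_{f_0}$) completely explicit, whereas in the paper this is buried inside the proof of Theorem~\ref{thm:maxflow-mincut}.

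One small inaccuracy worth flagging: your description of the local model as ``a $k$-marked star centered at $v$'' is not quite right when several vertices of $W_{f_0}$ map to the same singular value~$y$. The identification $\sim$ glues the boundary points of \emph{all} preimage pieces mapping to the same point of $\partial N$, so distinct vertices $v_1,v_2\in f_0^{-1}(y)$ with gates in a common direction land in the same component of the local model. This does not damage your argument---the carried train-track witness is still the inclusion of $(W_{f_0},\tau(f_0))$ restricted to the local model, and sliding a single $f(v)$ still works for the converse---but the component is in general a more complicated $k$-marked graph than a single star.
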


\begin{proof}
  By Lemma~\ref{lem:taut-subdomain}, $f$~is harmonic iff $f_0 \co
  W_{f_0} \to K$ is taut.
  If $f_0$ is taut, the train-track in condition~(\ref{item:mf-tt}) of
  Theorem~\ref{thm:maxflow-mincut} must be $(W_{f_0}, \tau(f_0))$.
\end{proof}

To be more explicit about the triangle inequalities, fix a
constant-derivative map $f \co G \to K$ from an elastic graph to a
length graph. For $v \in G$ and $d_K$ a direction in~$K$ from $f(v)$,
the \emph{tension} of $d_K$ is
\[
  T(d_K) = \sum \bigl\{\, \abs{f'(d_G)} \mid
    d_G\text{ direction in~$G$ at~$v$},
    f(d_G) = d_K\,\}.
\]
We say that $v$ is \emph{balanced} if
$(\,T(d_K) \mid d_K\text{ direction in $K$ at $f(v)$}\,)$ satisfies
the triangle inequalities.

\begin{corollary}\label{cor:harmonic-triangle}
  Let $f \co G \to K$ be a constant-derivative map from a marked
  elastic graph to a marked length graph. Then $f$~is harmonic iff
  each unmarked vertex of~$G$ is balanced.
\end{corollary}

\begin{proof}
  This is a restatement of
  Proposition~\ref{prop:harmonic-tt}.
\end{proof}

\begin{theorem}\label{thm:harmonic-min}
  Let $[f] \co G \to K$ be a homotopy class of maps from a marked
  elastic graph to a marked length graph. Then there is a harmonic map
  in~$[f]$. Furthermore, the following conditions are equivalent.
  \begin{enumerate}
  \item\label{item:harmonic-global} The map $f$ is a global minimum for~$\Dir$.
  \item\label{item:harmonic-local} The map $f$ is a local minimum for~$\Dir$.
  \item\label{item:harmonic} The map $f$ is harmonic.
  \item\label{item:harmonic-tt} The natural map $\iota \co W_f \to G$ is
    part of a tight sequence
    \[
    \shortseq{W_f}{\iota}{G}{f}{K}.
    \]
  \item\label{item:harmonic-curve} There is a weighted multi-curve
    $(C,c)$ on $G$ that forms a
    tight sequence
    \[
    \shortseq{C}{c}{G}{f}{K}.
    \]
  \end{enumerate}
\end{theorem}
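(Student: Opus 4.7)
The plan is to first establish existence of an energy minimizer in $[f]$, and then close the chain of implications $(\ref{item:harmonic-global}) \Rightarrow (\ref{item:harmonic-local}) \Rightarrow (\ref{item:harmonic}) \Rightarrow (\ref{item:harmonic-tt}) \Rightarrow (\ref{item:harmonic-curve}) \Rightarrow (\ref{item:harmonic-global})$. For existence, observe first that any minimizer of $\Dir$ must be constant-derivative on each edge, since on an interval with fixed endpoint images the functional $\int_e \abs{f'}^2\,dx$ is minimized by the affine (geodesic) map; so the infimum on~$[f]$ coincides with the infimum on $\PL^*[f]$. Fix any representative $f_0 \in \PL^*[f]$; then any minimizer~$g$ satisfies $\abs{g'(e)}^2 \alpha(e) \le \Dir(f_0)$ for every edge~$e$, confining it to $\PL^*[f]_{\le D}$ for $D = \max_e \sqrt{\Dir(f_0)/\alpha(e)}$. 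By Lemma~\ref{lem:deriv-bound} this subspace is a compact polyhedral complex, and $\Dir$ is continuous on it, so the minimum is attained.

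Implication $(\ref{item:harmonic-global}) \Rightarrow (\ref{item:harmonic-local})$ is trivial. For $(\ref{item:harmonic-local}) \Rightarrow (\ref{item:harmonic})$, suppose $f$ is a local minimum. Constant-derivativity on each edge follows from the affine-minimizer observation above, applied with the remaining data held fixed. To check tautness of $f \co W_f \to K$, I would invoke Proposition~\ref{prop:harmonic-tt}: it suffices to verify that on the non-null subgraph~$G_0$ the weights $\abs{f'(e)}$ satisfy the train-track triangle inequalities for the gates of~$\tau(f_0)$ at each unmarked vertex~$v$. I read these off from infinitesimal perturbations: differentiating~\eqref{eq:dir-1} with respect to moving $f(v)$ in a direction~$d$ at~$f(v) \in K$ yields (up to a factor of~$2$) the difference between the total tension of incident edges of~$G$ whose images depart $f(v)$ along~$d$ and the total tension along the other gates. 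Local minimality forces this one-sided derivative to be $\ge 0$ for every admissible~$d$, which is precisely the triangle inequality for each gate of~$W_{f_0}$ at~$v$.

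For $(\ref{item:harmonic}) \Rightarrow (\ref{item:harmonic-tt})$, let $\iota \co W_f \to G$ be the inclusion, so $n_\iota(e) = \abs{f'(e)}$. Direct computation gives $\EL(\iota) = \sum_e \alpha(e)\abs{f'(e)}^2 = \Dir(f)$, and Lemma~\ref{lem:dirichlet-defs} yields $\ell(f\circ\iota) = \int_K \Fill_f = \Dir(f)$, so $\ell(f\circ\iota)^2 = \EL(\iota)\,\Dir(f)$ realizes equality in~\eqref{eq:sm-CGK}. Harmonicity supplies $\ell[f\circ\iota] = \ell(f\circ\iota)$, making the sequence tight. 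For $(\ref{item:harmonic-tt}) \Rightarrow (\ref{item:harmonic-curve})$, apply Proposition~\ref{prop:tt-curve} to the weighted train track $(W_{f_0},\tau(f_0))$ given by Proposition~\ref{prop:harmonic-tt} to extract a saturating multi-curve $c \co C \to W_{f_0}$; composing with $\iota$ and re-running the computation above (using $n_{\iota\circ c}(e) = \abs{f'(e)}$) shows $C \to G \to K$ is tight. Finally, $(\ref{item:harmonic-curve}) \Rightarrow (\ref{item:harmonic-global})$ is immediate from Lemma~\ref{lem:tight}: tightness forces $f$ itself to be an energy minimizer.

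The main obstacle is $(\ref{item:harmonic-local}) \Rightarrow (\ref{item:harmonic})$, particularly at vertices $v$ of $G$ mapping to vertices of~$K$, where the space of admissible perturbations of $f(v)$ is a cone of directions rather than a vector space. One must compute the correct one-sided directional derivatives and repackage them into the full system of triangle inequalities on the gates of $W_{f_0}$ at~$v$. The degenerate cases (edges on which $\abs{f'}$ vanishes, or vertices of~$G$ mapping into the interior of an edge of~$K$) need to be handled via the reduction to $G_0$ already built into Proposition~\ref{prop:harmonic-tt}.
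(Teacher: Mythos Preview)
Your argument is correct and mirrors the paper's proof almost exactly: existence via compactness of $\PL^*[f]_{\le D}$, the one-sided directional derivative for $(\ref{item:harmonic-local})\Rightarrow(\ref{item:harmonic})$, the direct energy computation $\EL(\iota)=\ell(f\circ\iota)=\Dir(f)$ for $(\ref{item:harmonic})\Rightarrow(\ref{item:harmonic-tt})$, Proposition~\ref{prop:tt-curve} for the multi-curve, and Lemma~\ref{lem:tight} to close. One small cleanup: your step $(\ref{item:harmonic-tt})\Rightarrow(\ref{item:harmonic-curve})$ invokes the train-track structure on $(W_{f_0},\tau(f_0))$ via Proposition~\ref{prop:harmonic-tt}, which presupposes~$(\ref{item:harmonic})$; the paper avoids circularity here by also citing Lemma~\ref{lem:tight} for $(\ref{item:harmonic-tt})\Rightarrow(\ref{item:harmonic-global})$ directly, so that $(\ref{item:harmonic-global})$--$(\ref{item:harmonic-tt})$ are already known equivalent before $(\ref{item:harmonic-curve})$ is addressed---you should add that one line.
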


\begin{proof}
  We start with the equivalences. By definition, (\ref{item:harmonic-global})
  implies~(\ref{item:harmonic-local}).

  To show that (\ref{item:harmonic-local}) implies~(\ref{item:harmonic}),
  let $f$ be a local
  minimizer for $\Dir(f)$ within $[f]$; we wish to show $f$ is
  harmonic. One of the first results in calculus of variations is that
  $f$ is constant\hyp derivative. So by
  Proposition~\ref{prop:harmonic-tt} we only need to show the triangle
  inequalities. Let
  $v$ be a unmarked vertex of~$G$ of valence~$k$, and let
  $d_1,\dots,d_k$ be the non-zero
  directions of~$K$ at~$f(v)$. For small~$\epsilon>0$, let
  $f_{i,\epsilon}$ be $f$~modified by moving $f(v)$ a
  distance~$\epsilon$ in the direction $d_i$, extended to the edges
  so that $f_{i,\epsilon}$ is still constant-derivative.
  By hypothesis $\Dir(f) \le
  \Dir(f_{i,\epsilon})$. We have
  \begin{equation*}
  \Dir(f_{i,\epsilon}) = \Dir(f)
    + \sum_{d \in f^{-1}(d_i)} -2\epsilon\abs{f'(d)}
     + \sum_{\mathstrut j \ne i} \,\sum_{d \in f^{-1}(d_j)}
      2\epsilon\abs{f'(d)}
    + \!\sum_{d\text{ direction at }v}\! \epsilon^2/\alpha(d).
  \end{equation*}
  To be a local minimum, we must have for each~$i$
  \[
  \frac{d}{d\epsilon}\bigl(\Dir(f_{i,\epsilon})\bigr)\Big\vert_{\epsilon=0} \ge 0.
  \]
  This gives the
  $i$'th triangle inequality at~$v$, so $f$ is harmonic.

  To see that (\ref{item:harmonic})
  implies~(\ref{item:harmonic-tt}), suppose that $f$ is harmonic. Then
  $f \circ \iota$ is taut and therefore
  energy-minimizing. Furthermore,
  \begin{align*}
    \EL(\iota) &= \sum_{e \in \Edges(\Gamma)} \alpha(e) \abs{f'(e)}^2
               = \Dir(f)\\
    \ell(f \circ \iota) &= \sum_{e \in \Edges(\Gamma)}\abs{f'(e)}\,\ell(f(e))
                      = \Dir(f),
  \end{align*}
  so the energies are multiplicative, as desired.

  Proposition~\ref{prop:tt-curve} tells us that
  (\ref{item:harmonic-tt}) implies~(\ref{item:harmonic-curve}).

  Lemma~\ref{lem:tight} tells us that (\ref{item:harmonic-tt}) or
  (\ref{item:harmonic-curve}) implies~(\ref{item:harmonic-global}).

  To prove existence, let
  $g$ be any constant-derivative map. Then
  $\Dir[f] \le \Dir(g)$. For each edge~$e$, this gives an upper bound
  on possible values of $\abs{f'(e)}$ for any minimizer. Then, as in the proof of
  Theorem~\ref{thm:weak-stretch}, there are only finitely many
  combinatorial types below this bound, and for each combinatorial type
  there is a compact space of piecewise-linear maps. There is therefore a global
  minimum for $\Dir(f)$, which is harmonic.
\end{proof}

\subsection{Harmonic maps to weak graphs}
\label{sec:harmonic-weak}

As with Lipschitz energy, we need a generalization of
Theorem~\ref{thm:harmonic-min} to allow the target to be a weak
length graph.

\begin{definition}\label{def:weak-taut}
  Let $(\wt f, f) \co \Gamma_1 \to \Gamma_2$ be weak map between
  marked weak graphs, and suppose there is a weight structure $w_1$
  on~$\Gamma_1$. Then we say that $(\wt f, f)$ is \emph{taut} if there is a
  lift $\wt f'$ locally homotopic to $\wt f$ so that $\wt f'$ is taut.
\end{definition}

\begin{definition}\label{def:weak-harmonic}
  Let $G = (\Gamma, \alpha)$ be a marked elastic graph and let $K$ be
  a marked weak length graph. We say that a weak map
  $(\wt f, f) \co G \to K$ is \emph{harmonic} if it is
  constant\hyp derivative and the weak map $W_f \to K$ from the
  tension-weighted graph is taut.
\end{definition}

We do not allow $G$ to
be a weak graph. The definition could be extended to this case with
some more work (since when $e$ is a null
edge, $\abs{f'(e)}$ is not defined), but we will
not need it.

We do not know a concise version of
Corollary~\ref{cor:harmonic-triangle} in this setting.

\begin{taggedthm}{5\/$'$}
\label{thm:harmonic-min-weak}
  Let $[f] \co G \to K$ be a homotopy class of maps from a marked
  elastic graph to a marked weak length graph. Then there is a
  harmonic weak map
  in~$[f]$. Furthermore, the following conditions are equivalent.
  \begin{enumerate}
  \item\label{item:weak-harm-global} The weak map $f$ is a global minimum for~$\Dir$.
  \item\label{item:weak-harm-local} The weak map $f$ is a local minimum for~$\Dir$.
  \item\label{item:weak-harm} The weak map $f$ is harmonic.
  \item\label{item:weak-harm-curve} There is a weighted multi-curve
    $(C,c)$ on~$G$ that forms a
    tight sequence
    \[
    \shortseq{C}{c}{G}{f}{K}.
    \]
  \end{enumerate}
\end{taggedthm}

\begin{proof}
  The most significant change from the proof of
  Theorem~\ref{thm:harmonic-min} is the
  proof that (\ref{item:weak-harm-local})
  implies (\ref{item:weak-harm}), which we now do.
  Let
  $(\wt g, g)$ be a constant-derivative PL map, and
  suppose that $g$ is not harmonic in the sense of
  Definition~\ref{def:weak-harmonic}.  
  Then we will find a local
  modification that reduces $\Dir(g)$. The possible local
  modifications are more complicated than in the proof of
  Theorem~\ref{thm:harmonic-min}; we
  use the technology of taut maps from
  Section~\ref{sec:taut}.

  Fix $\epsilon$ sufficiently small. For $v$ a
  vertex of~$G$, let $N_v^* \subset K^*$ be
  the $\epsilon$-neighborhood of~$f(v)$. Make sure $\epsilon$ is
  small enough so that the $N_v^*$ are disjoint regular
  neighborhoods. Let
  $N^* = \bigcup_v N_v^*$, $N_v = \kappa^{-1}(N_v^*) \subset K$,
  $N = \bigcup_v N_v$, $M_v = g^{-1}(N_v^*)\subset G$, and
  $M = \bigcup_v M_v$. We will use $M$ and~$N$ for our neighborhoods
  in the definition of a weak map. On the restriction of $\wt g$ to
  each neighborhood, set up a marked local
  model, analogously to Definition~\ref{def:local-model}, except that
  the target model is not necessarily a star. Using
  Theorem~\ref{thm:maxflow-mincut}, choose the local lift $\wt g$ so
  each local model is
  taut as a map from~$W_g$.

  By assumption, $\wt g$ is not taut, and
  thus is not locally taut in a neighborhood of some point~$y$;
  necessarily $y \in \bdy N_v$ for some vertex~$v$.
  Let $U$ be a small neighborhood of~$y$, and let $Z$ be a component
  of $\wt g^{-1}(U)$ on which $\wt g$ can be homotoped to reduce
  $n_{\wt g}$ (as in Definition~\ref{def:taut} on locally taut). The
  exterior edges of~$Z$ (those that meet $\wt g^{-1}(U)$) can be
  divided into \emph{inside} edges, whose images under $\wt g$ point
  away towards~$v$, and \emph{outside} edges, whose images point away
  from~$v$.

  Since the marked local models for $\wt g$ are all taut, the total
  tension of the inside edges cannot be reduced; in particular,
  the total tension of the inside edges is less than or equal to the
  total tension of the outside edges. In fact, since $\wt g$ as a
  whole is not locally taut at~$y$, this inequality is strict:
  \begin{equation}\label{eq:inside-outside}
    \sum_{e\text{ inside}} \abs{g'(e)} < \sum_{e\text{ outside}} \abs{g'(e)}.
  \end{equation}
  
  Let $(\wt h, h)$ be the constant-derivative small modification of
  $(\wt g, g)$ so that $h$ maps each vertex in~$Z$ to~$y$ and agrees
  with $g$ on all other
  vertices.
  In~$h$ the length of the image of each inside edge was
  increased by~$\epsilon$, and the length of the image of each outside
  edge was decreased by~$\epsilon$. Then
  \begin{align*}
    \Dir(h) - \Dir(g)
    &= \sum_{e\text{ inside}} \frac{\ell(g(e))\epsilon + \epsilon^2}{\alpha(e)}
      + \sum_{e\text{ outside}} \frac{-\ell(g(e))\epsilon + \epsilon^2}{\alpha(e)}\\
    &= \epsilon\biggl(\sum_{e\text{ inside}} \abs{f'(e)}
      - \sum_{e\text{ outside}} \abs{f'(e)}\biggr) + O(\epsilon^2),
  \end{align*}
  which is negative for $\epsilon$ sufficiently small by
  Equation~\eqref{eq:inside-outside}. Thus $\Dir(h) < \Dir(g)$ and
  $g$~is not a local minimum for $\Dir$.

  The rest of the proof is almost unchanged: (\ref{item:weak-harm-global})
  implies (\ref{item:weak-harm-local}) by definition, and
  (\ref{item:weak-harm-curve}) implies (\ref{item:weak-harm-global})
  by properties of tight sequences.  For
  (\ref{item:weak-harm}) implies (\ref{item:weak-harm-curve}), if $f$
  is harmonic, by
  Theorem~\ref{thm:maxflow-mincut} there is a weighted multi-curve $(C,c)$
  on~$G$ so that $n_c = \abs{f'}$. Then $\shortseq{C}{c}{G}{f}{K}$ is
  tight.
  Finally, existence follows from by bounding the
  combinatorial types as before.
\end{proof}

We can improve the local lifts in a weak harmonic map a little.

\begin{definition}
  In a weak map $(\wt f, f) \co \Gamma_1 \to \Gamma_2$, we say that
  the local lift~$\wt f$ is \emph{vertex-precise} if,
  for every
  vertex~$v$ of $\Gamma_1$, $\kappa_2(\wt f(v)) = f(\kappa_1(v))$ on
  the nose (with no need for homotopy).
\end{definition}

\begin{proposition}\label{prop:weak-harmonic-vp}
  If $(\wt f, f) \co G \to K$ is a harmonic weak map, then $\wt f$ can
  be chosen to be vertex-precise and taut as a map from $W_f$ to~$K$.
\end{proposition}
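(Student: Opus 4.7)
The plan is to begin with any taut local lift $\wt f_0$ of $f$, which exists by Definition~\ref{def:weak-harmonic}, and modify it by a sequence of local homotopies, one near each vertex of $G$, to produce a lift that is simultaneously vertex-precise and taut. Because tautness is equivalent to local tautness by Theorem~\ref{thm:maxflow-mincut}, both the initial hypothesis and the conclusion are controlled vertex-by-vertex through the local models of Definition~\ref{def:local-model}, so the argument reduces to handling one vertex at a time without disturbing the map elsewhere. As a preliminary step, I would apply Proposition~\ref{prop:weak-reduced} to replace $\wt f_0$ by a reduced weak map, which costs nothing since reduction is itself a local homotopy within $N_1$.

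First I would arrange the equation $\kappa_2(\wt f(v)) = f(\kappa_1(v))$ for every vertex $v$ of $G$. Set $p^* = f(\kappa_1(v))$. If $p^*$ lies in the interior of an edge of $K^*$, then $\kappa_2^{-1}(p^*)$ is a single point and the homotopy commutativity of diagram~\eqref{eq:weak-map}, restricted to a regular neighborhood of $v$, lets me slide $\wt f_0(v)$ onto that point. If $p^*$ is a vertex of $K^*$, then $\kappa_2^{-1}(p^*)$ is a connected null subgraph $C \subset K$, and the freedom in the local lift (which is defined only up to local homotopy) allows $\wt f_0(v)$ to be slid along $C$ to any chosen point. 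Using disjoint regular neighborhoods of the vertices of $G$, these adjustments can be made independently.

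Next I would secure strict reduction at each unmarked vertex $v \in G$. Let $G_0 \subset G$ be the subgraph of edges with $\abs{f'}\neq 0$, with restricted map $f_0$. By Proposition~\ref{prop:harmonic-tt}, $(W_{f_0},\tau(f_0))$ is a weighted train track, so the triangle inequalities hold at $v$. If some edge at $v$ has positive tension, the triangle inequalities force at least two gates at $v$ with positive weight, and their images in $K^*$ are distinct non-zero directions; pulling back through $\kappa_2$ gives distinct non-zero directions in $K$, as required. The awkward case is when every edge incident to $v$ has zero tension: then $f$ is constant near $v$ and the incident edges map entirely into the null subgraph $C$ at $\wt f(v)$, and strict reduction requires routing two of them out of $\wt f(v)$ through distinct non-zero directions of $K$. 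Because we started from a reduced weak map, such a $v$ cannot be a dead end, and one checks by inspecting Definition~\ref{def:weak-reduced} that $C$ must then support the required pair of distinct non-zero directions; this step is the main obstacle and is where the weak-map machinery of Section~\ref{sec:lipschitz} pays off.

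Finally, I would verify that these modifications preserve tautness as a map $W_f \to K$. Each homotopy is supported in $N_v$ and therefore does not change $n_{\wt f}$ at any regular value outside; so it suffices to exhibit, inside each local model at a vertex of $K^*$, a simultaneously taut and vertex-precise representative. But the vertex-precise and strict-reduction conditions only constrain the discrete routing of $\wt f$ through the null subgraph $C$, whereas tautness is determined by the minimal cut weights of the local model via Proposition~\ref{prop:maxflow-mincut-local}, and the train-track produced there by Proposition~\ref{prop:tt-curve} can be realized with any prescribed routing of the null edges. Combining these local choices yields the desired global lift $\wt f$.
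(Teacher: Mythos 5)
Your overall plan (taut lift, then move vertices into the right fibers and straighten, then check tautness is preserved) matches the paper's strategy, but there is a concrete gap where the paper has a one-line argument and your proposal has a hand-wave.

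The paper's proof hinges on this observation: if $\wt f(v)$ lies in the interior of an edge~$e$ incident to $\kappa^{-1}(f(v))$, then tautness forces $n_{\wt f}$ to be \emph{constant} on~$e$, and therefore pushing $\wt f(v)$ along~$e$ to the endpoint on $\kappa^{-1}(f(v))$ does not change $n_{\wt f}$ at any regular value. That single fact is what makes the ``move one vertex at a time'' scheme work, because tautness is preserved globally after each push. Your proposal never invokes it. Your Step~1 only discusses where $\wt f_0(v)$ ends up, not why the slide is harmless; and your Step~4 tries to re-derive tautness preservation from local models of $f$ at vertices of $K^*$. But the local model of Definition~\ref{def:local-model} sees only the collapsed target $\Star_k$; it knows nothing about $n_{\wt f}$ on the null subgraph $C\subset K$, which is exactly where the lift is being rerouted. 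The claim that ``the train-track produced by Proposition~\ref{prop:tt-curve} can be realized with any prescribed routing of the null edges'' is not something either Proposition~\ref{prop:tt-curve} or Proposition~\ref{prop:maxflow-mincut-local} establishes, and it is not obviously true as stated: tautness of $\wt f\co W_f\to K$ is a condition on regular values of $K$ itself, not just of $K^*$, and different routings through $C$ can a priori change $n_{\wt f}$ there. You need to either use the constancy argument, or separately verify local tautness of $\wt f$ at every regular point of $C$ (which, using the fact that zero-tension edges have zero weight in $W_f$, reduces to reducing dead ends inside the null subgraph, exactly as the paper does at the end).

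Two smaller remarks. In your Step~1, the case ``$p^*$ in the interior of an edge'' is vacuous: $v \notin N_1$, so commutativity of diagram~\eqref{eq:weak-map} is exact there and $\kappa_2(\wt f(v)) = f(v)$ automatically. The substantive case is $p^*$ a vertex of $K^*$, and there $\wt f_0(v)$ need not lie in $C=\kappa_2^{-1}(p^*)$ at all: it can sit on a stub of a non-null edge of $K$ incident to $C$, which is precisely why a push (not merely a slide inside $C$) is needed and why one must argue it preserves $n_{\wt f}$. Your Step~3 analysis of the all-zero-tension vertex is on the right track, but it is simpler to observe that such dead ends are entirely inside null subgraphs, have zero weight in $W_f$, and can be reduced without affecting $n_{\wt f}$ or tautness.
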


\begin{proof}
  By definition of harmonic weak maps, we can find a taut initial
  lift~$\wt f$. If $\wt f$ does not map vertices to vertices, pick some
  vertex~$v$ so
  that $f(v) \ne \kappa(\wt f(v))$. Then $\wt f(v)$ lies on an
  edge~$e$ incident to $\kappa^{-1}(f(v))$. Since $\wt f$ is taut,
  $n_{\wt f}$ is constant on~$e$. We can thus  push $\wt f(v)$ into
  $\kappa^{-1}(f(v))$ without changing $f$ or~$n_{\wt f}$ (and thus
  keeping $\wt f$ taut). Repeat until
  $\wt f$~maps vertices to vertices.
\end{proof}

\subsection{Uniqueness and continuity}
\label{sec:relaxed}

Harmonic maps are not in general unique in their homotopy class. For
instance, if the target
length graph is a circle, then composing a harmonic map with any
rotation of the circle gives another harmonic map.  However, the length of
the image of an edge in a harmonic map is
unique. In fact, the lengths are unique in a larger set.

\begin{definition}\label{def:relaxed}
  For $\Gamma$ a marked graph, $K$ a weak marked length graph, and
  $[f] \co \Gamma \to K$ a homotopy class of maps, a
  \emph{relaxed
    map}~$r$ with respect to~$[f]$ is an assignment of a length $r(e)$ to each
  edge~$e$ of~$\Gamma$, so that, for any taut weighted marked
  multi-curve $(C,c)$
  on~$\Gamma$,
  \begin{equation}\label{eq:relaxed}
    \sum_{e \in \Edges(\Gamma)} n_c(e) r(e) \ge \ell[f \circ c].
  \end{equation}
  A relaxed map~$r$ naturally gives a weak length metric on~$\Gamma$.
  Let $\Rlx[f] \subset \Len(\Gamma)$ be the set of relaxed maps. We
  write $\Rlx_\ell[f]$ if we
  need to make precise the dependence on
  $\ell \in \Len(K)$.
\end{definition}

Although a relaxed map is not, in fact, any sort of map, the next
three lemmas show how relaxed maps are related to actual maps.

\begin{lemma}\label{lem:relax-lip}
  If $[f] \co \Gamma_1 \to \Gamma_2$ is a homotopy class,
  $r \in \Len(\Gamma_1)$, and $\ell \in \Len(\Gamma_2)$, then $r \in \Rlx_\ell[f]$ iff there is a map
  $g \in [f]$ with $\Lip^r_\ell(g) \le 1$.
\end{lemma}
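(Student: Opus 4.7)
The plan is to prove the two directions of the biconditional separately; the forward direction is immediate from sub-multiplicativity, while the reverse direction is a direct application of Theorem~\ref{thm:weak-stretch}.

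\emph{Forward direction} ($\Leftarrow$). I would take any $g \in [f]$ with $\Lip^r_\ell(g) \le 1$ and any taut marked weighted multi-curve $(C,c)$ on $\Gamma_1$, and chain sub-multiplicativity~\eqref{eq:sm-CKK} with the inequality $\ell[g\circ c] \le \ell(g \circ c)$:
\[
\ell[f \circ c] \;=\; \ell[g \circ c] \;\le\; \ell(g \circ c) \;\le\; \ell(c)\,\Lip^r_\ell(g) \;\le\; \ell(c),
\]
where $\ell(c)$ is computed using $r$ on $\Gamma_1$. By Lemma~\ref{lem:length-defs} and tautness of~$c$ (which makes $n_c$ constant on each edge), $\ell(c) = \sum_{e} n_c(e)\,r(e)$, which is exactly~\eqref{eq:relaxed}.

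\emph{Reverse direction} ($\Rightarrow$). The idea is to view $(\Gamma_1,r)$ as a weak marked length graph and apply Theorem~\ref{thm:weak-stretch} to $[f] \co (\Gamma_1, r) \to (\Gamma_2, \ell)$. First I verify that~\eqref{eq:relaxed} already implies the hypothesis of Theorem~\ref{thm:weak-stretch} that no null cycle in $(\Gamma_1,r)$ maps to a non-null cycle in $(\Gamma_2,\ell)$: any such cycle $\gamma$ would, via a reduced representative, supply a taut multi-curve~$c$ with $\sum_e n_c(e)\,r(e) = 0$ but $\ell[f\circ c] > 0$, contradicting~\eqref{eq:relaxed}. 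Granted also that $(\Gamma_1,r)$ has at least one non-null edge, Theorem~\ref{thm:weak-stretch} produces a weak map $\psi \in [f]$ with
\[
\Lip^r_\ell(\psi) \;=\; \Lip^r_\ell[f] \;=\; \sup_{c\,:\,\ell[c] > 0} \frac{\ell[f\circ c]}{\ell[c]},
\]
and~\eqref{eq:relaxed} bounds every ratio in that sup by $1$. The underlying PL map of the weak $\psi$ is then a representative of~$[f]$ with the same Lipschitz constant $\le 1$.

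\emph{Degenerate case.} The remaining case $r \equiv 0$ is handled directly: \eqref{eq:relaxed} forces $\ell[f\circ c] = 0$ for every taut multi-curve~$c$, which lets one homotope $f$ into a single null subgraph of $(\Gamma_2,\ell)$, producing $g \in [f]$ with $\Lip^r_\ell(g) = 0$. I expect the main obstacle in the argument to be precisely the careful bookkeeping around null edges --- both the null-cycle hypothesis of Theorem~\ref{thm:weak-stretch} and this degenerate case --- but both ultimately reduce to direct applications of~\eqref{eq:relaxed}.
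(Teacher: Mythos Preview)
Your proof is correct and follows the same route as the paper, which simply says ``This is Theorem~\ref{thm:weak-stretch}.'' You have spelled out both directions and verified the null-cycle hypothesis of Theorem~\ref{thm:weak-stretch}, which the paper leaves implicit; your treatment of the degenerate case $r\equiv 0$ is slightly roundabout (once all edges of $\Gamma_1$ are null, $\Gamma_1^*$ is discrete and any weak map has Lipschitz constant~$0$, so no separate homotopy argument is needed), but the conclusion is right.
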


\begin{proof}
  This is Theorem~\ref{thm:weak-stretch}.
\end{proof}

\begin{lemma}
  If $f \co \Gamma \to K$ is a weak PL map, then
  $f^*\ell \in \Rlx_\ell[f]$.
\end{lemma}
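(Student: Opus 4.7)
The plan is to show that the left-hand side of~\eqref{eq:relaxed} with $r = f^*\ell$ coincides with $\ell(f \circ c)$ for any taut weighted multi-curve $(C,c)$ on~$\Gamma$; the conclusion will then be immediate from $\ell[f \circ c] \le \ell(f \circ c)$ by the definition of the homotopy infimum.

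For the identification, first I would unpack $(f^*\ell)(e) = \int_{e} \abs{f'(y)}\,dy$ from Definition~\ref{def:length}. Because $(C,c)$ is taut with positive-weight components, Lemma~\ref{lem:taut-reduced} tells us it is reduced, so $n_c$ is constant on each edge of~$\Gamma$. This lets me rewrite the finite sum $\sum_e n_c(e)(f^*\ell)(e)$ as a single integral $\int_{y\in\Gamma} n_c(y)\abs{f'(y)}\,dy$ over~$\Gamma$.

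Next I would perform a change of variables from $\Gamma$ back to $C$ via $c$: at regular values $y$, the defining identity $n_c(y) = \sum_{c(x)=y} w(x)$ together with $dy = \abs{c'(x)}\,dx$ pulls the integral back to $\int_{x\in C} w(x)\abs{f'(c(x))}\abs{c'(x)}\,dx$. The chain rule $\abs{f'(c(x))}\abs{c'(x)} = \abs{(f\circ c)'(x)}$ collapses this to $\int_{x\in C} w(x)\abs{(f\circ c)'(x)}\,dx$, which by Lemma~\ref{lem:length-defs} is exactly $\ell(f\circ c)$.

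The main technical point — really a bookkeeping matter rather than a genuine obstacle — is that the change of variables only applies cleanly on the complement of the finite set of singular values in $\Gamma$ and on the portions of $C$ where $c$ is locally injective; both issues are harmless because the singular values have measure zero and the constant portions of $c$ contribute $0$ to both sides of the asserted equality. Once the identification $\sum_e n_c(e)(f^*\ell)(e) = \ell(f\circ c)$ is in hand, the inequality $\ell(f\circ c) \ge \ell[f\circ c]$ is automatic, and since $(C,c)$ was arbitrary this shows $f^*\ell \in \Rlx_\ell[f]$.
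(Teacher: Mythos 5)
Your proof is correct, but it takes a different route from the paper's. The paper observes that $\Lip^{f^*\ell}_\ell(f) = 1$ and invokes Lemma~\ref{lem:relax-lip} (the characterization of relaxed maps as those supporting a $1$-Lipschitz representative), which in turn rests on Theorem~\ref{thm:weak-stretch}; unpacking that route, the key inequality $\sum_e n_c(e)\,(f^*\ell)(e) \ge \ell[f\circ c]$ boils down to the sub-multiplicativity $\ell(f\circ c) \le \ell^{f^*\ell}(c)\cdot\Lip^{f^*\ell}_\ell(f)$ from Eq.~\eqref{eq:sm-CKK}. You instead bypass Lemma~\ref{lem:relax-lip} entirely and verify the definition of $\Rlx_\ell[f]$ directly, establishing the \emph{equality} $\sum_e n_c(e)\,(f^*\ell)(e) = \ell(f\circ c)$ by the change-of-variables computation (pull the edge sum back to $\int_\Gamma n_c\,\abs{f'}$, then to $\int_C w\,\abs{(f\circ c)'}$), and then conclude with $\ell(f\circ c) \ge \ell[f\circ c]$. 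Your version is more self-contained and gives slightly more information (an equality where the paper only needs an inequality), at the cost of re-doing a change-of-variables argument the paper has already packaged into its sub-multiplicativity and length lemmas. One small simplification: you don't actually need to route through Lemma~\ref{lem:taut-reduced}; the paper notes directly after Definition~\ref{def:taut} that tautness of $c$ already implies $n_c$ is constant on each edge.
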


\begin{proof}
  By definition, $\Lip^{f^*\ell}_{\ell}(f) = 1$. Apply
  Lemma~\ref{lem:relax-lip}.
\end{proof}

\begin{lemma}
  If $[f] \co \Gamma \to K$ is a homotopy class of maps from a marked
  graph~$\Gamma$ to a non-trivial weak marked length graph~$K$ and $r \in \Rlx[f]$, then
  there is a PL map $g \in [f]$ so that $r = g^* \ell$.
\end{lemma}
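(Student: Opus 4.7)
The plan is to start from the map furnished by the preceding lemma and surgically increase its pullback, edge by edge, using null-homotopic insertions. The preceding lemma supplies a PL map $g_0 \in [f]$ with $\Lip^r_\ell(g_0) \le 1$; I may further take $g_0$ to be constant-derivative on each edge. This gives $g_0^*\ell(e) \le r(e)$ for every edge $e$, with equality whenever $r(e) = 0$ (in that case the Lipschitz bound forces $g_0$ to be constant on $e$). On each edge with positive deficit $\delta(e) := r(e) - g_0^*\ell(e) > 0$, I will add a local null-homotopic excursion contributing exactly $\delta(e)$ to the $e$-pullback, without changing $[g_0]$.

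For an edge $e$ on which $g_0$ is not everywhere constant, I would pick a tiny subinterval $J \subset e$ where $g_0|_J$ is a linear embedding onto a sub-segment of $K$, and replace $g_0|_J$ by a PL map that first traces $g_0|_J$, then inserts a back-and-forth along $g_0(J)$ of total length $\delta(e)$, then continues. This is a homotopy rel endpoints of $J$, so $[g_0]$ is preserved, and the new $e$-pullback is exactly $r(e)$. The remaining case is an edge on which $g_0$ is constant at a point $p \in K$ while $r(e) > 0$; here I use the non-triviality of $K$ to find, in the connected component of $p$, a PL path $\gamma$ starting at $p$ of length $r(e)/2$ (traversing null segments as needed to reach a positive-length edge of $K$, then backtracking within that edge to attain any prescribed length). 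Replacing $g_0|_e$ by $\gamma$ followed by $\overline{\gamma}$ inserts a null-homotopic loop at $p$ of pullback length $r(e)$, again leaving $[g_0]$ unchanged.

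Performing all these independent local modifications simultaneously yields a PL map $g \in [f]$ with $g^*\ell(e) = r(e)$ on every edge, i.e., $g^*\ell = r$. The main obstacle in the plan is the degenerate constant-edge case: one must verify that the ``non-trivial'' hypothesis on $K$ really does guarantee a positive-length edge in every connected component of $K$ that meets the image of $g_0$, for otherwise no local bubbling can supply the required pullback length and the conclusion would fail.
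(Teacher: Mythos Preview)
Your approach is essentially the same as the paper's: start from the map $g_0$ supplied by Lemma~\ref{lem:relax-lip} with $\Lip^r_\ell(g_0)\le 1$, so that $g_0^*\ell(e)\le r(e)$ on every edge, and then lengthen the image edge by edge via zigzag folds until the pullback equals~$r$. The paper's proof says exactly this in one sentence (``introduce some zigzag folds'') and leaves the details implicit.

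You have been more careful than the paper in one respect: you separate out the case where $g_0$ is constant on an edge~$e$ with $r(e)>0$, and you observe that bubbling there requires a positive-length edge of~$K$ reachable from the image point~$p$. Your closing remark is correct that this is where the ``non-trivial'' hypothesis must do its work, and the paper does not spell out how. In the paper's intended applications $K$ is the underlying graph of an elastic graph with a weak metric, so each component certainly has positive-length edges; but as a standalone statement the hypothesis would need to be read component-wise (or $K$ taken connected) for the argument to go through. This is a fair point to flag, not a flaw in your reasoning.
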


\begin{proof}
  Lemma~\ref{lem:relax-lip} gives a PL map $g_0 \co (\Gamma,r) \to K$
  in~$[f]$ with
  $\Lip(g_0) \le 1$. That is, $\ell(g_0(e)) \le r(e)$ for each
  edge~$e$
  of~$\Gamma$. Define~$g$ by modifying~$g_0$: for each
  edge~$e$ on which $\ell(g_0(e)) < r(e)$, make the length of the
  image of~$e$ longer by introducing some zigzag folds.
\end{proof}

\begin{lemma}\label{lem:relaxed}
  Definition~\ref{def:relaxed} does not change if we let $(C,c)$ be
  \begin{enumerate}
  \item\label{item:relaxed-tt} a marked balanced train track,
  \item\label{item:relaxed-curve} a marked weighted multi-curve (as in
    Definition~\ref{def:relaxed}),
  \item\label{item:relaxed-conn} a marked curve with weight~$1$, or
  \item\label{item:relaxed-simple} a marked curve with weight~$1$
    that crosses each edge at most twice.
  \end{enumerate}
\end{lemma}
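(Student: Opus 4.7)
The plan is to use Lemma~\ref{lem:relax-lip} as a pivot: it asserts that the default definition of relaxed map in case~(\ref{item:relaxed-curve}) is equivalent to the existence of some $g\in[f]$ with $\Lip^r_\ell(g)\le 1$, viewing $(\Gamma,r)$ as a weak marked length graph.  I will then show that each of cases~(\ref{item:relaxed-tt}), (\ref{item:relaxed-conn}), and~(\ref{item:relaxed-simple}) is equivalent to this same Lipschitz criterion, so that all four coincide.

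Writing $\Rlx^{(i)}_\ell[f]$ for the set of weak metrics~$r$ satisfying~\eqref{eq:relaxed} for every test object of type~$(i)$, the inclusions $\Rlx^{(1)}_\ell[f]\subseteq\Rlx^{(2)}_\ell[f]\subseteq\Rlx^{(3)}_\ell[f]\subseteq\Rlx^{(4)}_\ell[f]$ will follow immediately from the observation that each successive class of test objects sits inside the previous one: a reduced unit-weight curve crossing each edge at most twice is in particular a reduced unit-weight curve; a unit-weight curve is a weighted multi-curve; and a weighted multi-curve is a weighted train track equipped with the discrete train-track structure, whose triangle inequalities at the interior vertices of each component reduce to trivial equalities.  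A larger class of test objects imposes a stricter constraint on~$r$.

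For the reverse direction, I first show that the Lipschitz condition implies~(\ref{item:relaxed-tt}) (and hence all four cases): if some $g\in[f]$ has $\Lip^r_\ell(g)\le 1$, then for any test object~$(C,c)$ submultiplicativity~\eqref{eq:sm-CKK} gives
\[
\ell^1_\ell(g\circ c)\;\le\;\Lip^r_\ell(g)\cdot\ell^1_r(c)\;\le\;\ell^1_r(c),
\]
and since $g\circ c\in[f\circ c]$ this yields $\ell^1_\ell[f\circ c]\le\ell^1_r(c)$.  When~$c$ is taut (as it is for any reduced multi-curve and any train-track map), $\ell^1_r(c)=\sum_e n_c(e)\,r(e)$, so~\eqref{eq:relaxed} holds and $r\in\Rlx^{(1)}_\ell[f]$.

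Finally, I close the chain by starting from $r\in\Rlx^{(4)}_\ell[f]$.  That condition reads $\sup_c \ell^1_\ell[f\circ c]/\ell^1_r[c]\le 1$, the supremum taken over unit-weight curves on~$\Gamma$ crossing each edge at most twice.  By Theorem~\ref{thm:weak-stretch} applied to $[f]\co(\Gamma,r)\to K$, this supremum equals $\Lip^r_\ell[f]$, so some $g\in[f]$ realizes $\Lip^r_\ell(g)\le 1$, placing~$r$ back in the original $\Rlx_\ell[f]$.  The main obstacle is the appeal to Theorem~\ref{thm:weak-stretch}: it both handles the compactness issues for weak metrics and, crucially, provides the ``at most twice'' characterization of the Lipschitz stretch factor.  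Modulo that theorem and Lemma~\ref{lem:relax-lip}, the rest of the argument is just bookkeeping with submultiplicativity.
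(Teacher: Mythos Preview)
Your proof is correct.  The paper argues in the opposite direction, showing that a violating test object of each type yields one of the next: $(1)\Rightarrow(2)$ via Proposition~\ref{prop:tt-curve}, $(2)\Rightarrow(3)$ by additivity over components, and $(3)\Rightarrow(4)$ by a direct cut-and-paste argument (with Lemma~\ref{lem:relax-lip} and Theorem~\ref{thm:weak-stretch} offered as an alternative).  Your route---pivoting entirely through the Lipschitz criterion and closing the loop $(4)\Rightarrow\text{Lipschitz}\Rightarrow(1)$ in one stroke via sub\-multiplicativity---is more unified and sidesteps the separate appeal to Proposition~\ref{prop:tt-curve}, at the cost of placing all the weight on Theorem~\ref{thm:weak-stretch}.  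Both approaches ultimately rest on that theorem for the non-trivial content; the difference is organizational rather than substantive.
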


\begin{proof}
  The types of curve-like objects are progressively more restrictive,
  so we need to show that the existence of a structure of one type
  violating Equation~\eqref{eq:relaxed}
  implies the next. Condition~(\ref{item:relaxed-tt}) implies
  condition~(\ref{item:relaxed-curve}) by Proposition~\ref{prop:tt-curve}.
  Condition~(\ref{item:relaxed-curve}) implies
  condition~(\ref{item:relaxed-conn}) by the additivity of
  Eq.~\eqref{eq:relaxed} over connected components.
  Condition~(\ref{item:relaxed-conn}) implies
  condition~(\ref{item:relaxed-simple}) by Lemma~\ref{lem:relax-lip} and
  Theorem~\ref{thm:weak-stretch} or, more simply, by taking any
  curve, looking for a maximal segment with no repeated
  oriented edges, doing cut-and-paste if necessary, and then using
  additivity over the connected components.
\end{proof}

\begin{lemma}\label{lem:relaxed-convex}
  For $[f] \co \Gamma_1 \to \Gamma_2$ a homotopy class of maps between marked
  graphs and $\ell \in \Len(\Gamma_2)$, $\Rlx_\ell[f]$
  is a closed, non-compact, convex polytope defined by finitely many
  inequalities, each inequality depending linearly on~$\ell$.
\end{lemma}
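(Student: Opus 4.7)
The plan is to cut $\Rlx_\ell[f]$ out by finitely many linear inequalities, and the finiteness will come from Lemma~\ref{lem:relaxed}(\ref{item:relaxed-simple}). First I would apply that clause of Lemma~\ref{lem:relaxed} to replace the defining family of taut weighted multi-curves by the much smaller family $\mathcal{C}$ of marked curves of weight~$1$ that cross each edge of~$\Gamma_1$ at most twice. For any such curve~$c$, the multiplicity $n_c(e)$ lies in $\{0,1,2\}$, and a reduced representative of $[c]$ (which exists and is essentially unique because its components are either arcs or free homotopy classes of loops, cf.\ Lemma~\ref{lem:taut-curve}) is combinatorially determined by an edge-path of total length at most $2\lvert\Edges(\Gamma_1)\rvert$. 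Hence $\mathcal{C}$ contains only finitely many homotopy classes with distinct multiplicity data, so only finitely many inequalities~\eqref{eq:relaxed} are genuinely needed.

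Next I would verify that each resulting inequality
\[
\sum_{e \in \Edges(\Gamma_1)} n_c(e)\, r(e) \;\ge\; \ell[f \circ c]
\]
is linear in both $r$ and~$\ell$. Linearity in~$r$ is visible on the left-hand side. For linearity in~$\ell$, I would apply Theorem~\ref{thm:maxflow-mincut} (or the uniqueness of reduced representatives) to pick a taut representative $c' \in [f \circ c]$ on~$\Gamma_2$; then Proposition~\ref{prop:taut-minimal} and Lemma~\ref{lem:length-defs} give
\[
\ell[f \circ c] \;=\; \ell(c') \;=\; \sum_{e' \in \Edges(\Gamma_2)} n_{c'}(e')\, \ell(e'),
\]
where the integers $n_{c'}(e')$ are determined by the homotopy class $[f \circ c]$ and are independent of~$\ell$. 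Therefore each defining inequality has the form $\langle a_c, r\rangle \ge \langle b_c, \ell\rangle$ with $a_c, b_c$ fixed nonnegative integer vectors.

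Thus $\Rlx_\ell[f]$ is the intersection inside $\Len(\Gamma_1)$ of the cone $\{r \ge 0\}$ with finitely many closed half-spaces, each cut out by a linear inequality whose right-hand side depends linearly on~$\ell$. This makes it a closed convex polytope with the required $\ell$-dependence. For non-compactness, the preceding lemmas give $f^*\ell \in \Rlx_\ell[f]$, and the system of inequalities is monotone in~$r$: if $r \in \Rlx_\ell[f]$ and $r' \ge r$ pointwise, then $r' \in \Rlx_\ell[f]$ as well. So $\Rlx_\ell[f] \supset f^*\ell + \RR_{\ge 0}^{\Edges(\Gamma_1)}$, which is unbounded.

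The only real obstacle is justifying the finiteness of the curve family~$\mathcal{C}$ (up to the equivalence that matters for the inequality); everything else is routine bookkeeping with linear algebra and the previously established facts about taut and reduced maps.
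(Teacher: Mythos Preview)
Your proof is correct and follows essentially the same approach as the paper: both reduce to the finite family of curves from Lemma~\ref{lem:relaxed}(\ref{item:relaxed-simple}) to get finitely many linear half-spaces. You fill in more detail than the paper does, particularly on why $\ell[f\circ c]$ depends linearly on~$\ell$; for non-compactness the paper uses the slightly simpler observation that scaling a relaxed map by $\lambda>1$ gives another relaxed map, but your monotonicity argument works equally well.
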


\begin{proof}
  This follows from condition~(\ref{item:relaxed-simple}) of
  Lemma~\ref{lem:relaxed}, as there are only finitely many curves
  crossing each edge at most twice, and Equation~\eqref{eq:relaxed}
  cuts out a closed half-space for each such curve. Scaling a relaxed
  map by a factor $\lambda > 1$ gives another relaxed map, so
  $\Rlx[f]$ is not compact.
\end{proof}

If $r \in \Rlx[f]$ is a relaxed map where the domain is an elastic
graph $(\Gamma,\alpha)$,
we can define the Dirichlet energy of~$r$:
\begin{equation}\label{eq:dir-rel}
\Dir^\alpha(r) \coloneqq \sum_{e \in \Edges(\Gamma)} \frac{r(e)^2}{\alpha(e)}.
\end{equation}
In fact, Equation~\eqref{eq:dir-rel} makes sense for any $r \in \Len(\Gamma)$.

We can now give the uniqueness statement.

\begin{proposition}\label{prop:harmonic-unique-min}
  If $f \co G \to K$ is a harmonic map from a marked elastic
  graph to a weak marked length graph, then
  $f^*\ell$ uniquely minimizes Dirichlet energy on $\Rlx_\ell[f]$.
\end{proposition}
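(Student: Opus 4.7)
The plan is to combine strict convexity of the Dirichlet functional, which handles uniqueness, with the tight sequence provided by Theorem~\ref{thm:harmonic-min-weak}, which supplies the key inequality via Cauchy--Schwarz.

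First I would note that $\Rlx_\ell[f]$ is a closed convex polytope by Lemma~\ref{lem:relaxed-convex} and that $\Dir^\alpha(r) = \sum_e r(e)^2/\alpha(e)$ is a strictly convex function on $\Len(\Gamma)$; together these immediately give uniqueness of the minimizer. Since $f^*\ell \in \Rlx_\ell[f]$ by the lemma immediately following Definition~\ref{def:relaxed}, and $\Dir^\alpha(f^*\ell) = \Dir(f)$ by Equation~\eqref{eq:dir-1}, the task reduces to showing that $\Dir(f) \le \Dir^\alpha(r)$ for every $r \in \Rlx_\ell[f]$.

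Next I would apply Theorem~\ref{thm:harmonic-min-weak} to the harmonic map $f$ to obtain a weighted marked multi-curve $(C,c)$ on $G$ for which the sequence $\shortseq{C}{c}{G}{f}{K}$ is tight. A careful reading of the proof of that theorem shows that the weights on $(C,c)$ can be chosen so that $n_c(e) = \abs{f'(e)}$ on every edge $e$ of $G$. Using $\ell(f(e)) = \alpha(e)\abs{f'(e)}$, this gives $\EL(c) = \Dir(f)$ and $\ell(f \circ c) = \Dir(f)$. Then for any $r \in \Rlx_\ell[f]$, chaining the relaxed-map inequality~\eqref{eq:relaxed} for the taut curve $c$ with Cauchy--Schwarz applied to the vectors $(\sqrt{\alpha(e)}\,n_c(e))_e$ and $(r(e)/\sqrt{\alpha(e)})_e$ yields
\[
\Dir(f) = \ell[f \circ c] \;\le\; \sum_e n_c(e)\,r(e)
  \;\le\; \sqrt{\EL(c)\,\Dir^\alpha(r)} = \sqrt{\Dir(f)\,\Dir^\alpha(r)},
\]
and dividing by $\sqrt{\Dir(f)}$ (in the non-degenerate case) finishes the required inequality.

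The main obstacle is not really a logical one but a bookkeeping one: extracting from the proof of Theorem~\ref{thm:harmonic-min-weak} the precise statement that $n_c$ can be taken to equal $\abs{f'}$ pointwise, rather than merely that \emph{some} taut multi-curve exists. The degenerate case $\Dir(f) = 0$ must also be treated separately: there $f$ is constant, $f^*\ell \equiv 0$, the inequality is trivial, and uniqueness still follows from strict convexity alone.
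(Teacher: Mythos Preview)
Your argument is correct, but it takes a different route from the paper. The paper's proof is shorter and more conceptual: it simply observes that $\Dir^\alpha$ is strictly convex with compact sub-level sets, hence has a unique minimizer on the closed convex set $\Rlx_\ell[f]$; then, since the lemma just before Lemma~\ref{lem:relaxed} shows that \emph{every} $r \in \Rlx_\ell[f]$ is $g^*\ell$ for some actual map $g \in [f]$, minimizing $\Dir^\alpha$ over $\Rlx_\ell[f]$ is the same as minimizing $\Dir$ over $[f]$, and the harmonic map already does that.

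Your approach instead bypasses the realization lemma and argues by duality: you pull the tight multi-curve $(C,c)$ with $n_c = \abs{f'}$ out of Theorem~\ref{thm:harmonic-min-weak}, then pair it against an arbitrary $r$ via the defining inequality of $\Rlx_\ell[f]$ and Cauchy--Schwarz. This is a perfectly valid and self-contained computation, and it makes the role of the supporting multi-curve explicit. The cost is that you have to dig into the proof of Theorem~\ref{thm:harmonic-min-weak} to justify $n_c = \abs{f'}$ (which is indeed stated there), and you must handle the $\Dir(f)=0$ case separately. The paper's version buys brevity by leaning on the realization lemma; yours buys transparency about where the inequality actually comes from.
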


\begin{proof}
  Let $G = (\Gamma, \alpha)$. The function $\Dir^\alpha$ is strictly
  convex on $\Len(\Gamma)$, and its sub-level sets are compact. As
  such, $\Dir^\alpha$ achieves a unique minimum on the closed convex
  set $\Rlx[f]$. Since $f$ was harmonic, it minimizes $\Dir(f)$ in
  $[f]$; since every point in $\Rlx[f]$ gives the lengths of an actual
  map in $[f]$, the minimizer in $\Rlx[f]$ must be~$f^*\ell$.
\end{proof}

In light of Proposition~\ref{prop:harmonic-unique-min}, we can think
of Dirichlet energy and the length of edges of the harmonic minimizer as
functions on $\Len(K)$.
\begin{definition}\label{def:Hf}
  For $[f]$ a homotopy class of maps from a marked elastic
  graph $(\Gamma_1,\alpha)$ to a marked graph~$\Gamma_2$, define
  \begin{align*}
    \Dir_{[f]} \co \Len(\Gamma_2) &\to \RR\\
    H_{[f]} \co \Len(\Gamma_2) &\to \Len(\Gamma_1)
  \end{align*}
  by setting $\Dir_{[f]}(\ell)$ to $\Dir^\alpha_\ell[f]$ and
  $H_{[f]}(\ell)$ to the relaxed map in $\Rlx_\ell[f]$
  minimizing~$\Dir^\alpha$.
\end{definition}

\begin{proposition}\label{prop:dirichlet-continuous}
  Let $[f] \co G \to \Gamma_2$ be a homotopy class of maps from a marked
  elastic graph to a marked graph. Then $\Dir_{[f]}$ and $H_{[f]}$ are
  continuous functions on $\Len(\Gamma_2)$, with $\Dir_{[f]}$
  piecewise-quadratic and $H_{[f]}$ piecewise-linear.
\end{proposition}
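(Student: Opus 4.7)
The plan is to reduce this to a parametric quadratic programming problem, using that the feasible set $\Rlx_\ell[f]$ is a polytope varying piecewise-linearly in $\ell$ and that $\Dir^\alpha$ is a strictly convex quadratic form.

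First, I invoke Lemmas~\ref{lem:relaxed-convex} and~\ref{lem:relaxed}(\ref{item:relaxed-simple}): $\Rlx_\ell[f]$ equals the intersection, over the finite set of marked curves $c$ on $\Gamma_1$ that cross each edge at most twice, of the half-spaces $\{r : \sum_{e} n_c(e)\,r(e) \ge \ell[f\circ c]\}$. The left-hand coefficients $n_c(e)$ are fixed non-negative integers. Each right-hand side $\ell[f\circ c]$ is a piecewise-linear function of $\ell \in \Len(\Gamma_2)$: on any open region of $\Len(\Gamma_2)$ where the set of null edges is fixed, the collapsed graph $\Gamma_2^*$ is fixed, the taut representative of $[f \circ c]$ in $\Gamma_2^*$ has a fixed multiplicity function $m$, and $\ell[f \circ c] = \sum_{e'} m(e') \ell(e')$ is linear in $\ell$. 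Taking the common refinement, I obtain a polyhedral decomposition $\cD_0$ of $\Len(\Gamma_2)$ on whose cells $\sigma$ the defining inequalities of $\Rlx_\ell[f]$ have right-hand sides affine in $\ell$.

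Fix a cell $\sigma \in \cD_0$. On $\sigma$, minimizing $\Dir^\alpha(r) = \sum_e r(e)^2/\alpha(e)$ over the parametric polytope $\{r : Ar \ge b(\ell)\}$ (with $A$ fixed and $b$ affine) is a standard parametric convex quadratic program. For each subset $S$ of constraint indices, the KKT system---stationarity $2r(e)/\alpha(e) = \sum_{c \in S} \lambda_c\, n_c(e)$, active-constraint equalities $\sum_e n_c(e) r(e) = b_c(\ell)$ for $c \in S$, and complementary slackness $\lambda_c = 0$ for $c \notin S$---is a linear system in $(r,\lambda)$ with fixed coefficient matrix and right-hand side affine in $\ell$. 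Where it has a unique solution, that solution is affine in $\ell$. The parameter region where a given $S$ is the active set at the optimum is polyhedral, being cut out by $\lambda_c(\ell) \ge 0$ for $c \in S$ and by the inactive feasibility inequalities $\sum_e n_c(e) r(\ell)(e) \ge b_c(\ell)$ for $c \notin S$. These regions cover $\sigma$, yielding a polyhedral refinement of $\cD_0$ on whose top-dimensional pieces $H_{[f]}$ is affine and $\Dir_{[f]}$ is quadratic.

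Continuity of both functions on all of $\Len(\Gamma_2)$ then follows from uniqueness of the minimizer (Proposition~\ref{prop:harmonic-unique-min}): whenever two affine pieces of $H_{[f]}$ meet, along the boundary between two KKT regions or between two cells of $\cD_0$, both candidate values lie in $\Rlx_\ell[f]$ and both minimize $\Dir^\alpha$, so they must agree. The main technical subtlety, which is handled by this uniqueness argument together with strict convexity, is the consistent gluing of the local parametric-QP solutions across degenerate active sets; concretely, whenever the natural linear system fails to have full rank, one passes to a maximal linearly independent subsystem, and distinct such choices produce the same primal value $r(\ell)$ by uniqueness.
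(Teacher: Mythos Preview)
Your proof is correct and takes essentially the same approach as the paper: both view the problem as minimizing the fixed strictly convex quadratic $\Dir^\alpha$ over the polytope $\Rlx_\ell[f]$, whose defining half-spaces have right-hand sides linear in~$\ell$ (Lemma~\ref{lem:relaxed-convex}), and deduce the piecewise-linear/quadratic structure from standard parametric quadratic programming; the paper states this in two sentences, while you spell out the KKT-region decomposition explicitly. One minor simplification: the right-hand sides $\ell[f\circ c]$ are already globally \emph{linear} in~$\ell$ (the reduced representative of $f\circ c$ in~$\Gamma_2$ does not depend on the metric), so your preliminary decomposition $\cD_0$ by null-edge pattern is unnecessary.
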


\begin{proof}
  As $\ell \in \Len(\Gamma_2)$ varies,
  the closed convex set $\Rlx_\ell[f]$ varies continuously by
  Lemma~\ref{lem:relaxed-convex}. Since $\Dir^\alpha$ is strictly convex, both
  the value and location of the minimum of $\Dir^\alpha$ on
  $\Rlx_\ell[f]$ depend continuously on~$\ell$.
  
  Furthermore, since $\Dir^\alpha$ is quadratic on $\Len(\Gamma)$ and
  $\Rlx_\ell[f]$ depends piecewise-linearly on~$\ell$, the
  value of the minimum of $\Dir^\alpha$ on $\Rlx_\ell[f]$ is a piecewise-quadratic
  function of~$\ell$ and the location of the minimum is a
  piecewise-linear function of~$\ell$. (The particular quadratic or
  linear function depends on the face of $\Rlx_\ell[f]$ containing the
  minimum.)
\end{proof}

See also Remark~\ref{rem:Dir-deriv}.

\begin{remark}
  An alternate way to see that $\Dir_{[f]}$ is piecewise-quadratic and
  that $H_{[f]}$ is piecewise-linear is to note that they are
  respectively quadratic and linear for a fixed combinatorial type of
  a harmonic representative, and only finitely many combinatorial
  types of maps appear by Theorem~\ref{thm:harmonic-min-weak} and
  Proposition~\ref{prop:strong-red-finite}. The combinatorial type
  is related to the face of $\Rlx_\ell[f]$ containing
  $H_{[f]}(\ell)$.
\end{remark}


\section{Minimizing embedding energy}
\label{sec:filling}

\subsection{Characterizing minimizers: $\lambda$-filling maps}
\label{sec:lambda-filling}

We now turn to the proof of Theorem~\ref{thm:emb-sf}, starting with a
characterization of which maps can appear as minimizers of
$\Emb[\phi]$. Since we will show that a map that
minimizes $\Emb[\phi]$ is a
simultaneous minimizer of the ratio of Dirichlet energy (for maps to a
length graph) and of the ratio of extremal length (from maps from a
weighted multi-curve), it is helpful to have a
notion that encapsulates both weights and lengths to prove both
tightness statements uniformly.

\begin{definition}\label{def:strip-graph}
  A \emph{strip graph} $S = (\Gamma,w,\ell)$ is a
  marked graph~$\Gamma$ with
  weights $w \in \Wgt(\Gamma)$ and
  lengths $\ell \in \Len(\Gamma)$, so that $w(e) \ne 0$ iff
  $\ell(e) \ne 0$. The strip graph is \emph{positive} if all lengths
  and weights are positive, and is \emph{balanced} if $w$ is balanced
  (Definition~\ref{def:tt}).
 
  There is an associated marked weighted
  graph $W_S = (\Gamma,w)$
  and weak marked length graph $K_S = (\Gamma, \ell)$. In addition to
  these, there is also an associated elastic structure: We say
  that $S$ is
  \emph{compatible} with an elastic graph $G_S = (\Gamma, \alpha)$ if
  $\ell(e) = w(e) \alpha(e)$ for each edge~$e$. (If $S$ is positive,
  then $G_S$ is unique.)
  A strip graph also has an \emph{area}
  \[
  \Area(S) \coloneqq \sum_{e \in \Edges(\Gamma)} \ell(e)w(e).
  \]
\end{definition}

To build intuition for strip graphs, we relate strip graphs to
tight sequences of maps.
\begin{proposition}\label{prop:strip-tight}
  A balanced strip graph $S$ gives a tight
  sequence of maps
  \[
  \longseq{W_S}{}{G_S}{}{K_S}{}{K^*_S}.
  \]
\end{proposition}
(Recall from Definition~\ref{def:weak-graph} that $K_S^*$ is $K_S$
with the null edges collapsed.)

\begin{proof}
  The fact that the map $W_S \to K_S$ is taut (hence
  energy-minimizing) is the condition that $w$ is balanced. The map
  $W_s \to K_S^*$ is then taut by Lemmas~\ref{lem:taut-subdomain}
  and~\ref{lem:taut-subrange}.
  We also have
  \begin{align*}
    \ell(W_S \to K_S^*)
      = \EL(W_S \to G_S)
        = \Dir(G_S \to K_S)
          &= \Area(S)\\
    \Lip(K_S \to K^*_S) &= 1,
  \end{align*}
  so the energies are multiplicative, as desired.
\end{proof}

\begin{proposition}
  Let $f \co G \to K$ be a harmonic map, with
  $G = (\Gamma, \alpha)$. Then there is a unique
  balanced strip structure $S = (\Gamma, w, \ell)$ on~$\Gamma$,
  compatible with~$\alpha$, so that $f$ is an isometry from
  $(\Gamma,\ell)$ to~$K$.

  Similarly, let $f \co W \to G$ be a taut map from a weighted graph
  to an elastic graph. Then there is a unique balanced strip structure
  $S = (\Gamma,w,\ell)$ compatible with~$G$ so that $n_f(y) = w(y)$.
\end{proposition}
\begin{proof}
  For the first statement, set $\ell(e)$ to be the length of the
  image, $f(e)$,
  and set $w(e)$ to be the tension,
  $\abs{f'(e)}$. For the second statement, set $w(e)$ to be the
  multiplicity covered~$n_f(e)$ and $\ell(e) = \alpha(e)\cdot w(e)$.
  Verifying the required properties is straightforward. In particular,
  ``balanced'' follows from Corollary~\ref{cor:harmonic-triangle} in
  the harmonic map case.
\end{proof}

\begin{definition}\label{def:lambda-filling}
  Let $S_1 = (\Gamma_1, w_1, \ell_1)$ and
  $S_2 = (\Gamma_2, w_2,\ell_2)$ be two marked balanced strip
  graphs. Write $G_1$ for the elastic graph $G_{S_1}$ associated to
  $S_1$, and so forth. (This involves choices if the $S_i$ are not
  positive.) Let $\phi \co \Gamma_1 \to \Gamma_2$ be a PL map. Since
  there are many different structures on the $\Gamma_i$, we give names
  to $\phi$ considered as a map between different structures: we have
  \begin{align*}
    \phi^W_W &\co W_1 \to W_2 &
      \phi^W_G &\co W_1 \to G_2 &
        \phi^W_K &\co W_1 \to K_2 \\
    &&\phi^G_G &\co G_1 \to G_2 &
        \phi^G_K &\co G_1 \to K_2 \\
    &&&&\phi^K_K &\co K_1 \to K_2.
  \end{align*}
  Then if $\lambda > 0$ is a real number and $S_2$ is positive, $\phi$ is
  \emph{$\lambda$-filling} if
  \begin{enumerate}
  \item\label{item:taut} $\phi^W_K$ is taut;
  \item\label{item:length-pres} lengths are preserved: $\phi^K_K$ is a
    local isometry; and
  \item\label{item:weight-scale} weights are scaled by a factor
    of~$\lambda$: for every
    regular value
    $y \in \Gamma_2$,
    \begin{equation}\label{eq:lambda-weights}
    \sum_{x \in \phi^{-1}(y)} w_1(x) = \lambda w_2(y).
    \end{equation}
    In other words, $n_\phi^{w_1} = \lambda w_2$ and in particular
    $\WR(\phi^W_W) = \lambda$.
  \end{enumerate}
  We say that a map~$\phi$ between elastic graphs is $\lambda$-filling if
  there are compatible strip structures so that $\phi$ is $\lambda$-filling as a
  map between the strip structures.
\end{definition}

\begin{lemma}\label{lem:filling-fill}
  Suppose $\phi \co S_1 \to S_2$ is a $\lambda$-filling map.  Then $\Fill_{\phi^G_G}$
  is identically equal to~$\lambda$. In particular,
  $\Emb(\phi^G_G) = \lambda$.
\end{lemma}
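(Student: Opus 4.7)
The plan is to compute $\Fill_{\phi^G_G}(y)$ pointwise at a regular value $y$ and show it equals $\lambda$ using the three $\lambda$-filling conditions. The crucial observation is the relation between the length metric and the elastic metric: on each edge~$e$ with compatibility $\ell(e) = w(e)\alpha(e)$, a unit of elastic length (in which $e$ has length $\alpha(e)$) corresponds to $w(e)$ units of the length metric (in which $e$ has length $\ell(e)$). So the change of parametrization from elastic to length on an edge~$e$ is multiplication by $w(e)$.

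First I would compute the elastic derivative of $\phi$ at a point $x$ lying in the interior of some edge $e_1$ of $\Gamma_1$ whose image sits on an edge $e_2$ of $\Gamma_2$. Because $\phi^K_K$ is a local isometry (condition~\eqref{item:length-pres}), $|(\phi^K_K)'(x)| = 1$ in the length metrics. Translating to elastic parametrizations via the scale factors above gives
\[
\bigl|(\phi^G_G)'(x)\bigr| = \frac{w_1(e_1)}{w_2(e_2)},
\]
since a unit of elastic length on the domain side equals $w_1(e_1)$ units of $\ell_1$, which maps isometrically to $w_1(e_1)$ units of $\ell_2$, which is $w_1(e_1)/w_2(e_2)$ units of elastic length on the target side.

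Next I would sum this formula over the preimages. For a regular value $y$ in the interior of $e_2$, every $x \in \phi^{-1}(y)$ lies in the interior of some edge of $\Gamma_1$, and grouping preimages by the edge $e_1$ containing them gives
\[
\Fill_{\phi^G_G}(y) = \sum_{x \in \phi^{-1}(y)} \bigl|(\phi^G_G)'(x)\bigr|
  = \frac{1}{w_2(e_2)} \sum_{x \in \phi^{-1}(y)} w_1(e_1(x))
  = \frac{n_\phi^{w_1}(y)}{w_2(e_2)}.
\]
By condition~\eqref{item:weight-scale} of $\lambda$-filling we have $n_\phi^{w_1}(y) = \lambda w_2(y) = \lambda w_2(e_2)$, so $\Fill_{\phi^G_G}(y) = \lambda$. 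This holds at almost every point (the exceptional set of singular values has measure zero), so $\Fill_{\phi^G_G} \equiv \lambda$ a.e., and $\Emb(\phi^G_G) = \esssup_y \Fill_{\phi^G_G}(y) = \lambda$.

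There is no real obstacle here; condition~\eqref{item:taut} (tautness of $\phi^W$) is not needed for this computation and will be used elsewhere. The only minor care required is to track the two distinct notions of derivative and to note that the identity holds only almost everywhere, which is exactly what the essential supremum in the definition of $\Emb$ was set up to handle.
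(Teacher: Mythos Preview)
Your proof is correct and follows essentially the same approach as the paper: compute $\bigl|(\phi^G_G)'(x)\bigr| = w_1(x)/w_2(\phi(x))$ from the local isometry condition and the compatibility relation $\alpha = \ell/w$, then sum over preimages and invoke the weight-scaling condition. The paper adds one preliminary sentence noting that $\phi$ is constant on the null edges of~$S_1$ (so one may assume $S_1$ positive), but your restriction to regular values of~$y$ handles this implicitly.
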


\begin{proof}
  Since $\phi$ is constant on the $0$-length edges of~$S_1$, we may
  delete these edges and
  assume that $S_1$ is positive as well.
  Since $\phi^K_K$ is length-preserving and
  $\alpha_i(e)=\ell_i(e)/w_i(e)$ for $i=1,2$, it
  follows that, for regular points $x \in G_1$,
  \[
  \left\lvert\bigl(\phi^G_G\bigr)'(x)\right\lvert = w_1(x)/w_2(\phi(x)).
  \]
  The result follows from Equation~\eqref{eq:lambda-weights}.
\end{proof}

\begin{lemma}\label{lem:filling-tight}
  A $\lambda$-filling map
  $\phi \co S_1 \to S_2$ gives
  a tight sequence of maps
  \[
  \longseq{W_1}{\id_{\Gamma_1}}{G_1}{\phi^G_G}{G_2}{\id_{\Gamma_2}}{K_2}.
  \]
\end{lemma}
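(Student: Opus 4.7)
The plan is to verify the defining conditions of tightness for the 3-map sequence directly: that the overall composition $f \co W_1 \to K_2$ is an $\ell$-energy minimizer in its homotopy class, and that the product of the three intermediate energies equals $\ell(f)$. The strategy is to express everything in terms of the areas of the two strip graphs and reduce to a single numerical identity.

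First I would compute the three individual energies. Lemma~\ref{lem:strip-tight} applied to $S_1$ gives $\EL(W_1 \to G_1) = \Area(S_1)$; applied to $S_2$ it gives $\Dir(G_2 \to K_2) = \Area(S_2)$; and Lemma~\ref{lem:filling-fill} gives $\Emb(\phi^G_G) = \lambda$. Taking square roots as specified in Definition~\ref{def:12inf-graphs}, the product of the three energies is $\sqrt{\Area(S_1)}\cdot\sqrt{\lambda}\cdot\sqrt{\Area(S_2)}$.

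Next I would establish the key identity $\Area(S_1) = \lambda\,\Area(S_2)$ by a change of variables along $\phi$. Because $\phi^K_K$ is a local isometry (condition~\eqref{item:length-pres}), the length measure on $\Gamma_1$ pushes forward to the counting-preimage measure on $\Gamma_2$, so
\[
\Area(S_1) = \int_{\Gamma_1} w_1(x)\,d\ell_1(x)
 = \int_{\Gamma_2}\biggl(\sum_{x\in\phi^{-1}(y)} w_1(x)\biggr) d\ell_2(y)
 = \lambda\,\Area(S_2),
\]
where the last equality is condition~\eqref{item:weight-scale}. Null edges of $S_1$ (on which both $w_1$ and $\ell_1$ vanish, and on which $\phi$ is forced to be constant) contribute nothing, so this goes through even though $S_1$ is not assumed to be positive. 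In particular, the triple product above equals $\lambda\,\Area(S_2) = \Area(S_1)$.

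Finally, I would compute the energy of the composition $f \co W_1 \to K_2$, which is just $\phi$ viewed between these structures. By Lemma~\ref{lem:length-defs} and condition~\eqref{item:weight-scale},
\[
\ell(f) = \int_{K_2} n_\phi(y)\,d\ell_2(y) = \lambda\int_{\Gamma_2} w_2(y)\,d\ell_2(y) = \Area(S_1),
\]
matching the triple product. Tautness of $\phi^W$ from condition~\eqref{item:taut}, together with Proposition~\ref{prop:taut-minimal}, gives $\ell(f) = \ell[f]$, so $f$ is a global $\ell$-energy minimizer. The combination of minimality of the composition and multiplicativity of the three energies is exactly tightness of the 3-map sequence; the extension of Lemma~\ref{lem:tight} to longer sequences follows by iterating (first apply it to the pair consisting of the middle and last maps, using multiplicativity and the fact that each individual map is already an energy minimizer by Lemmas~\ref{lem:strip-tight} and~\ref{lem:filling-fill}, then apply it again to that pair together with the first map).

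I expect the only real obstacle to be bookkeeping: keeping the square roots in Definition~\ref{def:12inf-graphs} straight, and checking carefully that ``tight'' for a 3-map sequence reduces cleanly to two invocations of Lemma~\ref{lem:tight}. A minor secondary subtlety is the presence of null edges in $S_1$, which is handled as above because $w_1$ and $\ell_1$ both vanish there.
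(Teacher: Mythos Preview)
Your proof is correct and follows the same route as the paper: compute the four energies, establish $\Area(S_1) = \lambda\,\Area(S_2)$, and use tautness of $\phi^W$ for minimality of the composite. Your final paragraph on iterating Lemma~\ref{lem:tight} is unnecessary (and the claim that Lemma~\ref{lem:filling-fill} shows $\phi^G_G$ is a minimizer is not quite right---that lemma only computes $\Emb$); tightness of the 3-map sequence is immediate from the definition once you have minimality of the composite together with multiplicativity.
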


\begin{proof}
  The composite $\phi^W_K \co W_1 \to K_2$ is taut by assumption.
  The energies of the various maps are
  \begin{align*}
    \EL(\id_{\Gamma_1}) &= \Area(S_1) && \text{by Proposition~\ref{prop:strip-tight}}\\
    \Emb(\phi^G_G) &= \lambda &&\text{by Lemma~\ref{lem:filling-fill}}\\
    \Dir(\id_{\Gamma_2}) &= \Area(S_2) = \Area(S_1)/\lambda
       &&\text{by Proposition~\ref{prop:strip-tight} and Equation~\eqref{eq:lambda-weights}.}
  \end{align*}
  Then $\ell(W_1 \to K_2) = \Area(S_1) =
  \sqrt{\EL(\id_{\Gamma_1})\Emb(\phi^G_G)\Dir(\id_{\Gamma_2})}$. which
  is multiplicative, as desired.
\end{proof}

\begin{proposition}\label{prop:lambda-filling-emb}
  Let $\phi \co G_1 \to G_2$ be a map between marked elastic graphs
  $G_1$ and~$G_2$. If there is a $\lambda$-filling map
  $\psi \in [\phi]$, then
  $\Emb[\phi] = \SF_{\Dir}[\phi] = \SF_{\EL}[\phi] = \Emb(\psi) = \lambda$.
\end{proposition}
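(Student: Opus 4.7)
The proof is a sandwich argument. The value $\lambda$ will appear simultaneously as an upper bound on $\Emb[\phi]$ (coming from the explicit witness $\psi$) and as a lower bound on both $\SF_{\EL}[\phi]$ and $\SF_{\Dir}[\phi]$ (coming from the tight sequence of Lemma~\ref{lem:filling-tight}). Combined with the general inequality $\SF \le \Emb$ from Corollary~\ref{cor:sf-bound}, this forces all four quantities to equal $\lambda$.

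For the upper bound, Lemma~\ref{lem:filling-fill} gives $\Emb(\psi) = \lambda$, so $\Emb[\phi] \le \Emb(\psi) = \lambda$; then Corollary~\ref{cor:sf-bound} supplies $\SF_{\Dir}[\phi] \le \Emb[\phi] \le \lambda$ and $\SF_{\EL}[\phi] \le \Emb[\phi] \le \lambda$.

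For the lower bounds, I would invoke Lemma~\ref{lem:filling-tight} to obtain a tight sequence
\[
\longseq{W_1}{c_1}{G_1}{\psi}{G_2}{f_2}{K_2},
\]
where $c_1$ and $f_2$ are the natural maps induced by the strip structures on $S_1$ and $S_2$. Tightness of the three-term subsequences $\shortseq{W_1}{c_1}{G_1}{\psi}{G_2}$ and $\shortseq{G_1}{\psi}{G_2}{f_2}{K_2}$ follows from Lemma~\ref{lem:tight} (applied in chained form) together with the explicit energy computations in the proof of Lemma~\ref{lem:filling-tight}. These subsequent tightnesses give $\EL[\psi \circ c_1] = \lambda\,\EL[c_1]$ and $\Dir[f_2 \circ \psi] = \lambda\,\Dir[f_2]$. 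Since the definition of $\SF_{\EL}$ takes the supremum over marked multi-curves rather than over maps from arbitrary weighted graphs, the single technical step is to convert $c_1 \co W_1 \to G_1$ into a weighted marked multi-curve without losing the ratio $\lambda$. I would do this with Proposition~\ref{prop:tt-curve}: because $W_1$ is balanced (by definition of a strip graph), it carries a weighted multi-curve $(C, c)$ that saturates $W_1$. Then $n_{c_1 \circ c} = w_1$ and $n_{\psi \circ c_1 \circ c} = \lambda w_2$ almost everywhere, so the ratio $\EL[\psi \circ c_1 \circ c]/\EL[c_1 \circ c]$ is again exactly $\lambda$ (and both curves are taut, so the homotopy-class energies equal the realized energies). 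Together with the $\Dir$-witness $(K_2, f_2)$, this gives $\SF_{\EL}[\phi] \ge \lambda$ and $\SF_{\Dir}[\phi] \ge \lambda$, closing the sandwich.

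The only mild obstacle is this bookkeeping conversion from weighted-graph maps to multi-curves, but Proposition~\ref{prop:tt-curve} is engineered precisely for this situation; all other steps are immediate applications of results already proved.
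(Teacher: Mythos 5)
Your overall sandwich strategy matches the paper's (which simply cites Lemmas~\ref{lem:filling-fill}, \ref{lem:filling-tight}, and~\ref{lem:tight}), and your handling of $\Emb$, $\SF_{\Dir}$, and the upper bounds is correct. You are also right to flag the conversion from the weighted-graph map $c_1 \co W_1 \to G_1$ to a genuine multi-curve as the one non-automatic step for the $\SF_{\EL}$ lower bound. But the way you execute that step has a gap.

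You invoke Proposition~\ref{prop:tt-curve} with the \emph{discrete} train-track structure on $W_1$ (this is what ``$W_1$ is balanced'' gives you: the map $W_1 \to K_1$ is taut). That guarantees $c$ does not backtrack, hence $c_1 \circ c$ is reduced; but it does \emph{not} guarantee that $\psi \circ c_1 \circ c$ is reduced, which is exactly what you then assert in the parenthetical ``(and both curves are taut, \ldots).'' If $\psi$ identifies two directions at a vertex of~$\Gamma_1$ — say $\psi$ folds two loops over the same target edge — a $\delta$-carried curve may enter and leave that vertex through directions that $\psi$ sends to the same direction in~$\Gamma_2$, and then $\psi \circ c$ backtracks. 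In that case $\EL[\psi \circ c] < \EL(\psi \circ c) = \lambda^2\Area(S_2)$, and the ratio need not equal $\lambda$. The balancedness of $S_1$ (which controls the map $W_1 \to K_1$) is a different tautness hypothesis than tautness of $\psi^W \co W_1 \to \Gamma_2$; the latter is what actually controls folding along~$\psi$, and it comes with the train-track structure $\tau(\psi)$, not $\delta$.

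The clean fix is to use the tautness of $\psi^W$ directly: Theorem~\ref{thm:maxflow-mincut}\eqref{item:mf-curve} produces a weighted multi-curve $(C,c)$ carried by~$W_1$ with $\psi \circ c$ taut and $n_{\psi\circ c} = n_\psi = \lambda w_2$, saturating every edge on which $\psi$ is non-constant (and the remaining edges have $w_1 = 0$ because $\psi$ is length-preserving and $S_1$ is a strip graph). Such a $c$ is a train-track map, hence reduced and taut, and both $\EL[c] = \Area(S_1)$ and $\EL[\psi\circ c] = \lambda^2\Area(S_2) = \lambda\Area(S_1)$ hold on the nose, giving the ratio $\lambda$. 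Equivalently, apply Proposition~\ref{prop:tt-curve} to $(W_1, \tau(\psi))$ rather than to $(W_1,\delta)$. With that change your argument is complete; everything else is fine.
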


\begin{proof}
  Immediate from Lemmas~\ref{lem:filling-fill},
  \ref{lem:filling-tight} and~\ref{lem:tight}.
\end{proof}

Thus, $\lambda$-filling maps are optimizers for $\Emb$. If there were
always a $\lambda$-filling map in~$[\phi]$, then we would be done
with Theorem~\ref{thm:emb-sf}. Unfortunately this is not
true.

\begin{example}\label{examp:partial-filling}
  Let $G_1$ and $G_2$ both be the join of two circles, with elastic
  constants $(2,2)$ and $(1,4)$, respectively, and let
  $\phi \co G_1 \to G_2$ be the constant-derivative map which is the identity on~$\pi_1$
  and maps the vertex to the vertex:
  \[
  \mfigb{graphs-61} \quad\overset{\phi}{\longrightarrow}\quad \mfigb{graphs-62}.
  \]
  Then
  $\Emb(\phi) = 2$. Furthermore, $\SF_{\EL}[\phi] \ge 2$, by
  considering the right-hand loop of~$G_1$. Thus
  $\Emb[\phi] = \Emb(\phi) = \SF_{\EL}[\phi]= 2$.
  There is no
  $\lambda$-filling map in~$[\phi]$.
\end{example}

\begin{definition}\label{def:partial-filling}
  Let $S_1 = (\Gamma_1, w_1, \ell_1)$ and
  $S_2 = (\Gamma_2, w_2,\ell_2)$ be two marked balanced strip graphs,
  with $S_2$ positive.
  Then, for $\lambda > 0$, a map
  $\phi \co S_1 \to S_2$ is \emph{partially $\lambda$-filling} if
  there are complementary subgraphs
  $\Delta_i, \Sigma_i \subset \Gamma_i$ (i.e., with complementary sets
  of edges), so that
  \begin{enumerate}
  \item\label{item:Gamma-0-image} $\phi(\Delta_1) \subset \Delta_2$
    (so we can write $\phi \vert_\Delta$ for the restriction;
  \item\label{item:Sigma-image} $\phi(\Sigma_1) \subset \Sigma_2$ (so
    we can write $\phi \vert_\Sigma$ for the restriction);
  \item the strip graph structures induced on~$\Delta_i$
    and~$\Delta_2$ are balanced;
  \item the restriction $\phi\vert_\Delta \co \Delta_1 \to \Delta_2$ is
    $\lambda$-filling with respect to the induced strip graph structures;
  \item $\phi$ is everywhere length-preserving; and
  \item\label{item:scale-less} $\phi \vert_\Sigma$ scales weights by
    strictly less
    than~$\lambda$: for every regular value $y \in \Sigma_2$,
    \[
    \sum_{x \in \phi^{-1}(y)} w_1(x) < \lambda w_2(y).
    \]
  \end{enumerate}
  We call $\Delta_1$ and $\Delta_2$ the \emph{filling
    subgraphs} of $S_1$ and~$S_2$.
  Set $W_1^0$ to be the marked balanced weighted graph
  $(\Delta_1,w_1)$ and set $K_2^{0*}$ to be the marked length graph
  $(\Gamma_2, \ell_2)$ with the edges in~$\Sigma_2$
  collapsed to points.
\end{definition}

\begin{lemma}\label{lem:partial-tight}
  If $\phi \co S_1 \to S_2$ is a partially $\lambda$-filling map then,
  with notation as in Definition~\ref{def:partial-filling}, there is a
  tight sequence
  \[
  W_1^0 \longrightarrow G_1
    \overset{\phi^G_G}{\longrightarrow} G_2
    \longrightarrow K_2^{0*}.
  \]
\end{lemma}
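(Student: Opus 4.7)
The plan is to parallel the proof of Lemma~\ref{lem:filling-tight}, using the restricted $\lambda$-filling map $\phi|_{S_1^0}\co S_1^0 \to S_2^0$ as the engine and handling the non-filling subgraphs carefully. Write $\iota\co W_1^0 \to G_1$ for the inclusion, $\phi$ for $\phi^G_G$, and $q\co G_2 \to K_2^{0*}$ for the collapsing map.

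The first step is to compute each of the four energies in the sequence. Since $\iota$ sees only edges of $S_1^0$, we have $\EL(\iota) = \Area(S_1^0)$. A repeat of the calculation in Lemma~\ref{lem:filling-fill}, combined with the strict inequality in condition~(\ref{item:scale-less}), gives $\Fill_{\phi}(y) = \lambda$ on $\Gamma_2^0$ and $\Fill_{\phi}(y) < \lambda$ on $\Sigma_2$, so $\Emb(\phi) = \lambda$. The collapse $q$ has $\abs{q'(e)} = w_2(e)$ on filling edges and vanishes on non-filling edges, so $\Dir(q) = \Area(S_2^0)$. For the composite $g = q \circ \phi \circ \iota$, condition~(\ref{item:Sigma-image}) forces $\phi$-preimages of regular points of $K_2^{0*}$ to lie in $S_1^0$, so the $\lambda$-filling of the restriction yields $n_g = \lambda w_2$ and $\ell(g) = \lambda \Area(S_2^0) = \Area(S_1^0)$. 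These values satisfy the required multiplicativity $\sqrt{\EL(\iota) \Emb(\phi) \Dir(q)} = \Area(S_1^0) = \ell(g)$.

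The main obstacle is showing that $g$ is $\ell$-minimal (equivalently, taut) in its homotopy class within $K_2^{0*}$. The difficulty is that $K_2^{0*}$ arises from the length graph of $S_2^0$ by further identifying vertices in the same connected component of $\Sigma_2$, and such identifications could a priori enable new homotopies that reduce multiplicity. To handle this, I would apply Lemma~\ref{lem:filling-tight} to the restricted map, obtaining a tight sequence whose composite $W_1^0 \to K_{S_2^0}$ is taut, and then invoke Theorem~\ref{thm:maxflow-mincut}(\ref{item:mf-tt}) to produce a marked weighted train track $(T,t)$ carried by $W_1^0$ so that this composite is a train-track map with multiplicities $\lambda w_2$. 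The key observation is that post-composing with the vertex-identification map $K_{S_2^0} \to K_2^{0*}$ preserves the train-track property: distinct directions at a vertex of $K_{S_2^0}$ correspond to distinct filling edges of $\Gamma_2$ and therefore remain distinct at the identified vertex in $K_2^{0*}$. Hence the same $(T,t)$ witnesses tautness of $g$ via Theorem~\ref{thm:maxflow-mincut}(\ref{item:mf-tt}).

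With $g$ shown to be taut, Proposition~\ref{prop:taut-minimal} gives $\ell[g] = \ell(g)$, and combining with Proposition~\ref{prop:sub-mult} yields
\[
\ell[g] \le \sqrt{\EL[\iota]\, \Emb[\phi]\, \Dir[q]} \le \sqrt{\EL(\iota)\, \Emb(\phi)\, \Dir(q)} = \ell(g) = \ell[g],
\]
so the chain collapses to equality throughout. This forces each of $\iota$, $\phi$, and $q$ to be an energy minimizer in its homotopy class and the homotopy-class energies to multiply to the composite energy, which is the three-term analogue of Lemma~\ref{lem:tight} needed for tightness of the full sequence.
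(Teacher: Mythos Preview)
Your proof is correct and follows the same overall strategy as the paper: compute the four energies, verify multiplicativity, and establish that the composite $W_1^0 \to K_2^{0*}$ is taut. The energy computations and the final inequality-collapse argument match the paper essentially verbatim.

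The one place you diverge is in the tautness step. You go through the train-track characterization of Theorem~\ref{thm:maxflow-mincut}(\ref{item:mf-tt}), producing a carried train track for the map $W_1^0 \to \Gamma_2^0$ and then checking that post-composing with the vertex identification $\Gamma_2^0 \to K_2^{0*}$ preserves the train-track property (since distinct edges of $\Gamma_2^0$ remain distinct after collapsing $\Sigma_2$). This works. The paper instead invokes Lemma~\ref{lem:taut-subrange} directly: since $W_1^0 \to \Gamma_2^0$ is taut (from the $\lambda$-filling restriction), and the image of $W_1^0 \to \Gamma_2$ lies in $\Gamma_2^0$, that lemma immediately gives tautness of the collapsed map $W_1^0 \to K_2^{0*}$. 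The paper's route is shorter and avoids unpacking the train-track machinery, but your argument has the virtue of making explicit why the extra identifications in $K_2^{0*}$ cannot create new homotopies that reduce multiplicity.
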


Here, we are thinking about $K_2^{0*}$ as an ordinary graph. (Later
there will be an associated weak length graph.)
\begin{proof}
  The version of Lemma~\ref{lem:filling-fill} in this context says
  that $\Fill_{\phi^G_G}(y) = \lambda$ if $y$ is a regular point in~$\Delta_2$ and
  $\Fill_{\phi^G_G}(y) < \lambda$ if $y$ is a regular point in $\Sigma_2$. We then
  have
  \begin{align*}
    \EL(W_1^0 \to G_1) &= \Area(\Delta_1)\\
    \Emb(G_1 \to G_2) &= \lambda\\
    \Dir(G_2 \to K_2^{0*}) &= \Area(\Delta_2) = \Area(\Delta_1)/\lambda\\
    \ell[W_1^0 \to K_2^{0*}] = \ell(W_1^0 \to K_2^{0*}) &= \Area(\Delta_1),
  \end{align*}
  which is multiplicative.

  Since $\phi\vert_\Delta$ is $\lambda$-filling, the map
  $W_1^0 \to \Delta_2$ is taut. By Lemma~\ref{lem:taut-subrange}
  applied to $W_1^0 \to \Gamma_2$, this implies that
  $W_1^0 \to K_2^{0*}$ is taut, completing the proof the sequence is tight.
\end{proof}

Thus partially $\lambda$-filling maps are also global minimizers for
$\Emb$.
The bulk of this section will be devoted to proving they exist:

\begin{proposition}\label{prop:partial-filling}
  For $[\phi] \co G_1 \to G_2$ a homotopy class of maps
  between marked elastic graphs, there are compatible strip structures on
  $G_1$ and $G_2$ and map $\psi \in [\phi]$ so that $\psi$ is
  partially $\lambda$-filling.
\end{proposition}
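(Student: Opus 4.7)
The plan is to take an $\Emb$-minimizer $\psi\in[\phi]$, identify the filling subgraph where $\Fill_\psi$ attains its maximum, and use a first-order variational analysis at $\psi$ together with the machinery of Section~\ref{sec:taut} and Section~\ref{sec:harmonic} to produce compatible strip structures.

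First I would verify that $\Emb[\phi]$ is attained. By Proposition~\ref{prop:strong-red-finite} only finitely many combinatorial types of strongly reduced maps appear in $[\phi]$; on each type, $\Emb$ is continuous in finitely many real parameters. Any reference $\psi_0\in[\phi]$ gives an upper bound confining the minimization to a compact polyhedron, so a constant-derivative minimizer $\psi\in[\phi]$ exists. Set $\lambda\coloneqq\Emb(\psi)=\Emb[\phi]$. Since $\Fill_\psi$ is piecewise constant, the set where it equals $\lambda$ is a union of open edges of $G_2$; let $\Gamma_2^0$ be its closure, let $\Sigma_2$ be the complementary subgraph of $G_2$, and let $\Gamma_1^0\coloneqq\psi^{-1}(\Gamma_2^0)$ with complementary subgraph $\Sigma_1$ in $G_1$.

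The central step is to produce weights $w_2$ on $\Gamma_2^0$ making $W_2^0\coloneqq(\Gamma_2^0,w_2)$ a balanced train track, such that the derived weights $w_1(x)\coloneqq w_2(\psi(x))\abs{\psi'(x)}$ on $\Gamma_1^0$ make $W_1^0$ balanced and $\psi^W\co W_1^0\to W_2^0$ taut. Once such $w_2$ is in hand, setting $\ell_i\coloneqq w_i\alpha_i$ furnishes the two strip structures, and the remaining conditions of Definition~\ref{def:partial-filling} follow by direct calculation. Length preservation is automatic from $\abs{\psi'(x)}=\ell_1(x)/\ell_2(\psi(x))$, and the weight-scaling condition reads
\[
\sum_{x\in\psi^{-1}(y)} w_1(x) = w_2(y)\Fill_\psi(y) = \lambda w_2(y)
\]
for regular $y\in\Gamma_2^0$, with strict inequality on $\Sigma_2$ since $\Fill_\psi<\lambda$ there. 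Once $w_2$ is certified to be balanced, Proposition~\ref{prop:harmonic-tt} lets us realize $W_2^0$ as the tension of a harmonic representative of the collapsing map $G_2\to K_2^{0*}$.

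The hard part, and main obstacle, is producing $w_2$ with balance at vertices of $W_2^0$, balance at vertices of $W_1^0$, and tautness of $\psi^W$ all at once; none of these follows from Theorem~\ref{thm:harmonic-min-weak} or Theorem~\ref{thm:maxflow-mincut} in isolation. My approach would be to extract $w_2$ from the variational conditions at the $\Emb$-minimum: for any infinitesimal move of a vertex $v$ of $G_1$ in a direction at $\psi(v)$, the first-order change in $\max_{y}\Fill_\psi(y)$ must be nonnegative. Assembling these constraints across all vertices gives a linear program whose dual variables are exactly the edge weights $w_2$ on $\Gamma_2^0$; complementary slackness at the optimum translates into the triangle inequalities for $W_1^0$ and the tautness of $\psi^W$, while the vertex-balance of $W_2^0$ is forced by the fact that $\Fill_\psi$ is constantly $\lambda$ on each side of every regular value inside $\Gamma_2^0$. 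Combining these two features yields the partial $\lambda$-filling property demanded by the proposition.
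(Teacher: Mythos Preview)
Your approach diverges sharply from the paper's. The paper does not start from an $\Emb$-minimizer; instead it builds an iteration $\Iter_\phi = D_2\circ N_{[\phi]}\circ D_1^{-1}\circ H_{[\phi]}$ on $\Len(\Gamma_2)$, shows the Dirichlet ratio $\DR$ weakly increases along it (Lemma~\ref{lem:DR-ER-increase}), and extracts the strip structures from a projective fixed point maximizing~$\DR$. Interior fixed points yield $\lambda$-filling maps directly (Proposition~\ref{prop:fixed-lambda}); a boundary fixed point yields a $\lambda$-filling map on subgraphs $\Delta_i$ (Proposition~\ref{prop:fixed-boundary}), and the complement $\Sigma_i$ is handled by induction on the size of~$\Gamma_1$, with Proposition~\ref{prop:DR-boundary-max} guaranteeing the inductive multiplier on~$\Sigma$ does not exceed~$\lambda$.

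Your direct route has a genuine error at the outset: $\Emb$-minimizers are \emph{not} constant-derivative, so minimizing over $\PL^*[\phi]$ does not produce one. Example~\ref{examp:emb} already shows this. With the vertex of~$G_1$ placed $2/5$ along an edge of~$G_2$, the constant-derivative map has $\abs{\phi'}=7/15$ on the two long edges, so $\Fill_\phi = 14/15$ on the overlap segment and $\Emb = 14/15$; the stated optimum $\Emb = 3/5$ is reached only by letting $\abs{\phi'}$ jump where an edge of~$G_1$ crosses the central vertex of~$G_2$. (In the paper's construction $\psi = f^{-1}\circ g$ with $g$ harmonic, giving $\abs{\psi'(x)} = w_1(x)/w_2(\psi(x))$, which changes across vertices of~$G_2$.) Since a partially $\lambda$-filling map forces $\lambda = \Emb[\phi]$ via Lemma~\ref{lem:partial-tight}, starting from the wrong~$\psi$ cannot succeed. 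Even after repairing this by working with maps that are merely piecewise linear on preimages of edges of~$G_2$, you would still need to prove that $\{\Fill_\psi = \lambda\}$ is a union of whole edges, to supply a \emph{positive} strip structure on~$\Sigma_2$ making $\psi$ globally length-preserving, and to actually exhibit the primal/dual LP and verify that complementary slackness yields both the balance of~$w_1$ and the tautness of~$\psi^W$; none of this is done.
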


Proposition~\ref{prop:partial-filling} suffices to prove
Theorem~\ref{thm:emb-sf}. We spell out some further consequences.

\begin{proposition}\label{prop:emb-sf-detail}
  For $[\phi] \co G_1 \to G_2$ a homotopy class of maps between
  elastic graphs, there is a tight sequence
  \[
  C \overset{c}{\longrightarrow} T \overset{t}{\longrightarrow} G_1
    \overset{\psi}{\longrightarrow} G_2 \overset{f}{\longrightarrow} K
  \]
  where $\psi \in [\phi]$, $T$ is a balanced train track whose
  underlying graph is a subgraph of~$G_1$, $t$ is the inclusion map,
  $K$ is a length graph whose underlying graph is obtained by
  collapsing some edges of~$G_2$, $f$ is the collapsing map,
  $(C,c)$ is a weighted multi-curve saturating~$T$, and $\psi \circ t$ is a
  train-track map. Furthermore,  the image of $\psi \circ t$ is the
  union of the edges that are not collapsed
  by~$f$, and $\Fill_\psi$ is constant
  and maximal on those edges.
\end{proposition}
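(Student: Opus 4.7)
The plan is to combine Proposition~\ref{prop:partial-filling} with Theorem~\ref{thm:maxflow-mincut} and Proposition~\ref{prop:tt-curve}, reading off a train track inside the filling subgraph and a multi-curve inside that train track. First, I apply Proposition~\ref{prop:partial-filling} to obtain compatible strip structures $S_i = (\Gamma_i, w_i, \ell_i)$ on $G_i$ and a partially $\lambda$-filling map $\psi \in [\phi]$ with filling subgraphs $S_i^0 = (\Gamma_i^0, w_i, \ell_i)$; Lemma~\ref{lem:partial-tight} together with Lemma~\ref{lem:tight} force $\lambda = \Emb[\phi]$. Take $K \coloneqq K_2^{0*}$, obtained from $\Gamma_2$ by collapsing every edge outside $\Gamma_2^0$, and let $f \co G_2 \to K$ be the collapsing map. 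By Lemma~\ref{lem:partial-tight} the four-term sequence $W_1^0 \to G_1 \overset{\psi}{\to} G_2 \overset{f}{\to} K$ is already tight.

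Next I extract the train-track and multi-curve layers on the left. Because $\psi$ is length-preserving and every edge of $S_1^0$ has positive length, $\psi$ is non-constant on every edge of $W_1^0$. Apply condition~(\ref{item:mf-tt}) of Theorem~\ref{thm:maxflow-mincut} to the map $\psi \co W_1^0 \to \Gamma_2$ (taut by Lemma~\ref{lem:taut-subrange}, since the restricted-range map $W_1^0 \to \Gamma_2^0$ is taut by the $\lambda$-filling condition). This yields a weighted train track $T$ realized as a subgraph of $W_1^0$, such that the composition $\psi \circ t \co T \to G_2$ is a train-track map with respect to $\delta_{G_2}$; here $t \co T \hookrightarrow G_1$ denotes the inclusion. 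The saturation clause of Theorem~\ref{thm:maxflow-mincut}, combined with non-constancy on every edge, forces the underlying weighted graph of $T$ to be all of $W_1^0$ with weights $w_1$, so only the train-track structure $\tau_T$ is genuinely new. Proposition~\ref{prop:tt-curve} then produces a marked weighted multi-curve $(C,c)$ on $T$ that saturates $T$ and is itself a train-track map.

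To verify tightness of the enlarged sequence $C \to T \to G_1 \to G_2 \to K$, I compute the four individual energies. Saturation gives $E^1_1(c) = \WR(c) = 1$. The inclusion $t$ carries weights $w_1$ into $G_1 = (\Gamma_1, \ell_1/w_1)$, so
\[
E^1_2(t)^2 = \EL(t) = \sum_{e \in \Gamma_1^0} \frac{\ell_1(e)}{w_1(e)} w_1(e)^2 = \Area(S_1^0),
\]
while Lemma~\ref{lem:partial-tight} supplies $E^2_2(\psi)^2 = \lambda$ and $E^2_\infty(f)^2 = \Area(S_1^0)/\lambda$. Because $c$ saturates $T$ and $\psi$ is length-preserving on $\Gamma_1^0$, the composition $f \circ \psi \circ t \circ c$ has weighted length $\sum_{e \in \Gamma_1^0} w_1(e)\,\ell_1(e) = \Area(S_1^0)$, which exactly matches the product of the four individual energies. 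The straightforward extension of Lemma~\ref{lem:tight} to longer chains then gives tightness.

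For the final assertion, the uncollapsed edges of $K$ are by construction exactly $\Gamma_2^0$. Lemma~\ref{lem:filling-fill} applied to the filling restriction gives $\Fill_\psi \equiv \lambda$ on $\Gamma_2^0$, while condition~(\ref{item:scale-less}) of Definition~\ref{def:partial-filling} gives $\Fill_\psi < \lambda$ off $\Gamma_2^0$, so $\Fill_\psi$ is constant and maximal exactly on the uncollapsed edges. Finally, condition~(\ref{item:weight-scale}) of Definition~\ref{def:lambda-filling} forces $\psi^{-1}(y) \ne \emptyset$ at every regular $y \in \Gamma_2^0$, so $\psi(T) = \psi(\Gamma_1^0) = \Gamma_2^0$. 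The one genuinely subtle step in the plan is the saturation argument forcing $T$ to have $W_1^0$ as its full underlying weighted graph; the remainder is direct assembly of earlier results.
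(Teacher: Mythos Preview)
Your proof is correct and takes essentially the same route as the paper, which likewise builds the sequence from Proposition~\ref{prop:partial-filling} and Lemma~\ref{lem:partial-tight} and then invokes Theorem~\ref{thm:maxflow-mincut} (and implicitly Proposition~\ref{prop:tt-curve}) to supply the train track and multi-curve on the left.

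One small correction to the step you flag as subtle: the claim that every edge of $S_1^0$ has positive length is not guaranteed---Definition~\ref{def:partial-filling} requires $S_2$ (hence $S_2^0$) to be positive, but places no such condition on $S_1^0$, so $\Gamma_1^0$ may contain edges of zero weight and zero length on which $\psi$ is constant. The paper handles this by taking $T$ to be the subgraph of $W_1^0$ on which $\abs{\psi'}\ne 0$, with train-track structure $\tau(\psi\circ t)$, rather than all of $W_1^0$. This does not affect your argument: zero-weight edges contribute nothing to $\EL(t)$, to $n_\psi$, or to the weighted length of the composite, so your energy computation and the surjectivity $\psi(T)=\Gamma_2^0$ go through unchanged with the corrected~$T$.
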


\begin{proof}[Proof of Proposition~\ref{prop:emb-sf-detail}, assuming
  Proposition~\ref{prop:partial-filling}]
  By Proposition~\ref{prop:partial-filling}, there is a partially
  $\lambda$-filling map $\psi \in [\phi]$.
  Lemma~\ref{lem:partial-tight} gives a tight sequence
  \[
  \longseq{W_1^0}{i}{G_1}{\psi}{G_2}{f}{K_2^{0*}}.
  \]
  Since $i$ is taut, Theorem~\ref{thm:maxflow-mincut} gives a tight
  sequence
  of maps
  \[
    C \overset{c}{\longrightarrow}
      T \overset{t}{\longrightarrow}
        W_1^0 \overset{i}{\longrightarrow}
          G_1 \overset{\psi}{\longrightarrow}
            G_2 \overset{f}{\longrightarrow}
              K_2^{0*}
  \]
  where $C$ is a weighted multi-curve and $T$ is a balanced train track,
  whose underlying graph is a subgraph of $W_1^0$ (and so $G_1$).
  Dropping $W_1^0$ gives the desired sequence.
  The edges not collapsed by~$f$,
  the image of $\psi \circ i \circ t$, and the edges where $\Fill_\psi$ is
  constant are all equal to the filling subgraph of~$G_2$.
\end{proof}

\begin{proof}[Proof of Theorem~\ref{thm:emb-sf}, assuming
  Proposition~\ref{prop:partial-filling}]
  Immediate consequence of Proposition~\ref{prop:emb-sf-detail} and
  Lemma~\ref{lem:tight}.
\end{proof}

\subsection{Iterating to optimize embedding energy}
\label{sec:iteration}

One approach to proving Proposition~\ref{prop:partial-filling} would
be to study those maps that locally minimize the embedding energy,
analogously to the proof of Theorem~\ref{thm:lipschitz-stretch}. From
Lagrange multipliers at the
local minimum, you can extract lengths and weights to form the
desired strip structure.
This is a little involved,
  since the relevant ambient space in which to apply the theory of
  Lagrange multipliers is not obvious; there is no obvious analogue of
  the space of constant-derivative maps used in
  Section~\ref{sec:harmonic}.
We will therefore take
a different approach, one that also suggests an algorithm to actually
compute the embedding energy.

From a homotopy class $[\phi] \co G_1 \to G_2$ of maps
between marked elastic graphs, we will give an iteration that
has a fixed point at a partially $\lambda$-filling map. To motivate the
iteration, we give an analogue in the setting of vector spaces.
First, recall that a strictly convex norm on a finite-dimensional
vector space~$V$
  defines a \emph{dual
  norm} on the dual space~$V^*$. This is the minimal norm that satisfies,
  for all $v \in V$ and $v^* \in V^*$,
  \begin{equation}\label{eq:norm-submul}
    \langle v^*,v \rangle \le \norm{v}\norm{v^*}.
  \end{equation}
  Equation~\eqref{eq:norm-submul} is tight in the sense of
  Proposition~\ref{prop:sub-mult}, namely, for every non-zero $v \in V$ there
  is a non-zero $v^* \in V^*$, unique up to scale, so that
  Equation~\eqref{eq:norm-submul}
  is an equality. If in addition $\norm{v^*} = \norm{v}$, we say that $v$ and $v^*$ 
  \emph{support} each other. (The hyperplane corresponding to $v^*$ is
  parallel to a supporting hyperplane for a norm ball in~$V$ at~$v$.)

\begin{example}\label{examp:bilin}
  If $\norm{v} = \sqrt{\langle v, v\rangle}$ for an inner product
  $\langle\cdot,\cdot\rangle$, the map from a vector to its supporting
  vector is the canonical isomorphism $V \xrightarrow{\sim} V^*$ given
  by $v \mapsto \langle v, \cdot \rangle$.
\end{example}

Now suppose we given an isomorphism $\phi \co V \to W$ between two
finite-dimensional vector spaces, with a strictly convex norm on each.
We wish to
find the operator norm $\norm{\phi}_{V,W}$ by finding a non-zero vector $v \in V$ that
maximizes the ratio of norms
\[
\NR(v) \coloneqq \frac{\norm{\phi(v)}_W}{\norm{v}_V}.
\]

\begin{algorithm}\label{alg:iter-vect}
  To attempt to maximize $\NR(v)$, pick $v_0 \in V$
  and set $i=0$.
  \begin{enumerate}
  \item \label{step:map-1} Let $w_i \in W$ be $\phi(v_i)$.
  \item \label{step:supp-2} Find the supporting
    vector $w_i^* \in W^*$ for~$w_i$.
  \item \label{step:map-2}Let $v_i^* \in V^*$ be $\phi^*(w_i^*)$.
  \item \label{step:supp-1}Find the supporting vector
    $v_{i+1} \in V$ for~$v_i^*$, increase $i$ by~$1$, and return to
    Step~(\ref{step:map-1}).
  \end{enumerate}
  This gives sequences of vectors $v_i\in V$, $v_i^* \in V^*$,
  $w_i \in W$, and $w_i^* \in W^*$. We may also consider the
  corresponding sequences $[v_i]$, etc., in the respective projective spaces. The
  candidate for maximizing
  $\NR$ on $PV$ is $\lim_{i\to \infty} [v_i]$,
  if it exists.
\end{algorithm}

Let
$\Iter_\phi\co V\righttoleftarrow$ be the composition of the steps in
Algorithm~\ref{alg:iter-vect}, and let
$P\Iter_\phi \co PV\righttoleftarrow$ be the corresponding map on
projective spaces.

  \begin{lemma}\label{lem:objective-increase}
    $\NR(v)$ does not decrease under $\Iter_\phi$:
    $\NR(v) \le \NR(\Iter_\phi(v))$. If we have equality, then
    $[v]$ is a fixed point of $P\Iter_\phi$.
  \end{lemma}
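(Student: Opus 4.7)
My plan is to extract two inequalities from the adjointness identity $\langle v_i^*,u\rangle = \langle w_i^*,\phi(u)\rangle$, applied first with $u=v_i$ and then with $u=v_{i+1}$. In each case, one side of the identity will be an exact equality (because one of the two vectors involved was chosen to be a supporting vector), while the other side will be bounded by the dual-norm inequality~(\ref{eq:norm-submul}). The two resulting inequalities will multiply together to give exactly the ratio inequality $\NR(v_i)\le\NR(v_{i+1})$.

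Concretely: at $u=v_i$, Step~(\ref{step:supp-2}) gives $\langle w_i^*,w_i\rangle = \norm{w_i}_W\,\norm{w_i^*}_{W^*} = \norm{w_i}_W^{\,2}$ on the $W$-side, while~(\ref{eq:norm-submul}) on the $V$-side bounds $\langle v_i^*,v_i\rangle$ by $\norm{v_i^*}_{V^*}\,\norm{v_i}_V = \norm{v_{i+1}}_V\,\norm{v_i}_V$ (using $\norm{v_{i+1}}_V = \norm{v_i^*}_{V^*}$ from Step~(\ref{step:supp-1})), giving
\begin{equation*}
  \norm{w_i}_W^{\,2} \le \norm{v_i}_V\,\norm{v_{i+1}}_V.
\end{equation*}
At $u=v_{i+1}$ the roles flip: Step~(\ref{step:supp-1}) makes the $V$-side an equality $\langle v_i^*,v_{i+1}\rangle = \norm{v_{i+1}}_V^{\,2}$, while~(\ref{eq:norm-submul}) bounds the $W$-side, yielding
\begin{equation*}
  \norm{v_{i+1}}_V^{\,2} \le \norm{w_i}_W\,\norm{w_{i+1}}_W.
\end{equation*}
Multiplying these two bounds and cancelling the common factor $\norm{w_i}_W\,\norm{v_i}_V\,\norm{v_{i+1}}_V^{\,2}$ (all nonzero, since $\phi$ is an isomorphism and $v_i\ne 0$) gives $\norm{w_i}_W/\norm{v_i}_V \le \norm{w_{i+1}}_W/\norm{v_{i+1}}_V$, which is the desired inequality.

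For the equality case, if $\NR(v_i)=\NR(v_{i+1})$ then both displayed inequalities must be tight, so in particular the first use of~(\ref{eq:norm-submul}) was tight: $\langle v_i^*,v_i\rangle = \norm{v_i^*}_{V^*}\norm{v_i}_V$, meaning $v_i$ supports $v_i^*$. Strict convexity of the $V$-norm makes the supporting vector of a given covector unique up to positive scalar, so $v_{i+1}$ is a positive multiple of $v_i$ and $[v_{i+1}]=[v_i]$ in $PV$, i.e., $[v_i]$ is a projective fixed point of $\Iter_\phi$. There is no real obstacle here: the argument is pure bookkeeping with adjointness and the definition of supporting vectors. The only point requiring care is tracking which of the two applications of~(\ref{eq:norm-submul}) is forced to be tight under the equality hypothesis, so that the strict-convexity uniqueness can be invoked correctly.
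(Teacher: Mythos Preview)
Your proof is correct and follows essentially the same route as the paper's: both apply the adjointness identity $\langle v_i^*,u\rangle=\langle w_i^*,\phi u\rangle$ at $u=v_i$ and $u=v_{i+1}$, use the supporting-vector equalities on one side of each and~\eqref{eq:norm-submul} on the other, and then combine. The only cosmetic difference is that the paper carries the dual norms $\norm{w_i^*}$ and $\norm{v_i^*}$ through to the final cancellation rather than converting them to $\norm{w_i}$ and $\norm{v_{i+1}}$ up front as you do.
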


\begin{proof}
  We use notation from Algorithm~\ref{alg:iter-vect}.
  Repeatedly apply Equation~\eqref{eq:norm-submul}, as an
  equality for vectors that support each other:
  \begin{align*}
    \norm{w_i} \, \norm{w_i^*} &= \langle w_i^*, w_i \rangle
      = \langle w_i^*, \phi v_i \rangle = \langle \phi^* w_i^*, v_i \rangle
      = \langle v_{i}^*, v_i \rangle\\
    \norm{v_{i}^*}\, \norm{v_{i+1}} &= \langle v_{i}^*, v_{i+1} \rangle
      = \langle \phi^* w_i^*, v_{i+1} \rangle = \langle w_i^*, \phi v_{i+1} \rangle
      = \langle w_i^*, w_{i+1} \rangle\\
    \frac{\NR(v_i)}{\NR(v_{i+1})} = \frac{\norm{w_i}}{\norm{v_i}}\frac{\norm{v_{i+1}}}{\norm{w_{i+1}}}
      &= \frac{\langle v_{i}^*, v_i \rangle}{\norm{v_i}\,\norm{w_i^*}}
         \frac{\langle w_i^*, w_{i+1} \rangle}{\norm{w_{i+1}}\,\norm{v_{i}^*}}
      \le \frac{\norm{v_{i}^*}\norm{v_i}}{\norm{v_i}\,\norm{w_i^*}}
         \frac{\norm{w_i^*}\norm{w_{i+1}}}{\norm{w_{i+1}}\,\norm{v_{i}^*}} = 1.
  \end{align*}
  If we have equality, then
  $\langle v_i^*, v_i\rangle = \norm{v_i^*}\norm{v_i}$ and so $v_i$ is
  a multiple of the supporting vector for~$v_i^*$, namely~$v_{i+1}$.
\end{proof}

\begin{corollary}
  A vector~$[v_0] \in PV$ that maximizes $\NR(v_0)$ is a fixed
  point for $P\Iter_\phi$. If $v_0$ is an attracting fixed point for the
  $P\Iter_\phi$, then $\norm{\phi v}/\norm{v}$ has a local maximum
  at~$v_0$.
\end{corollary}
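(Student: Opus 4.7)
The first claim follows immediately from the preceding lemma. If $[v_0]$ maximizes $\NR$, then since $\NR$ only depends on the projective class and $\NR(v_0) \le \NR(\Iter_\phi(v_0))$ by Lemma~\ref{lem:objective-increase}, the maximality forces equality. The ``equality implies fixed point'' clause of that lemma then gives that $[v_0]$ is a fixed point of $P\Iter_\phi$.

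For the second claim, the plan is to exploit the combination of monotonicity of $\NR$ under $\Iter_\phi$ with the attracting property of $v_0$. First I would note that $\NR$ descends to a continuous function on $PV$ (it is homogeneous of degree zero), and that $P\Iter_\phi$ is continuous, since the supporting vector map $V \to PV$ is continuous under the strict convexity hypothesis (the supporting vector is unique, and a limit of supporting vectors is supporting). Thus the iteration is a continuous dynamical system on $PV$ with respect to which $\NR$ is weakly increasing.

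Now let $U \subset PV$ be an open neighborhood of $[v_0]$ contained in the basin of attraction, so that for every $[v] \in U$, $P\Iter_\phi^i([v]) \to [v_0]$ as $i \to \infty$. For any such $[v]$, iterating Lemma~\ref{lem:objective-increase} gives
\[
\NR(v) \le \NR(\Iter_\phi(v)) \le \NR(\Iter_\phi^2(v)) \le \cdots,
\]
and continuity of $\NR$ together with convergence in $PV$ yields
\[
\lim_{i \to \infty} \NR(\Iter_\phi^i(v)) = \NR(v_0).
\]
Hence $\NR(v) \le \NR(v_0)$ for every $[v] \in U$, which is exactly the statement that $[v_0]$ is a local maximum of $\NR$.

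The only step that requires any care is the continuity of $P\Iter_\phi$, i.e., of the supporting-vector assignment. Under strict convexity the supporting vector is unique, and a standard compactness argument on the unit sphere shows that any limit of supporting vectors along a convergent sequence of points must itself support the limit point, giving continuity. Once this is in hand, the rest is just monotonicity plus continuity, so I do not expect any genuine obstacle.
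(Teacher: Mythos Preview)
Your argument is correct and is exactly the intended one: the paper states this corollary without proof, leaving both claims as immediate consequences of Lemma~\ref{lem:objective-increase} (monotonicity plus the equality clause for the first, monotonicity plus continuity and convergence for the second). Your added remark on continuity of the supporting-vector map is a reasonable detail the paper suppresses.
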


\begin{example}\label{examp:iter-bilin}
  In the setting of Example~\ref{examp:bilin}, if the norms on $V$
  and~$W$ come from inner products, then $\Iter_\phi$ is $\phi^*\phi$
  and the iteration reduces to power iteration: find
  the maximum eigenvector of $\phi^*\phi$ by repeatedly
  applying it. This almost always converges to an eigenvector of
  maximal eigenvalue, with convergence rate determined by the ratio
  between the two largest distinct eigenvalues.
\end{example}

\begin{example}
  Consider the case when $\phi$ is the
  identity on $\RR^n$ and $\norm{\cdot}_1$ is the standard inner
  product. Then $\norm{\cdot}_2$ is defined by its unit norm ball
  $B_2 \subset \RR^n$.  The supporting vector of $v \in \bdy B_1$ is
  the tangent hyperplane to~$B_2$. Up to scale,
  Algorithm~\ref{alg:iter-vect} alternates between taking a
  tangent hyperplane to~$B_2$, finding the closest point to the origin
  on the
  tangent hyperplane, and projecting to $B_2$, as in
  Figure~\ref{fig:iter-vect}.
\end{example}

\begin{figure}
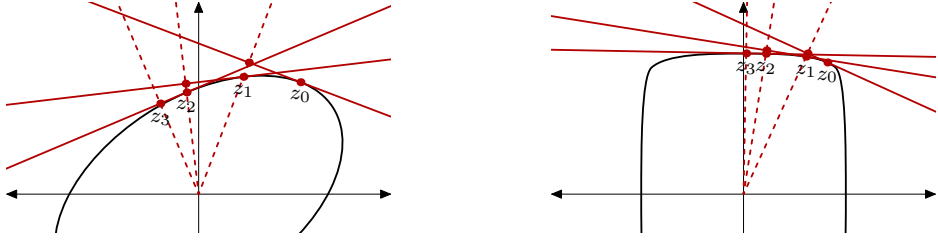

  \begin{align*}
    \mfigb{iter-11} && \mfigb{iter-14}
  \end{align*}
  \caption{Examples for iteration on vector spaces (up to scale). On
    the left, $B_1$
    comes from a quadratic norm as in
    Example~\ref{examp:iter-bilin}. The right shows an example where
    Algorithm~\ref{alg:iter-vect} does not converge to a global
    maximum of~$\NR$.}
  \label{fig:iter-vect}
\end{figure}

We now return to the actual case of interest of elastic graphs. For a marked
elastic graph~$G = (\Gamma, \alpha)$, we think of a ``duality''
between homotopy classes of maps $f \co G \to K$ to a length graph and maps
$c \co W \to G$ from a weighted graph. The pairing is given by
$\langle [c], [f]\rangle = \ell[f \circ c]$, and the two norms are
$\sqrt{\Dir[f]}$ and $\sqrt{\EL[c]}$. The analogue of
Equation~\eqref{eq:norm-submul} is Equation~\eqref{eq:sm-CGK}.
More concretely, we work with $\Len(\Gamma)$
and $\Wgt(\Gamma)$.
\begin{definition}\label{def:duality}
  For $G$ an elastic graph, the \emph{pairing} $\langle
  \cdot,\cdot\rangle \co \Wgt(\Gamma) \times \Len(\Gamma) \to \RR$ is
  defined by
  \[
    \langle w, \ell \rangle \coloneqq \sum_{e \in \Edge(\Gamma)} w(e)\ell(e)
  \]
  and the \emph{duality map}
  $D_G \co \Wgt(\Gamma) \to \Len(\Gamma)$ is defined by
  \[
    D_G(w)(e) \coloneqq \alpha(e)w(e).
  \]
\end{definition}
For $w \in \Wgt(\Gamma)$ a balanced set of weights and
$\ell \in \Len(\Gamma)$ any set of lengths,
$\langle w, \ell\rangle = \ell[f \circ c]$ where
$c \co (\Gamma, w) \to G$ and $f \co G \to (\Gamma, \ell)$ are both
the identity map.

Fix $[\phi] \co G_1 \to G_2$ a homotopy class of maps between marked
elastic graphs $G_1 = (\Gamma_1,\alpha_1)$ and $G_2 = (\Gamma_2,\alpha_2)$.
For $\ell \in \Len(\Gamma_2)$, let $\DR(\ell)$ be the ratio of
Dirichlet energies
$\Dir^{\alpha_1}_\ell[\phi]/\Dir^{\alpha_2}_\ell[\id_{\Gamma_2}]$, and for
$w \in \Wgt(\Gamma_1)$, let $\ER(w)$ be the ratio of extremal lengths
$\EL^w_{\alpha_2}[\phi]/\EL^w_{\alpha_1}[\id_{\Gamma_1}]$.

\begin{algorithm}\label{alg:iter}
  To attempt to maximize $\DR$ and~$\ER$,
  pick a generic initial set of lengths
  $\ell_0 \in \Len(\Gamma_2)$, and let $K_0$ be the marked
  length graph $(\Gamma_2,\ell_0)$. We will write $D_i$ for $D_{G_i}$.
  Let
  $f_0 \co G_2 \to K_0$ be the identity on the level of graphs. Set
  $i=0$ and consider the following iteration.
\begin{enumerate}
\item\label{item:step-harmonic} Find a harmonic representative~$g_i$ of the
  composite map $[\phi \circ f_i] \co G_1 \to K_i$. For each edge $e$
  of~$\Gamma_1$, let $m_i(e) = \ell(g_i(e))$. Thus $m_i =
  H_{[f]}(\ell_i)$, with
  $H_{[f]}$ (Definition~\ref{def:Hf}).
\item\label{item:step-W} Set $w_i = D_1^{-1}(m_i) \in \Wgt(\Gamma_1)$,
  so that $w_i(e)$ is $\abs{g_i'(e)}$. Let
  $W_i$ be
  the tension-weighted graph $(\Gamma_1, w_i)$, and let $d_i \co W_i \to G_1$
  be the identity on the level of graphs. $W_i$ is
  balanced by
  Proposition~\ref{prop:harmonic-tt}.
\item\label{item:step-push} Set $v_i = N_{[\phi]}(w_i)$, the
  push-forward of~$w_i$. Since $g_i$ is harmonic and thus taut from
  the tension-weighted graph, we have
  \begin{equation}\label{eq:weights-sum}
  v_i(y) = \sum_{x \in g_i^{-1}(y)} w_i(x).
  \end{equation}
  The weights~$v_i$ are also balanced, by
  Proposition~\ref{prop:taut-balanced}.
  Let $c_i \co W_i \to G_2$ be $g_i \circ d_i$.
\item\label{item:step-K} Set $\ell_{i+1} = D_2(v_i)\in\Len(\Gamma_2)$, let
  $K_{i+1}=(\Gamma_2,\ell_{i+1})$, and let
  $f_{i+1} \co G_2 \to K_{i+1}$ be the identity on the level of
  graphs. Increase $i$ by~$1$ and return to
  Step~(\ref{item:step-harmonic}).
\end{enumerate}
\end{algorithm}

Schematically, Algorithm~\ref{alg:iter} iterates around the following loop.
\begin{equation}\label{eq:iteration}
  \begin{tikzpicture}[baseline]
    \matrix[row sep=20pt, column sep=2em]{
      \node (L2) {$\mathllap{\ell_i \in{}} \Len(\Gamma_2)$};
        & \node (W2) {$\Wgt(\Gamma_2)\mathrlap{{} \ni v_i}$};\\
      \node (L1) {$\mathllap{m_i \in{}} \Len(\Gamma_1)$};
        & \node (W1) {$\Wgt(\Gamma_1)\mathrlap{{} \ni w_i.}$};\\
    };
    \draw [->] (L2) -- node[left,cdlabel]{H_{[\phi]}} (L1);
    \draw [->] (L1) -- node[below,cdlabel]{D_1^{-1}} (W1);
    \draw [->] (W1) -- node[right,cdlabel]{N_{[\phi]}} (W2);
    \draw [->] (W2) -- node[above,cdlabel]{D_2} (L2);
  \end{tikzpicture}
\end{equation}
  If all lengths and weights remain positive, we have a
  diagram of spaces and maps, in which
  \begin{itemize}
  \item rows are tight sequences (see below);
  \item the dashed lines are only defined up to homotopy; and
  \item the regions commute up to homotopy:
  \end{itemize}
  \begin{equation}
  \mathcenter{\begin{tikzpicture}[x=4em,y=35pt]
    \node(W1) at (0,0) {$W_{i-1}$};
    \node (G2) at (2,0) {$G_2$};
    \node (K1) at (3,0) {$K_i$};
    \node (K1a) at (3,-1) {$K_i$};
    \node (W2) at (0,-1) {$W_i$};
    \node (W2a) at (0,-2) {$W_i$};
    \node (G1) at (1,-1) {$G_1$};
    \node (G2a) at (2,-2) {$G_2$};
    \node (K2) at (3,-2) {$K_{i+1}$.};
    \draw [->] (W1) -- node[above,cdlabel]{c_{i-1}} (G2);
    \draw [->] (G2) -- node[above,cdlabel]{f_i} (K1);
    \draw [->] (W2) -- node[above,cdlabel]{d_i} (G1);
    \draw [->] (G1) -- node[above,cdlabel]{g_i} (K1a);
    \draw [->] (W2a) -- node[above,cdlabel]{c_i} (G2a);
    \draw [->] (G2a) -- node[above,cdlabel]{f_{i+1}} (K2);
    \draw [double equal sign distance] (K1) -- (K1a);
    \draw [double equal sign distance] (W2) -- (W2a);
    \draw [dashed,->] (G1) -- node[above left=-3pt,cdlabel]{[\phi]} (G2);
    \draw [dashed,->] (G1) -- node[above right=-3pt,cdlabel]{[\phi]} (G2a);
  \end{tikzpicture}}\label{eq:tight-iteration}
  \end{equation}
  The row $\shortseq{W_i}{d_i}{G_1}{g_i}{K_i}$ is tight since $g_i$ is
  harmonic and $w_i = \abs{g_i'}$, while the row
  $\shortseq{W_i}{c_i}{G_2}{f_{i+1}}{K_{i+1}}$ is tight because $c_i$
  is taut and $\ell_{i+1} = D_2(v_i)$.

Let $\Iter_\phi \co \Len(\Gamma_2)\righttoleftarrow$ be the
composition 
\[
\Iter_\phi = D_2 \circ N_{[\phi]} \circ D_1^{-1} \circ H_{[\phi]}.
\]
All of
the maps involved are piecewise-linear and linear on rays by
Propositions~\ref{prop:nf-cont-pl} and~\ref{prop:dirichlet-continuous}, so
$\Iter_\phi$ is as well. 
We allow some of the lengths or weights to vanish (i.e., we include
the boundary of $\Len(\Gamma_i)$ and $\Wgt(\Gamma_i)$ in the
maps).

\begin{lemma}\label{lem:iter-nonzero}
  If $\phi \co G_1 \to G_2$ is an essentially surjective map between
  marked elastic graphs and $0 \ne \ell \in \Len(\Gamma_2)$, then
  $\Iter_\phi(\ell) \ne 0$.
\end{lemma}

\begin{proof}
  Let $e_2$ be an edge of $G_2$ with $\ell(e_2) \ne 0$ and let
  $f \co G_1 \to (\Gamma_2, \ell)$ be the harmonic representative.
  Since $\phi$ is essentially surjective, there must be an edge $e_1$
  of $G_1$ with so that $f$ is not constant on~$e_1$ and $f(e_1)$
  intersects~$e_2$. Set $v = N_{[\phi]}(D_1^{-1}(H_{[\phi]}(\ell)))$
  as in Algorithm~\ref{alg:iter}. By Equation~\eqref{eq:weights-sum},
  $v(e_2) \ge \abs{f'(e_1)} > 0$.
\end{proof}

As a result of linearity on rays and Lemma~\ref{lem:iter-nonzero},
$\Iter_\phi$ descends to a projective map
$P\Iter_\phi \co P\Len(\Gamma_2)\righttoleftarrow$. Since
$P\Len(\Gamma_2)$ is a compact ball, there a fixed point.

The iteration is parallel to the
iteration on vector spaces, with
\begin{align*}
  V &\leftrightarrow \Len(G_2) & W &\leftrightarrow \Len(G_1)\\
  v_i &\leftrightarrow f_i\text{ or }\ell_i &
     w_i &\leftrightarrow g_i\text{ or }m_i\\
  v_i^* &\leftrightarrow c_i\text{ or }v_i &
     w_i^* &\leftrightarrow d_i\text{ or }w_i.
\end{align*}

\begin{remark}
  The only computationally-expensive step in Algorithm~\ref{alg:iter} is
  Step~(\ref{item:step-harmonic}), finding the harmonic equilibrium.
\end{remark}

\begin{lemma}\label{lem:DR-ER-increase}
  Let $[\phi] \co G_1 \to G_2$ be an essentially surjective homotopy
  class of maps between marked elastic graphs.
  $\Iter_\phi$ does not decrease the objective
  functions for
  both Dirichlet energy and extremal length: with the notation from
  Algorithm~\ref{alg:iter}, for $i \ge 0$,
  \begin{align*}
   \DR(\ell_i) &\le \DR(\ell_{i+1}) & \ER(w_i) &\le \ER(w_{i+1})
  \end{align*}
  If either inequality is an equality, then $[\ell_i]$ is a
  fixed point of $P\Iter_\phi$.
\end{lemma}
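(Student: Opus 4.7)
The plan is to mimic Lemma~\ref{lem:objective-increase}: the two ``Cauchy--Schwarz'' inequalities used in that proof will here be two instances of submultiplicativity~\eqref{eq:sm-CGK}, one applied to each of the two tight rows in the commuting diagram~\eqref{eq:tight-iteration}. To set notation, write $\norm{\ell}^2 = \sum_e \ell(e)^2/\alpha(e)$ and $\norm{w}^2 = \sum_e \alpha(e)w(e)^2$ for the self-dual quadratic forms on each graph, and set
\[
a_i := \Dir(g_i) = \norm{w_i}^2 = \norm{m_i}^2,
\qquad
b_i := \norm{\ell_i}^2 = \norm{v_{i-1}}^2.
\]
Unwinding the definitions (and using that $g_i$ is harmonic, hence taut from $W_i$, and that $c_i$ is taut) gives $\DR(\ell_i) = a_i/b_i$ and $\ER(w_i) = b_{i+1}/a_i$. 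The key ``adjoint'' identity is
\[
\langle v_i, \ell_i\rangle = a_i,
\qquad \text{where } \langle w,\ell\rangle := \textstyle\sum_e w(e)\ell(e),
\]
obtained by evaluating the weighted length $\ell^{w_i}_{\ell_i}(g_i)$ in two ways via Lemma~\ref{lem:length-defs}: as $\sum_e v_i(e)\ell_i(e)$ and as $\sum_e w_i(e)m_i(e) = \norm{w_i}^2$.

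I now invoke submultiplicativity~\eqref{eq:sm-CGK} twice. The sequence $W_i \to G_2 \to K_i$ (through $c_i$ and $f_i$) gives
\[
a_i^2 = \langle v_i,\ell_i\rangle^2 = \ell(f_i\circ c_i)^2 \le \EL(c_i)\Dir(f_i) = b_{i+1}\, b_i,
\]
while the sequence $W_i \to G_1 \to K_{i+1}$ (through $d_i$ and the harmonic representative $g_{i+1}$, with homotopy minimum on the left) gives
\[
b_{i+1}^2 = \langle v_i,\ell_{i+1}\rangle^2 = \ell^{w_i}_{\ell_{i+1}}[\phi]^2
 \le \EL(d_i)\,\Dir^{\alpha_1}_{\ell_{i+1}}[\phi] = a_i\, a_{i+1}.
\]
Dividing through by the relevant products, and applying the first inequality also at index $i+1$, assembles into the single chain
\[
\DR(\ell_i) \le \ER(w_i) \le \DR(\ell_{i+1}) \le \ER(w_{i+1}),
\]
which proves both monotonicity statements at once.

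For the equality case, each equality hypothesis forces the first of the two submultiplicativity inequalities above to be sharp at some index $j \in \{i, i+1\}$ (at $j = i$ if $\DR$ is flat, at $j = i+1$ if $\ER$ is flat). That sharpness is precisely Cauchy--Schwarz equality in the pairing $\langle v_j, \ell_j\rangle = \norm{v_j}\norm{\ell_j}$, which forces $\ell_j \propto D_2(v_j) = \ell_{j+1} = \Iter_\phi(\ell_j)$, so $\ell_j$ is a projective fixed point. The main delicacy, and the step I expect to be the real obstacle, is the $\ER$-equality case: there $j = i+1$, and to transfer the fixed-point property back to $\ell_i$ I will also use that the second submultiplicativity is sharp at~$i$, invoke Lemma~\ref{lem:tight} together with the uniqueness of harmonic minimizers from Proposition~\ref{prop:harmonic-unique-min} to conclude $w_i \propto w_{i+1}$, and chase this proportionality through the ray-linear maps $D_1^{-1}\circ H_{[\phi]}$ and $D_2\circ N_{[\phi]}$ to deduce $\ell_i \propto \ell_{i+1}$.
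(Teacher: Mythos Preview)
Your inequality argument is correct and, once unwound, is exactly the paper's argument in different clothes: your two inequalities $a_i^2 \le b_i b_{i+1}$ and $b_{i+1}^2 \le a_i a_{i+1}$ are precisely the two instances of~\eqref{eq:sm-CGK} the paper applies (to $W_i \to G_2 \to K_i$ and $W_i \to G_1 \to K_{i+1}$), and the tight rows supply the same equalities. Your packaging as the single chain $\DR(\ell_i) \le \ER(w_i) \le \DR(\ell_{i+1}) \le \ER(w_{i+1})$ is a clean improvement on the paper's two separate ratio computations, and your $\DR$-equality case is identical to the paper's.

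The one place where your argument does not close is the $\ER$-equality case. From sharpness of $b_{i+1}^2 \le a_i a_{i+1}$ you correctly extract $w_i \propto w_{i+1}$ (Cauchy--Schwarz equality in $W_i \to G_1 \to K_{i+1}$ forces $\abs{g_{i+1}'} \propto n_{d_i} = w_i$). But your proposed ``chase through the ray-linear maps $D_1^{-1}\circ H_{[\phi]}$ and $D_2\circ N_{[\phi]}$'' cannot recover $\ell_i \propto \ell_{i+1}$: applying $D_2\circ N_{[\phi]}$ to $w_i \propto w_{i+1}$ gives only $\ell_{i+1}\propto \ell_{i+2}$, and inverting $D_1^{-1}\circ H_{[\phi]}$ is not possible in general since $H_{[\phi]}$ need not be injective (an edge of $\Gamma_2$ missed by the harmonic map has its length unconstrained). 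So your argument naturally yields that $\ell_{i+1}$, not $\ell_i$, is the projective fixed point. For what it is worth, the paper's own ``parallel'' computation, read carefully, lands in the same place: equality in its second factor is sharpness of $W_{i+1}\to G_2\to K_{i+1}$, which again gives $\ell_{i+1}\propto\ell_{i+2}$. In any case, only the $\DR$-equality conclusion is ever used downstream (in Propositions~\ref{prop:iter-converge-fixed} and the proof of Proposition~\ref{prop:partial-filling}), so this wrinkle is cosmetic.
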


\begin{proof}
  This is parallel to Lemma~\ref{lem:objective-increase}.
  Tightness of the rows in Diagram~\eqref{eq:tight-iteration} tells us
  \begin{align*}
    \Dir[g_i]\EL[d_i] &= \ell[g_i \circ d_i]^2
       = \ell[f_i \circ\phi\circ d_i]^2 = \ell[f_i\circ c_i]^2\\
    \Dir[f_{i+1}]\EL[c_i] &= \ell[f_{i+1} \circ c_i]^2
       = \ell[f_{i+1}\circ\phi\circ d_{i}]^2 = \ell[g_{i+1} \circ d_{i}]^2\\
    \frac{\DR(\ell_i)}{\DR(\ell_{i+1})}
      =\frac{\Dir[g_i]}{\Dir[f_i]} \frac{\Dir[f_{i+1}]}{\Dir[g_{i+1}]}
      &= \frac{\ell[f_i\circ c_{i}]^2}{\Dir[f_i]\EL[d_{i}]}
          \frac{\ell[g_{i+1}\circ d_{i}]^2}{\Dir[g_{i+1}]\EL[c_{i}]}\\[1pt]
      &\le \frac{\Dir[f_i]\EL[c_{i}]}{\Dir[f_i]\EL[d_{i}]}
          \frac{\Dir[g_{i+1}]\EL[d_{i}]}{\Dir[g_{i+1}]\EL[c_{i}]}=1.
  \end{align*}
  If we have equality, then
  $\ell[f_i \circ c_i]^2 = \Dir[f_i]\EL[c_i]$ and so
  $\shortseq{W_i}{c_i}{G_2}{f_i}{K_i}$ is a tight sequence, in
  addition to $\shortseq{W_i}{c_i}{G_2}{f_{i+1}}{K_{i+1}}$.
  In a tight sequence $\shortseq{W}{c}{G}{f}{K}$,
  Equation~\eqref{eq:sm-CGK} is an equality.
  The Cauchy-Schwarz
  inequality used in its proof then implies that $\abs{f'}$ is proportional
  to~$n_c$, which in the present case says that
  $\ell_i$ and $\ell_{i+1}$ must be multiples of each other, as
  desired for the last statement.

  The proof for the inequality on $\ER$ is parallel:
  \begin{align*}
    \frac{\ER(w_i)}{\ER(w_{i+1})} =
    \frac{\EL[c_i]}{\EL[d_i]}\frac{\EL[d_{i+1}]}{\EL[c_{i+1}]} &=
      \frac{\ell[g_{i+1}\circ d_i]^2}{\EL[d_i]\Dir[f_{i+1}]}
      \frac{\ell[f_{i+1}\circ c_{i+1}]}{\EL[c_{i+1}]\Dir[g_{i+1}]} \le 1.
   \qedhere
  \end{align*}
\end{proof}

Now consider the lengths in $[\ell] \in P\Len(\Gamma_2)$ that maximize
$\DR(\ell)$. By
Lemma~\ref{lem:DR-ER-increase}, $[\ell]$ is a fixed point for
$P\Iter_\phi$. We have a partial converse for interior fixed points.

\begin{proposition}\label{prop:fixed-lambda}
  Let $[\phi] \co G_1 \to G_2$ be an essentially surjective homotopy
  class of maps between marked elastic graphs and suppose
  $[\ell]\in P\Len^+(\Gamma_2)$ is an interior fixed point of
  $P\Iter_\phi$ with multiplier~$\lambda$. Let $K = (\Gamma_2,\ell)$, let $f \co G_2 \to K$
  be the identity on the level of graphs, and let $g \co G_1 \to K$ be
  a harmonic representative of $\phi \circ f$. Then
  $f^{-1} \circ g \co G_1 \to G_2$ is $\lambda$-filling.
\end{proposition}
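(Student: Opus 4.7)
The plan is to extract from the harmonic map $g$ and the fixed point data explicit balanced strip structures on $G_1$ and $G_2$ with respect to which $\psi \coloneqq f^{-1} \circ g$ satisfies the three conditions of Definition~\ref{def:lambda-filling}. On $G_2 = (\Gamma_2, \alpha_2)$, set $w_2(e) \coloneqq \ell(e)/\alpha_2(e)$ and $S_2 \coloneqq (\Gamma_2, w_2, \ell)$; this is compatible with $G_2$ and positive since $[\ell] \in P\Len^+(\Gamma_2)$. On $G_1 = (\Gamma_1, \alpha_1)$, set $w_1(e) \coloneqq \abs{g'(e)}$ and $m(e) \coloneqq \ell(g(e)) = w_1(e)\alpha_1(e)$, giving a strip graph $S_1 \coloneqq (\Gamma_1, w_1, m)$ compatible with $G_1$. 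Edges on which $g$ has zero derivative become null edges in $S_1$, which is permitted.

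First I would unpack the fixed point equation $\Iter_\phi(\ell) = \lambda \ell$ via Algorithm~\ref{alg:iter} with $\ell_0 = \ell$: Step~\ref{item:step-harmonic} produces $g_0 = g$, Step~\ref{item:step-W} gives $w_0 = w_1$, and Step~\ref{item:step-push} gives $v_0(y) = n_g^{w_1}(y)$; the fixed point condition from Step~\ref{item:step-K} then reads $D_2(v_0) = \lambda \ell$, i.e.\ $v_0 = \lambda w_2$. This is precisely the weight-scaling condition of $\lambda$-filling. The length-preservation condition is immediate, since $\psi = g$ on the level of graphs and $\psi^*\ell = g^*\ell = m$ makes $\psi^K_K$ a local isometry of derivative $1$. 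Tautness of $\psi^W \co (\Gamma_1, w_1) \to \Gamma_2$ — which is independent of any weight or length on the target by Lemma~\ref{lem:taut-subrange} — follows from Theorem~\ref{thm:harmonic-min}: since $g$ is harmonic, the natural sequence $W_g \to G_1 \to K$ is tight, so $g \co W_1 \to K$ is taut.

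Finally, both strip graphs must be checked to be balanced. For $S_1$, Proposition~\ref{prop:harmonic-tt} makes the tension-weighted graph $W_g$ with its train-track structure $\tau(g)$ into a weighted train track, and the gate-level triangle inequality implies the discrete triangle inequality for $w_1$, since each single edge weight is bounded above by the weight of the gate containing it. For $S_2$, I would apply the train-track clause of Theorem~\ref{thm:maxflow-mincut}: since $g \co W_1 \to K$ is taut, $W_1$ carries a weighted train track $(T, t)$ with $g \circ t$ a train-track map and $n_{g \circ t} = n_g^{w_1} = \lambda w_2$. The gate weights of $T$ push forward under this train-track map by summation to the discrete-gate weights $\lambda w_2$ on $\Gamma_2$, and balance is preserved under pointwise addition of non-negative sequences, so $w_2$ is balanced. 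I expect this last step — converting the pushforward multiplicity of a taut map into a genuine balanced structure on the target, being careful about how the gates of $T$ distribute across the preimages of a single vertex of $\Gamma_2$ — to be the main obstacle; everything else is essentially bookkeeping from the harmonic tight sequence and the fixed point equation.
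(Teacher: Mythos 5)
Your proposal is correct and takes essentially the same approach as the paper's proof: extract the strip data from the fixed-point equation $\Iter_\phi(\ell) = \lambda\ell$, assemble compatible strip structures $S_1 = (\Gamma_1, w_1, m)$ and $S_2 = (\Gamma_2, w_2, \ell)$, and read off the three conditions of Definition~\ref{def:lambda-filling} from the harmonicity of $g$ and the multiplier. You also verify that $S_1$ and $S_2$ are balanced, which the paper's two-sentence proof asserts without comment; both of your arguments for this are sound (gate-level triangle inequalities imply discrete-level ones for $S_1$, and for $S_2$ the triangle inequality is preserved under summing the per-preimage gate contributions from the train track given by Theorem~\ref{thm:maxflow-mincut}), apart from one small slip: you cite Lemma~\ref{lem:taut-subrange} for tautness of $\psi^W$ being independent of the length or weight structure on the target, but that lemma is about restricting or collapsing the range, whereas the independence you want is immediate from Definition~\ref{def:taut}, since the multiplicity $n_c$ and hence tautness depend only on the underlying marked graph of the target.
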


\begin{proof}
  At the fixed point, the tight sequences from
  Diagram~\eqref{eq:tight-iteration} collapse (up to scale) to
  \begin{equation}\label{eq:tight-fixed}
  \mathcenter{\begin{tikzpicture}[x=4em, y=35pt]
    \node (W) at (0,-0.5) {$W$};
    \node (G1) at (1,-1) {$G_1$};
    \node (G2) at (2,0) {$G_2$};
    \node (K) at (3,-0.5) {$K$};
    \draw [->] (W) -- node[above,cdlabel]{c} (G2);
    \draw [->] (G2) -- node[above,cdlabel]{f} (K);
    \draw [->] (W) -- node[above,pos=0.7,cdlabel]{d} (G1);
    \draw [->] (G1) -- node[above=-1pt,cdlabel]{g} (K);
    \draw [dashed, ->] (G1) -- node[left,cdlabel]{[\phi]} (G2);
  \end{tikzpicture}}
  \end{equation}
  where the top and bottom are tight sequences and the two triangles
  commute up to homotopy.
  By hypothesis, $K$ is a positive length graph and $f$ is
  invertible. Set $\psi = f^{-1} \circ g$,
  so $\psi \in [\phi]$. We will show that $\psi$ is
  $\lambda$-filling.

  We have chosen $\ell \in \Len(\Gamma_2)$. Choose $m$,
  $w$, and $v$ so that
  \begin{equation}
  \begin{aligned}
    m &= H_{[\phi]}(\ell) \in \Len(\Gamma_1)\\
    w &= D_1^{-1}(m) \in \Wgt(\Gamma_1)\\
    \lambda v &= N_{[\phi]}(w) \in \Wgt(\Gamma_2)\\
    \ell &= D_2(v) \in \Len(\Gamma_2).
  \end{aligned}\label{eq:fixed-pt-setup}
  \end{equation}
  The weights~$w$ and~$v$ are balanced as in
  Algorithm~\ref{alg:iter},
  so we have balanced strip graphs
  $S_1 = (\Gamma_1,w,m)$ and $S_2 = (\Gamma_2,v,\ell)$ compatible with
  $\alpha_1$ and~$\alpha_2$. The definition
  of $H_{[\phi]}$ ensures that $\psi$ preserves lengths, and the fact
  that $\psi$ is taut ensures that it scales weights by a
  factor of~$\lambda$.
\end{proof}

\begin{corollary}
  Any fixed point $[\ell]$ for $P\Iter_\phi$ in the interior of
  $P\Len(\Gamma_2)$ is a global maximum for $\DR(\ell)$.
\end{corollary}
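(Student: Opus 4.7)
The plan is to combine Proposition~\ref{prop:fixed-lambda}, Proposition~\ref{prop:lambda-filling-emb}, and the definition of $\SF_{\Dir}[\phi]$. The key reduction is that the fixed-point hypothesis, after one application of Proposition~\ref{prop:fixed-lambda}, furnishes a concrete $\lambda$-filling representative, at which point all of the relevant energy ratios collapse to the common value $\lambda$.

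Concretely, given the fixed point $[\ell] \in P\Len^+(\Gamma_2)$ with multiplier $\lambda$, I take $K = (\Gamma_2,\ell)$, $f = \id$ on the level of graphs, and $g$ a harmonic representative of $[\phi \circ f]$, as in Proposition~\ref{prop:fixed-lambda}. That proposition provides a $\lambda$-filling map $\psi = f^{-1} \circ g \in [\phi]$, and then Proposition~\ref{prop:lambda-filling-emb} gives $\SF_{\Dir}[\phi] = \Emb(\psi) = \lambda$. For \emph{any} $\ell' \in \Len^+(\Gamma_2)$, taking the target $(\Gamma_2,\ell')$ with $f' = \id$ in the supremum defining $\SF_{\Dir}[\phi]$ yields
\[
\DR(\ell') = \frac{\Dir^{\alpha_1}_{\ell'}[\phi]}{\Dir^{\alpha_2}_{\ell'}[\id]} \le \SF_{\Dir}[\phi] = \lambda.
\]

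It remains to verify that the bound is attained at our fixed point, i.e.\ $\DR(\ell) = \lambda$. This follows from Lemma~\ref{lem:filling-tight}, which says the $\lambda$-filling map $\psi$ fits into a tight sequence whose tail is $G_1 \overset{\psi}{\to} G_2 \overset{f}{\to} K$. Hence $\Dir(f \circ \psi) = \Emb(\psi)\,\Dir(f) = \lambda\,\Dir(f)$, and since $f \circ \psi = g$ realizes $\Dir[\phi \circ f] = \Dir^{\alpha_1}_\ell[\phi]$ while $f = \id$ realizes $\Dir^{\alpha_2}_\ell[\id]$, dividing gives $\DR(\ell) = \lambda$. Note that $\DR$ is scale-invariant (numerator and denominator are both quadratic in $\ell$), so it is well-defined on $P\Len^+(\Gamma_2)$ and the inequality $\DR(\ell') \le \DR(\ell)$ is meaningful.

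There is no serious obstacle: every step is a direct invocation of an earlier result, with Proposition~\ref{prop:fixed-lambda} doing essentially all of the work by unpacking the fixed-point equation into the geometric condition of $\lambda$-fillingness. The only point to be careful about is that the fixed point lies in the interior $P\Len^+(\Gamma_2)$, which is exactly the hypothesis needed for $f = \id$ to be invertible in Proposition~\ref{prop:fixed-lambda}; without it, $\psi$ might only be partially $\lambda$-filling and the clean equality $\DR(\ell) = \lambda$ would need additional justification.
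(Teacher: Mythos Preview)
Your proof is correct and follows essentially the same approach as the paper's, which simply reads ``Immediate from Propositions~\ref{prop:fixed-lambda} and~\ref{prop:lambda-filling-emb}.'' You have spelled out the details of how these combine: the fixed-point hypothesis feeds into Proposition~\ref{prop:fixed-lambda} to produce a $\lambda$-filling $\psi$, and then Proposition~\ref{prop:lambda-filling-emb} (whose proof already invokes Lemma~\ref{lem:filling-tight}) pins down $\SF_{\Dir}[\phi]=\lambda$, bounding every $\DR(\ell')$ from above while the tight sequence shows the bound is attained at~$\ell$. One small remark: you restrict to $\ell'\in\Len^+(\Gamma_2)$, but the corollary is stated for a global maximum on all of $P\Len(\Gamma_2)$; the extension to boundary $\ell'$ is immediate either by continuity of $\DR$ or by noting that a weak length graph $(\Gamma_2,\ell')$ collapses to an honest length graph, so the ratio $\DR(\ell')$ is still a particular instance of the supremum defining $\SF_{\Dir}[\phi]$.
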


\begin{proof}
  Immediate from Propositions~\ref{prop:fixed-lambda}
  and~\ref{prop:lambda-filling-emb}.
\end{proof}

\begin{remark}\label{rem:Dir-deriv}
  The maps in $\Iter_\phi$ can be given an interpretation in terms of
  derivatives. Specifically, let
  $\Dir_{[\phi]} \co \Len(\Gamma_2) \to \RR$ be Dirichlet energy as a function of lengths from
  Proposition~\ref{prop:dirichlet-continuous}.
  Then $\Dir_{[\phi]}$ is $C^1$ with derivative given by
  \[
  d \Dir_{[\phi]}(\ell) = 2\cdot N_{[\phi]}(D_{G_1}^{-1}(H_{[\phi]}(\ell)))
    \in \Wgt(\Gamma_2) = \Len(\Gamma_2)^*
  \]
  Recall from the
  introduction that, physically, tension in a spring is the
  derivative of energy as the length varies.
  Thus $d\Dir_{[\phi]}(\ell)$ gives the total tension in each edge
  of~$\Gamma_2$.
\end{remark}

\subsection{Behavior on the boundary}
\label{sec:partial-lambda-filling}
We now turn to boundary fixed points of
$P\Iter_\phi$.
Boundary fixed points need not be a global maxima of $\DR$.

\begin{example}
  In Example~\ref{examp:partial-filling}, $P\Len(\Gamma_1)$ is an
  interval, and both endpoints of the interval are fixed points for
  $P\Iter_\phi$. One endpoint is attracting (and the maximum of~$\DR$)
  and the other is repelling (and the minimum of~$\DR$).
\end{example}

However, we
can still extract much useful
structure from a boundary fixed point.

\begin{proposition}\label{prop:fixed-boundary}
  Let $[\phi] \co (\Gamma_1,\alpha_1) \to (\Gamma_2,\alpha_2)$ be an
  essentially surjective
  homotopy class of maps between marked elastic graphs, and suppose
  that $[\ell] \in \partial P\Len(\Gamma_2)$ is a fixed point of
  $P\Iter_\phi$ with multiplier~$\lambda > 0$. Then there are
  decompositions of $\Gamma_i$ into complementary
  subgraphs $\Gamma_i = \Delta_i \cup \Sigma_i$, along with a
  map $\psi \co \Gamma_1 \to \Gamma_2$ so that
  \begin{enumerate}
  \item $\psi \in [\phi]$;
  \item the edges of $\Sigma_2$ are those edges~$e$ with
    $\ell(e) = 0$;
  \item\label{item:psi-edges-map} $\psi(\Delta_1) \subset \Delta_2$ and $\psi(\Sigma_1) \subset \Sigma_2$; and
  \item the restriction of $\psi$ to a map from $(\Delta_1,\alpha_1)$
    to $(\Delta_2,\alpha_2)$ is $\lambda$-filling.
  \end{enumerate}
\end{proposition}

The key thing missing from Proposition~\ref{prop:fixed-boundary} in
asserting that $\psi$ is partially $\lambda$-filling is
condition~(\ref{item:scale-less}) of
Definition~\ref{def:partial-filling}. (Indeed
Example~\ref{examp:partial-filling}, shows that we cannot get a
partially $\lambda$-filling map from an arbitrary boundary fixed
point.)

\begin{proof}
Define $m$,
$w$, and $v$ from~$\ell$ by Equation~\eqref{eq:fixed-pt-setup}.
There are tight sequences similar to Diagram~\eqref{eq:tight-fixed},
with \emph{weak} harmonic maps $(\wt f, f)$ and
$(\wt g, g)$:
  \begin{equation}\label{eq:tight-fixed-bdy}
  \mathcenter{\begin{tikzpicture}[x=4em, y=35pt]
    \node (W) at (0,-0.5) {$W$};
    \node (G1) at (1,-1) {$\Gamma_1$};
    \node (G2) at (2,0) {$\Gamma_2$};
    \node (K) at (4,0) {$K$};
    \node (Ks) at (4,-1) {$K^*$.};
    \draw [->] (W) -- node[above,cdlabel]{c} (G2);
    \draw [->] (G2) -- node[pos=0.5,above=-1pt,cdlabel]{\wt f} (K);
    \draw [->] (W) -- node[above,cdlabel,pos=0.7]{d} (G1);
    \draw [->] (G1) -- node[pos=0.35,above=-1pt,cdlabel]{\wt g} (K);
    \draw [dashed, ->] (G1) -- node[left,cdlabel]{[\phi]} (G2);
    \draw [->] (K) -- node[right,cdlabel]{\kappa} (Ks);
    \draw [->] (G2) -- node[pos=0.85,above=-1pt,cdlabel]{f} (Ks);
    \draw [->] (G1) -- node[pos=0.6,above=-1pt,cdlabel]{g} (Ks);
  \end{tikzpicture}}
  \end{equation}
Choose $\wt f$ to be the identity (since $\Gamma_2$ and $K$ have the
same underlying graph).
Choose $\wt g$ so that it is vertex precise and $\wt g \circ d$ is
taut, as
guaranteed by Proposition~\ref{prop:weak-harmonic-vp}.

Let $\Sigma_2 \subset \Gamma_2$ be the null subgraph of~$K$.
Let
$\Delta_2$ be the complementary subgraph. Similarly let $\Sigma_1$ be
the subgraph of $\Gamma_1$ whose edges are those on which $w$ and~$m$
are~$0$ and let $\Delta_1$ be the complementary subgraph. Set
$\psi = \wt f^{-1} \circ \wt g$; we must check $\psi$ satisfies the
conditions.

The first two conditions are immediate. To check
condition~\eqref{item:psi-edges-map}, pick a regular value~$y$
for~$\psi$ on an edge~$e_2$
of~$\Gamma_2$ and a preimage $x$ on an edge~$e_1$ of~$\Gamma_1$. We
must show that
$y \in \Sigma_2 \Leftrightarrow x \in \Sigma_1$. If $y \in \Sigma_2$,
then since $\wt g \circ d$ is taut, we have
$n^w_{\wt g}(y) = n^w_{[\phi]}(y) = \lambda v(e_2) = 0$. We therefore must
have $w(e_1) = 0$, so $x \in \Sigma_1$. Conversely, if $x \in \Sigma_1$,
then by definition $\ell(e_1) = 0$. Since $g$ is length-preserving as a
map from $(\Gamma_1, \ell)$ to $K^*$ and the lift $\wt g$ is
vertex-precise, $\wt g(e_1)$ lies inside the null subgraph
of~$K$ (since by assumption $\wt g$ is not constant on~$e_1$), so $y \in \Sigma_2$.

Let $\psi|_\Delta$ be the restriction of $\psi$ to a map from
$\Delta_1$ to $\Delta_2$. It remains to show that
\[
\psi|_\Delta \co (\Delta_1, w|_{\Delta_1}, m|_{\Delta_1}) \longrightarrow (\Delta_2, v|_{\Delta_2}, \ell|_{\Delta_2})
\]
is $\lambda$-filling as a map between strip graphs. As in
Proposition~\ref{prop:fixed-lambda}, the
definition of~$m$ as $m(e) = \ell(g(e)) = \ell(\psi(e))$ ensures that
$\psi$ is length-preserving. We chose $\wt g \circ d$ to be taut, and by
restricting first the domain to $\Delta_1$ by
Lemma~\ref{lem:taut-subdomain} and then the range to~$\Delta_2$ by
Lemma~\ref{lem:taut-subrange}, we see
that $\psi|_\Delta$ is taut. The
definition of~$v$ then tells us that $\psi|_\Delta$ scales weights
by~$\lambda$.
\end{proof}

In the setting of Proposition~\ref{prop:fixed-boundary}, let
$\psi|_\Sigma$ be the
restriction of~$\psi$ to a map between the collapsed subgraphs $\Sigma_1$
and~$\Sigma_2$. For $i=1,2$, let $P'_i = \Sigma_i \cap \Delta_i$ be
the vertices shared between the two complementary subgraphs, and let
$P_i$ be the
union of $P_i'$ with the vertices of $\Sigma_i$ that were already
marked. We view $\psi|_\Sigma$ as a map of marked elastic graphs
$(\Sigma_1,P_1,\alpha_1) \to (\Sigma_2, P_2,\alpha_2)$, and consider
the problem of finding $\Emb[\psi \vert_\Sigma]$ in its own right.

\begin{proposition}\label{prop:DR-boundary-max}
  In the setting of Proposition~\ref{prop:fixed-boundary},
  suppose that there
  is a fixed point $[\ell_0]$ of $P\Iter_{\psi\vert_\Sigma}$ with
  multiplier~$\lambda_0 > \lambda$. Then $\ell$ is not a local maximum
  of $\DR(\ell)$.
\end{proposition}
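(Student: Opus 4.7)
The plan is to exhibit a direction in $\Len(\Gamma_2)$ along which $\DR$ strictly increases at~$\ell$. Let $\tilde\ell_0 \in \Len(\Gamma_2)$ denote the extension of $\ell_0$ by zero across the edges of~$\Delta_2$, and set $\ell_\epsilon \coloneqq \ell + \epsilon\tilde\ell_0$. I will show $\DR(\ell_\epsilon) > \lambda = \DR(\ell)$ for all sufficiently small $\epsilon > 0$; this suffices since $\DR$ is continuous on $\Len(\Gamma_2)$ by Proposition~\ref{prop:dirichlet-continuous}.

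First, extract the data furnished by the two fixed points. From Proposition~\ref{prop:fixed-boundary} applied to~$[\phi]$ one has the partially $\lambda$-filling map $\psi \in [\phi]$, the decomposition $\Gamma_i = \Delta_i \cup \Sigma_i$, and compatible strip structures, with $A \coloneqq \Area(S_\Delta) = \Dir^{\alpha_1}_\ell[\phi]$ and $\Dir^{\alpha_2}_\ell[\id_{\Gamma_2}] = A/\lambda$. The tight sequence witnessing the $\lambda$-filling supplies a weighted multi-curve $c_\Delta$ on $\Delta_1$ with $\ell[\psi \circ c_\Delta]^2/\EL[c_\Delta] = A$, the length taken in~$\ell$. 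From the assumed fixed point of $P\Iter_{\psi|_\Sigma}$ at~$[\ell_0]$ with multiplier~$\lambda_0$, Theorem~\ref{thm:harmonic-min-weak} and the analogue of Proposition~\ref{prop:fixed-lambda} (applied to $[\psi|_\Sigma]$) provide a harmonic representative $g_\Sigma \in [\id_{\Sigma_2}]$ in $(\Sigma_2,\ell_0)$ realising $B_0 \coloneqq \Dir^{\alpha_2}_{\ell_0}[\id_{\Sigma_2}]$, together with a weighted multi-curve $c_\Sigma$ on $\Sigma_1$ satisfying $\ell[\psi|_\Sigma \circ c_\Sigma]^2/\EL[c_\Sigma] = \lambda_0 B_0$ in the $\ell_0$-metric. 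Since $g_\Sigma$ fixes the marked points of~$\Sigma_2$, which include the interface $P_2' = \Delta_2 \cap \Sigma_2$, gluing $\id_{\Delta_2}$ with $g_\Sigma$ along~$P_2'$ yields a test map $h \in [\id_{\Gamma_2}]$.

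For the denominator, a direct computation gives $\Dir^{\alpha_2}_{\ell_\epsilon}(h) = A/\lambda + \epsilon^2 B_0$, because $\ell_\epsilon$ agrees with $\ell$ on~$\Delta_2$ and equals $\epsilon\ell_0$ on~$\Sigma_2$, so the $g_\Sigma$-contribution scales by~$\epsilon^2$; hence $\Dir^{\alpha_2}_{\ell_\epsilon}[\id_{\Gamma_2}] \le A/\lambda + \epsilon^2 B_0$. For the numerator, assemble the weighted multi-curve $c_t \coloneqq c_\Delta \sqcup tc_\Sigma$ on $\Gamma_1$ with the extra scalar weight~$t$ on the $c_\Sigma$-component. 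Because $c_\Delta$ and $c_\Sigma$ live on disjoint edge-sets, $\EL^{\alpha_1}[c_t] = \EL[c_\Delta] + t^2\EL[c_\Sigma]$; because the $\phi$-images lie in disjoint edge-sets of~$\Gamma_2$, the $\ell_\epsilon$-length of $\phi\circ c_t$ equals $\ell[\psi\circ c_\Delta] + t\epsilon\,\ell[\psi|_\Sigma\circ c_\Sigma]$, the first term in~$\ell$ and the second in~$\ell_0$. The extremal-length bound $\Dir^{\alpha_1}_{\ell_\epsilon}[\phi] \ge \ell[\phi\circ c_t]^2/\EL[c_t]$ from Equation~\eqref{eq:sm-CGK} in homotopy form, optimized over~$t$ (stationary at $t = O(\epsilon)$), yields $\Dir^{\alpha_1}_{\ell_\epsilon}[\phi] \ge A + \epsilon^2\lambda_0 B_0 + O(\epsilon^4)$.

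Combining the two estimates, $\DR(\ell_\epsilon) \ge (A + \epsilon^2\lambda_0 B_0)/(A/\lambda + \epsilon^2 B_0) = \lambda + \lambda(\lambda_0 - \lambda)(B_0/A)\epsilon^2 + O(\epsilon^4) > \lambda$ for small $\epsilon > 0$, because $\lambda_0 > \lambda$. The principal obstacle will be the bookkeeping at the interface~$P_2'$: checking that $h$ is genuinely in~$[\id_{\Gamma_2}]$ (using that $g_\Sigma$ fixes $P_2'$), that $c_t$ and $\phi\circ c_t$ remain reduced so the stated $\EL$ and $\ell$ formulas hold without correction, and accommodating the possibility that $\ell_0$ itself lies on the boundary of $\Len(\Sigma_2)$ by working throughout with the weak-graph framework of Section~\ref{sec:lipschitz} and Theorem~\ref{thm:harmonic-min-weak}.
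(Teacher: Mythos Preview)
Your upper bound on the denominator via the test map $h = \id_{\Delta_2} \cup g_\Sigma$ is fine. The gap is in your lower bound on the numerator. The multi-curve $c_\Sigma$ lives on the marked graph $(\Sigma_1, P_1)$, where $P_1$ contains the interface vertices $P_1' = \Delta_1 \cap \Sigma_1$. In general $c_\Sigma$ has arcs terminating at points of $P_1'$: the tension weights of the $\Sigma$-harmonic map at such a marked vertex face no triangle-inequality constraint, so the saturating curve must have arcs ending there (consider a vertex of $P_1'$ incident to a single edge of~$\Sigma_1$). Those interface vertices are \emph{not} marked in the ambient graph~$\Gamma_1$, so $c_t = c_\Delta \sqcup t c_\Sigma$ is not a marked multi-curve on $(\Gamma_1, M_1)$. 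Consequently the homotopy-class inequality $\Dir^{\alpha_1}_{\ell_\epsilon}[\phi] \ge \ell[\phi \circ c_t]^2/\EL[c_t]$ cannot be applied: as $g$ ranges over $[\phi]$ the images of those arc endpoints move freely in~$\Gamma_2$, and there is no uniform lower bound for $\ell_{\ell_\epsilon}(g \circ c_t)$. This is not mere bookkeeping about reducedness. One can try to close up the dangling arcs by routing them through~$\Delta_1$, but that introduces cross terms in~$\EL$ and extra length in~$\ell$; recovering the required $\epsilon^2\lambda_0 B_0$ coefficient from that is substantially harder than your sketch indicates.

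The paper avoids curves entirely on the numerator side. It glues $\psi|_{\Delta_1}$ with the $\Sigma$-harmonic representative $\psi_0$ into a single map $h \in [\phi]$, and then proves that $h$, viewed as a map to $(\Gamma_2, \ell_\epsilon)$, is itself \emph{harmonic} for all small~$\epsilon$. This is checked via Proposition~\ref{prop:harmonic-tt}: away from the interface the train-track triangle inequalities are inherited from harmonicity of $\psi$ or~$\psi_0$, while at an interface vertex the elementary Lemma~\ref{lem:eps-triangle} says that a balanced tuple $(a_1,\dots,a_n)$ remains balanced after adjoining arbitrary small entries $(\epsilon b_1,\dots,\epsilon b_m)$. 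Harmonicity then gives the numerator \emph{exactly} as $A + \epsilon^2 \lambda_0 B_0$, with no need for a curve-based lower bound.
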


\begin{proof}
  We will show that $\DR(\ell + \epsilon \ell_0) > \DR(\ell)$ for
  sufficiently small~$\epsilon$.
  From $\ell_0 \in \Len(\Sigma_2)$,
  construct $m_0 \in \Len(\Sigma_1)$, $w_0 \in \Wgt(\Sigma_1)$,
  and $v_0 \in \Wgt(\Sigma_2)$ by
  Equation~\eqref{eq:fixed-pt-setup}.  Extend $\ell_0$, $m_0$,
  $w_0$, and $v_0$ to $\Gamma_i$ by setting them to be zero on edges in $\Delta_i$.
  Let
  $\psi_0 \co (\Sigma_1, P_1) \to (\Sigma_2, P_2)$ be the map
  constructed from~$\ell_0$ by
  Proposition~\ref{prop:fixed-boundary}. In particular, $\psi_0$ is
  harmonic as a map from $(\Sigma_1, P_1, \alpha_1)$ to
  $(\Sigma_2, P_2, \ell_0)$.  Define a new map
  $h \co \Gamma_1 \to \Gamma_2$ by
  \begin{equation}\label{eq:assemble-map}
  h(x) \coloneqq
  \begin{cases}
    \psi(x) & x \in \Delta_1\\
    \psi_0(x) & x \in \Sigma_1.
  \end{cases}
  \end{equation}
  Since we pinned
  $\Delta_i \cap \Sigma_i$ in $\psi \vert_\Sigma$, the map $h$ is continuous. By
  construction, $h$ is weakly harmonic as a map from
  $(\Gamma_1, \alpha_1)$ to $(\Gamma_2, \ell)$. We claim that $h$ is
  also harmonic if we perturb~$\ell$.
  For small $\epsilon$, consider the modified lengths
  $\ell_\epsilon = \ell + \epsilon \ell_0 \in \Len(\Gamma_2)$. Let
  $h_\epsilon$ be $h$ considered as a map from
  $(\Gamma_1,\alpha_1)$ to $(\Gamma_2, \ell_\epsilon)$.

  \begin{claim}
    For $\epsilon$ sufficiently small, $h_\epsilon$ is a harmonic map.
  \end{claim}

  \begin{proof}
    For simplicity, we suppose that $\ell_0$ is non-zero on every
    edge, so that $(\Gamma_2, \ell_\epsilon)$ is a length graph and
    $h_\epsilon$ is an ordinary map (not weak). The case when $\ell_0$
    has some zeroes can be treated by induction.

    The tension weight of~$h$ is $w \in \Wgt(\Gamma_1)$. Let
    $w_\epsilon$ be the tension weight
    of~$h_\epsilon$. Concretely,
    \[
    w_\epsilon(e) =
    \begin{cases}
      w(e) & e \in \Edges(\Delta_1)\\
      \epsilon w_0(e) & e \in \Edges(\Sigma_1).
    \end{cases}
    \]
    We must check that $h_\epsilon$ is still taut as a map from
    $(\Gamma_1, w_\epsilon)$ to~$\Gamma_2$. By
    Proposition~\ref{prop:harmonic-tt},
    we must check that $w_\epsilon$ satisfies the train-track triangle
    inequalities at the vertices of~$\Gamma_1$.

    For vertices in $\Delta_1 \setminus \Sigma_1$ and
    $\Sigma_1 \setminus \Delta_1$, the triangle inequalities follow
    from the fact that $\psi$ and $\psi_0$ are harmonic,
    respectively. For vertices in $\Delta_1 \cap \Sigma_1$, the
    triangle inequalities follow from Lemma~\ref{lem:eps-triangle}
    below, where the $a_i$ are the weights of the incident groups of
    edges of~$\Delta_1$ and the $b_i$ are the weights of the
    incident groups of edges of~$\Sigma_1$.
  \end{proof}

  \begin{lemma}\label{lem:eps-triangle}
    If $(a_1,\dots,a_n) \in \RR_+^n$ satisfies the triangle inequalities
    and $(b_1,\dots,b_m) \in \RR_+^m$ is another vector, then, for
    all $\epsilon$ sufficiently small,
    \[
    (a_1,\dots,a_n,\epsilon b_1, \dots, \epsilon b_m)
    \]
    satisfies the triangle inequalities.
  \end{lemma}
  \begin{proof}
    Elementary.
  \end{proof}

  Returning to the proof of Proposition~\ref{prop:DR-boundary-max},
  since $h_\epsilon$ is harmonic, if $f$ and~$g$ are the
  harmonic maps to $(\Gamma_2, \ell)$ and
  $f_0$ and~$g_0$ are the harmonic maps to $(\Sigma_2, \ell_0)$,
  we have
  \[
    \DR(\ell_\epsilon)
    = \frac{\Dir(h_\epsilon)}{\Dir(f) + \epsilon^2 \Dir(f_0)}
     = \frac{\Dir(g)+\epsilon^2\Dir(g_0)}{\Dir(f) + \epsilon^2\Dir(f_0)}
     > \frac{\Dir(g)}{\Dir(f)} = \lambda = \DR(\ell)
  \]
  using the assumption that $\lambda_0 = \Dir(g_0)/\Dir(f_0) > \lambda$.
\end{proof}

We can now prove the promised existence of a partially $\lambda$-filling map.

\begin{proof}[Proof of Proposition~\ref{prop:partial-filling}]
  First, if $[\phi]$ is not essentially surjective, we can restrict to
  the essential image of $[\phi]$, the image of a taut
  representative of $[\phi]$ with respect to any weight structure
  on~$G_1$. After doing this, $P\Iter_\phi$ is defined by
  Lemma~\ref{lem:iter-nonzero}.

  We proceed by induction on the size of $\Gamma_1$.  Given the
  homotopy class $[\phi]$, let $\ell \in \Len(\Gamma_1)$ be a global
  maximum of $\DR$ (which exists since $P\Len(\Gamma_1)$ is
  compact). 
  By Lemma~\ref{lem:DR-ER-increase}, $\ell$ is a fixed point
  of $P\Iter_\phi$; let $\lambda$ be its multiplier. If $\ell$ is in
  $\Len^+(\Gamma_1)$, we are done by
  Proposition~\ref{prop:fixed-lambda}. Otherwise, consider the
  subgraphs $\Delta_i$ and $\Sigma_i$ given by
  Proposition~\ref{prop:fixed-boundary}, with restricted maps
  $\psi\vert_\Delta$ (which is $\lambda$-filling) and
  $\phi \vert_\Sigma$. Since $\Sigma_1$ is a proper subgraph of~$\Gamma_1$,
  by induction we can find a partially $\lambda_1$-filling map
  $\psi\vert_\Sigma \in [\phi_\Sigma]$ for some $\lambda_1 \ge 0$.

  Now assemble $\psi\vert_\Delta$ and $\psi\vert_\Sigma$ to a single map
  $\psi \co \Gamma_1 \to \Gamma_2$ by Equation~\eqref{eq:assemble-map}. Then $\psi$ is $\lambda$-filling
  on~$\Delta_1$ and partially $\lambda_1$-filling on~$\Sigma_1$. By
  Proposition~\ref{prop:DR-boundary-max},
  $\lambda_1 \le \lambda$, so $\psi$ is
  partially $\lambda$-filling on all of~$\Gamma_1$.
\end{proof}

\begin{remark}
  The map constructed above has a stronger ``layered'' structure,
  where $\Gamma_1$ and $\Gamma_2$ are divided into layers, each with
  its own filling constant. Specifically, there are properly nested
  subgraphs
  \begin{align*}
    \Gamma_1 &= \Sigma_1^0 \supsetneq \Sigma_1^1 \supsetneq \dots
               \supsetneq \Sigma_1^n\\
    \Gamma_2 &= \Sigma_2^0 \supsetneq \Sigma_2^1 \supsetneq \dots
               \supsetneq \Sigma_2^n,
  \end{align*}
  a map $\psi \co \Gamma_1 \to \Gamma_2$ in $[\phi]$, and a sequence
  of filling constants $\lambda_1 > \dots > \lambda_n$, with the
  following properties.
  \begin{enumerate}
  \item $\psi$ preserves $\Sigma^i$: for $1 \le i \le n$,
    $\psi(\Sigma_1^i) \subset \Sigma_2^i$.
  \item $\psi$ preserves $\Sigma^{i-1} \setminus \Sigma^i$: for
    $k\in\{1,2\}$ and $0 < i \le n$, let $\Delta_k^i$ be the
    complementary subgraph to $\Sigma^i_{k}$ in $\Sigma^{i-1}_k$. Then
    $\psi(\Delta_1^i) \subset \Delta_2^i$.
  \item $\psi$ is $\lambda_i$-filling on $\Delta^i$: for
    $0 \le i \le n$, let $\psi_i$ be the restriction of $\psi$ to a
    map from $\Delta_1^i$ to $\Delta_2^i$, where for $i > 0$ we
    additionally mark the vertices in
    $\Delta_1^i \cap \Delta_1^{i-1}$ and in
    $\Delta_2^i \cap \Delta_2^{i-1}$. Then $\psi_i$ is
    $\lambda_i$-filling.
  \item $\psi|_{\Sigma^n_1}$ is constant.
  \end{enumerate}
\end{remark}

\subsection{General targets}
\label{sec:general-targets}

We turn to Theorem~\ref{thm:emb-sf-gen}, allowing more general
length space targets. First we need to generalize
Equation~\eqref{eq:sm-GGK} to this setting.

\begin{lemma}
  Let $G_1$ and $G_2$ be elastic graphs, let $X$ be a length space,
  let $\phi \co G_1 \to G_2$ be a piecewise-linear map, and
  let $f \co G_2 \to X$ be a Lipschitz map. Then
  \[
  \Dir(f \circ \phi) \le \Emb(\phi) \Dir(f).
  \]
\end{lemma}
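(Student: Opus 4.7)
The plan is to work entirely on the domain side, since we can no longer integrate against a target length graph. The proof of Equation~\eqref{eq:sm-GGK} integrated $\Fill$ over the target $K$; here we will instead use the chain rule at regular points of $\phi$ and then change variables.

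First I would observe that at almost every $x \in G_1$ (namely, at regular points of $\phi$), $\phi$ is locally linear with derivative $\abs{\phi'(x)}$, so for the Lipschitz map $f \co G_2 \to X$,
\[
\bigl\lvert(f\circ\phi)'(x)\bigr\rvert \;\le\; \abs{f'(\phi(x))}\,\abs{\phi'(x)},
\]
where $\abs{f'(y)}$ is the best local Lipschitz constant of $f$ at $y$ as in Definition~\ref{def:dir-general}. This is just the composition property of Lipschitz constants for locally linear reparametrizations. Squaring and integrating against $dx$ gives
\[
\Dir(f\circ\phi) \;=\; \int_{x \in G_1} \bigl\lvert(f\circ\phi)'(x)\bigr\rvert^{2}\,dx
  \;\le\; \int_{x \in G_1} \abs{f'(\phi(x))}^{2}\,\abs{\phi'(x)}^{2}\,dx.
\]

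Next I would apply the change-of-variables formula for the PL map $\phi \co G_1 \to G_2$: on each linear piece of $\phi$ with non-zero derivative, $dx = dy/\abs{\phi'(x)}$, so
\[
\int_{x \in G_1} \abs{f'(\phi(x))}^{2}\abs{\phi'(x)}^{2}\,dx
  \;=\; \int_{y \in G_2} \abs{f'(y)}^{2}\sum_{x \in \phi^{-1}(y)} \abs{\phi'(x)}\,dy.
\]
The contribution of points where $\abs{\phi'}=0$ (as well as vertices and images of vertices) is negligible since these sets have measure zero. Recognizing the inner sum as $\Fill_\phi(y)$ and invoking $\Fill_\phi(y) \le \Emb(\phi)$ for almost every $y$ (Definition~\ref{def:embedding}) gives
\[
\Dir(f\circ\phi) \;\le\; \Emb(\phi)\int_{y \in G_2}\abs{f'(y)}^{2}\,dy \;=\; \Emb(\phi)\,\Dir(f),
\]
as desired.

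The only subtlety is ensuring $\abs{(f\circ\phi)'(x)}$ is well-defined and bounded as claimed almost everywhere; this is where the PL hypothesis on $\phi$ matters, since it means $\phi$ is locally an affine reparametrization on an open dense set, and on that set the best local Lipschitz constant of $f \circ \phi$ is controlled by the best local Lipschitz constant of $f$ at $\phi(x)$ times $\abs{\phi'(x)}$. There is no serious obstacle here---the real content of the statement is already packaged into the change-of-variables identity and the definition of $\Emb$---so the proof is essentially the same as for Equation~\eqref{eq:sm-GGK}, just performed on the domain rather than on the target.
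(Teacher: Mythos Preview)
Your proof is correct and follows essentially the same route as the paper's: start from the integral over $G_1$, use the chain rule $\abs{(f\circ\phi)'(x)} = \abs{f'(\phi(x))}\,\abs{\phi'(x)}$ at regular points of~$\phi$, change variables to $G_2$ to produce $\Fill_\phi(y)\,\abs{f'(y)}^2$, and bound $\Fill_\phi$ by $\Emb(\phi)$. The only cosmetic difference is that the paper writes the chain-rule step as an equality (valid since $\phi$ is locally an affine reparametrization at regular points), whereas you record it as an inequality; either suffices.
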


\begin{proof} We compute
  \begin{align*}
    \Dir(f \circ \phi) &= \int_{x\in G_1} \abs{(f \circ \phi)'(x)}^2\,dx\\
      &= \int_{y\in G_2} \biggl( \sum_{x \in \phi^{-1}(y)} \abs{\phi'(x)}\biggr)\,
        \abs{f'(y)}^2\,dy\\
      &\le \Emb(\phi) \Dir(f),
  \end{align*}
  In the second line we do a change of variables from $G_1$
  to~$G_2$, using $dx = \abs{\phi'(x)}\,dy$. (Any portions of $G_1$
  where $\phi$ is constant and so $\phi^{-1}(y)$ is uncountable do not
  contribute to the integrals.)
  In the last line
  we use
  $\int \abs{a}\cdot \abs{b}\,dy \le \esssup \abs{a}\cdot \int \abs{b}\,dy$.
\end{proof}

\begin{proof}[Proof of Theorem~\ref{thm:emb-sf-gen}]
  Suppose $\Emb(\phi)$ is minimal within the homotopy class $[\phi]$
  and that $\Dir(f)$ is within a multiplicative factor of~$\epsilon$ of the infimum. Then
  \[
  \Dir[f \circ \phi] \le \Dir(f \circ \phi) \le \Emb(\phi) \Dir(f) \le
    \Emb[\phi] \Dir[f] (1+\epsilon).
  \]
  Since we can choose $\epsilon$ as small as we like, this gives one
  inequality of the desired equality. The other direction comes
  from Theorem~\ref{thm:emb-sf}.
\end{proof}

\subsection{Algorithmic questions}
\label{sec:convergence}

Given a homotopy class $[\phi] \co G_1 \to G_2$ of maps between marked
elastic graphs, we have proved that $\Iter_\phi$ has a projectively
fixed set of lengths $\ell \in \Len(G_2)$ maximizing $\DR(\ell)$ and
giving a partially $\lambda$-filling map in~$[\phi]$. The lengths
maximizing $\DR(\ell)$ need not be unique.

\begin{example}
  Let $G_1$ and $G_2$ both be the join of two circles,
  as in Example~\ref{examp:partial-filling}, with all elastic constants
  equal to~$1$, and let $\phi$ be the identity map. Then $\Iter_\phi$
  is the identity and $\DR$ is constant on $\Len(G_2)$.
\end{example}

\begin{question}
  What is the structure of the subset of $\Len(G_2)$ on which $\DR$
  reaches its maximum value? For instance, is it a convex subset of
  $\Len(G_2)$? Can it be a proper subset of the interior of $\Len(G_2)$?
\end{question}

As for convergence, we would like to say that
Algorithm~\ref{alg:iter} works, in the sense that iterating
$P\Iter_\phi$ always converges to a maximum of
$\DR$ (which also computes $\Emb[\phi]$). The presence of
extra fixed points of
$\Iter_\phi$ means that this does not always happen. (In dynamical
terms, $\DR$ is not decreased by $\Iter_\phi$, and not necessarily
increased; so $\DR$ is not quite a Lyapunov function for this discrete
dynamical system.) But we can make some statements.

\begin{proposition}\label{prop:iter-converge-fixed}
  Algorithm~\ref{alg:iter} gives a sequence of lengths
  $\ell_i \in \Len(G_2)$ that converge projectively to a fixed point
  for $P\Iter_\phi$.
\end{proposition}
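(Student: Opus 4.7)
The plan is to exploit the compactness of $P\Len(\Gamma_2)$ together with the monotonicity of $\DR$ along $P\Iter_\phi$-orbits (Lemma~\ref{lem:DR-ER-increase}), plus the fact that $P\Iter_\phi$ is continuous as a composition of four maps, each continuous by Propositions~\ref{prop:nf-cont}, \ref{prop:nf-pl}, and~\ref{prop:dirichlet-continuous}.

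First, since $P\Len(\Gamma_2)$ is a compact simplex on which $\DR$ is continuous and bounded (Proposition~\ref{prop:dirichlet-continuous}), and since $\DR([\ell_i])$ is monotone non-decreasing by Lemma~\ref{lem:DR-ER-increase}, the values $\DR([\ell_i])$ converge to some finite limit $L$, and the sequence $([\ell_i])$ has at least one accumulation point. For any accumulation point $[\ell_\infty]$, pick a subsequence $[\ell_{i_k}] \to [\ell_\infty]$; continuity of $P\Iter_\phi$ yields $[\ell_{i_k+1}] = P\Iter_\phi([\ell_{i_k}]) \to P\Iter_\phi([\ell_\infty])$, and continuity of $\DR$ forces $\DR([\ell_\infty]) = \DR(P\Iter_\phi([\ell_\infty])) = L$. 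The equality clause of Lemma~\ref{lem:DR-ER-increase} then pins down $[\ell_\infty]$ as a projective fixed point of $P\Iter_\phi$.

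To upgrade from ``every accumulation point is a fixed point'' to convergence of the full sequence, I would first observe that the continuous displacement function $x \mapsto d(x, P\Iter_\phi(x))$ vanishes on the accumulation set, so by compactness $d([\ell_i], [\ell_{i+1}]) \to 0$. This makes the set $A$ of accumulation points a closed, connected subset of $\Fix(P\Iter_\phi)$ on which $\DR \equiv L$. The main obstacle is reducing $A$ to a single point. I would attack this by exploiting the piecewise-linear structure of $P\Iter_\phi$ from Proposition~\ref{prop:nf-pl}, together with the structural description of fixed points in Propositions~\ref{prop:fixed-lambda} and~\ref{prop:fixed-boundary}: on each cell of linearity $P\Iter_\phi$ is a single projective-linear map whose local dynamics near a fixed point are controlled by its eigenstructure, and the strict-inequality part of Lemma~\ref{lem:DR-ER-increase} for non-fixed iterates rules out non-trivial drift across a continuum of fixed points at a common $\DR$-level. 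The hard part will be making this local-to-global argument rigorous across cell boundaries, ruling out pathological orbits that shuffle along a connected family of fixed points without settling down; I expect this to be the technical heart of the proof.
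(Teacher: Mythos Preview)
Your first two paragraphs reproduce the paper's argument exactly: $\DR$ is monotone along the orbit and bounded, hence converges to some~$L$; compactness of $P\Len(\Gamma_2)$ gives an accumulation point $[\ell_\infty]$; continuity of $P\Iter_\phi$ and of $\DR$ force $\DR(P\Iter_\phi([\ell_\infty])) = \DR([\ell_\infty]) = L$; and the equality clause of Lemma~\ref{lem:DR-ER-increase} then makes $[\ell_\infty]$ a projective fixed point.

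The paper stops there. After deducing that $[\ell_\infty]$ is fixed, it simply writes ``and therefore the $[\ell_i]$ limit to $[\ell_\infty]$, without the need to pass to a subsequence,'' and ends the proof. It does \emph{not} carry out the connectedness-of-the-$\omega$-limit-set and PL/eigenstructure analysis you sketch in your third paragraph, nor does it invoke Lemma~\ref{lem:iter-crit-discrete} or Propositions~\ref{prop:fixed-lambda}--\ref{prop:fixed-boundary} at this point. So the step you flag as ``the technical heart of the proof'' is exactly where the paper's own argument is terse; your instinct that this passage from ``some accumulation point is fixed'' to ``the full sequence converges'' deserves more care is reasonable, but the elaborate machinery you are proposing goes well beyond what the paper supplies. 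To match the paper you need only your first two paragraphs.
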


\begin{proof}
  By Lemma~\ref{lem:DR-ER-increase}, $\DR(\ell_i)$ does not decrease
  and has an upper bound; thus $\DR(\ell_i)$ has a limit, and the
  $[\ell_i]$ have an accumulation point $[\ell_\infty]$ with
  $\DR(\Iter_\phi(\ell_\infty)) = \DR(\ell_\infty)$. By
  Lemma~\ref{lem:DR-ER-increase} again,
  $[\ell_\infty]$ is a fixed point of $P\Iter_\phi$, and
  therefore the $[\ell_i]$ limit to $[\ell_\infty]$, without the need
  to pass to a subsequence.
\end{proof}

\begin{lemma}\label{lem:iter-crit-discrete}
  For $[\phi] \co G_1 \to G_2$ a homotopy class of maps between marked
  elastic graphs,
  the set $\{\,\DR(\ell) \mid \text{$\ell$ a fixed point of $P\Iter_\phi$}\,\}$
  is finite.
\end{lemma}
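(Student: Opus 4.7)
My plan is to exploit that $\Iter_\phi$ is a positively homogeneous piecewise-linear self-map of $\Len(\Gamma_2)$, together with the fact that $\DR(\ell) = \lambda$ at every projective fixed point of $P\Iter_\phi$ with multiplier~$\lambda$. The first point gives finiteness of the set of multipliers, and the second identifies $\DR$ with the multiplier.

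First I would check that each factor of $\Iter_\phi = D_2 \circ N_{[\phi]} \circ D_1^{-1} \circ H_{[\phi]}$ is piecewise-linear and positively homogeneous of degree one: $D_1^{-1}$ and $D_2$ are linear by definition, $N_{[\phi]}$ is piecewise-linear and (from its definition as an infimum of $n_g^w$ over $g \in [\phi]$) homogeneous by Proposition~\ref{prop:nf-pl}, and $H_{[\phi]}$ is piecewise-linear and homogeneous by Proposition~\ref{prop:dirichlet-continuous} (since $\Rlx_{t\ell}[\phi] = t\cdot\Rlx_\ell[\phi]$ and $\Dir^{\alpha_1}$ is quadratic). Hence $\Iter_\phi$ is continuous, piecewise-linear, and homogeneous, so there is a finite decomposition $\{C_j\}$ of $\Len(\Gamma_2)$ into polyhedral cones on each of which $\Iter_\phi$ agrees with a linear map $A_j$. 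A projective fixed point $[\ell]$ with a representative in $C_j$ satisfies $A_j \ell = \lambda \ell$ for some $\lambda \ge 0$, so its multiplier lies in the real spectrum of $A_j$, a finite set. Varying over the finitely many cones, only finitely many multipliers arise.

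Next I would show that $\DR(\ell) = \lambda$ at every such fixed point. Fix $\ell$ with $\Iter_\phi(\ell) = \lambda\ell$, set $m = H_{[\phi]}(\ell)$, $w = D_1^{-1}(m) \in \Wgt(\Gamma_1)$, and $v(e) = \ell(e)/\alpha_2(e) \in \Wgt(\Gamma_2)$, so that $D_2(v) = \ell$ and the fixed-point condition becomes $N_{[\phi]}(w) = \lambda v$. Let $K = (\Gamma_2,\ell)$, let $f \co G_2 \to K$ be the identity on the underlying graph, and let $g \co G_1 \to K$ be a (possibly weak) harmonic representative of $[f \circ \phi]$ given by Theorem~\ref{thm:harmonic-min-weak}. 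By construction $\abs{g'(e)} = w(e)$, so $\Dir(g) = \sum_e w(e) m(e)$, while $\Dir(f) = \sum_e v(e) \ell(e)$. Let $d \co W \to G_1$ be the identity map from $W = (\Gamma_1,w)$; then $\EL(d) = \sum_e w(e)^2 \alpha_1(e) = \Dir(g)$, and the sequence $\shortseq{W}{d}{G_1}{g}{K}$ is tight by Theorem~\ref{thm:harmonic-min-weak}, giving $\ell(g \circ d)^2 = \EL(d)\cdot\Dir(g) = \Dir(g)^2$. On the other hand, tautness of $g$ from the tension-weighted graph yields $n_{g \circ d} = N_{[\phi]}(w) = \lambda v$, so by Lemma~\ref{lem:length-defs}
\[
\ell(g \circ d) = \sum_e \lambda v(e) \ell(e) = \lambda\,\Dir(f).
\]
Taking positive roots of $\lambda^2 \Dir(f)^2 = \Dir(g)^2$ gives $\Dir(g) = \lambda \Dir(f)$, that is, $\DR(\ell) = \lambda$.

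The main obstacle is the boundary case, where $\ell$ vanishes on some edges and $K$ is only a weak length graph. The weak harmonic theory (Theorem~\ref{thm:harmonic-min-weak} and Proposition~\ref{prop:weak-harmonic-vp}) supplies a taut weak harmonic representative $g$, and the equality $n_{g \circ d} = \lambda v$ holds on each edge of $\Gamma_2$ (trivially on null edges), so the calculation is uniform across interior and boundary fixed points. Combining the finiteness of the multiplier set with the identification $\DR(\ell) = \lambda$ then completes the proof.
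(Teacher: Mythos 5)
Your proof is correct, and it takes a genuinely different route from the paper's.

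The paper proves finiteness of the multiplier set by invoking Proposition~\ref{prop:fixed-boundary}: at any fixed point with multiplier~$\lambda$, one gets a $\lambda$-filling map on a pair of subgraphs $\Delta_i \subset \Gamma_i$, whence $\lambda = \Emb[\phi\vert_\Delta]$ by Proposition~\ref{prop:lambda-filling-emb}. Since the $\Delta_i$ range over a finite set of subgraph pairs, $\lambda$ takes only finitely many values. Your argument replaces this geometric identification with a direct structural observation: $\Iter_\phi$ is a continuous, positively homogeneous, piecewise-linear self-map of $\Len(\Gamma_2)$ with finitely many conical domains of linearity, so any projective fixed point has multiplier an eigenvalue of one of finitely many linear maps. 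This is more elementary and avoids invoking Proposition~\ref{prop:fixed-boundary} at all. What the paper's route buys, by contrast, is the meaningful interpretation $\lambda = \Emb[\phi\vert_\Delta]$, which is also what drives Algorithm~\ref{alg:Emb}: it tells you exactly which restricted problem to recurse on when the iteration stalls at a boundary fixed point. Your eigenvalue argument identifies the possible values more cheaply but gives no such interpretation.

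Both arguments need the identification $\DR(\ell) = \lambda$ at fixed points, which the paper's proof uses tacitly. Your second paragraph supplies a clean direct verification (via the tight sequence $\shortseq{W}{d}{G_1}{g}{K}$, the identity $\EL(d) = \Dir(g)$, and the push-forward formula $n_{g\circ d} = N_{[\phi]}(w) = \lambda v$), and you correctly note that the weak-harmonic machinery of Theorem~\ref{thm:harmonic-min-weak} and Proposition~\ref{prop:weak-harmonic-vp} makes the computation uniform across interior and boundary fixed points. One small point worth flagging: when $\ell$ lies on the boundary, $N_{[\phi]}(w)$ is computed on $\Gamma_2$ via the vertex-precise taut lift $\wt g$ rather than on the collapsed graph $K^*$, but since null edges contribute nothing to $\ell(g\circ d)$ or $\Dir(f)$, the identity $\ell(g\circ d) = \lambda\,\Dir(f)$ survives unchanged, exactly as you say.
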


\begin{proof}
  Let $\ell \in \Len(G_2)$ be a projective fixed point for
  $\Iter_\phi$ with multiplier~$\lambda$.
  Proposition~\ref{prop:fixed-boundary} gives a
  $\lambda$-filling map on subgraphs
  $\phi\vert_\Delta \co \Delta_1 \to \Delta_2$.
  By Proposition~\ref{prop:lambda-filling-emb},
  $\lambda=\Emb[\phi \vert_\Delta]$ and thus depends only
  on the subgraphs~$\Delta_i$. Since there are only finitely many
  subgraphs, we are done.
\end{proof}

\begin{proposition}
  For $\phi \co G_1 \to G_2$ a homotopy class of maps between marked
  elastic graphs, there is an open subset of $P\Len(G_2)$ on which
  $P\Iter_\phi$ converges to a maximum of~$\DR$.
\end{proposition}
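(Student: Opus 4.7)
The plan is to leverage the discreteness of critical values of $\DR$ together with monotonicity along orbits. Let $\lambda_{\max} = \sup_{[\ell]} \DR(\ell)$, which is achieved since $\DR$ is continuous on the compact space $P\Len(G_2)$ by Proposition~\ref{prop:dirichlet-continuous}. By Lemma~\ref{lem:DR-ER-increase}, any maximizer is a projective fixed point of $P\Iter_\phi$, so $\lambda_{\max}$ lies in the (finite) set of critical values
\[
\mathcal{C} \coloneqq \{\, \DR(\ell) \mid [\ell] \text{ is a fixed point of } P\Iter_\phi \,\}
\]
from Lemma~\ref{lem:iter-crit-discrete}.

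Let $\lambda_{\text{sub}}$ be the largest element of $\mathcal{C}$ strictly less than $\lambda_{\max}$, or $-\infty$ if no such element exists. Define
\[
U \coloneqq \{\, [\ell] \in P\Len(G_2) \mid \DR(\ell) > \lambda_{\text{sub}} \,\}.
\]
Since $\DR$ is continuous, $U$ is open; since $U$ contains every global maximizer of $\DR$, it is non-empty.

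The claim is that $P\Iter_\phi$ carries $U$ into the maximizing set. Given $[\ell_0] \in U$, form the orbit $[\ell_i] = P\Iter_\phi^i([\ell_0])$. By Lemma~\ref{lem:DR-ER-increase}, $\DR(\ell_i)$ is weakly increasing in $i$, so $\DR(\ell_i) \ge \DR(\ell_0) > \lambda_{\text{sub}}$ for all~$i$. By Proposition~\ref{prop:iter-converge-fixed}, $[\ell_i]$ converges to some fixed point $[\ell_\infty]$; by continuity of $\DR$ we have $\DR(\ell_\infty) \ge \DR(\ell_0) > \lambda_{\text{sub}}$. Since $[\ell_\infty]$ is a fixed point its $\DR$-value lies in $\mathcal{C}$, and the only element of $\mathcal{C}$ exceeding $\lambda_{\text{sub}}$ is $\lambda_{\max}$. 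Hence $\DR(\ell_\infty) = \lambda_{\max}$, so $[\ell_\infty]$ is a maximum of $\DR$, proving convergence to a maximum on the open set~$U$.

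The only subtlety is ensuring that orbits do not escape to the boundary of $P\Len(G_2)$ in a way that prevents us from applying Proposition~\ref{prop:iter-converge-fixed}: this is already handled there (the proof uses compactness of $P\Len(G_2)$ together with Lemma~\ref{lem:DR-ER-increase}), and boundary fixed points are permitted among the fixed points considered in $\mathcal{C}$.
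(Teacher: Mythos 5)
Your argument matches the paper's proof: both use Lemma~\ref{lem:iter-crit-discrete} to isolate the top critical value, Lemma~\ref{lem:DR-ER-increase} to keep orbits in the superlevel set, and Proposition~\ref{prop:iter-converge-fixed} to guarantee convergence to a fixed point, which must then be a global maximum. Your write-up just makes the choice of the open set $U$ and the bookkeeping more explicit.
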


\begin{proof}
  Let $\lambda$ be the maximum value of $\DR$ on $P\Len(G_2)$. By
  Lemma~\ref{lem:iter-crit-discrete}, there is an $\epsilon > 0$ so
  that there are no fixed points of $P\Iter_\phi$ in
  $\DR^{-1}(\lambda-\epsilon,\lambda)$. Then by
  Proposition~\ref{prop:iter-converge-fixed}, $P\Iter_\phi$ converges
  to a maximum of $\DR$ on $\DR^{-1}(\lambda-\epsilon,\lambda]$.
\end{proof}

\begin{question}\label{quest:converge}
  Does Algorithm~\ref{alg:iter} converge to a maximum of~$\DR$
  for an open dense set of initial data?
\end{question}

A few words are in order on why
Question~\ref{quest:converge} is not as easy as it may appear. If
$[\ell_1]$ is a fixed
point of $P\Iter_\phi$ with $\DR(\ell_1) < \Emb[\phi]$, then by
Proposition~\ref{prop:DR-boundary-max}
it is not a local maximum of~$\DR$. If $\Iter_\phi$ were
linear, that would imply the set attracted to~$[\ell_1]$
has empty interior. Since $\Iter_\phi$ is only PL, the
situation is more complicated. For instance, $\Iter_\phi$ can map an
open subset of $\Len^+(G_2)$ to a subset
$S \subset \bdy\Len(G_2)$,
since harmonic maps can generically map vertices to vertices.
Then $S$ could potentially be attracted to~$[\ell_1]$.

We can nevertheless improve Algorithm~\ref{alg:iter} to always find
$\Emb[\phi]$.

\begin{algorithm}\label{alg:Emb}
  Given an essentially surjective homotopy class
  $[\phi] \co G_1 \to G_2$ of maps between marked elastic graphs, to
  find $\Emb[\phi]$, pick arbitrary non-zero initial lengths
  $\ell_0 \in \Len(G_2)$ and iterate Algorithm~\ref{alg:iter} to get a
  sequence of lengths $\ell_i$. Since $\Iter_\phi$ is
  piecewise-linear, each set of lengths $\ell_i$ is in a closed cone
  of linearity $R_i \subset \Len(G_2)$. Since
  there are only finitely many domains of linearity, there must
  be some $i$ and~$k$ so that $R_{i+k} = R_i$.  By standard linear
  algebra techniques, we can see if $(\Iter_\phi)^k$ has a projective
  fixed point in $R_i$. By Proposition~\ref{prop:iter-converge-fixed},
  the $\ell_i$ converge to a projective fixed point, so eventually the
  linear algebra check will succeed, giving projectively fixed
  lengths~$\ell_\infty$ with multiplier $\lambda_\infty$.

  If $\ell_\infty$ is non-zero on every edge, we are done by
  Proposition~\ref{prop:fixed-lambda}. Otherwise, apply
  Proposition~\ref{prop:fixed-boundary} to extract a map
  \[
  \psi\vert_{\Sigma} \co (\Sigma_1, P_1, \alpha_1) \to
   (\Sigma_2, P_2, \alpha_2)
  \]
  between simpler graphs, with its own embedding energy
  $\lambda_\Sigma$ (which we can find by induction). If
  $\lambda_\Sigma < \lambda_\infty$, we have found a partially
  $\lambda_\infty$-filling map and are done. Otherwise, by
  Proposition~\ref{prop:DR-boundary-max}, we can find nearby lengths
  $\ell_0'$ with $\DR(\ell_0') > \DR(\ell_\infty) \ge \DR(\ell_0)$. In
  this case, repeat the algorithm, with $\ell_0'$ in place
  of~$\ell_0$. By Lemma~\ref{lem:iter-crit-discrete}, eventually we
  will find the true maximum value of $\DR(\ell)$, and thus
  the true value of $\Emb[\phi]$.
\end{algorithm}

In practice, Algorithm~\ref{alg:Emb} appears to run quickly, at least
for small examples. The additional steps to continue past a
projective fixed point have been unnecessary. Theoretically, there is
no reason to expect it to always perform well. In the closely
related case of pseudo-Anosov maps, Bell and Schleimer have
given examples where the analogous algorithm is
slow~\cite{BS17:NSDynamics}.


\appendix
\section{General graph energies}
\label{sec:energies}

As suggested by the notation in Definition~\ref{def:12inf-graphs}, the
energies of this paper fit into a more general framework. We start with a
notion of
$p$-conformal graphs, simultaneously generalizing weighted graphs,
elastic graphs,
and length graphs. There are several different perspectives. A
$p$-conformal graph can be viewed as
\begin{itemize}
\item a graph with a $p$-length $\alpha(e)$ on
  each edge;
\item an equivalence class of strip graphs $(\Gamma, w, \ell)$ under a
  rescaling operation; or
\item an equivalence class of spaces~$X$ with a length metric~$\ell$
  and measure $\mu$ under another rescaling operation.
\end{itemize}
We start with the metric
view, since it is most standard, although the formulas may
appear unmotivated.

\begin{definition}\label{def:p-graph}
  For $p \in (1,\infty]$, a \emph{$p$-conformal graph}
  $G^p = (\Gamma, \alpha)$ is a graph with a positive
  \emph{$p$-length} $\alpha(e)$ on each edge~$e$, which gives a
  metric. For $p=1$, a \emph{$1$-conformal graph} is a
  weighted graph.
\end{definition}

The weights associated to edges in a $1$-conformal graph behave
differently than the $p$-lengths for $p > 1$; for instance,
$p$-lengths add for two edges joined in series, while weights do not.
As a result, the formulas below have $p=1$ as a special case. (The
$p=1$ case where all the weights are equal to $1$ is a straightforward
limit of the $p > 1$ case; the weights $\alpha(e)$ become
irrelevant in the limit $p \to 1$.)

\begin{definition}\label{def:Epq-1}
  For $1 < p \le \infty$ and
  $f \co G^p \to K$ a PL map from a $p$-conformal graph to a
  length graph, define
  \begin{equation}\label{eq:Ep-1}
    E^p(f) \coloneqq
    \begin{cases}
      \norm{f'}_{p,G} & p > 1\\
      \int_G w(x) \abs{f'(x)}\,dx & p = 1.
    \end{cases}
  \end{equation}
  Here, we take the $L^p$ norm of $\abs{f'}$ by integration with respect
  to~$\alpha$, and use an additional
  subscript to
  make clear where the norm is being evaluated. The metrics on $G$
  and~$K$ are used to evaluate the derivative $\abs{f'}$, and the
  metric on~$G$ is also used in the integration in taking the $L^p$
  norm. Observe that the metric on~$G$ is irrelevant for $p=1$.

  If $f$ is
  constant-derivative and $1 < p < \infty$, then this becomes
  \begin{equation*}
    E^p(f) = \left(\sum_{e \in \Edges(G)} \frac{\ell(f(e))^p}{\alpha(e)^{p-1}}\right)^{1/p}.
  \end{equation*}
  (Compare Equation~\eqref{eq:dir-1}.)
  For $f \co W \to G^p$ a PL map from a weighted graph to a
  $p$-conformal graph, define
  \begin{equation}
    E_p(f) \coloneqq
    \begin{cases}
      \norm{n_f}_{\pdual, G} & p > 1\\
      \norm{n_f/w}_{\infty,G} & p = 1
    \end{cases}
  \end{equation}
  where $\pdual = p/(p-1)$ is the Hölder conjugate of~$p$. If $n_f$ is
  constant on each edge (as for taut maps, c.f.\
  Theorem~\ref{thm:maxflow-mincut}) and $1 < p < \infty$, then
  \[
    E_p(f) = \left( \sum_{e \in \Edge(G)} \alpha(e) n_f(e)^{\pdual} \right)^{1/\pdual}.
  \]
  (Compare Equation~\eqref{eq:el-curve}.)
  In general, for $f \co G^p \to H^q$ a PL map from a $p$-conformal graph to a
  $q$-conformal graph with $1 \le p \le q \le \infty$ with $1 < q$ and
  $p < \infty$, define
  \begin{align}
    \nonumber\Fill^p(f)  &\co H^q \to \RR_{\ge 0}\\
    \Fill^p(f)(y) &\coloneqq
                    \begin{cases}
                      \sum_{x \in f^{-1}(y)}\abs{f'(x)}^{p-1} & p > 1\\
                      \sum_{x \in f^{-1}(y)}w(x) & p = 1\\
                    \end{cases}\label{eq:Fillp}\\
    E^p_q(f) &\coloneqq \bigl(\norm{\Fill^p(f)}_{q/(q-p),H}\bigr)^{1/p}\label{eq:Epq}.\\
    \intertext{If $p<q$, this is}
    E^p_q(f) &= \left(\int_H \Fill^p(f)(y)^{\frac{1}{1-p/q}}\,d\alpha(y)\right)
                          ^{1/p-1/q}.
  \end{align}
  Energies of homotopy classes are defined as an infimum as usual:
  \begin{align*}
    E_p[f] &\coloneqq \inf_{g \in [f]} E_p(g)&
    E^p_q[f] &\coloneqq \inf_{g \in [f]}E^p_q(g)&
    E^p[f] &\coloneqq \inf_{g \in [f]} E^p(g).
  \end{align*}
\end{definition}

\begin{remark}
  As with the earlier energies, in each case $E^p_q(f)$ naturally
  extends to a wider class of graph maps than PL maps, with the
  regularity required depending on $p$ and~$q$.
\end{remark}

\begin{proposition}\label{prop:energy-defs-1}
  For a PL map~$f$, we have $E_p(f) = E^1_p(f)$ for $1 < p \le \infty$
  and $E^p(f) = E^p_\infty(f)$ for $1 \le p < \infty$.
\end{proposition}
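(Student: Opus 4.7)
The plan is to verify both identities by unwinding the general definitions in Definition~\ref{def:Epq-1} and applying the area formula for PL maps between graphs. Both identities amount to rewriting a pushforward-type integral back in domain coordinates, once one is careful about which measure lives on which $p$-conformal graph.

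For the second identity $E^p(f) = E^p_\infty(f)$ (valid when $p<\infty$ and the target is a length graph, so $q=\infty$), I would expand
\[
E^p_\infty(f)^p = \|\Fill^p(f)\|_{1,K} = \int_K \sum_{x \in f^{-1}(y)} |f'(x)|^{p-1}\,dy,
\]
where $dy$ is the length measure on~$K$. On each open segment where $f$ is linear with non-zero derivative, the change-of-variables relation $dy = |f'(x)|\,d\alpha(x)$ turns the right-hand side into $\int_G |f'|^{p-1}\cdot|f'|\,d\alpha = \|f'\|_{p,G}^p = E^p(f)^p$. Segments on which $f$ is constant or non-linear contribute zero to both sides, so taking $p$-th roots finishes this case.

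For $E_p(f) = E^1_p(f)$ with $f\co W\to G^p$, both sides are $L^{\pdual}$-norms of nonnegative functions on~$G$, so the job reduces to identifying $\Fill^1(f) = n_f$ almost everywhere. I expect this to be the main place where care is needed: the general formula $\Fill^p(f)(y) = \sum_{x \in f^{-1}(y)} |f'(x)|^{p-1}$ has to be reconciled with the convention that when the domain is $1$-conformal, its natural measure is the weight $w$, so the preimage sum is weighted accordingly and yields $\sum_x w(x) = n_f(y)$. I would justify this by noting that the pushforward under~$f$ of the $\alpha$-measure on~$W$ (which equals $dw$) has density $n_f$ against the $\alpha$-measure on~$G$; this is another instance of the change-of-variables formula applied to~$f$.

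The only real obstacle is bookkeeping of the measures on the various $p$-conformal graphs; once the convention that the measure on $(\Gamma,\alpha)$ gives edge~$e$ total mass $\alpha(e)$ is fixed, and $|f'|$ is read off from the corresponding $\alpha$-parameterization, both identities are one line of area formula.
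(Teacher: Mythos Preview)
Your proposal is correct and follows the same approach as the paper, which only records ``Change of variables'' as a proof sketch. You have been more explicit than the paper about the $p=1$ case: the literal formula $\Fill^1(f)(y)=\sum_{x\in f^{-1}(y)}|f'(x)|^{0}$ reads as an unweighted count, and your observation that the domain measure on a $1$-conformal graph is $w\,d\ell$ (so that the pushforward density is $n_f$) is exactly the right way to reconcile Definition~\ref{def:Epq-1} with Definition~\ref{def:Epq-3} and make the identity $E_p=E^1_p$ hold for general weights.
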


\begin{proof}
  True by definition for $E^1_p$, and change of variables for $E^p_\infty$.
\end{proof}

In light of Proposition~\ref{prop:energy-defs-1}, we also define
\begin{align*}
E^\infty_\infty(f) &\coloneqq E^\infty(f)= \Lip(f)\\
E^1_1(f) &\coloneqq E_1(f)= \WR(f).
\end{align*}

\begin{proposition}\label{prop:Epq-local-sup}
  For $1 \le p \le q \le \infty$ and $\phi \co G^p \to
  H^q$ a PL map,
  \[
  E^p_q(\phi) = \sup_{f \co H \to K} \frac{E^p(f \circ \phi)}{E^q(f)}
  \]
  where the supremum runs over all metric graphs~$K$ and PL maps
  $f \co H^q \to K$ with non-zero energy. If $p < q < \infty$,
  equality holds exactly when $\abs{f'}$ is proportional to $\bigl(\Fill^p(\phi)\bigr)^{1/(q-p)}$.
\end{proposition}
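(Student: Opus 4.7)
The plan is to establish the inequality $E^p(f \circ \phi) \le E^p_q(\phi) \cdot E^q(f)$ via a change of variables followed by Hölder, and then construct an explicit $f$ realizing equality. First, focusing on the generic case $1 < p < q < \infty$, I would perform a change of variables from $G$ to $H$: since near any regular $x \in \phi^{-1}(y)$ we have $d\alpha_G(x) = d\alpha_H(y)/\abs{\phi'(x)}$, the integral $E^p(f\circ\phi)^p = \int_G \abs{f'(\phi(x))}^p\abs{\phi'(x)}^p\, d\alpha_G$ transforms to
\[
E^p(f \circ \phi)^p = \int_H \abs{f'(y)}^p \Fill^p(\phi)(y)\, d\alpha_H(y),
\]
by collecting the $\abs{\phi'(x)}^{p-1}$ factors at preimages of $y$ into the definition of $\Fill^p(\phi)$ in Equation~\eqref{eq:Fillp}. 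Applying Hölder with conjugate exponents $q/p$ and $q/(q-p)$ then gives
\[
E^p(f\circ\phi)^p \;\le\; \bigl\lVert\abs{f'}^p\bigr\rVert_{q/p}\,\bigl\lVert\Fill^p(\phi)\bigr\rVert_{q/(q-p)} \;=\; E^q(f)^p\, E^p_q(\phi)^p,
\]
which is the desired sub-multiplicativity, and equality in Hölder forces $\abs{f'}^q$ proportional to $\Fill^p(\phi)^{q/(q-p)}$, i.e. $\abs{f'} \propto \Fill^p(\phi)^{1/(q-p)}$.

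For the reverse inequality I would construct a target that saturates Hölder. Since $\phi$ is PL, $\Fill^p(\phi)$ is piecewise constant on the set of regular values, so I can subdivide the edges of $H$ finitely many times to obtain a graph on which $\Fill^p(\phi)$ takes a constant value on each edge. Let $K$ be this subdivision equipped with the metric $\ell_K(e) \coloneqq \alpha_H(e)\cdot \Fill^p(\phi)(e)^{1/(q-p)}$, and let $f \co H \to K$ be the identity on the underlying space. Then $\abs{f'(e)} = \Fill^p(\phi)(e)^{1/(q-p)}$ on each edge, so the Hölder inequality is saturated and $E^p(f\circ\phi)/E^q(f) = E^p_q(\phi)$. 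This simultaneously gives the reverse bound and identifies the equality characterization.

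The main obstacle is handling the three boundary cases $p=1$, $p=q$, and $q=\infty$, each of which must be verified separately because the Hölder step degenerates. When $p = q$ the exponent $q/(q-p)$ becomes $\infty$ and the Hölder inequality collapses to $\int \abs{f'}^p \Fill^p(\phi)\, d\alpha_H \le \esssup \Fill^p(\phi) \cdot \int \abs{f'}^p\, d\alpha_H$, with the optimizer being any $f$ whose derivative is supported on edges where $\Fill^p(\phi)$ attains its essential supremum. When $q = \infty$ we have $E^\infty(f) = \Lip(f)$; the optimizer is a constant-derivative map, and the formula $E^p_q(\phi) = \norm{\Fill^p(\phi)}_{q/(q-p),H}^{1/p}$ reduces to a plain $L^p$ norm in agreement with Proposition~\ref{prop:energy-defs-1}. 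When $p = 1$, $\Fill^1(\phi)(y)$ is just the multiplicity $n_\phi(y)$ and the statement reduces to sub-multiplicativity already established in Proposition~\ref{prop:sub-mult}, together with the tightness results proved in Sections~\ref{sec:taut}--\ref{sec:harmonic}. A secondary subtlety is that on the subgraph of $H$ where $\Fill^p(\phi)$ vanishes, the constructed $f$ has zero derivative; one resolves this either by allowing $K$ to have collapsed edges (replacing $K$ by a genuine length graph after collapse) or by restricting to the essential image of $\phi$, both of which are harmless since the supremum in the statement ranges over all metric graphs~$K$.
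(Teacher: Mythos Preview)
Your proof is correct and follows essentially the same approach as the paper: reduce to $K$ having the same underlying graph as~$H$, perform the change of variables from $G$ to~$H$ to express $E^p(f\circ\phi)^p$ as $\int_H \abs{f'}^p\,\Fill^p(\phi)\,d\alpha_H$, and apply H\"older with exponents $q/p$ and $q/(q-p)$. Your explicit construction of the optimizer~$f$ and your treatment of the boundary cases are more detailed than the paper's (which dispatches $q=\infty$ as ``easy'' and leaves the optimizer implicit in the H\"older equality condition), but the substance is the same. One small remark: the proposition concerns concrete maps rather than homotopy classes, so for $p=1$ you do not actually need the tautness machinery of Sections~\ref{sec:taut}--\ref{sec:harmonic}; the same H\"older argument with the explicit optimizer already suffices.
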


Observe that Proposition~\ref{prop:Epq-local-sup} is about energies of
concrete maps, not about homotopy classes of maps.

\begin{proof} We omit the cases when $p=1$ and $q=\infty$. Otherwise, the
  energies $E^q(f)$ and $E^p(f \circ \phi)$ depend only on $\abs{f'}$, so
  we may as well assume that $K$ has the same underlying graph as
  $H$ (with varying metric) and $f$ is the identity as a graph map.
  We are thus picking $\abs{f'}$ as a piecewise\hyp constant function
  on~$H$.  We have
  \begin{align*}
    E^p(f \circ \phi)
      &= \biggl(\int_G \bigl\lvert f'(\phi(x))\bigr\rvert^p\, \abs{\phi'(x)}^p\,dx\biggr)^{1/p}\\
      &= \biggl(\int_H \bigl\lvert f'(y)\bigr\rvert^p\, \bigl(\Fill^p(\phi)(y)\bigr)\,dy\biggr)^{1/p}\\
      &\le \norm{f'}_q\,\bigl(\lVert\Fill^p(\phi)\rVert_{q/(q-p)}\bigr)^{1/p}\\
      &= E^q(f) E^p_q(\phi),
  \end{align*}
  using Hölder's inequality in the form
  $\norm{ab}_1 \le \norm{a}_{q/p}\norm{b}_{q/(q-p)}$. The equality
  statement follows from the equality condition in Hölder's inequality.
\end{proof}

\begin{remark}
  It is also true that
  \[
  E^p_q(\phi) = \sup_{c \co W \to G} \frac{E_q(\phi \circ c)}{E_p(c)}
  \]
  where the supremum runs over all maps $c \co W \to G$ from a
  weighted graph to~$G$ with
  non-zero energy (not necessarily taut). Here, equality holds when $n_c$ is
  proportional to
  $\abs{\phi'}^{p-1} \bigl(\Fill^p(\phi)\bigr)^{(p-1)/(q-p)}$.
\end{remark}

\begin{proposition}\label{prop:energy-submult}
  Given $1 \le p \le q \le r \le \infty$ and a sequence
  $\shortseq{G^p_1}{f}{G^q_2}{g}{G^r_2}$ of maps between a marked
  $p$-conformal graph $G_1$, a marked $q$-conformal graph $G_2$, and a
  marked $r$-conformal graph $G_3$,
  \begin{align*}
    E^p_r(g \circ f) &\le E^p_q(f)E^q_r(g)\\
    E^p_r[g \circ f] &\le E^p_q[f]E^q_r[g].
  \end{align*}
\end{proposition}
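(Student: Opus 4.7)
The plan is to deduce the inequality from the variational characterization given by Proposition~\ref{prop:Epq-local-sup}, which identifies $E^p_q(\phi)$ with the supremum of $E^p(h \circ \phi)/E^q(h)$ over PL test maps $h$ from $H^q$ to a length graph, and so in particular gives the pointwise bound $E^p(h \circ \phi) \le E^p_q(\phi)\,E^q(h)$ for every such~$h$.

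First I would fix any PL test map $h \co G_3 \to K$ to a length graph with $E^r(h) > 0$. Applying the pointwise bound to~$g$ with test map~$h$ yields $E^q(h \circ g) \le E^q_r(g)\,E^r(h)$. Since $h \circ g$ is itself a PL map from $G_2^q$ to the length graph~$K$, applying the pointwise bound to~$f$ with test map~$h \circ g$ yields $E^p((h \circ g) \circ f) \le E^p_q(f)\,E^q(h \circ g)$. Chaining these gives
\[
E^p(h \circ (g \circ f)) \le E^p_q(f)\,E^q_r(g)\,E^r(h).
\]
Taking the supremum over~$h$ and invoking the variational characterization one more time for $g \circ f$ produces the concrete bound $E^p_r(g \circ f) \le E^p_q(f)\,E^q_r(g)$. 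The homotopy-class version then follows from the same $\epsilon$-approximation argument as at the end of Proposition~\ref{prop:sub-mult}: choose $\tilde f \in [f]$ and $\tilde g \in [g]$ with $E^p_q(\tilde f) \le (1 + \epsilon) E^p_q[f]$ and $E^q_r(\tilde g) \le (1 + \epsilon) E^q_r[g]$, apply the concrete bound to $\tilde g \circ \tilde f \in [g \circ f]$, and send $\epsilon \to 0$.

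The main obstacle is that Proposition~\ref{prop:Epq-local-sup}, although stated for the full range $1 \le p \le q \le \infty$, is really written out in the text only when $p < q$ and both are finite; some care is needed at the boundary cases $p = q$, $p = 1$, and $r = \infty$, where the exponents $q/(q-p)$ and $r/(r-q)$ in Definition~\ref{def:Epq-1} become infinite and $E^p$ or $E^q$ reverts to its weighted-graph, length-graph, or Lipschitz definition. In each such case the variational identity reduces either to a standard $L^1$--$L^\infty$ Hölder statement or, for $r=\infty$, to Theorem~\ref{thm:weak-stretch}; once those specializations are verified, the chaining above is purely formal, and the trivial cases where $E^p_q(f)$, $E^q_r(g)$, or $E^r(h)$ vanishes cause no difficulty because the target of the composition is then forced to be essentially constant. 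An independent direct proof is also available: starting from the iterated filling-function identity
\[
\Fill^p(g \circ f)(z) = \sum_{y \in g^{-1}(z)} \abs{g'(y)}^{p-1}\,\Fill^p(f)(y),
\]
one applies Hölder twice — first inside the sum over~$y$ so that the $\abs{g'}$ factor reconstitutes $\Fill^q(g)(z)$, and then across the $L^{r/(r-p)}$ integral over~$z$ — to reach the same bound, bypassing Proposition~\ref{prop:Epq-local-sup} at the cost of considerable bookkeeping with the conjugate exponents.
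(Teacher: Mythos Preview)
Your proposal is correct and follows essentially the same route as the paper: the paper's proof sketch says the first inequality is ``an immediate consequence of Proposition~\ref{prop:Epq-local-sup}'' and the second ``follows as in the proof of Proposition~\ref{prop:sub-mult},'' which is exactly your chaining of the pointwise bound $E^p(h\circ\phi)\le E^p_q(\phi)E^q(h)$ followed by the $\epsilon$-approximation argument. Your concerns about the boundary cases are slightly overstated---the paper's proof of Proposition~\ref{prop:Epq-local-sup} already dispatches $q=\infty$ in its first sentence and the H\"older step handles $p=q$ under the usual convention $q/(q-p)=\infty$---but this does not affect the correctness of your argument.
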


\begin{proof}[Proof sketch]
  The first equation is an
  immediate consequence of Proposition~\ref{prop:Epq-local-sup}. The
  second equation follows as in the proof of
  Proposition~\ref{prop:sub-mult}.
\end{proof}

Taut maps automatically minimize $E_p$ within their homotopy class, as
in Proposition~\ref{prop:taut-minimal}. We can also minimize $E^p$
within a homotopy class.

\begin{proposition}\label{prop:p-harmonic}
  Let $[f] \co G^p \to K$ be a homotopy class of maps from a marked
  $p$-conformal graph to a marked length graph. Then there is a
  constant-derivative PL map $g \in [f]$ so that $E^p(g) = E^p[f]$.
\end{proposition}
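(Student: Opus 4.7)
The plan is to parallel the existence proofs for the special cases $p = 2$ (Theorem~\ref{thm:harmonic-min}) and $p = \infty$ (Theorem~\ref{thm:lipschitz-stretch}): reduce the problem to a finite-dimensional compact polyhedral optimization and invoke continuity. The $\alpha$-structure (or the weight~$w$ in the case $p=1$) equips each edge of $G^p$ with a natural metric, so Definition~\ref{def:const-deriv} and the polyhedral complex $\PL^*[f]$ extend verbatim to our setting.

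First I would show that every PL map $f \in [f]$ admits a constant-derivative representative $f^* \in [f]$ with $E^p(f^*) \le E^p(f)$. Fix an edge~$e$ of~$G$, parametrize it by $\alpha$-arc-length, and let $L = \ell(f(e))$, so that $\int_e \abs{f'}\,d\alpha = L$. For $1 < p < \infty$, Jensen's inequality applied to the convex function $t \mapsto t^p$ yields
\[
\int_e \abs{f'}^p\,d\alpha \;\ge\; \alpha(e)\bigl(L/\alpha(e)\bigr)^p \;=\; L^p/\alpha(e)^{p-1},
\]
with equality iff $\abs{f'}$ is constant on~$e$. For $p = \infty$ the bound $\esssup_e \abs{f'} \ge L/\alpha(e)$ is trivial and is saturated by the linear parametrization, and for $p = 1$ the energy $E^1(f) = \ell(f)$ depends only on the edge-path and so is insensitive to reparametrization. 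In every case, reparametrizing each edge to have constant derivative without moving the images of vertices produces some $f^* \in [f]$ with $E^p(f^*) \le E^p(f)$, so $E^p[f] = \inf_{g \in \PL^*[f]} E^p(g)$.

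Finally I would carry out the compactness step. Pick any $g_0 \in \PL^*[f]$ and set $D = E^p(g_0)$. For any $g \in \PL^*[f]_{\le D}$ and any edge~$e$ of~$G$, the inequality $\alpha(e)\abs{g'(e)}^p \le D^p$ (or $\abs{g'(e)} \le D$ directly, when $p = \infty$) gives a uniform bound on $\abs{g'(e)}$, which bounds the number of edges of~$K$ that can be traversed by $g(e)$. Hence only finitely many combinatorial types appear in $\PL^*[f]_{\le D}$, and the argument of Lemma~\ref{lem:deriv-bound} shows that $\PL^*[f]_{\le D}$ is a compact polyhedral complex. On each cell, $E^p(g)$ is an explicit continuous function of the positions of vertices (the $p$-th root of a sum of $p$-th powers of piecewise-linear functions for $p < \infty$, a piecewise-linear maximum for $p = \infty$), so $E^p$ attains its minimum on $\PL^*[f]_{\le D}$, yielding the desired~$g$. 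I do not expect any substantive obstacle: the real content is the uniform Jensen reduction to $\PL^*[f]$ across all $p \in [1,\infty]$, after which the compactness argument from the $p = 2$ case transfers unchanged.
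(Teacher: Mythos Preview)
Your proof is correct and follows essentially the same route as the paper's own (brief) proof sketch: handle $p=\infty$ via Theorem~\ref{thm:lipschitz-stretch}, observe for $p<\infty$ that convexity forces minimizers to be constant-derivative, and then invoke compactness of $\PL^*[f]_{\le D}$ as in Theorem~\ref{thm:harmonic-min}. You have simply fleshed out the ``form of $E^p$'' step with the explicit Jensen inequality, which is exactly what the paper's sketch is gesturing at. One small notational point: in the paper $\PL^*[f]_{\le D}$ is defined via a Lipschitz bound, whereas you are using it for the $E^p$-sublevel set; your argument in fact shows the latter sits inside the former (for a suitable~$D'$), so nothing is lost.
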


\begin{proof}[Proof sketch]
  For $p = \infty$, this is Theorem~\ref{thm:lipschitz-stretch}. For
  $p < \infty$, the form of $E^p$ guarantees that an energy minimizer
  will be constant-derivative. As in Theorem~\ref{thm:harmonic-min},
  this reduces the space of
  possibilities to minimizing over a compact space for each of
  finitely many combinatorial types.
\end{proof}

A map that minimizes $E^p(f)$ within $[f]$ is called a
\emph{$p$-harmonic map}. Theorem~\ref{thm:harmonic-min-weak} may also be
extended to give a local characterization of $p$-harmonic maps,
including cases where the target is weak. Specifically, $[f] \co G^p \to K$ is
$p$-harmonic iff the map $W_f \to K$
is taut, where $W_f$ is $G$ with $p$-tension weights $w(x) =
\abs{f'(x)}^{p-1}$.

There are also versions of stretch factors: for
  $[\phi] \co G^p \to H^q$ a homotopy class of marked maps, define
  \begin{align}
    \SFfrom{}^p_q[\phi] &\coloneqq \sup_{[f] \co H \to K} \frac{E^p[f \circ\phi]}{E^q[f]}
      \label{SFfrom}\\
    \SFto{}^p_q[\phi] &\coloneqq \sup_{[c] \co W \to G} \frac{E_q[\phi\circ c]}{E_p[c]},
      \label{SFto}
  \end{align}
  where in \eqref{SFfrom} we maximize over marked length graphs~$K$ and
  PL maps $f \co H^q \to K$ and in \eqref{SFto} we maximize over all
  marked weighted graphs (or multi-curves) on~$G^p$.

\begin{theorem}\label{thm:energy-sf}
  For $1 \le p \le q \le \infty$ and $[\phi] \co G^p \to H^q$ a homotopy
  class of maps from a marked $p$-conformal graph to a marked
  $q$-conformal graph, there is a map $\psi \in [\phi]$, a marked weighted
  graph~$W$, a marked weak length graph~$K$, and a tight sequence of
  marked maps
  \[
  \longseq{W}{c}{G^p}{\psi}{H^q}{f}{K}.
  \]
  In particular,
  \begin{align*}
  E^p_q(\psi) = E^p_q[\phi]
    &= \frac{E^p(f \circ \psi)}{E^q(f)}
      = \SFfrom{}^p_q[\phi]
    = \frac{E_q(\psi \circ c)}{E_p(c)}
      = \SFto{}^p_q[\phi].
  \end{align*}
\end{theorem}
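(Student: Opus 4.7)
The plan is to mirror the structure of Section~\ref{sec:filling}, generalizing each key construction from the $p=q=2$ case. The linear duality $D_G$ used there is replaced by the Hölder duality
\[
D^p_G \co \Wgt(\Gamma) \to \Len(\Gamma), \qquad D^p_G(w)(e) = \alpha(e)\, w(e)^{1/(p-1)},
\]
interpreted appropriately at $p=1$ and $p=\infty$. Harmonic maps are replaced by the $p$-harmonic maps guaranteed by Proposition~\ref{prop:p-harmonic}, and the Cauchy--Schwarz equality used in Lemma~\ref{lem:DR-ER-increase} is replaced by the equality case of Hölder's inequality, exactly as in the proof of Proposition~\ref{prop:Epq-local-sup}.

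First I would generalize the notion of strip graph and $\lambda$-filling map. A \emph{$(p,q)$-strip} is a balanced strip $(\Gamma,w,\ell)$ viewed as simultaneously carrying a $p$-conformal structure $\alpha^p(e) = \ell(e)/w(e)^{p-1}$ and a $q$-conformal structure $\alpha^q(e) = \ell(e)/w(e)^{q-1}$. A map $\phi \co S_1 \to S_2$ between such strips is \emph{$\lambda$-filling} if $\phi^W$ is taut, $\phi^K$ is a local isometry, and $\Fill^p(\phi)$ is essentially constant equal to $\lambda$ (compare Equation~\eqref{eq:Fillp}). Analogs of Lemmas~\ref{lem:filling-fill} and~\ref{lem:filling-tight} then show that a $\lambda$-filling map yields a tight sequence $W_1 \to G_1^p \to G_2^q \to K_2$ and that $E^p_q(\phi)=\lambda$, so any $\lambda$-filling representative of $[\phi]$ realizes $E^p_q[\phi]$.

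Next I would set up an iteration on $\Len(\Gamma_2)$ parallel to Algorithm~\ref{alg:iter}. Given current lengths $\ell_i$, let $f_i$ be the identity as a graph map from $G_2$ to $(\Gamma_2,\ell_i)$; find a $p$-harmonic representative $g_i \in [f_i \circ \phi]$; extract the $p$-tension weights $w_i(e) = \lvert g_i'(e)\rvert^{p-1}$; push forward to $v_i = N_{[\phi]}(w_i) \in \Wgt(\Gamma_2)$; and set $\ell_{i+1} = D^q_{G_2}(v_i)$. The scale-invariant objective
\[
\DR(\ell) \coloneqq \frac{E^p[f_\ell\circ\phi]}{E^q[f_\ell]}
\]
satisfies $\sup_\ell \DR(\ell) = E^p_q[\phi]$ by Proposition~\ref{prop:Epq-local-sup}. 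Monotonicity $\DR(\ell_{i+1}) \ge \DR(\ell_i)$, with equality only at a projective fixed point, follows by applying Proposition~\ref{prop:energy-submult} to the two tight-sequence rows built from $p$-harmonic and taut maps, just as in Lemma~\ref{lem:DR-ER-increase}; the key input is the Hölder equality condition, which forces $n_c$ proportional to $\lvert f'\rvert^{p-1}$ at a fixed point.

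Compactness of $P\Len(\Gamma_2)$ then yields a projective fixed point $[\ell_\infty]$, and the main obstacle lies in transferring the boundary analysis (Propositions~\ref{prop:fixed-boundary} and~\ref{prop:DR-boundary-max}) to general $p,q$. An interior fixed point immediately yields a $\lambda$-filling map in $[\phi]$, as in Proposition~\ref{prop:fixed-lambda}. When $[\ell_\infty]$ lies on the boundary, one needs a weak $p$-harmonic theory extending Theorem~\ref{thm:harmonic-min-weak}: the map decomposes along complementary subgraphs $\Delta_i, \Sigma_i \subset \Gamma_i$ with $\psi|_\Delta$ $\lambda$-filling and $\psi|_\Sigma$ a lower-complexity instance of the same problem. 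Strict convexity of $E^p(\cdot)^p$ for $1 < p < \infty$ supplies the perturbation argument for the analog of Proposition~\ref{prop:DR-boundary-max}, while the extreme cases $p=1$ and $q=\infty$ reduce to Theorems~\ref{thm:maxflow-mincut} and~\ref{thm:weak-stretch}, respectively, which are already established. Induction on the combinatorial complexity of $\Gamma_1$ then produces a partially $\lambda$-filling representative of $[\phi]$, from which the tight five-term sequence is assembled as in the proof of Proposition~\ref{prop:emb-sf-detail}.
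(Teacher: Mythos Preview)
Your proposal is correct and follows essentially the same route as the paper's sketch: reduce the endpoints $p=1$ and $q=\infty$ to Sections~\ref{sec:taut} and~\ref{sec:harmonic}, replace $D_G$ by the H\"older duality $D^p_G$, replace harmonic maps by $p$-harmonic maps, and run the iteration/boundary-analysis machinery of Section~\ref{sec:filling}. Two minor points of comparison are worth noting. First, in the step $\ell_{i+1}=D^q_{G_2}(v_i)$ you have the correct duality exponent on the target; the paper's displayed ``$D^p_2$'' is a slip, as is confirmed by its own scaling computation $\Iter_\phi(\lambda\ell)=\lambda^{(p-1)/(q-1)}\Iter_\phi(\ell)$. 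Second, the paper exploits that scaling to split the argument: when $p<q$ the iteration is contractive along rays (degree $(p-1)/(q-1)<1$), so any projectively invariant ray carries a genuine fixed point in $\Len(\Gamma_2)$, and one can argue there are no attracting boundary fixed points, yielding an interior fixed point and a $1$-filling map directly---no partial-filling layers or induction required. You instead treat $p<q$ uniformly with $p=q$, carrying through the full boundary decomposition and induction of Propositions~\ref{prop:fixed-boundary} and~\ref{prop:DR-boundary-max}. That works, but it is more labor than necessary; the paper's observation buys a substantially shorter argument in the strict-inequality case.
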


Theorem~\ref{thm:energy-sf} is much harder than
Proposition~\ref{prop:Epq-local-sup}.

\begin{proof}[Proof sketch]
The proof is quite similar to the proof of
Theorem~\ref{thm:emb-sf} in Section~\ref{sec:filling}.
For $p=1$, the tautness results of Section~\ref{sec:taut} give
the result, while for $q=\infty$ this is essentially
Proposition~\ref{prop:p-harmonic}. So assume that $1 < p \le q < \infty$.

For $G^p$ a
$p$-conformal graph with $1 < p < \infty$, there is an invertible
``duality'' map $D_G^p \co \Wgt(G) \to \Len(G)$, defined by setting
\begin{equation}\label{eq:dual-p}
\bigl(D_G^p(w)\bigr)(e) = \alpha(e)w(e)^{1/(p-1)}.
\end{equation}

Then, for a homotopy class as in the statement of
Theorem~\ref{thm:energy-sf}, define an iteration
$\Iter_\phi \co \Len(\Gamma_2) \to \Len(\Gamma_2)$ as follows.
\begin{enumerate}
\item For $\ell \in \Len(\Gamma_2)$, set
  $K = (\Gamma_2, \ell)$, find a $p$-harmonic
  representative~$g$ of $[\id \circ \phi] \co G^p \to K$, and set
  $m(e) = \ell(g(e)) \in \Len(\Gamma_1)$.
\item Set $w = (D^p_1)^{-1}(m) \in \Wgt(\Gamma_1)$, so $(\Gamma_1,w)$
  is the tension-weighted graph of~$g$.
\item Set $v = N_{[\phi]}(w) \in \Wgt(\Gamma_2)$.
\item Set $\Iter_\phi(\ell) = D^q_2(v) \in \Len(\Gamma_2)$.
\end{enumerate}
If $p = q$, $\Iter_\phi$ descends to a map on projective spaces
$P\Iter_\phi \co P\Len(\Gamma_2) \to P\Len(\Gamma_2)$, and one has to
do an analysis of the possible boundary fixed points, entirely
parallel to Section~\ref{sec:partial-lambda-filling}.

If $p < q$, then
$\Iter_\phi$ is not linear on rays, and we do the iteration on
$\Len(\Gamma_2)$ itself. More specifically, on a ray we have
\[
\Iter_\phi(\lambda \ell) = \lambda^{\frac{p-1}{q-1}}\Iter_\phi(\ell).
\]
Since $(p-1)/(q-1) < 1$, if a ray in $\Len(\Gamma_2)$ is mapped to
itself then there is a finite fixed point on the ray. A little more
analysis shows that there are never attracting
fixed points on the boundary of $\Len(\Gamma_2)$, so there must be a
fixed point in the
interior. This gives strip graphs
compatible with $G^p$ and $H^q$ in the sense of
Definition~\ref{def:p-rescale} below, along with a $1$-filling map
between them.
\end{proof}

We now turn to the first alternate definition of $p$-conformal graphs
and $E^p_q$. Recall from Definition~\ref{def:strip-graph} that a strip
graph is tuple $(\Gamma,w,\ell)$ of a graph~$\Gamma$ and a set of
weights~$w$ and lengths~$\ell$ on~$\Gamma$.

\begin{definition}\label{def:p-rescale}
  For $p \in [1,\infty)$, a \emph{$p$-conformal rescaling} of a
  positive strip
  graph $(\Gamma, w, \ell)$ changes the weight, length, and area by
  \[
  (w, \ell, \Area) \mapsto (\lambda^{p-1}w, \lambda\ell, \lambda^p\Area)
  \]
  where $\lambda \co \Edges(\Gamma) \to \RR_+$ is a positive rescaling
  factor on each edge. (The identity $\Area = w\cdot \ell$ is
  preserved.) An \emph{$\infty$-conformal rescaling} instead
  acts by $(w, \ell, \Area) \mapsto (\lambda w, \ell, \lambda \Area)$.
  We write
  $(\Gamma, w_1, \ell_1) \equiv_p (\Gamma, w_2, \ell_2)$ if the two
  strip structures are related by a $p$-conformal rescaling.

  For $p \in (1,\infty]$, we say that a $p$-conformal graph~$(\Gamma, \alpha)$ is
  \emph{compatible} with a positive strip structure $(w, \ell)$
  on~$\Gamma$ if
  \[
  (\Gamma, w, \ell) \equiv_p (\Gamma, 1, \alpha),
  \]
  or equivalently if $\ell(e) = \alpha(e)w(e)^{1/(p-1)}$.
  Thus we may think of a $p$-conformal graph as an equivalence
  class under~$\equiv_p$.
  We say that $(\Gamma, \alpha)$ is compatible with an arbitrary (not
  necessarily positive) strip
  structure if, for each edge~$e$,
  \[
  \ell(e)^{p-1} = \alpha(e)^{p-1} w(e).
  \]
  A $1$-conformal graph is compatible
  with a strip structure if the weights agree (ignoring the lengths).
\end{definition}

To better understand the duality map $D^p$ from
Equation~\eqref{eq:dual-p}, suppose we have a $p$-conformal graph
$(\Gamma,\alpha)$. Then for $w \in \Wgt^+(G)$, the lengths $D^p(w)$
are the unique values so that
$(\Gamma, w, D^p(w)) \equiv_p (\Gamma, 1, \alpha)$.

\begin{definition}
  Let $S_1 = (\Gamma_1, w_1, \ell_1)$ and
  $S_2 = (\Gamma_2, w_2,\ell_2)$ be two marked strip graphs (not
  necessarily balanced), with $S_2$ positive.  For $\lambda > 0$, a
  map $f \co S_1 \to S_2$ is \emph{weakly $\lambda$-filling}
  if it satisfies Conditions~\eqref{item:length-pres}
  and~\eqref{item:weight-scale} of
  Definition~\ref{def:lambda-filling}, dropping the condition that $f$
  is taut as a map between weighted graphs.
\end{definition}

\begin{definition}\label{def:Epq-2}
  For $1 \le p \le q \le \infty$, let $G_1^p$ be a $p$-conformal graph,
  $G_2^q$ be a $q$-conformal graph, and $f \co G_1^p \to G_2^q$ be a PL
  map. Then $E^p_{q,\mathrm{strip}}(f)$ is defined in the following way.
  \begin{enumerate}
  \item If $p < q$, then, as we prove in Proposition~\ref{prop:Epq}
    below, there are strip graphs $S_1$ and $S_2$,
    compatible with subdivisions of $G_1^p$ and $G_2^q$ respectively,
    so that $f$ is weakly $1$-filling as a map from $S_1$ to~$S_2$.
    Then
    \begin{equation}
      E^p_{q,\mathrm{strip}}(f) = \Area(S_2)^{1/p-1/q}.
    \end{equation}
  \item If $p = q<\infty$, there are usually no strip structures
    that make $f$ weakly $1$-filling. Instead,
    take any strip structures $(\Gamma_1,w_1,\ell_1)$ and
    $(\Gamma_2,w_2,\ell_2)$ so that $f$ is length-preserving, and
    define the energy to be the maximum ratio of weights:
    \begin{equation}
      E^p_{p,\mathrm{strip}}(f) = \esssup_{y \in \Gamma_2} \frac{n_f^{w_1}(y)}{w_2(y)}.
    \end{equation}
    This is independent of the choice of strip structures.
  \end{enumerate}
\end{definition}

\begin{proposition}\label{prop:Epq}
  For $\phi \co G_1^p \to G_2^q$ a PL map from a $p$-conformal graph to a
  $q$-conformal graph, $E^p_q(\phi) = E^p_{q,\mathrm{strip}}(\phi)$.
\end{proposition}

\begin{proof}[Proof sketch]
  This is closely related to Proposition~\ref{prop:Epq-local-sup}. For
  $p < q$, subdivide $G_2$ so that $\Fill^p(\phi)$ is constant on each
  edge. Then for $e$ an edge of~$G_2$, construct the strip structure
  $(\Gamma_2, w_2, \ell_2)$ compatible with~$G_2$ so that
  \[
  \ell_2(e) = \alpha_2(e) \cdot \bigl(\Fill^p(\phi)(e)\bigr)^{1/(q-p)}.
  \]
  This determines $\ell_1$ by the condition that $\phi$ be
  length-preserving, and $w_1$ and~$w_2$ by the compatibility
  condition. It is elementary to check that $\phi$ is $1$-filling with
  respect to these strip structures and then verify
  that $E^p_q(\phi) = E^p_{q,\mathrm{strip}}(\phi)$.

  The
  case $p=q$ is easier, as you can choose $\ell_2$ arbitrarily.
\end{proof}

For the final variation on the definition of $E^p_q$, we allow more
general spaces than graphs.

\begin{definition}\label{def:p-space}
  For $1 \le p \le \infty$, a \emph{$p$-conformal space} is (loosely)
  a tuple $(X,\ell,\mu)$ of a space~$X$, a length metric $\ell$ on~$X$, and a
  measure~$\mu$ on~$X$, up to rescaling by
  \[
  (X, \ell,\mu) \equiv_p (X, \lambda\ell, \lambda^p\mu)
  \]
  for a suitable rescaling function $\lambda \co X \to \RR_{>0}$. Write
  $[(X,\ell,\mu)]_p$ for an equivalence class of~$\equiv_p$.
\end{definition}

There are analytic subtleties in Definition~\ref{def:p-space} in, e.g.,
how to define the rescaling and exactly which metrics are allowed; we
do not attempt to address them in this paper. But note that oriented
conformal $n$-manifolds~$M^n$ give
examples of $n$-conformal spaces: given a conformal class of
(Riemannian) metrics
on~$M$, pick a base metric~$g$ in the conformal class, and set $\ell$
and $\mu$ to be distance with respect to~$g$ and the Lebesgue measure
of~$g$, respectively. Picking a different metric in the conformal
class changes $\ell$ and $\mu$ by an $n$-conformal rescaling.

Suppose $X = \Gamma$ is a graph with a base metric and associated measure and
\begin{itemize}
\item $\ell$ is a piecewise-constant multiple of the base metric;
\item $\mu$ is a piecewise-constant multiple of the base Lebesgue
  measure; and
\item the rescaling functions~$\lambda$ are piecewise-constant.
\end{itemize}
Definition~\ref{def:p-space} is then almost identical to
Definition~\ref{def:p-rescale}, if we define the weight at a generic
point $x\in \Gamma$ by
\[
w(x) = \frac{\mu(\Delta x)}{\ell(\Delta x)}
\]
where $\Delta x$ is a small interval
centered on~$x$.

\begin{definition}\label{def:Epq-3}
  For $1 \le p \le q \le \infty$ with $p < \infty$ and $\phi \co X_1^p \to X_2^q$ a
  suitable map from a $p$-conformal space to
  a $q$-conformal space, with $X_i = [(X_i, \ell_i,\mu_i)]$, define
  \begin{align*}
    \Fill^p_{\mathrm{conf}}(\phi) &\co Y^q \to \RR_{\ge 0}\\
    \Fill^p_{\mathrm{conf}}(\phi) &\coloneqq \phi_*\bigl(\bigl(\Lip^{\ell_1}_{\ell_2}(\phi)\bigr)^p\cdot \mu_1\bigr) / \mu_2\\
    E^p_{q,\mathrm{conf}}(\phi) &\coloneqq \bigl(\lVert \Fill^p_{\mathrm{conf}}(\phi) \rVert_{q/(q-p),X_2}\bigr)^{1/p}\\
  \end{align*}
  To take the definition of $E^p_{q,\mathrm{conf}}$ step-by-step:
  \begin{itemize}
  \item $\Lip^{\ell_1}_{\ell_2}(\phi) \co X_1 \to \RR_+$ is the
    local Lipschitz constant of~$\phi$.
  \item Next, $\phi_* \bigl(\bigl(\Lip^{\ell_1}_{\ell_2}(\phi)\bigr)^p\cdot\mu_1\bigr)$ is the
    push-forward of measures.
  \item $\Fill^p_{\mathrm{conf}}(\phi) = \phi_*(\Lip(\phi)^p\cdot \mu_1) / \mu_2$ is the
    Radon-Nikodym derivative of the two measures.
  \item Finally, $E^p_{q,\mathrm{conf}}(\phi)$ is (up to a power) the
    $L^{q/(q-p)}$-norm of $\Fill^p_{\mathrm{conf}}$.
  \end{itemize}
  We will not define which maps $\phi$ are ``suitable'' (or indeed
  which tuples $(X,\ell,\mu)$ are allowed), but
    it should include cases where $\phi$ is Lipschitz and the
    Radon-Nikodym derivative exists, i.e., $\phi_*(\Lip(\phi)^p\cdot\mu_1)$
    is absolutely continuous with respect to $\mu_2$. This includes
    non-constant PL
    maps between graphs.

  For $q = \infty$ (so that $X_2$ is a length space),
  $E^p_{\infty,\mathrm{conf}}$ can be rewritten
  \begin{equation}\label{eq:Ep-3}
  E^p_{\infty,\mathrm{conf}}(\phi)
    = \bigl\lVert \Lip^{\ell_1}_{\ell_2}(\phi)\bigr\rVert_{p,X_1}.
  \end{equation}
  In this case we do not need the Radon-Nikodym derivative.
\end{definition}

The motivation for the exponents in
Definition~\ref{def:Epq-3} is that, up to an overall power,
$E^p_{q,\mathrm{conf}}$ is
the unique expression constructed with this data and these operations
that is invariant under both $p$-conformal rescaling on~$X_1^p$ and
$q$-conformal rescaling on~$X_2^q$.

\begin{proposition}
  For $f \co G^p \to H^q$ a PL map from a $p$-conformal graph to a
  $q$-conformal graph, $E^p_q(f) = E^p_{q,\mathrm{conf}}(f)$.
\end{proposition}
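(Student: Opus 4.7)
The plan is to show that under a canonical choice of compatible strip structure on each $p$-conformal graph, the conformal filling function $\Fill^p_{\mathrm{conf}}(f)$ agrees pointwise almost everywhere with the graph-theoretic $\Fill^p(f)$ from Definition~\ref{def:Epq-1}; then the equality of $E^p_q$ and $E^p_{q,\mathrm{conf}}$ follows because both are obtained from the filling function by taking the same $L^{q/(q-p)}$-norm to the same power $1/p$. For each $p$-conformal graph $(\Gamma,\alpha)$ I would pick the canonical strip representative $(w,\ell) = (1,\alpha)$, which is manifestly compatible in the sense of Definition~\ref{def:p-rescale}. In this representative the metric on the graph coincides with $\alpha$, so the local Lipschitz constant $\Lip^{\ell_1}_{\ell_2}(f)(x)$ appearing in Definition~\ref{def:Epq-3} coincides with the derivative $\abs{f'(x)}$ used in Definition~\ref{def:Epq-1}, and the measure $\mu_i$ coincides with integration against $\alpha_i$.

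The key step is computing the Radon--Nikodym derivative $\Fill^p_{\mathrm{conf}}(f) = f_*(\abs{f'}^p\,d\alpha_1)/d\alpha_2$ explicitly for a PL map. Away from the finitely many singular values in $H$, pick a regular value $y$; near each $x \in f^{-1}(y)$ the map $f$ has locally constant derivative, so the change-of-variables formula gives $f_*(d\alpha_1)\vert_{\text{near }y} = \sum_{x \in f^{-1}(y)} (1/\abs{f'(x)})\, d\alpha_2$. Weighting the domain measure by $\abs{f'}^p$ and pushing forward therefore yields
\[
\Fill^p_{\mathrm{conf}}(f)(y) \;=\; \sum_{x \in f^{-1}(y)} \frac{\abs{f'(x)}^p}{\abs{f'(x)}} \;=\; \sum_{x \in f^{-1}(y)} \abs{f'(x)}^{p-1} \;=\; \Fill^p(f)(y).
\]
Since regular values have full measure in $H$, taking the $L^{q/(q-p)}$-norm with respect to $\alpha_2 = \mu_2$ and the $1/p$ power gives $E^p_{q,\mathrm{conf}}(f) = E^p_q(f)$ in the generic range $p < q < \infty$.

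The edge cases require a moment's check. For $p = q < \infty$ the exponent $q/(q-p)$ degenerates to $\infty$ and both $E^p_p$ and $E^p_{p,\mathrm{conf}}$ become $(\esssup \Fill^p(f))^{1/p}$, so the same pointwise identity settles it. For $q = \infty$ the alternative formula \eqref{eq:Ep-3} gives $E^p_{\infty,\mathrm{conf}}(f) = \norm{\Lip(f)}_{p,X_1}$, which under the canonical strip identification is $\norm{f'}_{p,G_1}$; this matches $E^p(f) = E^p_\infty(f)$ from \eqref{eq:Ep-1} by Proposition~\ref{prop:energy-defs-1}. The extremal case $p = q = \infty$ collapses both expressions to $\Lip(f)$. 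Finally, since both $\equiv_p$-rescaling on $X_1$ and $\equiv_q$-rescaling on $X_2$ leave $E^p_{q,\mathrm{conf}}$ invariant (this is exactly the motivation for the exponents in Definition~\ref{def:Epq-3}, and is the quick bookkeeping check that $\abs{f'}^p\,d\mu_1$ is $p$-conformally invariant and that $(\Fill^p_{\mathrm{conf}})^{q/(q-p)}\,d\mu_2$ is $q$-conformally invariant), the identity does not depend on the canonical choice of strip structure.

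There is essentially no hard step here: the content is a single change-of-variables identity for PL maps, which works because for PL maps the Radon--Nikodym derivative of $f_*\mu_1$ with respect to $\mu_2$ exists on the complement of finitely many singular values. The only mildly delicate point is ensuring that $p$-conformal equivalence really does leave the combination $\abs{f'}^p\,d\mu$ invariant, which is a direct calculation from the rescaling rule $(w,\ell,\mu) \mapsto (\lambda^{p-1}w,\lambda\ell,\lambda^p\mu)$ together with $\Lip(f) \mapsto \Lip(f)/\lambda$ on the domain side. Once that is in hand, the proposition reduces to comparing two expressions that have already been matched pointwise.
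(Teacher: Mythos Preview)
Your proof is correct and is exactly the expansion the paper has in mind: the paper's proof is the single line ``Follows from expanding the definitions,'' and you have carried out precisely that expansion, including the change-of-variables computation showing $\Fill^p_{\mathrm{conf}}(f) = \Fill^p(f)$ almost everywhere and the treatment of the degenerate cases $p=q$ and $q=\infty$.
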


\begin{proof}
  Expand the definitions.
\end{proof}

Definitions~\ref{def:p-space} and~\ref{def:Epq-3} point to a
considerably more general setting, likely with
substantial new
difficulties. As mentioned in Section~\ref{sec:prior-work}, much
prior attention has been devoted to proving the existence of harmonic
maps between various types of spaces \cite{EF01:HarmonicPolyhedra}
(related to minimizing
$E^2_\infty$), and the general case is likely
to be harder.

\begin{warning}
  The energy $E^2_\infty$ from Definition~\ref{def:Epq-3} does not agree with
  other definitions
  of Dirichlet energy. For instance, suppose $X_1$ is a Riemann
  surface~$\Sigma$ (with its natural $2$-conformal
  structure), and $X_2$ is a Riemannian $n$-manifold~$M$.
  Pick a base metric~$g$ on~$\Sigma$ in the given conformal
  class. Then, given a smooth map $f \co \Sigma \to M$, we can
  consider the Jacobian $df_x\co T_x\Sigma \to TM$, with singular values
  $\lambda_1, \lambda_2 \co \Sigma \to \RR_{\ge 0}$, the eigenvalues of
  $\sqrt{(df_x)^T(df_x)}$, chosen so that
  $\lambda_1(x) \ge \lambda_2(x)$. Thus $df_x$ maps the unit circle
  in $T_x\Sigma$ to an ellipse whose major and minor axis have length
  $\lambda_1(x)$ and $\lambda_2(x)$. The local Lipschitz constant
  of~$f$ at~$x$ is $\lambda_1(x)$, so we have
  \begin{align*}
    \bigl(E^2_{\infty,\textrm{conf}}(f)\bigr)^2 &= \int_\Sigma \lambda_1(x)^2\,\mu(x)\\
    \intertext{(where $\mu$ is Lebesgue measure on~$\Sigma$) while
    the standard Dirichlet energy is}
    \Dir(f) &= \int_\Sigma \bigl(\lambda_1(x)^2 + \lambda_2(x)^2\bigr)\,\mu(x).
  \end{align*}
  These energies are both conformally invariant, but are not the same.
  They do agree if the target space is
  a graph, as in that case $\lambda_2(x) = 0$.
\end{warning}


\section{Electrical networks}
\label{sec:electrical}

As mentioned in the introduction, the elastic graphs of this paper are
closely related to the much
better studied theory of \emph{resistor networks}. Suppose we are
given an elastic graph~$G$ with $k$ marked vertices $x_1, \dots, x_k$,
called \emph{nodes}. Turn it into a network of resistors, where the
elastic constants
$\alpha(e)$ become resistances. If we attach external voltage sources at
voltages $V_1, \dots, V_k$ to the nodes, then the remainder of the
circuit will reach an electrical equilibrium, which has several pieces
of data:
\begin{itemize}
\item a voltage $V(v)$ for each vertex~$v$ of~$G$ (agreeing
  with $V_1,\dots,V_k$ on the nodes);
\item an internal current $I(\vec e)$ flowing through each oriented edge $\vec
  e$ of~$G$ (with $I(-\vec e) = -I(\vec e)$;
\item the total current $I_1, \dots, I_k$ flowing out of the nodes; and
\item the total energy $E$ dissipated by the system per unit time.
\end{itemize}
At equilibrium, these are related by Kirchhoff's current laws.
\begin{itemize}
\item The current on an edge is
  related to the voltage difference. If $\vec e$ has source~$s$ and
  target~$t$, let $\Delta V(\vec e) = V(t) - V(s)$; then
  \[I(\vec e) = \frac{\Delta V(\vec e)}{\alpha(e)}.\]
\item For each internal (unmarked) vertex~$v$ of~$G$, the total
  current flowing in is~$0$
  \[ \sum_{\vec e\text{ incident to }v} I(\vec e) = 0, \]
  while at the node $x_i$,
  \[
  I_i = \sum_{\vec e\text{ incident to }x_i} I(\vec e).
  \]
\item The energy dissipated is
  \[
  E = \sum_{e\in\Edges(G)} \alpha(e) I(\vec e)^2 =
   \sum_{e \in \Edges(G)} \frac{(\Delta V(\vec e))^2}{\alpha(e)}.
  \]
\end{itemize}
The energy dissipated is identical to
Equation~\eqref{eq:dir-1} for the Dirichlet energy of a map~$f$, in
the special case that the target of~$f$ is $\RR$ with $k$ marked
points at $V_1,\dots,V_k$.

For resistor networks, the equations for the internal
voltages and currents are linear, so $V(v)$, $I(\vec e)$, and $I_i$
are linear functions of the~$V_i$, while $E$ is a quadratic function
of the~$V_i$.  (By contrast, in the more general case considered in
the bulk of this paper, the energy $\Dir_{[f]}$ as a function of
lengths is only
piecewise-quadratic.) The \emph{response matrix} $\Lambda_{ij}(G)$ of
a resistor
network~$G$ is the matrix that gives the external currents~$I_i$ as a
function of the external voltages~$V_j$. The matrix $\Lambda$ is symmetric and
determines $E$ as a quadratic function of the~$V_i$:
\begin{equation}\label{eq:energy-response}
  E = \sum_{i,j=1}^k V_i V_j \Lambda_{ij}.
\end{equation}

Much attention has been devoted to the question of when two resistor
networks (with the same number of nodes) are \emph{electrically
  equivalent}, in the sense that the response matrices are the
same. Series and parallel reduction of resistors are examples of
electrical equivalence. A more substantial example
\cite{Kennelly99:TriangleStar} is the \emph{$Y$--$\Delta$ transform},
that relates a 3-node network with the topology of a~$Y$ to one with
the topology of a~$\Delta$:
\begin{equation}\label{eq:Y-Delta}
\mfigb{energies-6} \equiv_{\mathrm{elec}} \mfigb{energies-21}
\end{equation}
where
\begin{align*}
  r_1 &= \frac{R_2 R_3}{R_1+R_2+R_3} &
        R_1 &= \frac{r_2 r_3 + r_1 r_3 + r_1 r_2}{r_1} \\
  r_2 &= \frac{R_1 R_3}{R_1+R_2+R_3} &
        R_2 &= \frac{r_2 r_3 + r_1 r_3 + r_1 r_2}{r_2} \\
  r_3 &= \frac{R_1 R_2}{R_1+R_2+R_3} &
        R_3 &= \frac{r_2 r_3 + r_1 r_3 + r_1 r_2}{r_3}
\end{align*}
It turns out that the $Y$--$\Delta$ transform and
series and parallel reduction are sufficient to relate any two
electrically equivalent planar resistor networks with
nodes on the external face
\cite{CdV94:ResElecI,CGV96:ResElecII,CIM98:CircNetworks}.

We can also ask when one resistor network~$G_1$ \emph{dominates}
another network~$G_2$, in the sense that the energy dissipated
by~$G_1$ is greater than the energy dissipated by~$G_2$, for any
choice of external voltages~$V_i$. From
Equation~\eqref{eq:energy-response} we see that this happens exactly
when $\Lambda(G_1) \preceq \Lambda(G_2)$, in the usual Löwner ordering
on quadratic forms.

Elastic networks with targets more general than $\RR$ are
almost never equivalent, so instead we ask about domination, as in
Theorem~\ref{thm:emb-sf}. If $G_1$, $G_2$ are elastic networks with
$k$ corresponding marked nodes, say that
$G_1 \preceq_{\mathrm{elast}} G_2$ if, for any maps $\phi_i$ from
$G_i$ to a marked tree~$K$ mapping corresponding nodes to the same
point,
\[
\Dir[\phi_1] \le \Dir[\phi_2].
\]
Then, for
instance, we have the following inequalities of energies:
\begin{equation}
  \label{eq:Y-Delta-examp}
\mfigb{energies-23} \preceq_{\mathrm{elast}} \mfigb{energies-1}
   \preceq_{\mathrm{elast}} \mfigb{energies-22},
\end{equation}
The second two graphs are electrically equivalent
by~\eqref{eq:Y-Delta}. The first inequality is a simple application of
Theorem~\ref{thm:emb-sf}, while the second requires a little more
argument.
Since the second and third graphs are electrically equivalent, and
those two graphs have equal
energies when $K$ is $\RR^n$ (with three marked points) for any~$n$.
(The case $n=1$ is exactly
electrical equivalence, and for $n \ge 2$ the Dirichlet
energy is the sum of the energies of the projections to the different
coordinates.)
In fact, the inequalities in~\eqref{eq:Y-Delta-examp} hold more
generally when $K$ is any CAT(0)
space
with three marked points.

More generally, a connected resistor network~$G$ with three nodes
is always
electrically equivalent to a tripod with some weights and a triangle
with some other weights, related to each other by the $Y$--$\Delta$
transform. The tripod is always elastically
dominated by the electrically equivalent triangle. In general, in
forthcoming joint work with Dejean and
Gorski, we have the following theorem.

\begin{citethm}[Dejean-Gorski-Thurston \cite{DGT:DirichletEnergy}]
  Let $G$ be an elastic network with three nodes, and let $Y_G$ and
  $\Delta_G$ be the electrically equivalent tripod and triangle,
  respectively. Then
  \[
    Y_G \preceq_{\mathrm{elast}} G \preceq_{\mathrm{elast}} \Delta_G.
  \]
\end{citethm}

\begin{figure}
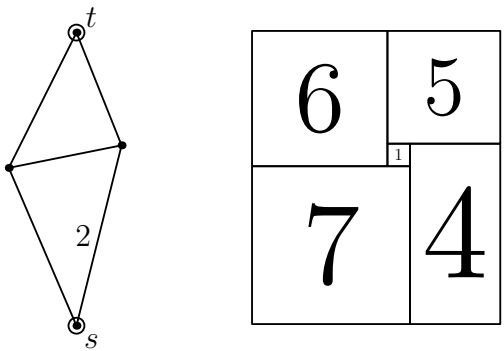

  \[
  \mfig{squares-6}
  \qquad\qquad
  \mfig{squares-5}
  \]
  \caption{A simple electric network and an associated tiling by
    rectangles. All resistances on the graph are~$1$ except for
    one edge, which has resistance~$2$. On the rectangle tiling, we
    have shown the total current flowing
    through the edge, which in the picture is the
    width of the rectangles.}
  \label{fig:rect-tiling}
\end{figure}

We close by reminding the reader of the connection between electrical
networks at equilibrium and rectangle tilings
\cite{BSST40:DissectionRects}: loosely, if you assign each edge a
rectangle of length equal to the voltage difference between the
endpoints and width equal to the total current, then Kirchhoff's laws
say that the rectangles may be assembled into a single tiling, in
which the aspect ratios are equal to the resistances. See
Figure~\ref{fig:rect-tiling} for a simple example. In the more general
setting of elastic graphs, the ``weights'' throughout this paper can
also be reinterpreted as ``widths'', giving similar tiling
pictures on a surface.

Let us spell this out a bit more, although without proofs, and
omitting background on quadratic differentials. Suppose that
$f \co G \to K$ is a
surjective harmonic map from an elastic graph to a length graph, and
suppose that $G$ and~$K$ both have a ribbon structure and $f$ is a
ribbon map, i.e., extends to an embedding
$Nf \co NG \hookrightarrow NK$ between surface thickenings of $G$
and~$K$
\cite[Def.~\ref*{Char:def:ribbon-map}]{Thurston16:Characterize}. Let
$\Sigma = NK$ be the thickening
of~$K$. Recall that a \emph{flip-translation structure} on~$\Sigma$ is
an atlas with overlap maps given by translations or rotations
by~$\pi$, with certain allowed singularities; equivalently, it is a
pair of a conformal structure~$\omega$ on~$\Sigma$ and a quadratic
differential~$q$ with respect to~$\omega$. Then there is a
flip-translation structure on~$\Sigma$ with the following properties.
\begin{enumerate}
\item The boundaries of~$\Sigma$ are horizontal.
\item Corresponding to each vertex of $K$ of valence~$v$, there is a
  zero of~$q$ of order $v-2$. (Note that $K$ cannot have vertices of
  valence~$1$.)
\item The quotient of $q$ by the vertical foliation is the length
  graph~$K$, with length induced by the measure on the vertical
  foliation.
\item $\Sigma$ is tiled by rectangles (axis-aligned in the
  flip-translation structure), with each edge~$e$ corresponding to a
  rectangle $R(e)$. The aspect ratio of $R(e)$ is~$\alpha(e)$, while
  the length (horizontal extent) of $R(e)$
  is equal to the length of $f(e)$ in~$K$.
\item The measure on the horizontal foliation of~$q$ gives weights
  related to a weighted graph~$W_f$ forming a tight sequence
  \[
    \shortseq{W}{}{G}{f}{K}
  \]
  in the sense of Theorem~\ref{thm:harmonic-min}. It is tempting to
  think that the leaves of the horizontal foliation itself gives the
  weighted multi-curve~$C$ in Theorem~\ref{thm:harmonic-min}, but this is
  not generally true, as the leaves of the horizontal foliation will
  typically not be closed when $K$ is a sufficiently complicated
  graph.
\end{enumerate}
The fact that a harmonic map gives a quadratic (not holomorphic)
differential on~$\Sigma$, or equivalently the fact that we have a
flip-translation structure rather than a translation structure
on~$\Sigma$, comes from the distinction between electrical and elastic
stretching mentioned in the introduction: an electrical flow through a
network gives an orientation on each edge, while a stretched elastic
graph has unsigned tensions on each edge.


\bibliographystyle{hamsalpha}
\bibliography{dylan,conformal,topo,graphs}

\end{document}